\newcommand\myshade{85}
\colorlet{mylinkcolor}{violet}
\colorlet{mycitecolor}{blue}
\colorlet{myurlcolor}{green}
\renewcommand{\sec}[0]{\section}
\newcommand{\ssec}[0]{\subsection}
\newcommand{\sssec}[0]{\subsubsection}
\newcommand{\on}{\operatorname}
\newcommand{\nc}{\newcommand}
\nc{\TV}{To\"en--Vezzosi}
\nc{\et}{{\on{\acute{e}t}}}
\nc{\virg}[1]{``#1"}
\nc{\bigt}[1]{\big( #1 \big) }
\nc{\Bigt}[1]{\Big( #1 \Big) }
\nc{\wt}{\tilde}
\theoremstyle{plain}
\newtheorem{thm}[subsubsection]{Theorem}
\newtheorem{mainthm}{Theorem}
\newtheorem{lem}[subsubsection]{Lemma}
\newtheorem{prop}[subsubsection]{Proposition}
\newtheorem{cor}[subsubsection]{Corollary}
\theoremstyle{definition}
\newtheorem{defn}[subsubsection]{Definition}
\newtheorem{notat}[subsubsection]{Notation}
\theoremstyle{remark}
\newtheorem{rmk}[subsubsection]{Remark}
\newtheorem{rem}[subsubsection]{Remark}
\theoremstyle{plain}
\newtheorem*{uncor}{Corollary}
\newtheorem*{unconj}{Conjecture}
\theoremstyle{definition}
\newcommand{\ccC}[0]{\mathcal{C}}
\newcommand{\ccF}[0]{\mathcal{F}}
\newcommand{\ccH}[0]{\mathcal{H}}
\newcommand{\ccM}[0]{\mathcal{M}}
\newcommand{\ccO}[0]{\mathcal{O}}
\newcommand{\ccS}[0]{\mathcal{S}}
\newcommand{\bbA}[0]{\mathbb{A}}
\newcommand{\bbD}[0]{\mathbb{D}}
\newcommand{\bbE}[0]{\mathbb{E}}
\newcommand{\bbL}[0]{\mathbb{L}}
\newcommand{\bbP}[0]{\mathbb{P}}
\newcommand{\bbQ}[0]{\mathbb{Q}}
\newcommand{\bbU}[0]{\mathbb{U}}
\newcommand{\bbZ}[0]{\mathbb{Z}}
\newcommand{\ffF}[0]{\mathfrak{F}}
\newcommand{\ffG}[0]{\mathfrak{G}}
\newcommand{\rmB}[0]{\mathrm{B}}
\newcommand{\scrG}[0]{\mathscr{G}}
\newcommand{\scrO}[0]{\mathscr{O}}
\newcommand{\scrV}[0]{\mathscr{V}}
\nc{\sA}{{\mathsf{A}}}
\nc{\sB}{{\mathsf{B}}}
\nc{\sC}{{\mathsf{C}}}
\nc{\sD}{{\mathsf{D}}}
\nc{\sE}{{\mathsf{E}}}
\nc{\sF}{{\mathsf{F}}}
\nc{\sG}{{\mathsf{G}}}
\nc{\sK}{{\mathsf{K}}}
\nc{\sM}{{\mathsf{M}}}
\nc{\sN}{{\mathsf{N}}}
\nc{\sO}{{\mathsf{O}}}
\nc{\sW}{{\mathsf{W}}}
\nc{\sQ}{{\mathsf{Q}}}
\nc{\sP}{{\mathsf{P}}}
\nc{\sR}{{\mathsf{R}}}
\nc{\sS}{{\mathsf{S}}}
\nc{\sT}{{\mathsf{T}}}
\nc{\sU}{{\mathsf{U}}}
\nc{\sV}{{\mathsf{V}}}
\nc{\sZ}{{\mathsf{Z}}}
\newcommand{\up}[1]{\on{#1}}
\newcommand{\ul}[1]{\underline{#1}}
\renewcommand{\epsilon}{\varepsilon}
\newcommand{\eps}{\epsilon}
\newcommand{\op}[0]{{\on{op}}}
\nc{\cat}{\on{cat}} 
\newcommand{\sg}[0]{{\on{sg}}}
\newcommand{\oo}[0]{\infty}
\newcommand{\uF}[0]{\on{F}}
\newcommand{\uG}[0]{\on{G}}
\newcommand{\uH}[0]{\on H}
\newcommand{\uI}[0]{\on{I}}
\newcommand{\uK}[0]{\on{K}}
\newcommand{\uL}[0]{\on{L}}
\newcommand{\GLK}[0]{\on{G}_{\on{L/K}}}
\newcommand{\PLK}[0]{\on{P}_{\on{L/K}}}
\newcommand{\uKs}[0]{\overline{\on{K}}}
\newcommand{\uP}[0]{\on{P}}
\newcommand{\Tr}[0]{\on{Tr}}
\renewcommand{\lg}{\on{length}}
\newcommand{\coFib}[0]{\on{coFib}}
\DeclareMathOperator{\coker}{coker} 
\nc{\act}{{\on{act}}}
\nc{\rev}{{\on{rev}}}
\nc{\env}{{\on{env}}}
\renewcommand{\rightarrow}{\to}
\newcommand{\xto}[1]{\xrightarrow{#1}}
\newcommand{\hto}{\hookrightarrow}
\DeclareMathOperator{\ev}{ev} 
\DeclareMathOperator{\coev}{coev} 
\DeclareMathOperator{\id}{id} 
\newcommand{\longto}{\longrightarrow}
\newcommand{\Gal}[0]{{\scrG}al}
\newcommand{\IK}{{\uI_{\uK}}}
\newcommand{\IL}{{\uI_{\uL}}}
\newcommand{\Ql}[1]{\bbQ_{\ell #1}}
\newcommand{\BU}[0]{\rmB \bbU}
\newcommand{\End}[0]{\up{End}}
\newcommand{\Qell}{\bbQ_{\ell}}
\nc{\sBenv}{\sB^{\env}}
\nc{\sCenv}{\sC^{\env}}
\newcommand{\QellI}{\bbQ_\ell^{\uI}}
\newcommand{\QellIG}{\bbQ_\ell^{\uI}[\GLK]}
\newcommand{\QellIGbeta}{\bbQ_\ell^{\uI}(\beta)[\GLK]}
\newcommand{\QellIGred}{\bbQ_\ell^{\uI}(\GLK)}
\newcommand{\QellIGredbeta}{\bbQ_\ell^{\uI}(\beta)(\GLK)}
\nc{\Benv}{\CB^{\env}}
\nc{\Sym}{\on{Sym}}
\newcommand{\oml}[1]{\omega_{\ell #1}}
\newcommand{\Spec}[1]{\on{Spec}(#1)}
\newcommand{\dSch}[0]{\on{dSch}}
\newcommand{\Sch}[0]{\on{Sch}}
\nc{\bareta}{\bar{\eta}}
\DeclareMathOperator{\Hom}{Hom} 
\newcommand{\pr}[1]{\on{pr}_{#1}}
\newcommand{\x}[1]{\times_{#1}}
\nc{\Ind}[0]{\on{Ind}}
\newcommand{\Ext}[0]{\on{Ext}}
\newcommand{\HH}[0]{\on{HH}}
\newcommand{\CH}[0]{\on{CH}}
\newcommand{\HK}[0]{\on{HK}}
\newcommand{\dgcat}[0]{\on{dgCat}}
\newcommand{\dgCat}{\on{dgCat}}
\newcommand{\dgCAT}[0]{\on{dgCAT}}
\newcommand{\mmod}{\on{-mod}}
\newcommand{\Mod}[0]{\on{Mod}}
\newcommand{\Fun}[0]{\on{Fun}}
\newcommand{\biMod}[0]{\on{biMod}}
\newcommand{\dgMod}[0]{\on{Mod}_{\on{dg}}}
\newcommand{\cofdgMod}[0]{\on{Mod}_{\on{dg}}^{\on{cof}}}
\DeclareMathOperator{\Coh}{\mathsf{D^b_{coh}}} 
\DeclareMathOperator{\Perf}{\mathsf{D_{perf}}} 
\DeclareMathOperator{\QCoh}{\mathsf{D_{qcoh}}} 
\DeclareMathOperator{\Sing}{\mathsf{D_{sg}}} 
\DeclareMathOperator{\MF}{\on{MF}}
\newcommand{\SH}[0]{\ccS \ccH}
\newcommand{\Shv}[0]{\on{Shv}_{\Ql{}}}
\newcommand{\Shvcon}[0]{\on{Shv}_{c,\Ql{}}}
\newcommand{\Mv}[0]{\ccM^{\vee}}
\newcommand{\rl}[0]{\on{r}^{\ell}}
\newcommand{\chern}[0]{\ccC\!\on{h}^{\ell}}
\newcommand{\Sw}[0]{\on{Sw}}
\newcommand{\dimtot}[0]{\on{dimtot}}
\newcommand{\sw}[0]{\on{sw}_{\uL\!/\!\uK}}
\newcommand{\ar}[0]{\on{ar}_{\uL\!/\!\uK}}
\newcommand{\nuIquot}{\nu^{\IL}\big/\nu^{\IK}}
\nc{\arcat}{\on{ar}_{\uL\!/\!\uK}^{\on{cat}}}
\nc{\idcat}{\id^{\cat}}
\nc{\Bl}{\on{Bl}} 
\nc{\Ar}{\on{Ar}} 
\newcommand{\usotimes}{\otimes}
\newcommand{\upast}{
\begin{tikzpicture}[baseline=-1.1mm, scale =0.12]
\node (A) at (0,0) {$\ast$};
\end{tikzpicture}
}
\newcommand{\boxast}[1]{
\: \rlap{\hskip -0.65mm $\upast$} \square_{#1} \:
}
\nc{\hB}{\on{h}_{\sB}}
\nc{\hC}{\on{h}_{\sC}}
\nc{\restr}[2]{\left. #1 \right |_{#2}}
\nc{\wideprime}[1]{#1'}
\renewcommand{\setminus}{\smallsetminus}
\renewcommand{\sim}{\simeq}
\title{Proof of the Deligne--Milnor conjecture}
\author{Dario Beraldo \and Massimo Pippi}
\date{\today}
\address[D.~Beraldo]{Department of Mathematics, University College London, London WC1H 0AY, United Kingdom}
\email{d.beraldo@ucl.ac.uk}
\address[M.~Pippi]{Univ Angers, CNRS-UMR 6093, LAREMA, SFR MATHSTIC, F-49000 Angers,France}
\email{massimo.pippi@univ-angers.fr}
\begin{document}

\begin{abstract}
We apply methods of derived and non-commutative algebraic geometry to understand ramification phenomena on arithmetic schemes.
As an application, we prove the Deligne--Milnor conjecture and, in the pure characteristic case, a generalization of Bloch conductor conjecture.
\end{abstract}

\maketitle

\tableofcontents

\section{Introduction}\label{sec: intro}

This paper is a contribution to the study of \emph{ramification theory on arithmetic schemes} by means of \emph{derived} and \emph{non-commutative} algebraic geometry.
In particular, we give a non-commutative interpretation of the \emph{total dimension} of the sheaf of vanishing cycles of an arithmetic scheme.

In the companion paper \cite{beraldopippi22}, we give a non-commutative interpretation of intersection-theoretic invariants of arithmetic schemes.
As we explain below, the combination of these two points of view has concrete applications.

\ssec{Overview}

The primary goal of this paper is to prove the Deligne--Milnor formula, conjectured by Deligne in \cite[Expos\'e XVI]{sga7ii}, in its full generality.

\sssec{}

For $S$ the spectrum of a DVR (discrete valuation ring), the conjecture calls for an equality of numbers associated to an $S$-scheme $U$ with an isolated singularity.
These numbers reflect the algebraic, topological and arithmetic information encoded by $U \to S$.

\sssec{}

In \cite[Expos\'e XVI]{sga7ii}, Deligne proved the Deligne--Milnor formula in the pure-characteristic case, hence the interest of the present paper lies mainly in the mixed characteristic situation.
Nevertheless, our proof also works in pure characteristic without modifications: 
this fact is, in our opinion, an important feature of our methods. 
Such methods, which are totally different from those of Deligne, rely heavily on ideas of \TV{} and are \virg{non-commutative} in nature:
even though the Deligne--Milnor formula is an equality of numbers, the main players of this paper are certain categories of sheaves, and the formula will then be obtained from them by performing a two-step decategorification (roughly speaking: from categories to vector spaces, and from vector spaces to numbers).

\sssec{} 

This strategy gets us somewhat close to the full Bloch conductor conjecture (see \cite{bloch87}), which is known to imply the Deligne--Milnor conjecture (\cite{orgogozo03}). This is currently work in progress and will be the topic of another work. 
However, what we do get in this paper is a proof of \emph{a generalization of Bloch conductor conjecture} in the case of pure characteristic.

\sssec{}

To summarize, in the present paper we prove the following two main results:

\begin{enumerate}

\item
A generalization of Bloch conductor conjecture in the pure characteristic case (Theorem \ref{mainthm: pure char}). 

\item
The Deligne--Milnor conjecture in full generality.
In fact, we manage to prove Bloch conductor conjecture in several new cases (Theorem \ref{mainthm: hypersurf}).

\end{enumerate}

Both results depend (logically for the former, and only morally for the latter) on a categorical version of Bloch conductor conjecture, which we state as Theorem \ref{thm: categorical Bloch}.
Even though this categorical version appears quite abstract, we believe it provides a truly conceptual explanation as to why the conductor formula conjectured by Bloch holds.
This categorical point of view is due to \TV, who stated and proved a categorical version of Bloch's formula in a particular case\footnote{They assume that the action of the inertia group on the $\ell$-adic cohomology of the geometric generic fiber is unipotent.}.

\sssec{}

Here is an even shorter summary of the contents of this paper. 
If we perceive the proof of Bloch conductor conjecture as a two-step decategorification, we have full control of the first decategorification and the second one yields the categorical generalized Bloch conductor formula.
This formula, however, is not a-priori an equality of numbers, but an equality of classes in a cohomology group we have not sufficient control of.
In the pure characteristic and hypersurface cases this issue can be bypassed.
This is how we obtain our results (1) and (2) above.

\ssec{Some basic notation}

\sssec{}

Let $v_{\up K}: \up K\to \bbZ \cup \{\oo\}$ be a complete discrete valuation field with ring of integers $\scrO_{\up K}$. 
We fix once and for all a uniformizer $\pi_{\up K}\in \scrO_{\up K}$ and denote by $k=\scrO_{\up K}/\langle \pi_{\up K} \rangle$ the residue field, which we assume algebraically closed of characteristic $p \geq 0$. 
Let $\uKs$ be a separable closure of $\uK$ and $\uI_{\uK}=\Gal(\uKs/\uK)$ the absolute Galois group of $\uK$. 
Recall the exact sequence of profinite groups
$$
1\to \uP_{\uK} \to \uI_{\uK} \to \uI_{\uK}^{\up t} \to 1.
$$
The profinite group $\uP_{\uK}$ is a pro-$p$-group (it is trivial if $p=0$) and is called the \emph{wild inertia group}, while the quotient $\uI_{\uK}^{\up t}$ is called the \emph{tame inertia group}.

\sssec{}

We will use the following notation:
$$
 s=\Spec{k} \xrightarrow{i_S} S=\Spec{\scrO_{\uK}} \xleftarrow{j_S}\eta=\Spec{\uK} \leftarrow \bar{\eta}=\Spec{\uKs}.
$$
For an $S$-scheme $Y$, we add subscripts to denote the relevant fiber products: $Y_s := s\x{S} Y$, and similarly for $Y_{\eta}$ and $Y_{\bar{\eta}}$. 
As usual, we call $Y_s$ the special fiber, $Y_\eta$ the generic fiber and $Y_{\bar{\eta}}$ the geometric generic fiber.

\sssec{}

Let $\uK \subseteq \uL (\subseteq \uKs)$ be a finite Galois extension giving rise to a totally ramified extension $\scrO_{\uK}\subseteq \scrO_{\uL}$. 
By \cite[Chapter 1, Proposition 18]{serre79}, we know that
$$
  \scrO_{\uL}\simeq \scrO_{\uK}[x]/ \langle E(x) \rangle
$$
for a suitable Eisenstein polynomial $E(x) \in \scrO_{\uK}[x]$. In this case, the element $\pi_{\uL}:= x \in \scrO_{\uL}$ is a uniformizer and $\pi_{\uK}$, when viewed as an element of $\scrO_{\uL}$, can be written as $\pi_{\uK}=u\cdot \pi_{\uL}^{e}$ for some $u \in \scrO_{\uL}^*$ and $e=\deg(E(x))$.
We set $S' := \Spec{\scrO_{\uL}} $ and often use the notation $Y' := Y \times_S S'$.
To avoid cumbersome notation, we set $A=\scrO_{\uK}$ and $A'=\scrO_{\uL}$.

\sssec{}

Let $\uI_{\uL}$ be the Galois group $\Gal(\uKs/\uL)$. This is an open normal subgroup of $\uI_{\uK}$, with finite quotient $\GLK:= \uI_{\uK}\big/\uI_{\uL}$.
We denote $\uP_{\uL}$ and $\uI_{\uL}^{\up{t}}$ the wild inertia subgroup of $\uI_{\uL}$ and its tame quotient, respectively. 
We will denote the quotient $\uP_{\uK}\big/\uP_{\uL}$ by $\PLK$: this is the $p$-Sylow subgroup of $\GLK$.

\ssec{The Deligne--Milnor conjecture}

\sssec{}

Let $q:U \to S$ be a flat regular $S$-scheme purely of relative dimension $n$, which we assume to be smooth outside an isolated singularity $x \in U_s$.

\sssec{}

The \emph{Deligne--Milnor number} is defined as
$$
  \mu_{U/S}= \lg_{\ccO_{U,x}} \Bigl ( \ul{\Ext}_U^1(\Omega^1_{U/S},\ccO_{U})_x \Bigr ). 
$$

\sssec{}

We fix once and for all a prime number $\ell$, different from the residue characteristic of $S$.
Recall that the $\oo$-functor of \emph{vanishing cycles}
$$
  \Phi_q: \Shvcon(U) \rightarrow \Shvcon^{\uI_{\uK}}(U_s)
$$
associates a (constructible) continuous $\ell$-adic $\uI_{\uK}$-representation to any (constructible) $\ell$-adic sheaf on $U$.
See \cite{sga7i,sga7ii, deligne77}.

\sssec{}

With our standing hypothesis, $\Phi_q(\Ql{,U})$ can be identified with a skyscraper sheaf at $x\in U_s$ placed in cohomological degree $n$. 
From now on we will adopt the following notation:
$$
  \Phi
  :=
  \Phi_q(\Ql{,U}).
$$
Since $\Phi$ is a continuous $\ell$-adic $\uI_{\uK}$-representation, the wild inertia subgroup acts through a finite quotient.
Therefore, there exists a finite Galois extension $\uK \subseteq \uL$ such that $\uP_{\uL}$ acts trivially on $\Phi$.
In particular, we get an induced action of $\PLK$ on $\Phi$. 

\sssec{}

Recall that the \emph{Swan character} is defined as
$$
  \sw: \GLK \to \bbZ
$$
$$
  g \mapsto 
  \begin{cases}
  \lg_{A}(\Omega^1_{A'/A}) - |\GLK| + 1, & \text{ if } g=\id;\\
  -\lg_{A}\bigt{A'/\langle \frac{g(\pi_{\uL})}{\pi_{\uL}}-1\rangle}, & \text{ if } g\neq \id.
  \end{cases}
$$
According to \cite{swan63}, this function is the character of an $\ell$-adic representation, known as the \emph{Swan representation}. It can take nonzero values only on the subgroup $\PLK$:
$$
  \sw(g)=0, \hspace{0.4cm} {\text{for } } g \in \GLK \setminus \PLK.
$$

\sssec{}
The \emph{wild dimension} of $\Phi$, also known as the \emph{Swan conductor}, is defined as
\begin{align*}
  \Sw(\Phi) & := \frac{1}{|\GLK|} \sum_{g\in \GLK}\sw(g)\cdot \Tr_{\Qell{}}(g:\Phi)\\
                            & = \frac{1}{|\GLK|} \sum_{g\in \PLK} \sw(g) \cdot \Tr_{\Qell{}}(g:\Phi).
\end{align*}
This is a well-defined integer that vanishes if and only if the action of $\uI_{\uK}$ on $\Phi_q$ is tame. 

\begin{rmk}\label{rmk: Sw always well defined}
The group $\GLK$ \emph{does not} act on $\Phi$ in general and yet the definition of the Swan conductor is well-posed: the trace $\Tr_{\Qell}(g:\Phi)$ is not defined for $g\notin \PLK$, but $\sw(g)=0$ in this case and so the meaning of the notation $\sw(g)\cdot \Tr_{\Qell}(g:\Phi)$ is just $\virg{0}$.
\end{rmk}

\sssec{}

Denote by $\chi (\Phi)$ the $\Qell$-adic Euler characteristic of $\Phi$ and define the \emph{total dimension} of $\Phi$ as
$$
  \dimtot (\Phi) := 
  \chi(\Phi) + \Sw (\Phi).
$$
The following formula was proposed by Deligne.
\begin{unconj}[Deligne--Milnor formula, \cite{sga7ii}]
With the above notation, we have:
$$
  \mu_{U/S}= (-1)^n \dimtot (\Phi).
$$
\end{unconj}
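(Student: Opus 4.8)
\emph{Overall strategy.}
The plan is to view the formula as the numerical shadow of a categorical identity, decategorified in two steps. To $q\colon U\to S$ I would attach the $\oo$-category of singularities $\Sing(U_s)$; since $U$ is regular and $U_s$ is the zero divisor of the function $\pi_\uK\colon U\to\bbA^1_k$, Orlov's theorem identifies it with the matrix-factorization category $\MF(U,\pi_\uK)\simeq\Coh(U_s)/\Perf(U_s)$, and base change along $S'\to S$ upgrades this to a dg-category carrying an action of $\GLK$ — morally $\MF(U',\pi_\uL)$, which sees the $\GLK$-action even though $U'$ need no longer be regular. The isolated-singularity hypothesis forces this sheaf of dg-categories on $U_s$ to be supported at the single point $x$; in particular its $\ell$-adic realization is a dualizable $\Qell$-linear dg-category. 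Everything below is about decategorifying this one object in two ways.

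\emph{First decategorification: $\ell$-adic realization.}
I would apply the $\ell$-adic Chern character $\chern$ of Blanc--Robalo--\TV{} to $\MF(U',\pi_\uL)$ with its $\GLK$-equivariant structure. The basic input is that this realization recovers the vanishing cycles: $\chern\big(\MF(U,\pi_\uK)\big)$ is the inertia-invariant incarnation of $\Phi$, placed in cohomological degree $n$ and suitably Tate-twisted, while the $\uL$-twisted $\chern\big(\MF(U',\pi_\uL)\big)$ carries a genuine action of $\GLK$ restricting, on $\PLK$, to the wild part of the inertia action on $\Phi$; over this identification the trace of $g\in\PLK$ on the realization is $\Tr_{\Qell}(g\colon\Phi)$. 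Establishing this identification in the presence of \emph{wild} ramification — i.e.\ removing the unipotent-monodromy hypothesis under which \TV{} worked — is the principal technical task.

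\emph{Second decategorification and the categorical Bloch formula.}
Passing from the realization and its equivariant structure to a \emph{class} — pairing against the Swan character exactly as in the definitions of $\Sw(\Phi)$ and $\dimtot(\Phi)$, and using Remark~\ref{rmk: Sw always well defined} — one obtains the categorical Bloch conductor formula (Theorem~\ref{thm: categorical Bloch}): an identity, in an $\ell$-adic cohomology group supported on the singular locus of $U_s$, between the class coming from the bivariant Hochschild homology of $\MF(U,\pi_\uK)$ and the class built from $\Phi$ with its monodromy traces. On the Hochschild side, an HKR-type computation identifies $\HH_*\big(\MF(U,\pi_\uK)\big)$, locally at $x$, with a Koszul-type complex built from $\Omega^1_{U/S}$ whose Euler characteristic is $\lg_{\ccO_{U,x}}\big(\ul{\Ext}_U^1(\Omega^1_{U/S},\ccO_U)_x\big)=\mu_{U/S}$ (turning this into the intersection-theoretic description of $\mu_{U/S}$ is the content of the companion paper \cite{beraldopippi22}); the sign $(-1)^n$ is the contribution of the degree $n$ in which $\Phi$, hence $\chern(\MF)$, sits. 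Granting the categorical Bloch formula, the two sides of the Deligne--Milnor identity are thus forced to agree \emph{as cohomology classes}.

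\emph{From classes to numbers: the obstacle, and why it vanishes here.}
A priori the categorical Bloch formula is an equality of classes in a cohomology group over which one has no control of the degree map to $\Qell$, and so it becomes an equality of numbers only once one knows that group is $\Qell$; this is exactly why the full Bloch conductor conjecture does not follow. The isolated-singularity hypothesis delivers precisely what is needed: it forces the singular locus of $U_s$, on which everything is supported, to be the single point $x$, so the ambient group is already $\Qell$ and the categorical identity \emph{is} the numerical formula $\mu_{U/S}=(-1)^n\dimtot(\Phi)$. This is the mechanism behind the hypersurface case (Theorem~\ref{mainthm: hypersurf}); in pure characteristic (Theorem~\ref{mainthm: pure char}) the same obstruction is bypassed differently, by working over a field where the required trace and duality statements are available. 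Since no step uses $p=0$, one recovers in particular the pure-characteristic Deligne--Milnor formula of \cite{sga7ii}, consistently with its being implied — via \cite{orgogozo03} — by Bloch's conjecture.
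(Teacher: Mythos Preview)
Your high-level philosophy matches the paper's, but the decisive step --- passing from the categorical identity to numbers --- rests on a mechanism that does not work as you describe.

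The categorical Bloch formula (Theorem~\ref{thm: categorical Bloch}) lives in $\uH^0_{\et}\bigl(S,\rl_S(\HH_*(\sB/A))\bigr)$ with $\sB=\Sing(s\times_S s)$; this group depends \emph{only on $S$}, not on $U$ or the singular locus of $U_s$, so the isolated-singularity hypothesis cannot collapse it to $\Qell$. Whether it equals $\Qell$ is an open conjecture of \TV{}. The actual bypass behind Theorem~\ref{mainthm: hypersurf} uses the \emph{hypersurface} structure, not the isolated singularity: a presentation of $X$ as a divisor in a smooth $S$-scheme furnishes a derived thickening $d\delta_{X/S}$ of the diagonal, and pullback along it defines an integration map $\int_{X/S}\colon\rl_S\bigl(\Sing(X\times_S X)\bigr)\to\rl_S\bigl(\MF(S,0)_s\bigr)\simeq (i_S)_*\QellI(\beta)$, whose target is computable regardless of $X$. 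The proof then establishes (Proposition~\ref{prop:two commutative squares}) compatibility of $\int_{X/S}$ with $\int_{s/S}\simeq\id$ on one side and with $\int_{S'/S}$ on the other, and identifies the latter with the negative \emph{Artin} character (Proposition~\ref{prop: nc nature ar}) --- not the Swan character you invoke. The resulting identity is $\Bl(X/S)=-\Tr_{\QellI}(\id:\Phi^{\IK})-\Ar(\Phi)$, and the derived-invariants term $\Tr_{\QellI}(\id:\Phi^{\IK})$ is essential and absent from your outline.

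Your sketch also omits the paper's structural core: the Morita equivalence between $\sB$ and $\sC=\Sing(S'\times_S S')$ via the bimodule $\sE=\Sing(s')$, which yields $\HH_*(\sB/A)\simeq\HH_*(\sC/A)$ and the key splitting $\rl_S\bigl(\Sing(X\times_S X)\bigr)\simeq\bigl(\rl_S(\sT)\otimes_{\rl_S(\sB)}\rl_S(\sT)\bigr)\oplus\bigl(\rl_S(\sU)\otimes_{\rl_S(\sC)}\rl_S(\sU)\bigr)$. It is the $\sU$-summand --- through $\rl_S(\sC)\simeq\QellI(\beta)(\GLK)$ and the trace formula over a reduced group algebra (Appendix~\ref{appendix: traces over group algebras}) --- that produces the Artin-conductor contribution; your ``pairing against the Swan character'' is no substitute for this. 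Finally, the identification of the left-hand side with $\mu_{U/S}$ does not come from an HKR computation of $\HH_*(\MF)$, but from Kato--Saito's reformulation of Bloch's localized class via stable Tors of the diagonal, categorified in \cite{beraldopippi22}.
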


\begin{rmk}
This conjecture has been proved in the following cases:
\begin{itemize}
    \item when the relative dimension $n$ is $0$ in \cite[Exposé XVI]{sga7ii}. In fact, in this case the formula is equivalent to the \emph{conductor discriminant formula} of E.~Artin (\cite{artin31});
    \item when $x\in U_s$ is an ordinary quadratic singularity in \cite[Exposé XVI]{sga7ii}. This was proved using the Picard--Lefschetz formula in \'etale cohomology (\cite[Exposé XV]{sga7ii});
    \item when $S$ is of pure characteristic $p\geq 0$ in \cite[Exposé XVI]{sga7ii};
    \item when the relative dimension $n$ is $1$, by combining \cite{bloch87} with \cite{orgogozo03};
    \item when the action of $\uI_{\uK}$ on $\Phi$ is unipotent in \cite{beraldopippi22}.
\end{itemize}

\end{rmk}

\ssec{Bloch conductor conjecture}

In his seminal paper \cite{bloch87}, Bloch proposed a generalization of the Deligne--Milnor conjecture that we now recall.

\sssec{} 

Let us keep the notation of the previous subsection. 
Let $p:X\to S$ be a flat proper $S$-scheme. Suppose moreover that $X$ is regular and that $p_{\eta}:X_{\eta}\to \eta$ is smooth. Denote by $d$ the relative dimension of $X$ over $S$.

\sssec{} 

Bloch defined a \emph{localized Chern class} adapting the \emph{graph construction} of Mac-Pherson and Fulton (\cite{fulton98}).
This is an element 
$$
  c_{d+1,X_s}^{X}(\Omega^1_{X/S}) \in \CH_{d+1}(X_s\to X)
$$
in the bivariant Chow group of $X/S$. From this, one obtains a class
$$
  c_{d+1,X_s}^{X}(\Omega^1_{X/S})\cap [X] \in \CH_0(X_s),
$$
which yields a \emph{localized intersection number} upon taking the degree:
$$
  (\Delta_X,\Delta_X)_S:=(-1)^{d+1} \on{deg} \Bigt{ c_{d+1,X_s}^{X}(\Omega^1_{X/S})\cap [X]}  \in \CH_0(s)\simeq \bbZ.
$$
This number is called \emph{Bloch intersection number} and, in the sequel, it will be often denoted by $\Bl(X/S)$.

\begin{rmk}
The class $c_{d+1,X_s}^{X}(\Omega^1_{X/S})\cap [X]$ is supported on the singular locus of $p:X\to S$. Hence, $\Bl(X/S)$ is well-defined with the weaker assumption that the singular locus of $p:X\to S$ is proper.
\end{rmk}

\begin{rmk}
Kato--Saito reinterpreted $\Bl(X/S)$ in terms of the stable Tors of the diagonal $\delta_{X/S}: X \to X\times_ S X$.
In \cite{beraldopippi22}, we used this reformulation to categorify $\Bl(X/S)$: we expressed it using matrix factorizations.
\end{rmk}

\sssec{}

The number $\Bl(X/S) = (\Delta_X,\Delta_X)_S$ will appear on the left-hand side of Bloch conductor formula. To formulate the right-hand side, we need the following notation:
\begin{itemize}
\item 
the $\Qell$-adic Euler characteristic of the special fiber
$$
  \chi(X_s;\Qell) := \Tr_{\Qell}\bigl (\id:\uH^*_{\et}(X_s,\Qell)\bigr ) 
  := \sum_{j\in \bbZ}(-1)^j  \dim_{\Qell}\bigl ( \uH^j_{\et}(X_s,\Qell)\bigr ); 
$$
\item 
the $\Qell$-adic Euler characteristic of the geometric generic fiber
$$
  \chi(X_{\bar{\eta}};\Qell)
  :=
  \Tr_{\Qell}\bigl (\id:\uH^*_{\et}(X_{\bar{\eta}},\Qell)\bigr ) 
  := \sum_{j\in \bbZ}(-1)^j \dim_{\Qell}\bigl ( \uH^j_{\et}(X_{\bar{\eta}},\Qell)\bigr ); 
$$
\item for a finite Galois extension $\uK \subseteq \uL (\subseteq \uKs)$ with the property that the wild inertia group $\uP_{\uL}$ acts trivially on $\uH(X_{\bar{\eta}},\Ql{,X_{\bar{\eta}}})$ (this extension always exists, see \cite{sga7i}), the Swan conductor of the geometric fiber is defined as
\begin{align*}
     \Sw(X_{\eta}/\eta;\Qell) & = \frac{1}{|\GLK|} \sum_{g\in \GLK}\sw(g) \Tr_{\Qell{}}\bigl (g:\uH^*_{\et}(X_{\bar{\eta}},\Qell) \bigr ) \\
                            & = \frac{1}{|\GLK|}\sum_{g\in \GLK}\sw(g) \Bigt{ \sum_{j\in \bbZ}(-1)^{j} \Tr_{\Qell{}}\bigl (g:\uH^j_{\et}(X_{\bar{\eta}},\Qell) \bigr )}.
\end{align*}
For the same reason as in Remark \ref{rmk: Sw always well defined}, this is well defined even if $\GLK$ does not act on $\uH_{\et}^*(X_{\bareta},\Qell)$.
With a similar formula, one defines the Swan conductor $\Sw(V)$ of any finite dimensional $\ell$-adic $\IK$-representation $V$.

\item 
finally, the total dimension of $X/S$ is defined as
$$
\dimtot(X/S; \Qell)
:=
 \chi(X_{\bar{\eta}};\Qell) 
-
\chi(X_s;\Qell)
+
\Sw(X_{\eta}/\eta;\Qell)
=
\chi(\Phi)+\Sw(\Phi),
$$
where $\Phi:= \uH^*_{\et}\bigt{X_s, \Phi_p(\Ql{,X})}$.
With a similar formula, one defines the total dimension of any finite-dimensional $\ell$-adic $\IK$-representation $V$.
\end{itemize}

\sssec{}

In \cite{bloch87}, Bloch proposed the following
\begin{unconj}[Bloch conductor formula]
With the same notation as above,
$$
  \Bl(X/S)
  = 
  - \dimtot(X/S;\Qell).
$$
\end{unconj}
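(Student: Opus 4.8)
The plan is to prove the formula by a \virg{two-step decategorification}: lift both sides to invariants of a single dg-category, compare them there, then realize $\ell$-adically and finally take traces to land in $\bbZ$. For the left-hand side I would use the reinterpretation of $\Bl(X/S)$ via the stable Tor of the diagonal $\delta_{X/S}\colon X\to X\times_S X$ (recalled above, after Kato--Saito) together with the matrix-factorization model of the companion paper \cite{beraldopippi22}, so as to present $\Bl(X/S)$ as a non-commutative Euler class — the categorical trace of the identity endofunctor — of a suitable dg-category of singularities $\Sing(X/S)$ attached to $p\colon X\to S$ and the uniformizer $\pi_{\uK}$. This category is smooth and proper, hence self-dual up to a shift, and it acquires, through its monodromy/$S^1$-action, a residual action of the inertia group $\IK$ after $\ell$-adic realization.

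For the right-hand side I would invoke the Blanc--Robalo--\TV{} comparison between singularity categories and vanishing cycles: the $\ell$-adic realization of $\Sing(X/S)$ is a twist of the inertia-equivariant vanishing cohomology $\Phi = \uH^*_{\et}\bigt{X_s,\Phi_p(\Ql{,X})}$, with matching monodromy. Under this identification, the categorical analogue of the total dimension — a combination of the $\ell$-adic Euler characteristic with a \emph{categorical Swan conductor} intrinsic to $\Sing(X/S)$ — should recover $\dimtot(X/S;\Qell) = \chi(\Phi)+\Sw(\Phi)$. Making this precise is essentially a non-commutative Grothendieck--Ogg--Shafarevich statement that equates the categorical Swan invariant with the classical $\Sw(\Phi)$.

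The next step is the categorical Bloch conductor formula, Theorem \ref{thm: categorical Bloch}: at the level of dg-categories, the invariant computing $\Bl(X/S)$ and the one computing $-\dimtot(X/S;\Qell)$ coincide. This is the conceptual heart of the proof and should follow from the self-duality of $\Sing(X/S)$, the additivity of categorical traces along the deformation to the normal cone of $\delta_{X/S}$, and the compatibility of all of this with the inertia action. Applying the monoidal $\ell$-adic realization functor — which carries categorical traces to $\ell$-adic traces, is compatible with the $\IK$-action, and is under full control — one then obtains an equality of the corresponding classes in an $\ell$-adic cohomology group carrying an action of $\IK$. This completes what I would call the first decategorification.

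The main obstacle is the second decategorification: passing from this equality of classes to the numerical identity $\Bl(X/S) = -\dimtot(X/S;\Qell)$ by applying the relevant trace/dimension map with values in $\bbZ$. A priori the generalized categorical Bloch formula is an equality of classes in a cohomology group of which one does not have sufficient control, so reading off the two numbers — in particular identifying the \emph{categorical} Swan conductor with the representation-theoretic $\Sw(\Phi)$ — is not automatic. In the pure-characteristic case this obstruction can be bypassed, giving the generalization stated in Theorem \ref{mainthm: pure char}; in the hypersurface case an explicit analysis of the vanishing cohomology achieves the same, giving Theorem \ref{mainthm: hypersurf} and, in particular, the Deligne--Milnor formula for an isolated singularity. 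Establishing the displayed formula in full generality would require controlling this target group unconditionally, which is the object of work in progress.
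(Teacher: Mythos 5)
First, note that the displayed statement is the full Bloch conductor conjecture, which the paper itself does not prove: it establishes the categorical version (Theorem \ref{thm: categorical Bloch}) and then the numerical formula only in pure characteristic (Theorem \ref{mainthm: pure char}) and in the hypersurface case (Theorem \ref{mainthm: hypersurf}). You correctly flag this at the end, and your high-level architecture --- two-step decategorification, a categorical Bloch formula as the conceptual core, and a bypass of the uncontrolled group $\uH^0_{\et}\bigt{S,\rl_S(\HH_*(\sB/A))}$ in the two tractable cases --- matches the paper's. So the overall shape of your plan is right.

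However, there is a genuine gap at the heart of your sketch: you posit a single self-dual dg-category $\Sing(X/S)$ carrying an intrinsic \virg{categorical Swan conductor}, to be matched with $\Sw(\Phi)$ by a \virg{non-commutative Grothendieck--Ogg--Shafarevich} statement, and you invoke \virg{additivity of traces along the deformation to the normal cone} to prove the categorical formula. Neither of these is a mechanism; the first is precisely the hard point left undefined, and the second is not how the argument goes. The paper's actual engine is two-sided: $\sT=\Sing(X_s)$ is a saturated module over $\sB=\Sing(s\x{S}s)$, while $\sU=\Sing(X')$ (for a sufficiently ramified $S'\to S$) is a module over $\sC=\Sing(S'\x{S}S')$, and the bimodule $\sE=\Sing(s')$ realizes a Morita equivalence $\sB\simeq_{\on{Mor}}\sC$ with $\HH_*(\sB/A)\simeq\HH_*(\sC/A)$. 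The decisive computation is the split fiber-cofiber sequence
$$
  \rl_S(\sT)\otimes_{\rl_S(\sB)}\rl_S(\sT)
  \to
  \rl_S\bigt{\Sing(X\x{S}X)}
  \to
  \rl_S(\sU)\otimes_{\rl_S(\sC)}\rl_S(\sU),
$$
which decomposes the class of the diagonal into a $\sB$-side trace equal to $-\Tr_{\Qell^{\IK}}(\id:\Phi^{\IK})$ and a $\sC$-side trace living in $\HH_0\bigt{\Qell(\GLK)/\Qell}$. The wild ramification does not appear as a Swan-type invariant of one category: it appears because $\rl_S(\sC)\simeq\Qell^{\IK}(\beta)(\GLK)$ is a (reduced) group algebra, the trace of the identity on $\rl_S(\sU)$ is the class $\frac{1}{|\GLK|}\sum_g\Tr(g^{-1}:\Phi^{\IL})\langle g\rangle$, and the Artin character (not the Swan character --- the paper uses the decomposition $\dimtot(\Phi)=\Ar(\Phi)+\Tr_{\Qell^{\IK}}(\id:\Phi^{\IK})$) is realized geometrically as the pushforward along the derived diagonal of $S'/S$ into $\MF(S,0)_s$. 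Without this splitting and the identification of $\int_{S'/S}$ with $-\ar$, your plan has no way to produce the conductor term, and the step you label \virg{the conceptual heart} remains an assertion rather than a proof.
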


\begin{rmk}
This conjecture has been proved in the following cases:
\begin{itemize}
    \item the case $d=1$ was proven by Bloch in \cite{bloch87};
    \item the case in pure characteristic zero follows from the work of Kapranov, see \cite{kapranov};
    \item when the reduced special fiber $(X_s)_{\on{red}}$ is a simple normal crossing divisor, the conjecture was proven by Kato and Saito in \cite{katosaito04};
    \item the pure-characteristic case was proven by Saito in \cite{saito21}.
    \item the unipotent case, under the additional hypothesis that $X$ is a hypersurface in a smooth $S$-scheme, has been proven in \cite{beraldopippi22}.
\end{itemize}
\end{rmk}

\sssec{}

In \cite{orgogozo03}, Orgogozo suggests that a similar formula ought to hold when only the singular locus of $X\to S$ (and not necessarily $X$ itself) is proper over $S$. 
In the same paper, he proves that the Deligne--Milnor formula is a particular case of Bloch conductor formula. For future reference, we record this statement here, where as before
$$
  \Phi:= \uH^*_{\et}\bigt{X_s,\Phi_p(\Ql{,X})}
  \simeq
  \uH^*_{\et,c}\bigt{X_s,\Phi_p(\Ql{,X})}.
$$

\begin{unconj}[Generalized Bloch conductor formula]
Let $S$ be as above.
Assume that $p:X\to S$ is a flat, geometrically connected, generically smooth regular $S$-scheme whose singular locus is proper over $S$. Then the following formula holds:
$$
  \Bl(X/S)
  =
  -\dimtot (\Phi).
$$

\end{unconj}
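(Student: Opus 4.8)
The plan is to reduce the assertion to the Bloch conductor formula for \emph{proper} regular generically smooth $S$-schemes --- itself known in several important cases (\cite{bloch87,katosaito04,saito21}) and a central theme of this paper and of \cite{beraldopippi22} --- by exploiting the fact that \emph{both} sides of the claimed equality depend only on an arbitrarily small neighborhood of the locus $Z\subseteq X_s$ where $p$ fails to be smooth (recall $p_\eta$ is smooth, so $Z\subseteq X_s$, and by hypothesis $Z$ is proper over $s$). On the right-hand side this is because the vanishing cycles complex $\Phi_p(\Ql{,X})$ is acyclic over the smooth locus of $p$; this is also exactly the reason for the identification $\Phi=\uH^*_{\et}(X_s,\Phi_p(\Ql{,X}))\simeq\uH^*_{\et,c}(X_s,\Phi_p(\Ql{,X}))$. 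Hence $\Phi$, with its $\IK$-action --- and therefore $\chi(\Phi)$ and $\Sw(\Phi)$ --- is computed on any \'etale neighborhood of $Z$. On the left-hand side, the localized Chern class $c^{X}_{d+1,X_s}(\Omega^1_{X/S})$ is, by the very graph construction of Mac-Pherson and Fulton, supported on $Z$ --- which is precisely what makes $\Bl(X/S)$ well-posed once $Z$ is proper --- and in the Kato--Saito reformulation via the stable $\Tor$ of the diagonal $\delta_{X/S}$ the relevant class lives in $\CH_0(Z)$. Thus $\Bl(X/S)$ depends only on the restriction of $\Omega^1_{X/S}$ to a neighborhood of $Z$.

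Granting this locality, the first step is to construct a \emph{proper model}: a proper, regular, geometrically connected, generically smooth $S$-scheme $\ol X\to S$ together with an open $V\subseteq\ol X$ and an $S$-isomorphism $V\simeq W$ onto an open neighborhood $W$ of $Z$ in $X$, such that the non-smooth locus of $\ol X\to S$ is again exactly $Z$. One starts from a Nagata compactification $X\hto X^{\mathrm c}$ over $S$ (with $X$ taken dense, so that $\ol X$, being connected, regular and dominant over $S$, will automatically be integral and flat over $S$); the difficulty is that $X^{\mathrm c}$ may fail to be regular, and $X^{\mathrm c}_\eta$ to be smooth, away from $Z$. One then replaces $X^{\mathrm c}$ by a modification that is an isomorphism over $W$ and is regular with smooth generic fiber. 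The second step is then formal: the Bloch conductor formula applied to $\ol X/S$ yields $\Bl(\ol X/S)=-\dimtot(\ol\Phi)$ with $\ol\Phi:=\uH^*_{\et}(\ol X_s,\Phi_{\ol p}(\Ql{,\ol X}))$, and the locality just discussed --- vanishing cycles and localized Chern classes being henselian-local and supported on $Z\subseteq W$ --- identifies $\Bl(\ol X/S)=\Bl(X/S)$ and $\ol\Phi\simeq\Phi$ as $\IK$-representations, whence $\dimtot(\ol\Phi)=\dimtot(\Phi)$ and the formula follows.

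The main obstacle is precisely the construction of the proper model: one needs a compactification that is simultaneously regular and generically smooth while left untouched near $Z$. In equal characteristic this is embedded resolution of singularities (Hironaka) applied away from $Z$ --- the route taken in Deligne's original argument and in Orgogozo's reduction \cite{orgogozo03} of the Deligne--Milnor formula to the Bloch conductor formula. In mixed characteristic resolution is unavailable, and one argues differently: via Artin approximation one realizes each completed local ring $\widehat{\ccO}_{X,z}$, $z\in Z$, as that of a hypersurface (more generally, a local complete intersection) inside a smooth $S$-scheme, spreads this out to a projective $S$-model, and controls the behavior at infinity by a suitable choice of defining equation, reducing any residual discrepancy by d\'evissage to Swan conductors of representations of strictly smaller dimension or with tamer ramification. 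Alternatively --- and more in the spirit of the present paper --- one bypasses compactification altogether: the categorical Bloch conductor formula (Theorem \ref{thm: categorical Bloch}) is phrased in terms of the dg-category of singularities attached to $X/S$, which is supported at $Z$ and hence insensitive to whether $X$ itself is proper --- properness of $Z$ being exactly what ensures the smoothness and properness (dualizability and finiteness) of this category needed for the two-step decategorification to run just as in the proper case. In either approach, what ultimately prevents upgrading the resulting identity to an equality of integers is the limited control one has over the $\ell$-adic cohomology group in which the categorical formula naturally takes values --- which is why this statement remains conjectural in general and is proved here only in the pure-characteristic and hypersurface settings.
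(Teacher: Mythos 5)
Your proposal does not establish the statement, and indeed it cannot as written: the Generalized Bloch conductor formula is stated in the paper as a \emph{conjecture}, and the paper itself only proves it in the pure-characteristic case (Theorem \ref{mainthm: pure char}) and in the hypersurface case (Theorem \ref{mainthm: hypersurf}), in both cases by a route entirely different from yours. The central gap in your argument is that the reduction to a proper model is circular in the generality you need: even granting the existence of a proper, regular, generically smooth compactification $\ol X\to S$ which is an isomorphism near $Z$, you then invoke ``the Bloch conductor formula applied to $\ol X/S$'' --- but that formula is itself open in mixed characteristic outside the cases $d=1$ (Bloch), snc reduction (Kato--Saito), and the unipotent/hypersurface cases of \cite{beraldopippi22} and of this paper. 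So at best your reduction transfers the problem from one open conjecture to another, and to make it yield anything you would additionally have to arrange the compactification to land inside one of the known classes, which you do not address. The second gap is the construction of $\ol X$ itself: in mixed characteristic there is no resolution of singularities, and the paragraph invoking Artin approximation, spreading out, and ``controlling the behavior at infinity by a suitable choice of defining equation'' is a program, not a proof --- no mechanism is given for ensuring that the modification of $X^{\mathrm c}$ away from $W$ is simultaneously regular, flat, and smooth over $\eta$. Your own closing sentence concedes that the statement ``remains conjectural in general,'' which is an accurate self-assessment but confirms that no proof has been produced.

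The locality observations in your first paragraph are correct and are indeed implicit in the paper (both $\Bl(X/S)$ and $\dimtot(\Phi)$ are supported on $Z$), and your third paragraph correctly identifies the paper's actual strategy: work directly with the dg-categories $\sT=\Sing(X_s)$ and $\sU=\Sing(X')$ over the Morita-equivalent monoidal dg-categories $\sB$ and $\sC$, split the $\ell$-adic realization of $\Sing(X\times_S X)$ into the two trace terms, and decategorify. But you only gesture at this; the substance of the paper's proofs of Theorems \ref{mainthm: pure char} and \ref{mainthm: hypersurf} --- the retraction $\HH_*(\sB/A)\to\sB$ available in pure characteristic, respectively the map $\int_{X/S}$ to $\rl_S\bigt{\MF(S,0)_s}$ in the hypersurface case, which are precisely what convert the categorical identity into an equality of numbers --- is absent. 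If your goal is the conjecture in the stated generality, neither your compactification route nor the paper's categorical route currently closes it; if your goal is one of the proved cases, you would need to supply the decategorification step rather than appeal to it.
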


\sssec{}

It is convenient to introduce the \emph{Artin conductor} of an $\ell$-adic $\IK$-representation as well.
Recall that the \emph{Artin character}
$$
  \ar: \GLK \to \bbZ
$$
of $\uK \subseteq \uL$ is defined by
$$
  g \mapsto 
  \begin{cases}
  \lg_{A}(\Omega^1_{A'/A}) & \text{ if } g=\id;\\
  -\lg_{A}\bigt{A'/\langle g(\pi_{\uL})-\pi_{\uL}\rangle} & \text{ if } g\neq \id.
  \end{cases}
$$
This function is the character of an $\ell$-adic representation, the \emph{Artin representation} (\cite{serre60}): in fact, the Artin representation is the sum of the Swan representation and the augmentation representation (the quotient of the regular representation by the trivial representations):
$$
  \sw 
  =
  \ar
  -
  \on{reg}_{\GLK}
  +
  \on{triv}_{\GLK}.
$$

\sssec{} 

Let $V$ be an object in $\Shvcon^\IK(s)$. 
Assume that $\IL$ acts unipotently on $V$, so that for every endomorphism $f:V \to V$ we have
$$
  \Tr_{\Qell}\bigt{f:V}
  =
  \Tr_{\Qell^{\IL}}\bigt{f:V^\IL},
$$
where the trace on the right hand side is computed in the stable symmetric monoidal $\oo$-category $\Mod_{\Qell^{\IL}}$.
We define the \emph{Artin conductor} of $V$ as
\begin{align}\label{eqn: def Ar}
  \Ar(V) & := \frac{1}{|\GLK|}\sum_{g\in \GLK}\ar(g) \Tr_{\Qell^{\IL}}(g:V^{\IL}) \\
  \nonumber
         & = \chi(V) + \Sw(V) - \Tr_{\Qell^{\IK}}(\id:V^{\IK}).
\end{align}
In particular,
\begin{equation}\label{eqn: dimtot = Ar + Tr_(Qell^I)(id:V^I)}
  \dimtot(V) = \Ar(V) + \Tr_{\Qell^{\IK}}(\id:V^{\IK}).
\end{equation}

\begin{rmk}
Our definition of the Artin conductor \emph{is different} from other definitions appearing in the literature, where it is defined as the number
$$
  \Sw(V)+\Tr_{\Qell}(\id:V)-\Tr_{\Qell}(\id:V^\IK),
$$
with $V^\IK$ standing for usual fixed points (as opposed to the derived fixed points).
\end{rmk}

\begin{rmk}
The definition of the Artin conductor as in \eqref{eqn: def Ar} involves the $\ell$-adic number $\Tr_{\Qell^{\IK}}(\id:V^\IK)$, which is inherently derived. This complication does not appear in the definition of the Swan conductor because, as explained in Remark \ref{rmk: Sw always well defined}, there only the action of $\PLK$ needs to be taken into account. On the contrary, for the Artin conductor, we need the entire $\GLK$, which does not act on $V$.
\end{rmk}

\sssec{}

With the definition of the Artin conductor at hand, we immediately obtain the following alternative expression for the generalized Bloch conductor formula:
\begin{equation}\label{eqn:Bloch-formula-with-Artin}
  \Bl(X/S)
  =
 - \Tr_{\Qell^\IK}\bigt{\id: \Phi^\IK}  -\Ar(\Phi) .    
\end{equation}
This is the shape of the formula that is best suited for our methods: for instance, compare this formula with that of Theorem \ref{thm: categorical Bloch}.

\ssec{Categorical Bloch conductor formula}

Let us now outline our methods, which naturally yield a categorical version of Bloch conductor conjecture. In Section \ref{ssec:intro-classical conjectures} below, we will explain how this categorical version relates to the classical version.

\sssec{} 

We tackle the generalized Bloch conductor conjecture (thus in particular the Deligne--Milnor conjecture) by working with categories of sheaves on some schemes attached to $X/S$. 
More precisely, we will work with differential graded categories (dg-categories, from now on) of \emph{singularities}, see Section \ref{ssec: singularity categories} for the definitions. 
The dg-category of singularities of a regular scheme is trivial, hence we must look for singular (that is, non-regular) schemes in our situation. 

\sssec{} 

One such singular scheme is the special fiber $X_s := s \times_S X$. 
Following \cite{toenvezzosi22}, we define the dg-category
$$
\sT:= \Sing(X_s)
$$
and observe that it is equipped with the convolution action of the monoidal dg-category
$$
\sB := \Sing(s \times_S s).
$$
Here $s \times_S s$ is the derived self-intersection of the closed point of $S$.
This is an object of \emph{derived algebraic geometry}: precisely, it is a quasi-smooth derived groupoid.\footnote{In the pure-characteristic case, $G$ is actually a derived group scheme, due to the presence of a retraction $s \hto S \to s$. This retraction partially accounts for the second categorification in the pure-characteristic being considerably easier than in the mixed-characteristic case.} 
An important result due to \TV{} is that $\sT$ is a dualizable $\sB$-module, see Corollary \ref{cor: T dualizable B module}.

\sssec{} 

Another singular scheme arising naturally is the base-change $X' := X \times_S S'$ for some sufficiently large totally ramified extension $S'\to S$, see Section \ref{ssec: the C module U}. 
Thus we consider
$$
\sU := \Sing(X')
$$
and, parallel to the case of $\sT$, we notice that $\sU$ is endowed with an action of the monoidal dg-category
$$
\sC := \Sing(S' \times_S S').
$$
This dg-category is monoidal as $S' \times_S S'$ is a groupoid over $S'$. Notice that this groupoid is classical since the map $S' \to S$ is flat.

\sssec{}

A key fact is that the monoidal dg-categories $\sB$ and $\sC$ are \emph{Morita equivalent}, that is, they have equivalent $\infty$-categories of modules (Corollary \ref{cor:Morita}). Parallel to the case of Morita-equivalent rings (which have isomorphic centers), the monoidal dg-categories $\sB$ and $\sC$ have equivalent \emph{Drinfeld centers} (or \emph{Hochschild cohomologies}). 
In Theorem \ref{thm:HH-are-equal}, we will show that $\sB$ and $\sC$ have also isomorphic \emph{Hochschild homologies}:
\begin{equation}\label{eqn:intro-HH=HH}
 \HH_*(\sB/A)
 \simeq
 \HH_*(\sC/A).    
\end{equation}

\sssec{}

The categorical Bloch conductor formula is an equality of classes in
\begin{equation} \label{eqn:intro-ambiente per catBCC}
\uH^0_{\et}\Bigt{S, \rl_S \bigt{\HH_*(\sB/A)}}.   
\end{equation}
Here $\rl_S$ is a decategorification procedure,
called $\ell$-adic realization: roughly speaking, it associates an $\ell$-adic sheaf to any $A$-linear dg-category.
By the above discussion, we can express $\rl_S \bigt{\HH_*(\sB/A)}$ alternatively as $\rl_S \bigt{\HH_*(\sC/A)}$.
To produce classes in \eqref{eqn:intro-ambiente per catBCC}, we will use the formalism of categorical traces.

\sssec{}

The dualizability of $\sT$ over $\sB$ allows us to take the \emph{categorical trace} of any $\sB$-linear endofunctor $F: \sT \to \sT$. By construction, this trace in an object
$$
\Tr_\sB(F : \sT)
\in 
\HH_*(\sB/A).
$$
We will be mostly interested in $\Tr_\sB(\id:\sT)$, the trace of the identity. Indeed, the \emph{categorical Bloch intersection class} is an element
$$
\Bl^{\cat}(X/S)
\in 
\uH^0_\et\bigt{S, \rl_S \bigt{\HH_*(\sB/A)}}
$$
defined purely in terms of the dg-functor
$\Tr_{\sB}(\id:\sT)$.
This construction is due to \TV{} (\cite{toenvezzosi22}); see Section \ref{sssec:cat Bloch class} for a quick summary.

\sssec{}

It turns out that the $\sB$-module $\sT$ and the $\sC$-module $\sU$ correspond to each other under the above Morita equivalence. We obtain that $\sU$ is a dualizable $\sC$-module (Proposition \ref{prop: U dualizable C module}) and that 
$$
  \Tr_{\sB}(\id:\sT)
  \simeq
  \Tr_{\sC}(\id:\sU).
$$
Observe that this formula makes sense thanks to the equivalence \eqref{eqn:intro-HH=HH}. In particular, we can alternatively express $\Bl^{\cat}(X/S)$ purely in terms of $\Tr_{\sC}(\id : \sU)$.

\sssec{} 

A fundamental feature of the $\infty$-functor $\rl_S$ is its \emph{lax-monoidality}. It follows formally that $\rl_S(\sB)$ is an algebra object that acts on $\rl_S(\sT)$. Likewise, $\rl_S(\sC)$ is an algebra object acting on $\rl_S(\sU)$.
In Section \ref{ssec: the r(B)-module r(T)}, we will show that $\rl_S(\sT)$ is a dualizable $\rl_S(\sB)$-module. Similarly, in Section \ref{ssec: the r(C)-module r(U)} we will show that $\rl_S(\sU)$ is a dualizable left $\rl_S(\sC)$-module. %
Hence, we can take the traces of the identity in the two cases:
$$
\Tr_{\rl_S(\sB)}\bigt{\id: \rl_S(\sT)}
\in 
\uH^0_\et\Bigt{
S,
\HH_*\bigt{\rl_S(\sB)/\rl_S(A)}
} 
$$
$$
\Tr_{\rl_S(\sC)}\bigt{\id: \rl_S(\sU)}
\in 
\uH^0_\et\Bigt{
S,
\HH_*\bigt{\rl_S(\sC)/\rl_S(A)}
}.
$$
By the lax-monoidal structure of $\rl_S$ we obtain arrows
$$
\mu_\sB: 
\HH_*\bigt{\rl_S(\sB)/\rl_S(A)}
\to 
\rl_S\bigt{
\HH_*(\sB/A)
} 
$$
$$
\mu_\sC: 
\HH_*\bigt{\rl_S(\sC)/\rl_S(A)}
\to 
\rl_S\bigt{
\HH_*(\sC/A)} .
$$

\sssec{}

We use $\mu_\sB$ and $\mu_{\sC}$ to view the two traces above as elements of $\uH^0_{\et}\bigt{S, \rl_S \bigt{\HH_*(\sB/A)}}$: we can thus compare them with each other and with the categorical Bloch intersection class. 
In Section \ref{sec: categorical BCC}, we will prove that
\begin{equation}\label{eqn:intro-Blcat as a sum, first formula}
\Bl^{\cat}(X/S)
=
\mu_{\sB}
\Bigt{
\Tr_{\rl_S(\sB)}\bigt{\id: \rl_S(\sT)}
}
+ 
\mu_{\sC}
\Bigt{
\Tr_{\rl_S(\sC)}\bigt{\id: \rl_S(\sU)}
}.
\end{equation}

\sssec{} 

We will actually show that these two traces define elements
\begin{equation}\label{eqn:intro-traces of rl}
\Tr_{\rl_S(\sB)}\bigt{\id: \rl_S(\sT)}
\in 
\Qell
\end{equation}
\begin{equation}
\Tr_{\rl_S(\sC)}\bigt{\id: \rl_S(\sU)}
\in 
\HH_0\bigt{\Qell(\GLK)/\Qell}
\end{equation}
and that we have canonical arrows
$$
\idcat: \Qell
\hto
\uH^0_{\et} \Bigt{S,\HH_*\bigt{\rl_S(\sB)/\rl_S(A)}}
\xto{\mu_{\sB}}
\uH^0_\et\Bigt{
S,
\rl_S\bigt{\HH_*(\sB/A)}
}, 
$$
$$
-\arcat:
\HH_0\bigt{\Qell(\GLK)/\Qell}
\hto 
\uH^0_{\et} \bigt{S,\bigt{\rl_S(\sC)/\rl_S(A)}}
\xto {\mu_{\sC}}
\uH^0_\et\Bigt{
S,
\rl_S\bigt{\HH_*(\sC/A)}
}.
$$
See Section \ref{ssec:intro-classical conjectures} below for the reason of the notations $\idcat$ and $\arcat$.

\sssec{}

By \cite[Theorem 4.39]{brtv18}, 
we know that the class $\mu_{\sB}\Bigt{\Tr_{\rl_S(\sB)}\bigt{\id: \rl_S(\sT)}}$ is equal to 
$$
  \idcat \Bigt{\Tr_{\rl_S(\sB)}\bigt{\id:\rl_S(\sT)}}
  =
  -\idcat \Bigt{\Tr_{\Qell^{\IK}}\bigt{\id:\Phi^\IK}}.
$$ 
We define the \emph{categorical Artin conductor} as the opposite of the class $\mu_{\sC}\Bigt{\Tr_{\rl_S(\sC)}\bigt{\id: \rl_S(\sU)}}$:
\begin{align*}
\Ar^{\cat}(\Phi) & :=
-\arcat \Bigt{
  \Tr_{\rl_S(\sC)}\bigt{\id:\rl_S(\sU)}
  }\\
  & =
  -\arcat \Bigt{
  \frac{1}{|\GLK|}\sum_{g\in \GLK} \Tr_{\Qell^\IL}(g^{-1}:\Phi^\IL) \cdot \langle g \rangle 
  }
  \in
  \uH^0_\et\Bigt{S, \rl_S\bigt{\HH_*(\sC/A)}}.
\end{align*}
The latter equality will be proved in Section \ref{sec: categorical BCC}.

\medskip

These two terms sum up to give the opposite of what we call \emph{categorical total dimension} of $X/S$, to be denoted $\dimtot^{\cat} (\Phi)$.

\begin{mainthm}[{Categorical generalized Bloch conductor formula}]\label{thm: categorical Bloch}
With the same hypotheses as in the statement of the generalized Bloch conductor conjecture, we have
\begin{align*}
\Bl^{\cat}(X/S) & = -\idcat \bigt{\Tr_{\Qell^\IK}(\id:\Phi^\IK)}- \Ar^{\cat}(\Phi)
\\
                & =: -\dimtot^{\cat} (\Phi).
\end{align*}
\end{mainthm}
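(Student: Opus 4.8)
The plan is to obtain the statement as a combination of the comparison formula \eqref{eqn:intro-Blcat as a sum, first formula}, the $\ell$-adic computation of \cite[Theorem 4.39]{brtv18}, and the definition of the categorical Artin conductor; since the substance lies in \eqref{eqn:intro-Blcat as a sum, first formula}, I also indicate how I would establish it. First I would recall that, by its construction following \TV{} (\cite{toenvezzosi22}), $\Bl^{\cat}(X/S)$ is the class in $\uH^0_\et\bigt{S,\rl_S(\HH_*(\sB/A))}$ determined via $\rl_S$ by $\Tr_{\sB}(\id:\sT) \in \HH_*(\sB/A)$, the trace of the identity of the dualizable $\sB$-module $\sT$ (Corollary \ref{cor: T dualizable B module}); thus everything reduces to computing the image of $\Tr_{\sB}(\id:\sT)$ under $\rl_S$. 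Using the Morita equivalence $\Mod_\sB \simeq \Mod_\sC$ of Corollary \ref{cor:Morita}, under which $\sT$ corresponds to the $\sC$-module $\sU$, together with the identification $\HH_*(\sB/A) \simeq \HH_*(\sC/A)$ of Theorem \ref{thm:HH-are-equal} and the naturality of categorical traces, I obtain at once that $\sU$ is a dualizable $\sC$-module (Proposition \ref{prop: U dualizable C module}) and that $\Tr_\sB(\id:\sT) \simeq \Tr_\sC(\id:\sU)$, so that $\Bl^{\cat}(X/S)$ can be read off from either side.

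The heart of the argument --- and the step I expect to be the main obstacle --- is the comparison \eqref{eqn:intro-Blcat as a sum, first formula}. Because $\rl_S$ is only \emph{lax} monoidal, it does not commute with categorical traces: the lax structure supplies only the maps $\mu_\sB$ and $\mu_\sC$, and the content of \eqref{eqn:intro-Blcat as a sum, first formula} is that the defect of $\mu_\sB\bigt{\Tr_{\rl_S(\sB)}(\id:\rl_S(\sT))}$ in computing the image of $\Tr_\sB(\id:\sT)$ under $\rl_S$ is precisely $\mu_\sC\bigt{\Tr_{\rl_S(\sC)}(\id:\rl_S(\sU))}$. To prove this I would work with the explicit $\ell$-adic realizations of the singularity categories of the derived self-intersections $s \times_S s$ and $S' \times_S S'$: one shows that inside $\uH^0_\et\bigt{S,\rl_S(\HH_*(\sB/A))} \simeq \uH^0_\et\bigt{S,\rl_S(\HH_*(\sC/A))}$ the class $\Bl^{\cat}(X/S)$ decomposes as a \emph{tame} part, lying in the image of $\idcat$, plus a \emph{wild} part, lying in the image of $\arcat$; one then identifies the tame part with $\mu_\sB\bigt{\Tr_{\rl_S(\sB)}(\id:\rl_S(\sT))}$, using the compatibility of $\rl_S$ with the convolution product on the $\sB$-side applied to the module $\sT$, and the wild part with $\mu_\sC\bigt{\Tr_{\rl_S(\sC)}(\id:\rl_S(\sU))}$, using the corresponding statement on the $\sC$-side applied to $\sU$. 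The delicate point is to keep precise track of the inertia action throughout, so that the wild part genuinely produces the element built from the numbers $\Tr_{\Qell^\IL}(g^{-1}:\Phi^\IL)$ rather than some twist of it; it is also here that the hypotheses on $p:X\to S$ --- regularity, generic smoothness, properness of the singular locus --- are used, to guarantee the dualizabilities and the expected shape of the realizations.

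Finally I would conclude. By \eqref{eqn:intro-traces of rl} the two traces take values in $\Qell$ and in $\HH_0\bigt{\Qell(\GLK)/\Qell}$ respectively; by \cite[Theorem 4.39]{brtv18} the former equals $-\Tr_{\Qell^\IK}(\id:\Phi^\IK)$, whence $\mu_\sB\bigt{\Tr_{\rl_S(\sB)}(\id:\rl_S(\sT))} = -\idcat\bigt{\Tr_{\Qell^\IK}(\id:\Phi^\IK)}$; and by the very definition of the categorical Artin conductor, $\mu_\sC\bigt{\Tr_{\rl_S(\sC)}(\id:\rl_S(\sU))} = -\Ar^{\cat}(\Phi)$. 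Substituting both into \eqref{eqn:intro-Blcat as a sum, first formula} yields
$$\Bl^{\cat}(X/S) = -\idcat\bigt{\Tr_{\Qell^\IK}(\id:\Phi^\IK)} - \Ar^{\cat}(\Phi),$$
and one sets $\dimtot^{\cat}(\Phi) := \idcat\bigt{\Tr_{\Qell^\IK}(\id:\Phi^\IK)} + \Ar^{\cat}(\Phi)$, which is the asserted identity.
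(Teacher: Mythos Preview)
Your proposal is correct and follows essentially the same route as the paper: reduce to the splitting formula \eqref{eqn:intro-Blcat as a sum, first formula}, then identify the two summands via \cite[Theorem 4.39]{brtv18} and the definition of $\Ar^{\cat}$. The paper's concrete mechanism for the ``heart'' step---which you describe only in outline---is the direct-sum decomposition $\rl_S(\sT^\op\otimes_\sB\sT)\simeq\bigl(\rl_S(\sT)\otimes_{\rl_S(\sB)}\rl_S(\sT)\bigr)\oplus\bigl(\rl_S(\sU)\otimes_{\rl_S(\sC)}\rl_S(\sU)\bigr)$ (Proposition~\ref{prop: key splif fib-cofib sequence}), together with the verification (Theorems~\ref{thm: duality r(T)} and~\ref{thm:duality for r(U)}) that the components $(\gamma_\sT,\eps_\sT)$ and $(\gamma_\sU,\eps_\sU)$ of the realized categorical duality datum under this splitting are themselves genuine duality data; this is what turns the decomposition of $\rl_S(\coev)$ into the sum of two honest traces.
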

\begin{rem}

We have the following interesting extreme cases.
\begin{itemize}
    \item 
In the unipotent case considered by \TV{}, the second term on the right-hand side vanishes: in other words, $\Bl^{\cat}(X/S)$ comes entirely from $\rl_S(\sT)$. This is the version of Bloch conductor conjecture proven in \cite[Theorem 5.2.2]{toenvezzosi22}.
    \item 
Symmetrically, when $\Phi^{\IK} \simeq 0$, the first term on the right-hand side vanishes: in this case, $\Bl^{\cat}(X/S)$ comes entirely from $\rl_S(\sU)$.
\end{itemize}
\end{rem}

\ssec{Connection with the \virg{classical} conjectures} \label{ssec:intro-classical conjectures}

\sssec{} 

We believe that the categorical generalized Bloch conductor formula \emph{is} the generalized Bloch conductor formula. To prove this is the case, one needs to have control of the dg-category
$$
  \HH_*(\sB/A)
  \simeq
  \HH_*(\sC/A)
$$
and of certain dg-functors mapping to it.
More precisely, \TV{} conjectured that
$$
\uH^0_\et\Bigt{S, \rl_S\bigt{\HH_*(\sB/A)}} 
\simeq 
\Qell;
$$
we believe that, under this isomorphism, the maps
$$
\idcat:
\Qell 
\to
\uH^0_\et\Bigt{S, \rl_S \bigt{\HH_*(\sB/A)}}
$$
$$
-\arcat:
\HH_0\bigt{\Qell(\GLK)/\Qell} 
\to
\uH^0_\et\Bigt{S, \rl_S \bigt{\HH_*(\sB/A)}}
$$
correspond to the identity and the negative of the Artin character, respectively. We are not able to show this yet.
Nevertheless, our theorems below provide some evidence corroborating these expectations.

\sssec{}
In the pure-characteristic case, a major simplification occurs: the monoidal dg-category $\sB$ is symmetric monoidal. 
This implies the existence of a retraction $\HH_*(\sB/A)\to \sB$ which allows to prove that the above guesses are correct when we post-compose with the map
$$
  \uH^0_\et\Bigt{S, \rl_S \bigt{\HH_*(\sB/A)}}
  \to
  \uH^0_\et\bigt{S, \rl_S(\sB)}
  \simeq
  \Qell.
$$
We thus obtain the generalized Bloch conductor conjecture in pure characteristic.

\begin{mainthm}\label{mainthm: pure char}
Let $S$ be of pure characteristic. 
Then the generalized Bloch conductor conjecture holds true.
\end{mainthm}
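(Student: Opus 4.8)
The plan is to deduce the generalized Bloch conductor conjecture, in pure characteristic, from the categorical formula of Theorem~\ref{thm: categorical Bloch} by exploiting the retraction that becomes available in equal characteristic. When $S$ has pure characteristic, Cohen's structure theorem gives an embedding $k\subseteq\scrO_{\uK}$, so that the closed immersion $i_S:s\to S$ acquires a retraction $S\to s$; this endows the derived groupoid $s\times_S s$ with the structure of a commutative derived group over $k$, and hence makes the convolution monoidal dg-category $\sB=\Sing(s\times_S s)$ \emph{symmetric} monoidal. As recalled above, a symmetric monoidal $A$-linear dg-category admits a canonical retraction $\HH_*(\sB/A)\to\sB$ splitting the unit $\sB\to\HH_*(\sB/A)$. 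Applying the lax monoidal $\ell$-adic realization $\rl_S$, then $\uH^0_\et(S,-)$, and invoking the identification $\uH^0_\et(S,\rl_S(\sB))\simeq\Qell$, I obtain a retraction
$$
  \rho:\ \uH^0_\et\Bigt{S,\rl_S\bigt{\HH_*(\sB/A)}}\longto\Qell
$$
which, through \eqref{eqn:intro-HH=HH}, is equally defined on $\uH^0_\et(S,\rl_S(\HH_*(\sC/A)))$.

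The next step is to apply $\rho$ to the equality of Theorem~\ref{thm: categorical Bloch} and identify the three resulting classes in $\Qell$ with their classical counterparts, namely $\rho(\Bl^{\cat}(X/S))=\Bl(X/S)$, $\rho\bigt{\idcat(\Tr_{\Qell^\IK}(\id:\Phi^\IK))}=\Tr_{\Qell^\IK}(\id:\Phi^\IK)$, and $\rho(\Ar^{\cat}(\Phi))=\Ar(\Phi)$. Granting these, Theorem~\ref{thm: categorical Bloch} becomes the numerical identity $\Bl(X/S)=-\Tr_{\Qell^\IK}(\id:\Phi^\IK)-\Ar(\Phi)$, which is precisely formula \eqref{eqn:Bloch-formula-with-Artin}, i.e. the generalized Bloch conductor formula $\Bl(X/S)=-\dimtot(\Phi)$; this proves the theorem (and, via \cite{orgogozo03}, reproves the Deligne--Milnor formula). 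The first identification expresses that $\rho$ realizes the \virg{first decategorification} --- that the image of the categorical Bloch class built from $\Tr_\sB(\id:\sT)$ is the Bloch intersection number --- and is supplied by \cite{beraldopippi22} together with \cite[Theorem 4.39]{brtv18}. The second is essentially formal: by construction $\idcat$ factors through the unit of the Hochschild homology of the algebra $\rl_S(\sB)$ followed by $\mu_\sB$, so the compatibility of the retraction $\HH_*(\sB/A)\to\sB$ with units and with the lax monoidal structure of $\rl_S$ forces $\rho\circ\idcat$ to be the identity of $\uH^0_\et(S,\rl_S(\sB))\simeq\Qell$.

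The main obstacle is the third identification, $\rho(\Ar^{\cat}(\Phi))=\Ar(\Phi)$. By definition $\Ar^{\cat}(\Phi)$ is the image under $-\arcat$ of the explicit class $\frac{1}{|\GLK|}\sum_{g\in\GLK}\Tr_{\Qell^\IL}(g^{-1}:\Phi^\IL)\langle g\rangle\in\HH_0(\Qell(\GLK)/\Qell)$, so one must compute the composite
$$
  \HH_0\bigt{\Qell(\GLK)/\Qell}\xto{-\arcat}\uH^0_\et\Bigt{S,\rl_S\bigt{\HH_*(\sB/A)}}\xto{\rho}\Qell
$$
and recognise the resulting $\Qell$-linear functional on $\HH_0(\Qell(\GLK)/\Qell)$ as pairing against the Artin character $\ar$, after which the stated identity follows by reindexing $g\mapsto g^{-1}$, using that $\ar$ is a character, and comparing with \eqref{eqn: def Ar}. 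This forces one to make explicit the $\ell$-adic realization $\rl_S(\sC)$ of $\sC=\Sing(S'\times_S S')$, its Hochschild homology, the comparison map $\mu_\sC$, and their interaction with $\rho$; geometrically the crucial input is that the ramification multiplicities $\lg_A\bigt{A'/\langle g(\pi_{\uL})-\pi_{\uL}\rangle}$ defining $\ar$ are exactly what the singularity category of the classical groupoid $S'\times_S S'$ records, $g$-component by $g$-component --- in the spirit of Kato--Saito and Bloch. Note that $\sC$ is \emph{not} symmetric monoidal, even in pure characteristic, since a uniformizer of $\scrO_{\uK}$ has no $e$-th root in $\scrO_{\uK}$ and hence $\scrO_{\uL}$ admits no section over $\scrO_{\uK}$; this is precisely why the argument must be routed through the $\sB$-side retraction together with the isomorphism \eqref{eqn:intro-HH=HH}. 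The remaining ingredients --- symmetric monoidality of $\sB$, the retraction and its compatibilities, and $\rho(\Bl^{\cat}(X/S))=\Bl(X/S)$ --- I expect to be comparatively routine given the machinery set up in this paper and in \cite{toenvezzosi22,brtv18,beraldopippi22}.
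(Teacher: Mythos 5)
Your strategy coincides with the paper's own: in pure characteristic the retraction $S\to s$ makes $\sB$ symmetric monoidal, the unit $\ev^{\HH}_{\sB/\sB}:\sB\to\HH_*(\sB/A)$ acquires a retraction $\on{m}:\HH_*(\sB/A)\to\sB$, and one applies $\rl_S(\on{m})$ to the equality of Theorem \ref{thm: categorical Bloch}, identifying the image of $\Bl^{\cat}(X/S)$ with $\Bl(X/S)$ via \cite{beraldopippi22} and the image of the $\idcat$-term formally. Up to that point your argument matches the paper step for step.

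However, the step you yourself flag as the ``main obstacle'' --- showing that $\rho\circ(-\arcat)$ pairs against $-\ar$, so that $\rho(\Ar^{\cat}(\Phi))=\Ar(\Phi)$ --- is exactly where the substance of the proof lies, and your proposal stops at asserting what should be true rather than proving it. The paper closes this gap in two moves. First, it proves that the square \eqref{diag: m circ evHH vs int} commutes, i.e.\ that $\on{m}\circ\ev^{\HH}_{\sE/\sB}$ agrees with $\bigt{K(S,0)\to K(s,0)}_*\circ\int_{S'/S}^{\on{dg}}$, where $\int_{S'/S}^{\on{dg}}:\sC\to\MF(S,0)_s$ is the integration functor of \cite{beraldopippi22}; this is not formal --- it uses the Morita equivalence $\sC\simeq(\sE^{\op}\otimes_A\sE)\otimes_{\sB^{\env}}\sB$ to reduce the comparison to the bimodule $\sE$, where both circuits are identified with $\ev_{\sE/\sB}$ by base change along explicit Koszul squares. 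Second, Proposition \ref{prop: nc nature ar} computes that $\int_{S'/S}^{\on{dg}}$ induces $-\ar$ on $\HK_0$: derived base change along $d\delta_{S'/S}$ sends $\Delta_{S'}$ and the graphs $\Gamma_g$ ($g\neq\id$) to explicit matrix factorizations whose degrees in $\uG_0(S)_s\simeq\bbZ$ recover precisely the lengths $\lg_A(\Omega^1_{A'/A})$ and $\lg_A\bigt{A'/\langle g(\pi_{\uL})-\pi_{\uL}\rangle}$ entering the Artin character. Your heuristic that the groupoid $S'\times_SS'$ ``records'' these lengths component by component is the correct intuition and is vindicated by that computation, but as written it is an assertion, not an argument: without the commutativity of \eqref{diag: m circ evHH vs int} there is no a priori reason that the retraction $\on{m}$, defined purely on the $\sB$-side, should interact with the $\sC$-side trace through the Artin character at all.
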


\sssec{}

In mixed characteristic, the monoidal dg-category $\sB$ is not symmetric monoidal, hence we cannot appeal to the simplification used in the pure-characteristic case.
Nevertheless, under some additional hypothesis, our proof of the categorical version can be modified so as to bypass the dg-category $\HH_*(\sB/A)$.

\begin{mainthm}\label{mainthm: hypersurf}
Assume that $X$ is a hypersurface in a smooth $S$-scheme.
Then the generalized Bloch conductor conjecture holds true.
\end{mainthm}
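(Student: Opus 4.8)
The starting point is the categorical generalized Bloch conductor formula (Theorem~\ref{thm: categorical Bloch}), which holds unconditionally under the hypotheses in force and gives the identity
\begin{equation}\label{eqn:plan-catBCC}
\Bl^{\cat}(X/S)
=
-\idcat\bigt{\Tr_{\Qell^\IK}(\id:\Phi^\IK)}
-
\Ar^{\cat}(\Phi)
\end{equation}
of classes in $\uH^0_\et\bigt{S,\rl_S(\HH_*(\sB/A))}$. The goal is to \emph{decategorify} \eqref{eqn:plan-catBCC} into the numerical identity $\Bl(X/S)=-\Tr_{\Qell^\IK}(\id:\Phi^\IK)-\Ar(\Phi)$, that is, into \eqref{eqn:Bloch-formula-with-Artin}, which is the generalized Bloch conductor conjecture. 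As in the proof of Theorem~\ref{mainthm: pure char}, the only obstruction is that the target $\uH^0_\et\bigt{S,\rl_S(\HH_*(\sB/A))}$ is not known to be $\Qell$; in pure characteristic this is bypassed via the retraction $\HH_*(\sB/A)\to\sB$ coming from symmetric monoidality. When $X$ is a hypersurface in a smooth $S$-scheme that retraction is unavailable, so the plan is instead to exploit the \emph{matrix-factorization models} of $\sT$ and of $\sU$ in order to realize every term of \eqref{eqn:plan-catBCC} directly into $\Qell$, bypassing the ill-understood $\HH_*(\sB/A)$.

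Write $X\hto M$ as a hypersurface $X=V(f)$ in a smooth $S$-scheme $M$, and set $M':=M\x{S}S'$, $f':=f|_{M'}$, and $X'=V(f')=X\x{S}S'$. First I would record the two matrix-factorization presentations. Since $X$ is regular and $X_s=V(\pi_{\uK}|_X)\hto X$, one has $\sT=\Sing(X_s)\simeq\MF(X,\pi_{\uK}|_X)$; since $M'\to S'$ is smooth, $M'$ is regular, and therefore $\sU=\Sing(X')\simeq\MF(M',f')$. (It is precisely the hypersurface hypothesis that makes the second presentation available: without it, $X'$ is merely some singular scheme.) These models are $2$-periodic and carry the monoidal actions of $\sB$ and $\sC$ under which $\sT$ is a dualizable $\sB$-module (Corollary~\ref{cor: T dualizable B module}) and $\sU$ a dualizable $\sC$-module (Proposition~\ref{prop: U dualizable C module}). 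By \cite{brtv18,toenvezzosi22} their $\ell$-adic realizations are expressible in terms of the vanishing-cycles sheaf $\Phi$, of its derived inertia-invariants $\Phi^\IK$, and of $\Phi^{\IL}$ together with its $\GLK$-action; likewise $\rl_S(\sB)$ and $\rl_S(\sC)$ become computable.

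The core of the argument is a numerical reading of the decomposition \eqref{eqn:intro-Blcat as a sum, first formula}, one summand at a time. For the $\sT$-summand: by \cite[Theorem~4.39]{brtv18} one has $\mu_{\sB}\bigt{\Tr_{\rl_S(\sB)}(\id:\rl_S(\sT))}=-\idcat\bigt{\Tr_{\Qell^\IK}(\id:\Phi^\IK)}$, and using the model $\sT\simeq\MF(X,\pi_{\uK}|_X)$ I would check that the relevant realization sends this class to $-\Tr_{\Qell^\IK}(\id:\Phi^\IK)\in\Qell$; this part is essentially contained in \cite{beraldopippi22}. For the $\sU$-summand, where the genuinely new content lies, I would compute $\Tr_{\rl_S(\sC)}(\id:\rl_S(\sU))=\frac{1}{|\GLK|}\sum_{g\in\GLK}\Tr_{\Qell^\IL}(g^{-1}:\Phi^\IL)\,\langle g\rangle\in\HH_0(\Qell(\GLK)/\Qell)$, using $\sU\simeq\MF(M',f')$ and the fact that $S'\x{S}S'$ is a \emph{classical} groupoid over $S'$; then I would identify the induced functional $\HH_0(\Qell(\GLK)/\Qell)\to\Qell$ with the Artin character $\ar:\GLK\to\bbZ$, so that $\Ar^{\cat}(\Phi)$ decategorifies to the classical Artin conductor $\Ar(\Phi)$ of \eqref{eqn: def Ar}. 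Finally, for the left-hand side of \eqref{eqn:plan-catBCC}, the hypersurface hypothesis lets one describe $\Bl(X/S)$ itself through matrix factorizations --- via the Tor-of-the-diagonal reinterpretation of \cite{katosaito04}, categorified in \cite{beraldopippi22} --- compatibly with the realizations above, so that $\Bl^{\cat}(X/S)$ decategorifies to $\Bl(X/S)$. Applying to \eqref{eqn:plan-catBCC} the resulting $\Qell$-valued evaluation, the left-hand side becomes $\Bl(X/S)$ and the right-hand side becomes $-\Tr_{\Qell^\IK}(\id:\Phi^\IK)-\Ar(\Phi)$, which is \eqref{eqn:Bloch-formula-with-Artin}.

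The main obstacle is the $\sU$-summand in mixed characteristic, when the $\GLK$-action on $\Phi^\IL$ is no longer unipotent. In the unipotent case of \cite{beraldopippi22} the $\sU$-contribution is much more tractable, whereas here one must track the $\GLK$-equivariance faithfully through the base change from $X$ to $X'$, through the Morita equivalence $\sB\sim\sC$, and through the lax-monoidal functor $\rl_S$, so as to recover the full character $g\mapsto\Tr_{\Qell^\IL}(g^{-1}:\Phi^\IL)$ and to pin down its pairing against the Artin character. This is exactly the point at which the explicit presentation of $\MF(M',f')$ as a module over $\sC=\Sing(S'\x{S}S')$, together with its $2$-periodicity, has to be used in an essential way. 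A secondary difficulty, again settled using the hypersurface hypothesis, is the compatibility of all these realization maps with the bivariant-Chow definition of $\Bl(X/S)$.
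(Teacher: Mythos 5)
Your overall strategy --- use the hypersurface presentation to trade the inaccessible $\HH_*(\sB/A)$ for a matrix-factorization receptacle whose realization on $S$ has $\uH^0_\et\simeq\Qell$, and then evaluate the two trace summands separately --- is the strategy of the paper. But there is a genuine gap in how you propose to execute it: you cannot ``apply a $\Qell$-valued evaluation'' to the identity of Theorem \ref{thm: categorical Bloch}, because that identity lives in $\uH^0_\et\bigt{S,\rl_S(\HH_*(\sB/A))}$, and the whole difficulty in mixed characteristic is precisely that no map from this group to $\Qell$ is available (this is what the retraction $\on{m}:\HH_*(\sB/A)\to\sB$ supplies in pure characteristic and what is missing here). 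So Theorem \ref{thm: categorical Bloch} cannot serve as a starting point to be post-processed; the paper instead goes one level upstream, to the splitting \eqref{eqn: magic}, i.e.\ $[\rl_S(\coev)]=[\mu_{\sT/\sB}\circ\wt{\gamma}_\sT]+[\mu_{\sU/\sC}\circ\wt{\gamma}_\sU]$ in $\uH^0_\et\bigt{S,\rl_S(\Sing(X\times_S X))}$, which is the common source of both the categorical and the numerical statements, and pushes it forward along $\int_{X/S}:\rl_S\bigt{\Sing(X\times_SX)}\to\rl_S\bigt{\MF(S,0)_s}$ rather than along $\rl_S(\ev^{\HH})$.

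Once the argument is arranged this way, the real technical content --- which your proposal does not isolate --- is Proposition \ref{prop:two commutative squares}: the compatibility of $\int_{X/S}$ with the evaluations $\wt{\eps}_\sT$ and $\wt{\eps}^{\HH}_\sU$, i.e.\ the commutativity of the two squares of the master diagram with $\rl_S\bigt{\MF(S,0)_s}$, $\int_{s/S}$ and $\int'_{S'/S}$ replacing $\rl_S\bigt{\HH_*(\sB/A)}$, $\idcat$ and $-\arcat$. This is not bookkeeping: it is proved (Lemmas \ref{lem: left square tech-prop} and \ref{lem: right square tech-prop}) by comparing several derived-groupoid actions on $\Sing(X_s)$ and on $\Sing(X')$ (those of $s\times_Ss$, $s\times_{\bbA^1_s}s$, $S\times_{\bbA^1_S}S$ and $\MF(P',L',0)$), and it is here --- not in the computation of the $\sU$-trace, which is carried out in full generality in Sections \ref{sec: realizations}--\ref{sec: categorical BCC} and Appendix \ref{appendix: traces over group algebras} --- that the hypersurface hypothesis enters essentially. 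Your remaining ingredients (the models $\sT\simeq\MF(X,\pi_{\uK})$ and $\sU\simeq\MF(M',f')$, the identification $\int_{S'/S}\sim-\ar$ of Proposition \ref{prop: nc nature ar}, the trace formula for $\Tr_{\rl_S(\sC)}(\id:\rl_S(\sU))$, and $\int_{X/S}[\Delta_X]=\Bl(X/S)$) are correct and are exactly those used in the paper.
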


\noindent
Arguing as in \cite[]{beraldopippi22}, we see that the  Deligne--Milnor conjecture is a particular case of Theorem \ref{mainthm: hypersurf}. 
Hence:

\begin{uncor}[Deligne--Milnor formula]
The Deligne--Milnor conjecture holds true.
\end{uncor}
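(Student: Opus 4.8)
The plan is to deduce the corollary from Theorem~\ref{mainthm: hypersurf} by realizing the germ of $U$ at its singular point as a hypersurface in a smooth $S$-scheme, following the reduction already carried out (in the unipotent case) in \cite{beraldopippi22}; the only new ingredient is that Theorem~\ref{mainthm: hypersurf} no longer requires any unipotency assumption. First I would replace $U$ by a connected affine open neighborhood of $x$ and pass to an étale neighborhood, keeping $U\to S$ flat, regular, generically smooth, with non-smooth locus equal to the single closed point $x\in U_s$ (hence proper over $S$) and, after a further harmless reduction, with geometrically connected generic fiber. Since $\Phi_q(\Ql{,U})$ stays a skyscraper at $x$ placed in cohomological degree $n$ under such modifications, the number $\dimtot(\Phi)$ is unchanged, where $\Phi=\uH^*_{\et}\bigt{U_s,\Phi_q(\Ql{,U})}$.

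Next I would observe that such a $U$ is, étale-locally around $x$, a hypersurface in a smooth $S$-scheme. Indeed $\ccO_{U,x}$ is regular of dimension $n+1$, the morphism $U\to S$ is flat, and $\ccO_{U_s,x}=\ccO_{U,x}/(\pi_{\uK})$ is \emph{not} regular; hence $\pi_{\uK}\in\mathfrak{m}_{U,x}^2$ and the embedded dimension of $U_s$ at $x$ equals $n+1$. Lifting a basis of $\mathfrak{m}_{U_s,x}/\mathfrak{m}_{U_s,x}^2$ to $n+1$ elements of $\mathfrak{m}_{U,x}$ gives a morphism $U\to\bbA^{n+1}_S$ which on completed local rings is a surjection $A[[t_1,\dots,t_{n+1}]]\tto\widehat{\ccO_{U,x}}$; since $\widehat{\ccO_{U,x}}$ is a codimension-one regular quotient of the regular local ring $A[[t_1,\dots,t_{n+1}]]$, the kernel is principal, so $\widehat{\ccO_{U,x}}\simeq A[[t_1,\dots,t_{n+1}]]/(f)$. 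The same holds after henselization, and a standard spreading-out argument (worked out in \cite{beraldopippi22}) promotes this to an isomorphism between an étale neighborhood $X$ of $x$ in $U$ and the zero locus $V(g)$ of a function $g$ on a scheme $V$ étale over $\bbA^{n+1}_S$. In particular $X$ is a hypersurface in the smooth $S$-scheme $V$.

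Finally I would apply Theorem~\ref{mainthm: hypersurf} to $X$. This scheme is a hypersurface in a smooth $S$-scheme and satisfies all the hypotheses of the generalized Bloch conductor conjecture (flat, geometrically connected, generically smooth, regular, with proper singular locus $\{x\}$), so the theorem yields $\Bl(X/S)=-\dimtot(\Phi)$; here the sheaf of vanishing cycles of $X$ agrees with that of $U$ as an $\IK$-representation, both being the same skyscraper at $x$. On the other hand, for an isolated singularity the localized Chern class used to define Bloch's number computes the Deligne--Milnor number, i.e.\ $\Bl(X/S)=(-1)^{n+1}\mu_{U/S}$; this is precisely the local content of Orgogozo's reduction of the Deligne--Milnor formula to Bloch's conductor formula (\cite{orgogozo03}, building on \cite{bloch87}), and being supported on $\{x\}$ it is insensitive to the passage to the étale neighborhood $X$. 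Combining the two identities gives $\mu_{U/S}=(-1)^{n+1}\Bl(X/S)=(-1)^n\dimtot(\Phi)$, which is the Deligne--Milnor formula.

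I expect the only real work to be the geometric bookkeeping of the first two steps---realizing the germ of $U$ as a hypersurface, arranging the technical hypotheses such as geometric connectedness, and matching $\mu_{U/S}$, $\Bl(X/S)$ and $\dimtot(\Phi)$ exactly on the nose---rather than any new conceptual obstacle: this is the same reduction already performed in \cite{beraldopippi22}, the point being that Theorem~\ref{mainthm: hypersurf} supersedes its unipotent predecessor.
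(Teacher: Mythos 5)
Your proposal is correct and follows exactly the route the paper takes: the paper's entire proof is the one-line remark that, ``arguing as in \cite{beraldopippi22}'', the Deligne--Milnor conjecture is the special case of Theorem \ref{mainthm: hypersurf} obtained by realizing the isolated-singularity germ as a hypersurface in a smooth $S$-scheme and invoking Orgogozo's identification of the localized intersection number with $(-1)^{n+1}\mu_{U/S}$. You have simply written out the reduction (embedded dimension argument, spreading out, sign bookkeeping) that the paper delegates to \cite{beraldopippi22} and \cite{orgogozo03}, and the details check out.
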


\begin{rmk}
Our proof goes through if, instead of the identity morphism, we consider any other $\sB$-linear endomorphism of $\sT$ (for instance, one induced by an $S$-endomorphism of $X$.)
This provides more general versions of the main theorems above, whose formulation is left to the interested reader.
\end{rmk}

\ssec{Structure of the paper}

The rest of this paper is organized as follows.
\begin{itemize}
    \item In Section \ref{sec: preliminaries}, we fix some notation and recall some background in derived and non-commutative algebraic geometry. In particular, we review the basics on the theory of dualizable modules over a monoidal dg-category.
    
    \item In Section \ref{sec: nctraces}, we introduce and study the dg-categories of singularities that play a main role in this paper, namely the monoidal dg-categories $\sB$ and $\sC$ and their modules $\sT$ and $\sU$. Then we introduce a $(\sB,\sC)$-bimodule $\sE$.
    
    \item In Section \ref{sec:morita}, we prove that the monoidal dg-categories $\sB$ and $\sC$ are Morita equivalent by means of the bimodule $\sE$. As a consequence, we prove that $\sU$ is a dualizable $\sC$-module.
    Finally we prove that $\sB$ and $\sC$ have isomorphic Hochschild homologies over $A$: this fact is crucial for the proofs of the main theorems. 
    
    \item In Section \ref{sec: realizations}, we study the decategorification of $\sT/\sB$ (resp. $\sU/\sC$) provided by $\ell$-adic realization.
    The main result of this section is that the decategorification of $\sT$ (resp. $\sU$) is dualizable over that of $\sB$ (resp. $\sC$).
    Furthermore, we show that the duality datum of the left $\sB$-module $\sT$ (equivalently, the duality datum of the left $\sC$-module $\sU$) induces a duality datum of the decategorification of $\sT$ (resp. $\sU$) as a module over the decategorification of $\sB$ (resp. $\sC$).
    
    \item In Section \ref{sec: categorical BCC}, we prove Theorem \ref{thm: categorical Bloch}, that is, the categorical generalized Bloch conductor conjecture.
    Finally, we deduce Theorem \ref{mainthm: pure char} from the categorical version.
    
    \item In Section \ref{sec: connection to the classical conjectures}, we adapt our proof of Theorem \ref{thm: categorical Bloch} to obtain Theorem \ref{mainthm: hypersurf}.
 
    \item In Appendix \ref{appendix: traces over group algebras}, we recall some trace formulas for dualizable modules over a group algebra.
\end{itemize}

\sec*{Acknowledgements}

We are deeply grateful to Bertrand To\"en and Gabriele Vezzosi: this work would simply not exist without their insights and their ideas, which they have always generously shared with us.

We are indebted to John Milnor, Pierre Deligne, Spencer Bloch, Kazuya Kato and Takeshi Saito who influenced us through their works.

We wish to thank Benjamin Hennion, Valerio Melani, Mauro Porta and Marco Robalo for useful discussions over the years.

Last but not least, we are thankful to a fortune cookie whose ticket provided essential motivational support at a stage of the project when it was most needed. 

This project has received funding from the PEPS JCJC 2024.
\section{Preliminaries and notation}\label{sec: preliminaries}

In this section we recall some constructions, notions and facts that we will need in this paper.

\ssec{Conventions}

\sssec{}

We will employ the language of higher category theory of \cite{luriehtt} and \cite{lurieha}. 
All functors will be implicitly derived. For instance, we will write $-\otimes -$ instead of $-\otimes^{\bbL} -$ for the derived tensor product.

\sssec{}

All schemes will be assumed to be of finite type over a regular noetherian base of finite Krull dimension (often a complete and strict henselian trait).

\sssec{}

In the body of the paper, $S'\to S$ will always correspond to a totally ramified extension of DVR's $A=\scrO_{\uK} \subseteq \scrO_{\uL}=A'$ such that the inertia group $\IL$ acts unipotently on the $\ell$-adic representation under exam.
For reasons that will become clear later, we will need to assume that the extension is not trivial, i.e. that $S'\to S$ is not an isomorphism.

\sssec{} 

Whenever an $\ell$-adic sheaf $M$ is endowed with the trivial action of $\IK$, we will not distinguish between $\IK$-fixed points and $\IL$-fixed points and simply write $M^{\up{I}}$.

\sssec{}

For a dg-category $T$ we will denote $\HK(T)$ the spectrum of the homotopy-invariant non-connective algebraic K-theory of $T$.

\sssec{}
Whenever we have a stable symmetric monoidal $\oo$-category $\ccC^{\otimes}$, whenever we have a morphism
the following notation will be employed for the cohomology class associated to a morphism $f:1_{\ccC}\to M$:
$$
  [f]
  \in
  \uH^0(M)=\pi_0 \bigt{\Hom(1_{\ccC},M)}.
$$
Moreover, we will use the following convention. Let $A$ be an associative algebra and assume that $M$ is a dualizable left $A$-module with duality datum $(\coev,\ev)$. For every $A$-linear endomorphism $g:M\to M$ we will denote
$$
  \Tr_A(g:M)
  \in
  \HH_0(A)
  :=
  \pi_0\Bigt{\Hom_{\ccC}\bigt{1_{\ccC},\HH_*(A)}}
$$
the cohomology class associated to the morphism
$$
  1_{\ccC}
  \xto{\coev}
  M^{\vee}\otimes_{A}M
  \xto{\id \otimes g}
  M^{\vee}\otimes_{A}M
  \xto{\ev^{\HH}}
  \HH_*(A).
$$
If $A$ is commutative, we will also denote $\Tr_A(g:M)$ the associated element in $\uH^0(A)$.

\sssec{}
For a quasi-compact quasi-separated morphism $q:Z\to T$ (where $T$ is either $s$ or $S$) we will denote by $\Ql{,Z}$ and $\oml{,Z}:=q^!\Ql{,T}$ the constant and dualizing $\ell$-adic sheaves respectively.

\ssec{Derived algebraic geometry}

Derived algebraic geometry has been developed by \TV{} (\cite{toenvezzosi08}) and Lurie (\cite{luriedag1}).
For a gentle introduction to the theory of derived algebraic geometry we refer the reader to \cite{toen14}.
For the intimately related (but different) theory of spectral algebraic geometry the reader can consult \cite{luriesag}.

\sssec{}

Derived algebraic geometry is a homotopical version of algebraic geometry where rings are replaced by simplicial rings.
Nontrivial elements in higher homotopy groups of simplicial rings retain nontrivial homotopical information.

\sssec{}

Simplicial rings are regarded up to quasi-isomorphism. 
In other words they constitute the objects of an $\oo$-category of derived rings.
For a simplicial ring $R$, the set $\pi_0(R)$ inherits a ring structure.
Moreover, $\pi_i(R)$ is a $\pi_0(R)$-module for every $i\in \bbZ$. The same holds true if we work with a sheaf of simplicial rings instead.

\sssec{}

Consider a pair $(X,\ccO_X)$, where $X$ is a topological space and $\ccO_X$ a sheaf of simplicial rings.
Such a pair is a derived scheme if
\begin{itemize}
\item $(X,\pi_0(\ccO_X))$ is a scheme;
\item $\pi_i(\ccO_X)$ is a quasi-coherent $\pi_0(\ccO_X)$-module for every $i\in \bbZ$.
\end{itemize}
A derived scheme is said to be bounded if $\pi_{i}(\ccO_X)\simeq 0$ for all but finitely many $i\in \bbZ$.
All derived schemes will be implicitly supposed to be bounded.

\sssec{}

Derived schemes are naturally regarded as objects in an $\oo$-category $\dSch$.
There is an adjunction 
\begin{equation*}
  \begin{tikzpicture}[scale=1.5]
    \node (a) at (0,1) {$\Sch$};
    \node (b) at (2,1) {$\dSch$.};
    \path[->,font=\scriptsize,>=angle 90]
         ([yshift= 1.5pt]a.east) edge node[above] {$\iota$} ([yshift= 1.5pt]b.west);
    \path[->,font=\scriptsize,>=angle 90]
         ([yshift= -1.5pt]b.west) edge node[below] {$\pi_0$} ([yshift= -1.5pt]a.east);
  \end{tikzpicture}
\end{equation*}
between the $\oo$-category of derived schemes and the (usual) category of schemes $\Sch$. The functor $\iota: \Sch \to \dSch$ is fully faithful.

\sssec{}

The schemes with a non-trivial derived structure that appear in the present paper are of a precise nature, namely they arise as (derived) fiber products of diagrams of classical schemes. 
The classical scheme associated to such a fiber product coincides with the usual fiber product of schemes.

\ssec{Morita theory of dg-categories}

We refer to \cite{keller06,toen08} for more details on the Morita theory of dg-categories.

\sssec{}

Let $R$ be a commutative ring.
An $R$-linear dg-category is a category enriched over the category of $R$-cochain complexes.

\sssec{}

If $\sT$ is an $R$-linear dg-category, one can consider its \emph{homotopy category} $\mathsf{hT}$: this is an $R$-linear category with the same objects as $\sT$ and with morphisms given by
$$
  \Hom_{\mathsf{hT}}(x,y)
  :=
  \uH^0\bigt{\Hom_{\sT}(x,y)}.
$$
The composition law in $\mathsf{hT}$ is defined in the obvious way.
The assignment $\sT \rightsquigarrow \mathsf{hT}$ is functorial in $\sT$.

\sssec{}

A dg-functor between two $R$-linear dg-categories is a functor compatible with the enrichment.
A dg-functor is \emph{quasi-fully faithful} if it induces quasi-isomorphisms on the hom-complexes.
It is \emph{quasi-essentially surjective} if it induces an essentially-surjective functor at the level of homotopy categories.
A \emph{quasi-equivalence} is a dg-functor that is both quasi-fully faithful and quasi-essentially surjective.

\sssec{}

A dg-functor is a \emph{Morita equivalence} if it induces a quasi-equivalence at the level of dg-categories of dg-modules (\cite{toen07}).
The class of Morita equivalences contains that of quasi-equivalences. 
Concretely, a dg-functor $F:\sT\to \sU$ is a Morita equivalence if it is quasi-fully faithful and if the essential image of $F$ generates $\sU$ under finite colimits and retracts.

\sssec{}

We will denote by $\dgCat_R$ the associated $\oo$-category, i.e. the $\oo$-localization of the category of dg-categories with respect to Morita equivalences.

\sssec{}

In \cite{toen07}, To\"en shows that $\dgCat_R$ admits a theory of localizations: for a dg-category $\sT$ and a set of morphisms $W$ in $\sT$, there exists a morphism $\sT \to L_W \sT$ in $\dgCat_R$ such that for every dg-category $\sU$ the induced $\oo$-functor
$$
  \Fun_{\dgCat_R}(L_W\sT,\sU)
  \to
  \Fun_{\dgCat_R}(\sT,\sU)
$$
is fully-faithful and the essential image consists of those dg-functors $\sT\to \sU$ such that the image of $W$ lands in the set of equivalences of $\sU$.

\sssec{}

In particular, one can use the above construction to construct dg-quotients as follows.
Consider a quasi-fully faithful embedding $\sU \hto \sT$ and let $W$ be the collection of the morphisms in $\sT$ whose cone belongs to $\sU$. 
Then the dg-quotient associated to $\sU \hto \sT$ is defined as
$$
  \sT/\sU
  :=
  L_{W} \sT.
$$

\sssec{}
A \emph{localization sequence} in $\dgCat_R$ is a diagram
$$
  \sU
  \to
  \sT
  \to
  \sT'
$$
such that the composition is zero, the arrow $\sU \to \sT$ is quasi-fully faithful, and $\sT'\simeq \sT/\sU$.

\sssec{}

The derived tensor product of dg-categories, constructed in \cite{toen07}, endows $\dgCat_R$ with the structure of a symmetric monoidal $\infty$-category. A \emph{monoidal $R$-linear dg-category} is a unital associative algebra object in $\dgCat_R$.

\ssec{Duality for modules over a monoidal dg-category}

In this subsection we recall some general notions about dualizable modules over a monoidal dg-category.
For further details we refer the reader to \cite[Section 4.6.2]{lurieha} and \cite[Section 2]{toenvezzosi22}.

\sssec{}

For a monoidal ($R$-linear) dg-category $\sA$, we denote by $\sA^{\rev}$ the monoidal dg-category obtained from $\sA$ by \emph{reversing} the monoidal structure.

\sssec{}

We denote by $\Mod_{\sA}$ the $\oo$-category of \emph{left} $\sA$-modules.
When we need to consider the $\oo$-category of \emph{right} $\sA$-modules, we employ the notation $\Mod_{\sA^\rev}$.

If we are given a pair of monoidal $S$-linear dg-categories $\sA_1$ and $\sA_2$, we will denote by $\biMod_{(\sA_1,\sA_2)}$ the $\oo$-category of $(\sA_1,\sA_2)$-bimodules. 

The $\oo$-category of $(\sA_1,\sA_2)$-bimodules can be identified with the $\oo$-category of left $\sA_1\otimes_R \sA_2^\rev$-modules. In particular, we have the following identifications:
$$
\Mod_{\sA} 
\simeq
\biMod_{(\sA,R)}, 
\hspace{0.5cm}
\Mod_{\sA^\rev} 
\simeq
\biMod_{(R,\sA)}.
$$

\sssec{} 

Whenever we are given three monoidal dg-categories $\sA_1, \sA_2$ and $\sA_3$, there is a relative tensor product 
$$
-\otimes_{\sA_2}-:
\biMod_{(\sA_1,\sA_2)} 
\x{} 
\biMod_{(\sA_2,\sA_3)} 
\to
\biMod_{(\sA_1,\sA_3)}. 
$$
See \cite[Section 4.4]{lurieha}.%
The following particular case will be of interest in the sequel. Define the monoidal dg-category
$$
\sA^\env
:=
\sA^\rev 
\otimes_R
\sA
$$
and notice that $\sA$ carries a natural $\sA^\env$ left module structure.
Then, for a left $\sA$-module $\sM$ and a right $\sA$-module $\sN$, we have
$$
\sN \otimes_\sA \sM
\simeq
(\sN \otimes_R \sM)
\otimes_{\sA^\env}
\sA.
$$

\sssec{} 

Denote by $\dgCAT_R$ the $\oo$-category of compactly generated presentable $R$-linear dg-categories with continuous $R$-linear dg-functors.
Similarly to $\dgcat_R$, this $\infty$-category is symmetric monoidal when equipped with the tensor product defined in \cite[Section 4.8.1]{lurieha}, \cite{toen07}.

\sssec{}

The procedure of ind-completion yields a symmetric-monoidal (non-full) faithful embedding 
$$
  \widehat{(-)}: 
  \dgcat_R 
  \to
  \dgCAT_R,
$$
which is compatible with relative tensor products:
$$
\widehat{\sN}\otimes_{\widehat{\sA}}\widehat{\sM}
\simeq 
\widehat{\sN \otimes_\sA \sM}.
$$
\sssec{} 

After \cite{toen07}, it is known that $\dgCAT_R$ is a rigid symmetric monoidal $\oo$-category. 
Moreover, for $\sT \in \dgCat_R$, the dual of $\widehat{\sT}$ in $\dgCAT_R$ is $\widehat{\sT^{\op}}\simeq \widehat{\sT}^{\op}$.

For $\sM$ a left $\sA$-module, we have that  $\widehat{\sM}$ is a left $\widehat{\sA}$-module while $\widehat{\sM}^{\op}$ is a right $\widehat{\sA}$-module.

Thus, we have two dg-functors in $\dgCAT_R$
$$
\mu: 
\widehat{\sA}\otimes_{\widehat{R}} \widehat{\sM} 
\to
\widehat{\sM},
\hspace{0.5cm}
\mu^{\op}:
\widehat{\sM}^{\op} \otimes_{\widehat{R}} \widehat{\sA}
\to 
\widehat{\sM}^{\op}.
$$

\begin{defn}\label{defn: cotensored A-module}
A left $\sA$-module $\sM$ is \emph{cotensored} over $\sA$ if the morphism $\mu^{\op}$ above lies in the subcategory $\dgCat_R\subseteq \dgCAT_R$.
\end{defn}

In other words, a left $\sA$-module $\sM$ is cotensored over $\sA$ if $\sM^{\op}$ is a right $\sA$-module.

\begin{rmk}\label{rmk: monoidal catogory generated by its unit object = every module is cotensored}
As underlined in \cite[Remark 2.1.4]{toenvezzosi22}, if the monoidal dg-category $\sA$ is Karoubi-generated by its unit object, then
$$
  \Mod_{\sA} 
  \to 
  \Mod_{\widehat{\sA}}
$$
is essentially surjective.

In particular, every $\sA$-module $\sM$ is cotensored over $\sA$.
\end{rmk}
\sssec{}

Suppose that the right adjoint $\mu^{\up{r}}$ to $\mu: \widehat{\sA}\otimes_{\widehat{R}} \widehat{\sM} \to \widehat{\sM}$ also belongs to $\dgCAT_R$.
By duality we obtain a dg-functor
$$
h:
\widehat{\sM}^{\op} \otimes_{\widehat{R}} \widehat{\sM}
\to 
\widehat{\sA}.
$$
\begin{defn}
We say that  $\sM$ is \emph{proper} over $\sA$ if the dg-functor $h:\widehat{\sM}^{\op} \otimes_{\widehat{R}} \widehat{\sM} \to \widehat{\sA}$ above lies in the subcategory $\dgCat_R\subseteq \dgCAT_R$.
\end{defn}

\begin{rmk}
By its very definition
$$
h:
\widehat{\sM}^{\op} \otimes_{\widehat{R}} \widehat{\sM} 
\to 
\widehat{\sA},
$$
is the dg-functor
$$
(x,y)\mapsto \Hom_{\widehat{\sM}}(x,y).
$$
Therefore, $\sM$ is proper over $\sA$ if it has a natural \emph{enrichment} over $\sA$.
\end{rmk}

\sssec{}

As proved in \cite[Proposition 2.4.6]{toenvezzosi22}, for every $\sM \in \Mod_{\sA}$, $\widehat{\sM}$ is a dualizable $\widehat{\sA}$-module (in the sense of \cite[Section 2.4]{toenvezzosi22}, \cite[Section 4.6.1]{lurieha}). 
Its right dual can be identified with $\widehat{\sM^\op}$.
In particular, we have a uniquely defined (up to a contractible space of choices) pair of dg-functors
$$
\widehat{\sM} \otimes_{\widehat{R}} \widehat{\sM}^{\op} 
\to
\widehat{\sA},
$$
$$
\widehat{R}
\to
\widehat{\sM}^\op \otimes_{\widehat{\sA}} \widehat{\sM} 
$$
providing a duality datum for the dualizable object $\widehat{\sM}$ in $\Mod_{\widehat{\sA}}$.

Notice that the first displayed functor is $(\widehat{\sA}^{\env})^\rev$-linear, while the second one is just $\widehat{R}$-linear.
\begin{defn}
We say that $\sM\in \Mod_{\sA}$ is \emph{saturated over $\sA$} if it is cotensored over $\sA$ and if the two morphisms above lie in the images of the $\oo$-functors $\widehat{(-)}:\Mod_{(\sA^{\env})^\rev} \to \Mod_{(\widehat{\sA}^{\env})^\rev}$ and $\widehat{(-)}:\dgCat_R\to \dgCAT_R$.
\end{defn}
\begin{notat}
If $\sM$ is saturared over $\sA$, we will denote the two relevant dg-functors as
$$
\ev_{\sM/\sA}:
\sM \otimes_R \sM^\op 
\to
\sA,
$$
$$
\coev_{\sM/\sA}:
R
\to
\sM^\op \otimes_{\sA} \sM 
$$
and refer to them as the \emph{evaluation} and \emph{coevaluation} respectively.

If no confusion arises, we will denote them simply as $\ev$ and $\coev$.
\end{notat}

\sssec{}

For $\sM$ a saturated $\sA$-module, we will write
$$
  \ev^{\HH}_{\sM/\sA}:
  \sM^\op \otimes_\sA \sM
  \simeq
  (\sM^\op \otimes_R \sM)
  \otimes_{\sA^\rev} 
  \sA
  \to
  \sA
  \otimes_{\sA^\rev}
  \sA
  =:
  \HH_*(\sA/R)
$$
for the map
$$
\bigt{
  \sM \otimes_R \sM^\op 
  \xto{\up{swap}}
  \sM^\op \otimes_R \sM
  \xto{\ev_{\sM/\sA}}
  \sA
}
\otimes_{\sA^\rev}
\sA.
$$
We will write $\ev^{\HH}$ if no confusion can arise.
\sssec{}

For every $\sM$ saturated over $\sA$, and for every $\sA$-linear endomorphism $\phi$ of $\sM$, one has the following $R$-linear dg-functor
$$
  \Perf(R)
  \xto{\coev}
  \sM^\op \otimes_\sA \sM
  \xto{\id \otimes \phi}
  \sM^\op \otimes_\sA \sM
  \xto{\ev^{\HH}}
  \HH_*(\sA/R)
$$
which can be identified with an object
$$
  \Tr_\sA(\phi:\sM)
  \in
  \HH_*(\sA/R).
$$

\ssec{Singularity categories}\label{ssec: singularity categories}
\sssec{}
Just as for usual schemes, one can attach certain dg-categories to derived schemes.

For $X=\Spec{A}$ an affine derived scheme, corresponding to a simplicial ring $A$, one puts
$$
  \QCoh(X):=\Mod_{N(A)},
$$
where $N(A)$ is the dg-algebra obtained from $A$ via the nerve construction.

When $X$ is an arbitrary derived scheme, one puts
$$
  \QCoh(X)
  :=
  \lim_{\Spec{A}\subseteq X}\Mod_{N(A)},
$$
where the limit runs over the category of Zariski open subsets of $X$.

\sssec{}

Inside $\QCoh(X)$ sit two particularly important dg-categories, namely the dg-category of perfect complexes $\Perf(X)$ and the dg-category $\Coh(X)$ of objects in $\QCoh(X)$ with bounded coherent cohomology.

\sssec{}

The singularity category $\Sing(Z)$ is a non-commutative invariant of $Z$ which is related to its singularities.
For example, it follows from the homological characterization of regularity (\cite{auslanderbuchsbaum56,auslanderbuchsbaum57,serre56}) that a noetherian scheme of finite Krull dimension is regular if and only if its singularity category is trivial.

\sssec{}
Let $Z$ be a bounded and noetherian derived scheme. 
Then we can consider the full embedding
$$
  \Perf(Z)\subseteq \Coh(Z).
$$
The \emph{singularity category of $Z$} is defined as the dg-quotient
$$
  \Sing(Z):=\Coh(Z)/\Perf(Z).
$$

\sssec{}

Let $Z$ be an hypersurface $V(f)$ in a regular scheme $Y$. 
Then $\Sing(Z)$ is equivalent to the dg-category of matrix factorizations $\MF(Y,f)$. 
This is a dg-category whose objects are quadruplets $(E,F,\phi,\psi)$, where $E$ and $F$ are projective $\ccO_Y$-modules of finite type and
$$
  \phi: 
  E 
  \to 
  F,
  \hspace{0.5cm}
  \psi:
  F 
  \to 
  E
$$
are $\ccO_Y$-linear morphisms such that
$$
  \psi \circ \phi
  =
  f\cdot \id_E,
  \hspace{0.5cm}
  \phi \circ \psi
  =
  f\cdot \id_F.
$$
We refer to \cite{buchweitz21, orlov04} for more details.

More generally, we can consider a global section $\sigma$ of a line bundle $L$ on $Y$. 
In this case, we will denote $K(Y,L,\sigma)$ the Koszul (derived) scheme $Y\times_{\sigma,L,0}Y$\footnote{When the line bundle $L$ is trivial we will simply write $K(Y,\sigma)$.}.
Then $\Sing \bigt{K(Y,L,\sigma)}$ is equivalent to the dg-category of twisted matrix factorizations $\MF(Y,L,\sigma)$.
This is a dg-category whose objects are quadruplets $(E,F,\phi,\psi)$, where $E$ and $F$ are vector bundles on $Y$and
$$
  \phi: 
  E 
  \to 
  F,
  \hspace{0.5cm}
  \psi:
  F 
  \to 
  E \otimes L
$$
are $\ccO_Y$-linear morphisms such that
$$
  \psi \circ \phi
  =
  \id_E\otimes \tau,
  \hspace{0.5cm}
  (\phi \otimes L) \circ \psi
  =
  \id_F \otimes \tau.
$$
\begin{rmk}
Notice that the above mentioned equivalence holds even when $\sigma=0$, in which case $\MF(Y,L,0)$ is equivalent to the dg-category of $L$-twisted $2$-periodic complexes on $Y$.
\end{rmk}
\sec{The dg-categories of singularities attached to an arithmetic scheme}\label{sec: nctraces}

In this section, we introduce several dg-categories of sheaves and study some relations among them.

\ssec{The monoidal dg-category \texorpdfstring{$\sB$}{B}}

In this section, we introduce the scheme $G := s \times_S s$, which is a groupoid over $s$, and the dg-categories $\sB^+ := \Coh(G)$ and $\sB :=\Sing(G)$. We provide strict models for them and show that they are endowed with natural convolution monoidal structures.

\sssec{}

We describe the groupoid structure on $G$ via the Hopf algebroid structure on its associated dg-algebra.

Consider the $A$-dg-algebra
$$
  K_A
  :=
  A[\varepsilon],
$$
where $\varepsilon$ is a free variable in cohomological degree $-1$ such that $d(\varepsilon)=\pi_{\uK}$.

Consider the dg-algebra
$$
  K_A^2
  :=
  K_A\otimes_A K_A
  \simeq
  A[\varepsilon_1,\varepsilon_2].
$$
Here, both $\varepsilon_1$ and $\varepsilon_2$ are free variables sitting in cohomological degree $-1$ whose differential is $\pi_{\uK}$.

Notice that $K_A$ is quasi-isomorphic to $k$ and that $K_A^2$ is quasi-isomorphic to the nerve of the simplicial algebra $k\otimes_Ak$. Thus, $K_A^2$ is the Hopf algebroid associated to the derived groupoid scheme
$$
  G =  s\times_S s.
$$

\sssec{}

As it is well-known, $K_A^2$ is a Hopf algebroid over $A$. 
The source and target morphisms are
$$
  1\otimes \id :
  K_A
  \to
  K_A^2,
  \hspace{0.5cm}
  \id \otimes 1 :
  K_A
  \to
  K_A^2.
$$
The unit of the Hopf algebroid corresponds to the multiplication of the $A$-dg-algebra $K_A$
$$
  K_A^2
  \to 
  K_A,
  \hspace{0.5cm}
  \varepsilon_1,\varepsilon_2
  \mapsto
  \varepsilon
$$
and the composition to
$$
  \id\otimes 1 \otimes \id:
  K_A^2
  \to
  K_A^2\otimes_{K_A}K_A^2.
$$
Finally, the antipode corresponds to 
$$
  K_A^2
  \to
  K_A^2,
  \hspace{0.5cm}
  \varepsilon_1
  \mapsto 
  \varepsilon_2,
  \hspace{0.2cm}
  \varepsilon_2
  \mapsto
  \varepsilon_1.
$$

\sssec{}

Consider the $A$-linear dg-category $\dgMod(K_A^2)$ of $K_A^2$-dg-modules.
\begin{rmk}
The datum of a $K_A^2$-dg-module is equivalent to the datum of an $A$-dg-module $(E,d)$ together with two $A$-linear morphisms
$$
  h_1,h_2:
  E
  \to
  E[-1]
$$
such that 
\begin{itemize}
    \item $h_1^2=0$, $h_2^2=0$;
    \item $[h_1,h_2]=0$;
    \item $[d,h_1]=\pi_{\uK} \cdot \id_E = [d,h_2]$.
\end{itemize}
A morphism between two such objects is a morphism in the underlying dg-category of $A$-dg-modules which is strictly compatible with the given morphisms of degree $-1$. 
See \cite[Section 2]{brtv18} and \cite{pippi22b} for more details.
\end{rmk}

\sssec{}

The Hopf algebroid structure on $K_A^2$ endows $\dgMod(K_A^2)$ with an associative and unital (but not necessarely symmetric) monoidal structure
$$
  -\odot -:
  \dgMod(K_A^2) \otimes_A \dgMod(K_A^2)
  \to
  \dgMod(K_A^2)
$$
$$
  (M,N)
  \mapsto
  M\odot N
  :=
  M\otimes_{K_A}N,
$$
where $K_A$ acts on $M$ via $\varepsilon_2$ and on $N$ via $\varepsilon_1$.
The unit of this monoidal structure is $K_A$, regarded as a $K_A^2$-dg-module via the diagonal structure.

\sssec{}

We endow $\dgMod(K_A^2)$ with the projective model structure.
As already mentioned in \cite[\S 4.1.1]{toenvezzosi22}, this is compatible with the monoidal structure. 
In particular, the monoidal structure preserves cofibrant objects.

\sssec{}

Let $\cofdgMod(K_A^2)\subseteq \dgMod(K_A^2)$ denote the full subcategory spanned by those dg-modules which are cofibrant and strictly perfect over $A$, together with the monoidal unit $K_A$ (which is not cofibrant).

\sssec{}

Let $W_{\up{qi}}$ denote the set of quasi-isomorphisms in $\cofdgMod(K_A^2)$.
Put
$$
  \sB^+
  :=
  L_{W_{\up{qi}}}\bigt{\cofdgMod(K_A^2)}.
$$
This is a monoidal $A$-linear dg-category (\cite[Appendix A]{toenvezzosi22}). 
Its underlying $A$-linear dg-category is $\Coh(G)$.

\sssec{}

Consider the set $W_{\up{pe}}$ consisting of those morphism in $\cofdgMod(K_A^2)$ whose cofiber is a perfect $K_A^2$-dg-module.

\begin{lem}\label{lem: compatibility odot with Wqi and Wpe}
The monoidal structure on $\cofdgMod(K_A^2)$ is compatible with $W_{\up{pe}}$, i.e. $\odot$ sends $W_{\up{pe}}\otimes \id \cup \id \otimes W_{\up{pe}}$ into $W_{\up{pe}}$.
\end{lem}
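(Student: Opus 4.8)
The plan is to recognise that the lemma asserts precisely that $\Perf(K_A^2)$ is a two-sided $\odot$-ideal inside $\Coh(G) = \Coh(K_A^2)$, and to deduce this from one structural fact about $G = s \times_S s$: that $\Coh(G)$ is generated, as a thick subcategory, by the monoidal unit $K_A$.

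First I would observe that $M \odot N = M \otimes_{K_A} N$ and that, for every $N$ in $\cofdgMod(K_A^2)$, the endofunctor $-\odot N$ is exact, and likewise $M \odot -$ for every $M$. The point is that $K_A^2 = K_A \otimes_A K_A$ is, through either structure map $\varepsilon_1,\varepsilon_2$, a bounded complex of free $K_A$-modules, so a cofibrant $K_A^2$-dg-module (and the unit $K_A$) is cofibrant, hence K-flat, over $K_A$ along both maps; thus $\otimes_{K_A}$ computes the derived tensor product and preserves cofiber sequences. Granting this, if $f \in W_{\up{pe}}$ has cofiber $C$ then $f \odot \id_N$ has cofiber $C \odot N$, and symmetrically $\id_M \odot g$ has cofiber $M \odot C'$ for $g \in W_{\up{pe}}$ with cofiber $C'$. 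Hence the lemma reduces to showing that $C \odot N$ and $M \odot C'$ are perfect $K_A^2$-dg-modules whenever $C, C'$ are.

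Next I would prove the structural fact. Recall that $\cofdgMod(K_A^2)$ is a model for $\Coh(G)$, that $K_A^2 \simeq k \otimes_A^{\bbL} k$ satisfies $\uH^i(K_A^2) = 0$ for $i > 0$ with $\uH^0(K_A^2) = k$ a field and with finite-dimensional total cohomology, and that the monoidal unit $K_A \simeq k$ is the $K_A^2$-module obtained by restriction along $K_A^2 \to \pi_0(K_A^2) = k$. For $M \in \Coh(K_A^2)$, its Postnikov tower exhibits $M$ as built, via finitely many cofiber sequences, out of the shifted cohomology modules $\uH^i(M)[-i]$; each $\uH^i(M)$ is a finitely generated, hence finite-dimensional, $\pi_0(K_A^2) = k$-module, i.e.\ a finite direct sum of copies of $K_A$. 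Thus $M$ lies in the thick subcategory generated by $K_A$.

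To conclude: for $C$ perfect, the exact endofunctor $C \odot -$ sends the generator $K_A$ to $C \odot K_A \simeq C$, which is perfect; since perfect $K_A^2$-dg-modules form a thick subcategory and every $N \in \cofdgMod(K_A^2)$ lies in the thick subcategory generated by $K_A$, it follows that $C \odot N$ lies in the thick subcategory generated by $C$, hence is perfect; the argument for $M \odot C'$ is identical. I expect the one nontrivial step to be the generation statement of the third paragraph — that $\Coh(G)$ is the thick closure of the monoidal unit — which rests on the special geometry of $s \times_S s$ (equivalently, on $K_A$ being quasi-isomorphic to the field $k$) and is exactly what degenerates over a more general base than a DVR.
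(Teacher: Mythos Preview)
Your proof is correct, but it follows a different reduction from the paper's. Both begin by observing that the claim amounts to showing $C \odot N$ (and symmetrically $M \odot C'$) is perfect whenever $C$ is. At this point the paper reduces on the \emph{perfect} variable: since $\Perf(K_A^2)$ is thick-generated by $K_A^2$, it suffices to compute $K_A^2 \odot N \simeq K_A \otimes_A N$, which is the external product over $A$ of two $K_A$-modules and is perfect because $K_A \simeq k$ is regular (so $N$ is already perfect over $K_A$). You instead reduce on the \emph{coherent} variable: using the Postnikov tower and $\pi_0(K_A^2)=k$ a field, you show $\Coh(G)$ is thick-generated by the monoidal unit $K_A$, whence $C \odot N$ lies in the thick closure of $C \odot K_A \simeq C$. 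The paper's route is a one-line computation once the reduction is made; your route is slightly longer but isolates the structural fact that $\sB^+$ is Karoubi-generated by its unit, which the paper invokes separately later (for cotensoredness of $\sB$-modules). Either way the regularity of $s$ is the essential input, as you correctly flag.
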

\begin{proof}
This has been already observed in \cite{toenvezzosi22}. 
We include the details for the sake of completeness.
The claim is equivalent to the following: if $M, N\in \cofdgMod(K_A^2)$, then $M\odot N$ is a perfect $K_A^2$-dg-module as soon as one among $M$ and $N$ is. 
Suppose that $M$ is a perfect $K_A^2$-dg-module.
We may assume that $M=K_A^2$. 
In this case,
$$
  K_A^2\odot N 
  \simeq 
  K_A\otimes_A N
$$
is the external tensor product over $A$ of two perfect $K_A$-modules (since $K_A\simeq k$ is regular), whence perfect.

The case when $N$ is perfect as a $K_A^2$-dg-module is totally analogous.
\end{proof}

\sssec{}
The lemma above, joint with \cite[Appendix A]{toenvezzosi22}, implies that
$$
  \sB
  :=
  L_{W_{\up{pe}}}\bigt{\cofdgMod(K_A^2)}
$$
is a monoidal dg-category, whose underlying dg-category is equivalent to $\Sing(G)$.
Since $W_{\up{qi}}\subseteq W_{\up{pe}}$, the localization morphism $\sB^+\to \sB$ is monoidal.

\ssec{The \texorpdfstring{$\sB$}{B}-module \texorpdfstring{$\sT$}{T}}

Let $X \to S$ be a regular $S$-scheme of finite type.
As usual, we denote by $X_s := X \times_S s$ the special fiber. 
The goal of this section is to provide a strict model for the dg-category $\sT := \Sing(X_s)$, and equip it with the structure of left $\sB$-module.

\sssec{}

Consider an $A$-algebra $A\to R$ of finite type and assume that $R$ is regular.
Let
$$
  K_R
  :=
  R[\varepsilon],
$$
where $\varepsilon$ is a free variable in cohomological degree $-1$ such that $d(\varepsilon)=\pi_{\uK}$.
Notice that $K_R=K_A\otimes_AR$.

\sssec{}

Following \cite[\S 4.1.2]{toenvezzosi22}, we consider the $A$-linear dg-category $\cofdgMod(K_R)$ of cofibrant (with respect to the projective model structure) $K_R$-dg-modules with underlying perfect $R$-module.
By \cite[Lemma 2.33]{brtv18}, $\cofdgMod(K_R)$ is a strict model for $\Coh(R\otimes_Ak)$ in the sense that
$$
  \Coh(R\otimes_Ak)\simeq L_{W_{\up{qi}}}\bigt{\cofdgMod(K_R)},
$$
where $W_{\up{qi}}$ stands for the set of quasi-isomorphisms in $\cofdgMod(K_R)$.

\sssec{}

We consider the dg-functor
$$
  -\odot-:
  \cofdgMod(K_A^2)\otimes_A \cofdgMod(K_R)
  \to
  \cofdgMod(K_R)
$$
$$
  (M,E)
  \mapsto
  M\odot N
  :=
  M\otimes_{K_A}E,
$$
where $M$ has the $K_A$-module structure induced by $1\otimes \id:K_A\to K_A^2$ and $E$ that induced by $K_A\to K_R$.

\sssec{}\label{left B module structure}

By \cite[Proposition 4.1.5]{toenvezzosi22} this induces a left $\sB^+$-modules structure on 
$$
\Coh(k\otimes_AR)\simeq L_{W_{\up{qi}}}\cofdgMod(K_R)
$$
and a left $\sB$-module structure on 
$$
\Sing(k\otimes_AR)\simeq L_{W_{\up{pe}}}\cofdgMod(K_R).
$$

\sssec{}

For a regular $S$-scheme of finite type $X\to S$, we set
$$
  \sT^+
  :=
  \lim_{\Spec{R}\subseteq X} L_{W_{\up{qi}}}\cofdgMod(K_R)
  \in
  \Mod_{\sB^+}.
$$
Since the forgetful functor $\Mod_{\sB^+} \to \dgCat_A$ is a right adjoint, it commutes with limits.
If follows that the $A$-linear dg-category underlying $\sT^+$ is equivalent to $\Coh(X_s)$.
Similarly, we define
$$
  \sT
  :=
  \lim_{\Spec{R}\subseteq X} L_{W_{\up{pe}}}\cofdgMod(K_R)
  \in
  \Mod_{\sB}.
$$
This definition endows $\Sing(X_s)$ with a left $\sB$-module structure.

\begin{rmk}{\cite[Proposition 4.1.7]{toenvezzosi22}}
Since $\sB^+$ (resp. $\sB$) is Karoubi-generated by its unit object, $\sT^+$ (resp. $\sT$) is cotensored over $\sB^+$ (resp. $\sB$).
\end{rmk}

\sssec{}

One of the main contributions of {\TV} is the following \emph{K\"unneth formula for singularity categories}.

\begin{thm}[{\cite[Theorem 4.2.1]{toenvezzosi22}}]\label{Thm: Kunneth for Sing}\label{thm:Kunneth}
Let $Y$ and $Z$ be two schemes over $S$. 
Suppose that both $Y$ and $Z$ are regular $S$-schemes of finite type with smooth generic fibers.
Consider the dg-categories $\Sing(Y_s)$ and $\Sing(Z_s)$, both equipped with the convolution action of $\sB$. There is a canonical equivalence
$$
\ffF:
\Sing(Y_s)^\op
\otimes_{\sB}
\Sing(Z_s)
\xto{\;\; \simeq \;\;}
\Sing(Y \times_S Z)
$$
induced by the dg-functor
$$
  (Y_s\times_sZ_s \to Y\times_S Z)_*(\ul\Hom_{Y_s}(-,\ccO)\boxtimes_s -):
  \Coh(Y_s)^\op \otimes_A \Coh(Z_s)
  \to
  \Coh(Y\times_S Z).
$$

\end{thm}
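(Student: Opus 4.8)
The functor $\ffF$ being given in the statement, the plan is to prove it is an equivalence by reducing to the affine case and then computing the two-sided bar construction that presents the relative tensor product. As a first step, both sides are Zariski-local in $Y$ and in $Z$: the right-hand side because $\Sing(-)$ is a Zariski sheaf, and the left-hand side because $\Sing(Y_s)$ is a Zariski sheaf of $\sB$-modules while $-\otimes_\sB\Sing(Z_s)$ is colimit-preserving (the Zariski-descent squares, formed in presentable dg-categories, are bicartesian and hence preserved). So we may assume $Y=\Spec{R}$ and $Z=\Spec{R'}$ with $R,R'$ regular of finite type over $A$; it is harmless, and convenient, to suppose in addition that $R,R'$ are flat over $A$ (the general case runs identically, with derived schemes in place of schemes), so that $\pi_{\uK}$ is a non-zerodivisor, $Y_s=V(\pi_{\uK})\subseteq Y$ and $Z_s=V(\pi_{\uK})\subseteq Z$ are hypersurfaces, and $X:=Y\times_S Z=\Spec{R\otimes_A R'}$ is a classical scheme that is regular away from $X_s=Y_s\times_s Z_s$. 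On the strict dg-models of the previous sections, the dg-functor $p_{*}\bigl(\ul\Hom_{Y_s}(-,\ccO)\boxtimes_s-\bigr)$, with $p\colon X_s\hookrightarrow X$, is balanced over $\sB^+$: the dualisation $\ul\Hom(-,\ccO)$ converts the left convolution action into a right action precisely via the antipode $\varepsilon_1\leftrightarrow\varepsilon_2$ of the Hopf algebroid $K_A^2$, and the source and target legs $K_A\to K_A^2$ encode the gluing of the two module structures; it therefore descends to $\ffF^+\colon\Coh(Y_s)^\op\otimes_{\sB^+}\Coh(Z_s)\to\Coh(X)$ and, after inverting $W_{\up{pe}}$, to the functor $\ffF$.

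To show $\ffF$ is an equivalence I would compute $\mathrm{Bar}_\bullet\bigl(\Coh(Y_s)^\op,\sB^+,\Coh(Z_s)\bigr)$ term by term. By the Künneth formula for $\Coh$ over the field $k$, its $n$-th term is $\Coh$ of the $n$-fold fibre product $Y_s\times_s G^{\times_s n}\times_s Z_s$ (where $G=s\times_S s$); a direct computation of these derived fibre products identifies the resulting simplicial derived scheme with the Čech nerve of the closed immersion $X_s\hookrightarrow X$, the face maps matching the convolution structure maps under the dualisation twist. The colimit of this nerve is the formal completion $X^{\wedge}_{X_s}$; since the singularities of $X$ are contained in $X_s$, the singularity category does not see the difference between $X$ and $X^{\wedge}_{X_s}$, and passing to $\Sing$ gives $\Sing(Y_s)^\op\otimes_\sB\Sing(Z_s)\simeq\Sing(X)$. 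Alternatively, the comparison can be tested on generators: $\Sing(Y_s)$, resp.\ $\Sing(Z_s)$, is generated under the $\sB$-action, colimits and retracts by the image of $\ccO_{Y_s}$, resp.\ $\ccO_{Z_s}$; the functor $\ffF$ sends $\ccO_{Y_s}^\op\otimes\ccO_{Z_s}$ to $p_*\ccO_{X_s}=\ccO_X/\pi_{\uK}$; full faithfulness then reduces, by $\sB$-linearity, to checking that the induced map on $\Hom$-complexes between these generators is an equivalence — on the source side this is read off from the bar complex and the Hopf-algebroid structure, on the target side it is the $2$-periodicity (Tate) computation in $\MF(X,\pi_{\uK})$ — while essential surjectivity follows because $\Sing(X)$ is generated by $\ccO_X/\pi_{\uK}$, the complement of the Cartier divisor $X_s\subseteq X$ being regular.

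The main obstacle is the bookkeeping of the two quotients — the localisation at $W_{\up{pe}}$ turning $\Coh$ into $\Sing$, and the relative-tensor-product quotient — and in particular the verification that the stated $\Coh$-functor is well defined at the level of singularity categories: one must show that $p_*\bigl(\ul\Hom_{Y_s}(-,\ccO)\boxtimes_s-\bigr)$ carries objects built out of perfect complexes on either factor to perfect complexes on $X$, so that the perfect objects correspond on the two sides. Informally this is the assertion that the pushforward of a perfect complex along the Cartier divisor $X_s\hookrightarrow X$ becomes perfect once one is allowed to discard the regular part of $X$; it holds, but interacts delicately with the relative tensor product, and this is precisely where the regularity of $Y$ and $Z$ and the smoothness of their generic fibres are used in an essential way. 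A secondary technical point is the identification of the geometric realisation of the category-level bar construction with that of the scheme-level Čech nerve, that is, the commutation of $\Coh$ with this pro-nilpotent colimit; the descent used in the reduction to the affine case is by contrast unproblematic, since bicartesian squares of presentable dg-categories are preserved by the colimit-preserving functor $-\otimes_\sB\Sing(Z_s)$.
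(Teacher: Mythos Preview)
The paper does not supply its own proof of this theorem: it is stated with a citation to \cite[Theorem~4.2.1]{toenvezzosi22}, followed only by a remark that the flatness hypothesis there is superfluous in the derived setting. There is therefore nothing in the present paper to compare your argument against directly; you are in effect reconstructing the To\"en--Vezzosi proof.

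Your outline is in the right spirit: the reduction to the affine case via Zariski descent, the use of the strict models $K_R$, $K_A^2$ to make the bar construction explicit, and the alternative route through generators are all reasonable ingredients, and the latter is close to how such comparisons are actually carried out. That said, the first route has a genuine gap. The assertion that ``the colimit of this nerve is the formal completion $X^{\wedge}_{X_s}$'' conflates the colimit of the simplicial diagram of \emph{derived schemes} with the colimit of the simplicial diagram of \emph{dg-categories} $[n]\mapsto\Coh(X_s^{\times_X(n+1)})$. These do not obviously agree: $\Coh$ does not commute with such colimits in general, and the subsequent passage from $\Coh$ of a formal neighbourhood to $\Sing(X)$ requires a careful comparison of perfect subcategories on both sides. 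You flag this as a ``secondary technical point'', but it is in fact the crux of the argument --- as is the interaction between the localisation at $W_{\up{pe}}$ and the bar-construction colimit, which you correctly identify as delicate but do not resolve.

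The generator approach is more robust, but also incomplete as written: the claim that $\Sing(X)$ is generated by $\ccO_X/\pi_{\uK}$ does not follow merely from the complement being regular (that gives only that $\Sing(X)$ is supported on $X_s$); one needs a d\'evissage argument to conclude generation. Likewise, the full-faithfulness check on generators requires an explicit endomorphism computation in the relative tensor product that you allude to but do not perform.
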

\begin{rmk}
Notice that in \emph{loc. cit.} the authors suppose that both $Y$ and $Z$ are flat over $S$. 
This hypothesis is superfluous, provided that fiber products are taken the derived sense (as we do in this paper).
\end{rmk}

As a consequence, we obtain
\begin{cor}[{\cite[Proposition 4.3.1, Remark 4.3.2]{toenvezzosi22}}]\label{cor: T dualizable B module}
Let $p:X\to S$ a regular $S$-scheme of finite type with smooth geometric fiber.
Assume that the singular locus of $p_s:X_s\to s$ is proper.
Then $\sT$ is a saturated $\sB$-module.
\end{cor}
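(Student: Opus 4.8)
The plan is to derive the statement from the Künneth formula (Theorem~\ref{thm:Kunneth}) together with the generalities on dualizable modules recalled above, using the hypothesis on the singular locus only to control a single non-proper pushforward. Recall that $\sT$ is saturated over $\sB$ precisely when it is cotensored over $\sB$ and, in addition, the evaluation $\ev: \widehat\sT\otimes_{\widehat A}\widehat{\sT^\op}\to\widehat\sB$ and the coevaluation $\coev: \widehat A\to\widehat{\sT^\op}\otimes_{\widehat\sB}\widehat\sT$ of the (automatically) dualizable $\widehat\sB$-module $\widehat\sT$ lie in the essential images of the functors $\widehat{(-)}$. The first requirement is exactly the Remark preceding the statement, since $\sB$ is Karoubi-generated by its unit.

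For the coevaluation, I would apply Theorem~\ref{thm:Kunneth} with $Y=Z=X$ — legitimate because $X$ is a regular finite-type $S$-scheme with smooth generic fiber — to obtain a canonical equivalence $\sT^\op\otimes_\sB\sT\simeq\Sing(X\times_S X)$ of small dg-categories. Ind-completing and using that $\widehat{(-)}$ is compatible with relative tensor products, the target of $\coev$ is identified with $\Ind\Sing(X\times_S X)$. Unwinding the explicit dg-functor inducing the Künneth equivalence, $\coev$ sends the unit object of $\widehat A$ to the image in $\Sing(X\times_S X)$ of the diagonal pushforward $\delta_{X/S,*}\ccO_X$ (equivalently $\delta_{X/S,*}\omega_{X/S}$, as $X$ is regular), where $\delta_{X/S}: X\to X\times_S X$ is a closed immersion. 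Hence this is a genuine bounded coherent complex, thus a compact object of $\Ind\Sing(X\times_S X)$, and therefore $\coev$ lies in the essential image of $\widehat{(-)}: \dgCat_A\to\dgCAT_A$.

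For the evaluation I would argue as follows. Let $\Sigma\subseteq X_s$ be the singular locus of $p_s$; since $X$ is regular with smooth geometric fiber, $\Sigma$ agrees with the non-smooth locus of $p: X\to S$, and every object of $\Sing(X_s)$ becomes zero in $\Sing$ away from $\Sigma$. Up to the swap, $\ev$ is the $\sB$-enrichment of $\sT$: on objects it is computed by an internal Hom over $X_s$ followed by pushforward of an external tensor product along a natural morphism from (a product of two copies of) $X_s$ to $G$. This pushforward is not proper, which is why $\ev$ a priori only lands in $\widehat\sB$. However, modulo complexes that vanish in $\Sing(G)$, the complexes being pushed forward are supported over $\Sigma\times_S\Sigma$; by hypothesis $\Sigma$ is proper over $s$, so the relevant map is proper over this locus and the pushforward preserves bounded coherence there. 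Consequently $\ev$ factors through $\sB=\Sing(G)\subseteq\widehat\sB$, and being $(\widehat\sB^{\env})^{\rev}$-linear it descends to a morphism in $\Mod_{(\sB^{\env})^{\rev}}$. Combined with the two preceding points, this shows $\sT$ is saturated over $\sB$.

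The hard part will be the evaluation step: one must pin down the $\sB$-enrichment of $\sT$ concretely enough to recognize it as such a pushforward, and then run the support argument in the derived category of the (possibly non-classical) scheme-theoretic product of two copies of $X_s$ — in particular checking that coherent complexes on $X_s$ are perfect off $\Sigma$, and that derived pushforward along the proper restriction over $\Sigma\times_S\Sigma$ preserves bounded coherence. The coevaluation step and the reduction of saturation to three checks are, by contrast, formal given Theorem~\ref{thm:Kunneth} and the recollections above.
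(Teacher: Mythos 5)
Your proposal is correct and follows essentially the same route as the paper, which cites To\"en--Vezzosi and then, immediately after the statement, writes down exactly the duality data you reconstruct: the coevaluation $\ffF^{-1}\circ \on{proj}\circ \delta_{X/S*}\circ p^*$ coming from the K\"unneth equivalence applied to $Y=Z=X$, and the evaluation $(x,y)\mapsto (X_s\times_S X_s\to s\times_S s)_*\bigt{y\boxtimes_S \ul\Hom(x,\ccO)}$, which lands in $\Coh(G)$ precisely because objects of $\Sing(X_s)$ are perfect off the singular locus and the pushforward is proper over it. The point you flag as "the hard part" is indeed where the properness hypothesis enters, and your support argument is the standard one used in \cite{toenvezzosi22}.
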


\sssec{}

More precisely, consider the $\sB^\env$-linear dg-functor
$$
  \ev_{\sT/\sB}:
  \sT \otimes_A \sT^\op
  \to
  \sB
$$
induced by 
$$
  \Coh(X_s)\otimes_A \Coh(X_s)^\op
  \to
  \Coh(G)
$$
$$
  (x,y)
  \mapsto
  (X_s\times_S X_s \to s\times_Ss)_*(y\boxtimes_S \ul\Hom(x,\ccO)),
$$
and the functor (here $\on{proj}:\Coh(X\times_SX)\to \Sing(X\times_SX)$ denotes the canonical dg-functor)
$$
  \coev_{\sT/\sB}
  :=
  \ffF^{-1}\circ \on{proj} \circ \delta_{X/S*}\circ p^*:
  \Perf(S)
  \to
  \sT^\op \otimes_{\sB}\sT.
$$
It is proven in \cite{beraldopippi22} that $\ev_{\sT/\sB}$ and $\coev_{\sT/\sB}$ provide a duality datum for $\sT$ over $\sB$.

\ssec{The monoidal dg-category \texorpdfstring{$\sC$}{C}}

In this section, we study the groupoid scheme
$$
  H
  :=
  S'\times_S S'
$$
and the associated monoidal dg-categories $\sC^+ :=\Coh(H)$ and $\sC := \Sing(H)$.

\sssec{}

Recall our convention that $S' \to S$ is the map of affine schemes corresponding to a totally ramified extension $A \subseteq A'$ of discrete valuation rings. By \cite[Chapter 1, Proposition 18]{serre79}, we can write $A' =A[x]/\langle E(x) \rangle$ for an Eisenstein polynomial $E(x) \in A[x]$.

\sssec{}

We now describe the groupoid structure on $H$ via the corresponding Hopf algebroid. Consider the commutative $A$-algebra
$$
  A''
  :=
  A[x_1,x_2]/\langle E(x_1),E(x_2) \rangle.
$$
This is a Hopf algebroid over $A$, where the source and target maps are given by the inclusions
$$
  A'
  \to 
  A'', 
  \hspace{0.5cm} 
  x 
  \mapsto 
  x_i 
  \hspace{0.2cm} 
  (i=1,2),
$$
the unit is given by the multiplication
$$
  A'' 
  \to
  A',
  \hspace{0.5cm} 
  x_1,x_2 
  \mapsto 
  x, 
$$
the composition corresponds to
$$
  \id \otimes 1 \otimes \id:
  A''=A'\otimes_AA' 
  \to
  A'\otimes_A A' \otimes_A A',
$$
and the antipode to
$$
  A''
  \to
  A'',
  \hspace{0.5cm}
  x_1
  \mapsto 
  x_2, \;\;
  x_2
  \mapsto 
  x_1.
$$

\sssec{}

Consider the $A$-linear dg-category $\dgMod(A'')$ of dg-modules over $A''$.
The Hopf algebroid structure on $A''$ induces a (non-symmetric) monoidal structure on $\dgMod(A'')$ given by
$$
  M\circledcirc N
  :=
  M\otimes_{A'}N,
$$
where $A'$ acts on $M$ (resp. $N$) via $A'\xto{1\otimes \id}A''$ (resp. $A'\xto{\id\otimes 1}A''$).
The $A''$-structure on $M\otimes_{A'} N$ is given by $\id \otimes 1 \otimes \id: A''=A'\otimes_AA' \to A'\otimes_A A' \otimes_A A'$.
This structure is clearly associative and unital, the unit object being $A'$.

\sssec{}

We can endow $\dgMod(A'')$ with the projective model structure.
Moreover, this is compatible with the monoidal structure above, i.e. it defines a monoidal model category in the sense of \cite{hovey99}. 
In fact, we just need that the monoidal tensor product preserves cofibrant objects.

\sssec{}

Let $\cofdgMod(A'')\subseteq \dgMod(A'')$ be the (full) subcategory of cofibrant objects which are strictly perfect over $A$ and by the unit object $A'$. 
Notice that the unit is not cofibrant, so that including it is a non-redundant request.

\sssec{}

Consider the set of quasi-isomorphisms $W_{\up{qi}}$ in $\cofdgMod(A'')$.
We define
$$
  \sC^+
  :=
  L_{W_{\up{qi}}}\bigt{\cofdgMod(A'')}.
$$
This is a monoidal $A$-linear dg-category (\cite[Appendix A]{toenvezzosi22}), whose underlying dg-category is $\Coh(H)$ (\cite[Lemma 2.33]{brtv18}).

\sssec{}

Let $W_{\up{pe}}$ be the set of morphisms in $\cofdgMod(A'')$ whose cone is a perfect $A''$-dg-module.

\begin{lem}\label{lem: compatibility circledcirc with Wqi and Wpe}
The monoidal structure on $\cofdgMod(A'')$ is compatible with $W_{\up{pe}}$, i.e. the monoidal tensor product $\circledcirc$ sends $W_{\up{pe}} \otimes \id \cup \id \otimes W_{\up{pe}} $ into $W_{\up{pe}}$.
\end{lem}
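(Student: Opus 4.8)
The plan is to mimic, essentially verbatim, the proof of Lemma~\ref{lem: compatibility odot with Wqi and Wpe}. As there, the statement reduces to a single assertion: $M \circledcirc N$ is a perfect $A''$-dg-module as soon as one among $M$ and $N$ is perfect over $A''$. Indeed, if $f\colon M_1 \to M_2$ lies in $W_{\up{pe}}$, then, since $\circledcirc = -\otimes_{A'}-$ is exact in each variable, $\mathrm{cone}(f)\circledcirc N \simeq \mathrm{cone}(f\circledcirc \id_N)$; hence $f\circledcirc \id_N \in W_{\up{pe}}$ precisely when $\mathrm{cone}(f)\circledcirc N$ is perfect over $A''$. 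The case of $\id \otimes W_{\up{pe}}$ is symmetric.

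Next I would run the reduction to the free module. Since $\circledcirc$ preserves finite colimits and retracts in each variable, and since the perfect $A''$-dg-modules form the thick subcategory generated by $A''$ itself, it suffices to treat $M = A''$ (respectively $N = A''$). Then I would unwind the source/target maps $A' \to A''$, $x\mapsto x_i$, together with the output $A''$-module structure (given by $\id \otimes 1 \otimes \id$), to obtain an identification
$$
A'' \circledcirc N \;\simeq\; A' \otimes_A N,
$$
the external tensor product over $A$, where on the right $N$ carries the $A'$-module structure obtained by restriction along one of the two maps $A'\to A''$. This is exactly parallel to the identification $K_A^2 \odot N \simeq K_A \otimes_A N$ used in the proof of Lemma~\ref{lem: compatibility odot with Wqi and Wpe}.

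It then remains to observe that the external tensor product over $A$ of two perfect $A'$-modules is perfect over $A'\otimes_A A' = A''$ (a classical ring, since $S'\to S$ is flat). For the left factor this is immediate, $A'$ being free of rank one over itself; for the right factor I would argue that $A''$ is finite (indeed free of rank $e = \deg E$) over $A'$ via $x\mapsto x_i$, so that $N$ — bounded with coherent cohomology over $A''$ — is bounded with coherent cohomology over $A'$, hence perfect over $A'$ because the discrete valuation ring $A'$ is regular. The case where $N$ is the perfect factor is handled the same way, or deduced from the previous one via the antipode.

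I do not expect a genuine obstacle here. The only points requiring care are the bookkeeping of the two $A'$-module structures on $A''$ when unwinding $A'' \circledcirc N$, and the fact that restriction of scalars along the finite flat map $A'\to A''$ preserves bounded coherence — both routine. The whole content is that $A' \subseteq A''$ is finite flat and $A'$ is regular, exactly mirroring the role played by $k \simeq K_A$ being a field in the $\odot$ case.
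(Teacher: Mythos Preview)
Your proof is correct and follows the same approach as the paper's: reduce to $M = A''$, then observe that $A'' \circledcirc N = A'' \otimes_{A'} N$ is perfect over $A''$ because $N$ is perfect over the regular ring $A'$. Your explicit identification $A'' \circledcirc N \simeq A' \otimes_A N$ as an external tensor product simply unwinds what the paper leaves implicit in the line ``$A'' \circledcirc N = A'' \otimes_{A'} N$ and $N$ is a perfect $A'$-dg-module as $A'$ is regular.''
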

\begin{proof}
Let $u:M_1\to M_2 \in W_{\up{pe}}$ and $N \in \cofdgMod(A'')$. 
Since $\circledcirc$ is compatible with homotopy colimits, $u\circledcirc \id: M_1\circledcirc N \to M_2\circledcirc N \in W_{\up{pe}}$ if and only if $\coFib(u)\circledcirc N$ is a perfect $A''$-dg-module.
Since the category of perfect $A''$-dg-modules is Karoubi generated by $A''$, we need to show that $A''\circledcirc N$ is perfect for every $N$. This is obvious, since
$$
  A''\circledcirc N = A''\otimes_{A'}N 
$$
and $N$ is a perfect $A'$-dg-module as $A'$ is regular.
\end{proof}

\sssec{}

As a consequence of this lemma and of \cite[Appendix A]{toenvezzosi22}, the dg-localization
$$
  \sC
  :=
  L_{W_{\up{pe}}}\bigt{\cofdgMod(A'')}
$$
is monoidal. By \cite[Lemma 2.33]{brtv18}, it is equivalent to $\Sing(H)$.
Since $W_{\up{qi}}\subseteq W_{\up{pe}}$, the canonical map $\sC^+ \to \sC$ is monoidal.

\ssec{The \texorpdfstring{$\sC$}{C}-module \texorpdfstring{$\sU$}{U}}\label{ssec: the C module U}

Let $X\to S$ be a regular $S$-scheme of finite type. Let $X' := X \times_S S'$: this is a classical quasi-smooth scheme. In this section, we provide a strict model for the dg-category $\sU := \Sing(X')$ and endow it with the structure of left $\sC$-module.

\sssec{}

Let $A\to R$ be a regular $A$-algebra of finite type. Define
$$
  R'
  :=
  R[x]/\langle E(x) \rangle.
$$
Notice that $R'\simeq R\otimes_AA'$.

\sssec{}

We consider the $A$-linear dg-category $\cofdgMod(R')$ of cofibrant $R'$-dg-modules with an underlying perfect $R$-dg-module.
We have that
$$
  \Coh(R')\simeq L_{W_{\up{qi}}}\bigt{\cofdgMod(R')},
$$
with $W_{\up{qi}}$ the set of quasi-isomorphisms in $\cofdgMod(R')$.

\sssec{}

Consider the following dg-functor
$$
   \cofdgMod(A'') \otimes_A \cofdgMod(R')
  \to
  \cofdgMod(R')
$$
$$
  (N,E)
  \mapsto
  N\circledcirc E
  :=
  N\otimes_{A'}E,
$$
where $E$ has the $A'$-dg-module structure induced by $A'\to R'$ and $N$ that induced by $1 \otimes \id :A'\to A''$.

\begin{lem}
The above dg-functor induces a $\cofdgMod(A'')$-module structure on $\cofdgMod(R')$.
\end{lem}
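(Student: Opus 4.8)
The plan is to check the axioms making $\cofdgMod(R')$ a module category over the monoidal dg-category $(\cofdgMod(A''),\circledcirc)$. Conceptually this is just the statement that the groupoid $H=S'\times_SS'$ acts, by convolution, on $X'=X\times_SS'$ over $S'$, together with the effect of this action on the relevant subcategories of coherent/perfect objects; concretely it splits into two parts: (a) verifying that the displayed functor actually lands in $\cofdgMod(R')$, and (b) producing the unit and associativity constraints together with the pentagon and triangle identities. This runs parallel to the way the left $\sB^+$-module structure on $\cofdgMod(K_R)$ was obtained from the Hopf algebroid $K_A^2$ in \cite[Proposition 4.1.5]{toenvezzosi22}.

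Part (b) is formal. The unit object of $\cofdgMod(A'')$ is $A'$ with the diagonal $A''$-structure coming from the multiplication $A''\to A'$; restricted along $1\otimes\id:A'\to A''$ this is the tautological $A'$-module, so $A'\circledcirc E=A'\otimes_{A'}E=E$ on the nose and compatibly with the $R'$-action. The associativity constraint $(M\circledcirc N)\circledcirc E\simeq M\circledcirc(N\circledcirc E)$ is the evident isomorphism $(M\otimes_{A'}N)\otimes_{A'}E\simeq M\otimes_{A'}(N\otimes_{A'}E)$ of iterated relative tensor products; the point that the various $A'$-module structures match up is exactly the coassociativity of the Hopf algebroid $A''$ (encoded by the comultiplication $\id\otimes1\otimes\id:A''\to A'\otimes_AA'\otimes_AA'$). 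The pentagon and triangle then reduce to the associativity and unitality coherences of $\otimes_{A'}$, which hold strictly.

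The substantive point is (a), and I expect keeping track of strict perfectness to be the main obstacle. First I would record the relevant freeness: $A'=A[x]/\langle E(x)\rangle$ is finite free over $A$ of rank $e=\deg(E(x))$, whence $A''=A'\otimes_AA'$ is finite free over $A'$ through either leg and $R'=R\otimes_AA'$ is finite free over $R$, while $R$ is flat over $A$. Because restriction of scalars along the finite free map $1\otimes\id:A'\to A''$ is both left and right Quillen, a cofibrant $A''$-dg-module $N$ which is strictly perfect over $A$ restricts to a cofibrant $A'$-dg-module that is still strictly perfect (bounded, degreewise finitely generated projective) over $A'$. Using the base-change identity $N\otimes_{A'}(A'\otimes_AR)\simeq N\otimes_AR$ one then rewrites $N\circledcirc E=N\otimes_{A'}E$ so as to see that it is strictly perfect over $R$ (from $N$ strictly perfect over $A$ and $E$ strictly perfect over $R$, together with the flatness of $A'/A$ and $R'/R$) and cofibrant over $R'$ (the projective model structure on $\dgMod(R')$ being monoidal and $\circledcirc$ being built from the relative tensor products $\otimes_{A'}$ and $\otimes_{R'}$); hence $N\circledcirc E\in\cofdgMod(R')$. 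The same compatibility of $\circledcirc$ with cofibrations and with quasi-isomorphisms in each variable is what will later allow this strict action to be descended to $\dgCat_A$ and ultimately to the $\sC$-action on $\sU$, exactly as in \cite[Appendix A]{toenvezzosi22} and \cite{brtv18}. I expect the verification of strict perfectness over $R$ (rather than merely over $R'$) to be the one place where a genuine, if routine, argument is needed; everything else is formal.
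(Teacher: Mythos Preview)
Your proof is correct, and your Part (b) is exactly what the paper does: the paper's proof consists only of the associativity check $(N_1\otimes_{A'}N_2)\otimes_{A'}E\simeq N_1\otimes_{A'}(N_2\otimes_{A'}E)$ (verified by writing out the $R'$-action on the triple tensor product explicitly) and the unit check $A'\circledcirc E\simeq E$. Your treatment of these is essentially identical, though you phrase the compatibility of the intermediate $A'$-structures in terms of the Hopf algebroid coassociativity where the paper just writes the formula.

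The difference is one of emphasis. You declare Part (a) --- well-definedness of the target --- to be the ``substantive point'' and devote most of your write-up to it, whereas the paper's proof says nothing about it at all: the paper simply asserts the dg-functor has codomain $\cofdgMod(R')$ when it is introduced, and the lemma is understood to concern only the module axioms. Your Part (a) argument (finite freeness of $A'/A$, $A''/A'$, $R'/R$; cofibrancy and strict perfectness preserved under restriction and relative tensor) is sound and supplies a justification the paper leaves implicit, but it is not what this lemma is about in the paper's logic. So: same approach on the actual content, with a useful but misattributed addendum on your side.
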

\begin{proof}
We start by proving that there is a canonical isomorphism
$$
  (N_1\otimes_{A'} N_2)\otimes_{A'}E
  \simeq 
  N_1\otimes_{A'} (N_2\otimes_{A'}E).
$$
Notice that there is such a canonical isomorphism at the level of the underlying $A$-dg-modules.
Indeed both the left and the right hand sides are canonically isomorphic to the $A$-dg-module
$$
  N_1\otimes_{A'}N_2\otimes_{A'}E.
$$
We conclude by noticing that the canonical isomorphisms
$$
  (N_1\otimes_{A'} N_2)\otimes_{A'}E
  \simeq 
  N_1\otimes_{A'} N_2\otimes_{A'}E
  \simeq
  N_1\otimes_{A'} (N_2\otimes_{A'}E).
$$
are compatible with the $R'$-dg-modules structures, which in the middle is given by
$$
  (r\otimes x) \cdot (n_1\otimes n_2 \otimes e) 
  =
  (x\cdot n_1)\otimes n_2 \otimes (r \cdot e).
$$

Next, we show that there is a canonical isomorphism 
$$
  A'\circledcirc E
  \simeq 
  E
$$
This is obvious from the definition of $\circledcirc$ and from the $A''$-module structure on $A'$.
\end{proof}

\begin{lem}\label{lem: left C module structure}
There is a canonical left $\sC^+$-module structure on $\Coh(R')\simeq L_{W_{\up{qi}}}\bigt{\cofdgMod(R')}$ and a canonical left $\sC$-module structure on $\Sing(R')\simeq L_{W_{\up{pe}}}\bigt{\cofdgMod(R')}$.
\end{lem}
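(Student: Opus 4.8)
The plan is to descend the strict $\cofdgMod(A'')$-module structure on $\cofdgMod(R')$ furnished by the previous lemma along the two dg-localizations, exactly as the left $\sB$-module structure on $\sT^+$ and $\sT$ was obtained in \S\ref{left B module structure} from \cite[Proposition 4.1.5]{toenvezzosi22}. Concretely, I would invoke the universal property of dg-localizations: to produce a left $\sC^+$-module (resp. $\sC$-module) structure on $L_{W_{\up{qi}}}\bigt{\cofdgMod(R')}$ (resp. $L_{W_{\up{pe}}}\bigt{\cofdgMod(R')}$) it suffices to check that the action bifunctor $\circledcirc$ sends weak equivalences in either variable to weak equivalences, where on the $R'$-side one uses the class $W_{\up{qi}}$ of quasi-isomorphisms (resp. the class $W_{\up{pe}}$ of morphisms with perfect $R'$-cone).

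For the $W_{\up{qi}}$-localizations this is immediate: every object in sight is cofibrant (except the units $A'$, which are flat over $A'$), so $\circledcirc = -\otimes_{A'}-$ computes the derived tensor product and hence preserves quasi-isomorphisms in each variable. This already yields the left $\sC^+$-module structure on $\Coh(R')\simeq L_{W_{\up{qi}}}\bigt{\cofdgMod(R')}$.

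For the $W_{\up{pe}}$-localizations, since $\circledcirc$ commutes with cofibers the required compatibility reduces, by the same argument as in the proof of Lemma \ref{lem: compatibility circledcirc with Wqi and Wpe}, to showing that $N\circledcirc E$ is a perfect $R'$-dg-module as soon as $N$ is perfect over $A''$ or $E$ is perfect over $R'$. If $N$ is perfect over $A''$, then---perfect $A''$-modules being Karoubi-generated by $A''$---we may take $N=A''$, and $A''\circledcirc E = A''\otimes_{A'}E \simeq A'\otimes_A E \simeq R'\otimes_R E$ is perfect over $R'$ because $E$ is perfect over $R$ (external base change of a perfect module is perfect). If instead $E$ is perfect over $R'$, we may take $E=R'$, and $N\circledcirc R' = N\otimes_{A'}R'$ is perfect over $R'$ since $N$ is perfect over $A'$ (as $A'$ is regular noetherian). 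Combining the two cases, $\circledcirc$ descends to the $W_{\up{pe}}$-localizations, producing the left $\sC$-module structure on $\Sing(R')\simeq L_{W_{\up{pe}}}\bigt{\cofdgMod(R')}$.

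The only point requiring a little care---and the closest thing to an obstacle here---is the bookkeeping of the two distinct $A'$-module structures on $A''=A'\otimes_A A'$ entering the definition of $\circledcirc$, which underlies the identification $A''\otimes_{A'}E\simeq A'\otimes_A E$ together with the resulting $R'$-linearity; once this is disentangled the argument is a verbatim copy of the one carried out for $\sB$ and $\sT$.
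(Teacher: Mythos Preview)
Your proposal is correct and follows essentially the same route as the paper's proof: reduce via \cite[Appendix A]{toenvezzosi22} to checking that $\circledcirc$ is compatible with $W_{\up{qi}}$ and $W_{\up{pe}}$, dispatch the first by cofibrancy, and for the second reduce to the generators $N=A''$ and $E=R'$. The paper simply declares the perfectness of $A''\circledcirc E$ and $N\circledcirc R'$ to be ``obvious''; your explicit identifications $A''\circledcirc E\simeq R'\otimes_R E$ and $N\circledcirc R'\simeq N\otimes_{A'}R'$ (using regularity of $A'$ for the latter) spell out exactly what that obviousness amounts to.
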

\begin{proof}
By abuse of notation, we will denote by $W_{\up{qi}}$ (resp. $W_{\up{pe}}$) both the set of quasi-isomorphisms (resp. the set of morphisms with perfect cone) in $\cofdgMod(A'')$ and $\cofdgMod(R')$.
By the previous lemma and \cite[Appendix A]{toenvezzosi22}, it suffices to show that the (pseudo) left $\cofdgMod(A'')$-module structure on $\cofdgMod(R')$ is compatible with $W_{\up{qi}}$ and $W_{\up{pe}}$.
In other words, we need to show that $\circledcirc$ sends $W_{\up{qi}}\otimes \id \cup \id \otimes W_{\up{qi}}$ into $W_{\up{qi}}$ and $W_{\up{pe}}\otimes \id \cup \id \otimes W_{\up{pe}}$ into $W_{\up{pe}}$.

The case of quasi-isomorphisms follows immediately from the observation that we are dealing with cofibrant objects (and the unit of $\cofdgMod(A'')$).

As for the case of $W_{\up{pe}}$, we may equivalently prove that $N\circledcirc E$ is a perfect $R'$-dg module as soon as one among $E\in \cofdgMod(R')$ and $N\in \cofdgMod(A'')$ is such.
This reduces to prove that $N\circledcirc R'$ and $A''\circledcirc E$ are perfect $R'$-dg-modules, which is obvious.
\end{proof}

\sssec{} 

For a regular $S$-scheme $X\to S$ of finite type, we define
$$
  \sU^+
  :=
  \lim_{\Spec{R}\subseteq X}L_{W_{\up{qi}}}\bigt{\cofdgMod(R')}
  \in 
  \Mod_{\sC^+}.
$$
The forgetful functor $\Mod_{\sC^+} \to \dgCat_A$ being a right adjoint, the underlying $A$-linear dg-category of $\sU^+$ is equivalent to $\Coh(X')$.

Similarly, we put
$$
  \sU
  :=
  \lim_{\Spec{R}\subseteq X}L_{W_{\up{pe}}}\bigt{\cofdgMod(R')}
  \in 
  \Mod_{\sC}.
$$
For the same reason as above, the $A$-dg-category underlying the right $\sC$-module $\sU$ is equivalent to $\Sing(X')$.

\begin{lem}
The monoidal dg-category $\sC$ is Karoubi-generated by its unit.
In particular, the left $\sC$-module $\sU$ is cotensored over $\sC$.
\end{lem}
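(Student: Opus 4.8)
The second statement follows immediately from the first: once $\sC$ is Karoubi-generated by its unit, Remark \ref{rmk: monoidal catogory generated by its unit object = every module is cotensored} (which is \cite[Remark 2.1.4]{toenvezzosi22}) shows that $\Mod_\sC \to \Mod_{\widehat \sC}$ is essentially surjective, hence every $\sC$-module — and in particular $\sU$ — is cotensored over $\sC$. So the whole task is to prove that $\sC$ is Karoubi-generated by $\mathbf{1}_\sC$. Using Lemma \ref{lem: compatibility circledcirc with Wqi and Wpe} together with \cite[Lemma 2.33]{brtv18}, I will work with the identification $\sC \simeq \Sing(H)$, under which $\mathbf{1}_\sC$ is the image of the $A''$-module $A' = A''/\langle x_1-x_2\rangle$, i.e.\ of $\delta_* \ccO_{S'}$ for $\delta\colon S' \hookrightarrow H$ the diagonal.

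The key geometric point is that $H = S' \times_S S'$ has an \emph{isolated} singularity. Over the generic point, $H_\eta \simeq \Spec(\uL \otimes_{\uK} \uL)$ is a finite product of copies of $\Spec \uL$ (as $\uL/\uK$ is separable), so $H$ is regular there; over the special point, $H \times_S s = \Spec\bigt{(A'/\pi_{\uK}A') \otimes_k (A'/\pi_{\uK}A')}$ is the spectrum of a local Artinian ring, hence consists of a single point $y_0$, with residue field $k$. Since $A'' \otimes_A \uK$ has $|\GLK| \geq 2$ minimal primes, each of whose closure in $H$ meets $\{y_0\}$, the local ring $\ccO_{H,y_0}$ is not a domain and hence not regular; thus the singular locus of $H$ is exactly $\{y_0\}$. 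Moreover $H = V(E(x_2))$ inside the regular scheme $\Spec A'[x_2]$, so $\Sing(H) \simeq \MF(A'[x_2], E(x_2))$ is the matrix-factorization category of an isolated hypersurface singularity.

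Now I would exploit the isolated singularity to produce a manageable generator and then connect it to the unit. Since the singularity category only depends on an open neighbourhood of the singular locus, $\Sing(H) \simeq \Sing(\ccO_{H,y_0})$, and by a theorem of Dyckerhoff on matrix factorizations of isolated critical points (in the mild generality of a base discrete valuation ring rather than a field), $\Sing(H)$ is generated, as a thick subcategory, by the stabilized residue field $[k(y_0)]$ at the singular point. It therefore suffices to check that $[k(y_0)] \in \langle \mathbf{1}_\sC \rangle_{\mathrm{thick}}$. This is immediate from the short exact sequence of $A''$-modules
$$
  0 \to \mathbf{1}_\sC \xrightarrow{\;\cdot \pi_{\uL}\;} \mathbf{1}_\sC \to k(y_0) \to 0,
$$
obtained by noting that multiplication by $\pi_{\uL} \in A' = \mathbf{1}_\sC$ is $A''$-linear and injective with cokernel $A'/\pi_{\uL}A' \simeq k(y_0)$: it exhibits $[k(y_0)]$ as the cone of an endomorphism of $\mathbf{1}_\sC$. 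Combining, $\Sing(H) = \langle [k(y_0)] \rangle_{\mathrm{thick}} \subseteq \langle \mathbf{1}_\sC \rangle_{\mathrm{thick}} \subseteq \Sing(H)$, so $\sC$ is Karoubi-generated by its unit.

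The only genuinely non-formal ingredient is the assertion that the singularity category of an isolated hypersurface singularity is thickly generated by the residue field at the singular point, in the generality needed here; this is Dyckerhoff's theorem, but one should either verify his argument is insensitive to the nature of the regular base or, alternatively, bypass it as follows. One checks that $\langle \mathbf{1}_\sC \rangle_{\mathrm{thick}}$ is stable under the tensor action $-\otimes_{\ccO_H} N$, $N \in \Perf(H)$: indeed $\mathbf{1}_\sC \otimes_{\ccO_H} N \simeq \delta_*\delta^* N$ by the projection formula, and $\delta^* N \in \Perf(S') = \Coh(S')$, so $\delta_*\delta^* N \in \langle \delta_*\ccO_{S'} \rangle_{\mathrm{thick}} = \langle \mathbf{1}_\sC \rangle_{\mathrm{thick}}$, whence the exact endofunctor $-\otimes_{\ccO_H}N$ preserves $\langle \mathbf{1}_\sC \rangle_{\mathrm{thick}}$. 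This subcategory is nonzero because $\delta_*\ccO_{S'}$ is not perfect on $H$ — its stalk $A'$ at $y_0$ is a non-free maximal Cohen--Macaulay $\ccO_{H,y_0}$-module, hence of infinite projective dimension. Since the singular locus of $H$ is a single point, $\Sing(H)$ has no proper nonzero thick subcategory stable under the $\Perf(H)$-action (classification of thick tensor ideals of singularity categories, in the spirit of Stevenson's work), and we conclude again that $\langle \mathbf{1}_\sC \rangle_{\mathrm{thick}} = \Sing(H)$.
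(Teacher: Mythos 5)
Your argument is correct, but it takes a genuinely different route from the paper. The paper's proof is a two-line application of the K\"unneth theorem for singularity categories (Theorem \ref{Thm: Kunneth for Sing} with $Y=Z=S'$): $\sC$ is Karoubi-generated by the image of $\ffF$, hence by $\ffF(k,k)=k$, and $k\simeq[A'\xto{\pi_{\uL}}A']$ is a cone on an endomorphism of the unit. You instead argue directly from the geometry of $H=S'\times_SS'$: it is a hypersurface in the regular scheme $\bbA^1_{S'}$ with a unique (isolated) singular point $y_0$, its singularity category is generated by $k(y_0)$, and the same two-term complex finishes the job. Your route has the virtue of not invoking the K\"unneth theorem (the deepest input in the paper's proof), at the cost of needing a generation result for isolated hypersurface singularities; the paper's route is shorter given that Theorem \ref{Thm: Kunneth for Sing} is already available and used throughout. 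The only point you should tighten is the appeal to Dyckerhoff's theorem, whose standard form is over a field: rather than checking it survives over a DVR, or invoking Stevenson's tensor-ideal classification as in your fallback (which does work, and your verification that $\langle 1_{\sC}\rangle$ is a nonzero tensor ideal is correct), the cleanest justification is the localization sequence. Since $U:=H\setminus\{y_0\}$ is regular, $\Sing(U)\simeq 0$, and $\Sing(H)\to\Sing(U)$ is a quotient (up to summands) by the thick subcategory generated by the image of $\Coh_{\{y_0\}}(H)$; hence $\Sing(H)$ is Karoubi-generated by that image, which by d\'evissage equals $\mathrm{thick}\bigt{k(y_0)}$. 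This replaces both Dyckerhoff and Stevenson by standard facts and works verbatim in mixed characteristic. (Your preliminary checks are all fine: $A''$ is local with $H_s$ a one-point space, $\ccO_{H,y_0}$ is non-regular since $e=|\GLK|\geq 2$ by the standing assumption that $S'\to S$ is not an isomorphism, and multiplication by $\pi_{\uL}$ on $1_{\sC}=A'$ is $A''$-linear with cokernel $k(y_0)$.)
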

\begin{proof}
By Remark \ref{rmk: monoidal catogory generated by its unit object = every module is cotensored}, the second statement follows from the first one.
Therefore, we just need to show that $A'=1_\sC$ generates $\sC$.

By Theorem \ref{Thm: Kunneth for Sing} we have that $\sC$ is Karoubi-generated by the image of the dg-functor
$$
  \ffF:
  \Sing(s')^\op\otimes_A \Sing(s')
  \to
  \Sing(S'\times_S S'),
$$
whence - as $\Sing(s')$ is Karoubi-generated by $k$ - by the $A''$-module $k=\ffF(k,k)$.
Since 
$$
  k
  \simeq 
  [
  A'\xto{\pi_{\uL}}A'
  ]
$$
the claim follows.
\end{proof}

\begin{rmk}
Notice that $\sC^+$ is not Karoubi-generated by its unit.
Nevertheless, the left $\sC^+$-module $\sU^+$ is cotensored over $\sC^+$.
This is a consequence of the fact that the left $\sC^+$-module structure on $\sU^+$ corresponds, under Grothendieck duality, to a right $\sC^+$-module structure on $\sU^{\op}$. 
Therefore, one checks directly that the condition of Definition \ref{defn: cotensored A-module} holds.
Of course, this implies that the $\sC$-module $\sU$ is cotensored over $\sC$.
\end{rmk}

\ssec{Connecting the two sides: the \texorpdfstring{$(\sB,\sC)$}{(B,C)}-bimodule \texorpdfstring{$\sE$}{E}}

In the previous sections, we introduced two pairs of algebra/module categories: $(\sB,\sT)$ and $(\sC, \sU)$. We now relate them.

\sssec{}

We will relate the two groupoids $G = s \times_S s$ and $H = S' \times_S S'$ by means of the scheme
$$
s' := s\times_S S'.
$$
As above, this is a classical quasi-smooth scheme: explicitly, $s' = \Spec{k[x]/x^e}$ where $e$ equals the degree of the Eisenstein polynomial defining $S'$. In particular, $s'$ is a thickening of $s$.

\sssec{}

We now define
$$
\sE ^+ := \Coh(s')
$$
and
$$
\sE := \Sing(s').
$$
It is clear that $\sE^+$ is a $(\sB^+, \sC^+)$-bimodule. Below, we make this statement explicit using the strict models discussed in the previous sections. We will then show that the bimodule structure descends to the singularity categories.

\sssec{}

Let $\dgMod(k')$ be the $A$-linear dg-category of $k'=k\otimes_AA'$-dg-modules, endowed with the projective model structure.
Denote $\cofdgMod(k')\subseteq \dgMod(k')$ the full subcategory of cofibrant objects which are strictly perfect over $A$.
This is a strict model for $\Coh(s')$, that is,
$$
  \Coh(s')
  \simeq
  L_{W_{\up{qi}}}\bigt{\cofdgMod(k')}
$$
where $W_{\up{qi}}$ is the set of quasi-isomorphisms (\cite[Lemma 2.33]{brtv18}).

\begin{lem}\label{lem: E^+ (B^+,C^+)-bimod}
The dg-category $\cofdgMod(k')$ is naturally a  $\bigt{\cofdgMod(K_A^2),\cofdgMod(A'')}$-bimodule. 
Moreover, $\sE^+$ inherits a $(\sB^+, \sC^+)$-bimodule structure.
\end{lem}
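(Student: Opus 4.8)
The bimodule structure has a transparent geometric source. The thickening $s' = s\times_S S'$ carries a left action of the groupoid $G = s\times_S s$ through the structure map $s'\to s$, and a right action of the groupoid $H = S'\times_S S'$ through $s'\to S'$, and these two actions commute. Convolution against $G$ (resp.\ $H$) therefore makes $\Coh(s')$ a left module over $\Coh(G)=\sB^+$ (resp.\ a right module over $\Coh(H)=\sC^+$), and the commutativity of the actions assembles these into a $(\sB^+,\sC^+)$-bimodule. The content of the lemma is to realise this structure on the strict models $\cofdgMod(K_A^2)$, $\cofdgMod(A'')$ and $\cofdgMod(k')$, so that it becomes amenable to the dg-localisation machinery of \cite[Appendix A]{toenvezzosi22}.

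Here is the plan. I would define the left action $-\odot-\colon\cofdgMod(K_A^2)\otimes_A\cofdgMod(k')\to\cofdgMod(k')$ by first forming $M\otimes_{K_A}E$, where $M$ is viewed as a $K_A$-module via the source map $1\otimes\id\colon K_A\to K_A^2$ and $E$ via the canonical map $K_A\to k\to k'$ (so that $\varepsilon$ acts as $0$ on $E$). The residual data endow $M\otimes_{K_A}E$ with the structure of a module over $k'[\delta]$ (with $\delta$ in cohomological degree $-1$ and $d\delta=0$), which is a strict model for the coordinate ring of $G\times_s s'$; I then set $M\odot E$ to be its restriction of scalars along the inclusion $k'\hookrightarrow k'[\delta]$, that is, the pushforward along the action map $G\times_s s'\to s'$ that completes the convolution. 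Symmetrically, the right action $-\circledcirc-\colon\cofdgMod(k')\otimes_A\cofdgMod(A'')\to\cofdgMod(k')$ is $E\circledcirc N := E\otimes_{A'}N$, with $E$ viewed over $A'\to k'$ and $N$ over $1\otimes\id\colon A'\to A''$ — the mirror of the construction of the $\cofdgMod(A'')$-module $\cofdgMod(R')$ given above (cf.\ Lemma~\ref{lem: left C module structure}), with $R'$ replaced by $k'$. As for $\sT^+$ and for $\cofdgMod(R')$, one checks that these operations are well-defined on the model categories (the relevant inputs being that $A'$ is regular, that $k'$ is flat over $K_A$, that $G\times_s s'\to s'$ is finite, and \cite[Lemma 2.33]{brtv18}) and that they send $W_{\up{qi}}\otimes\id\cup\id\otimes W_{\up{qi}}$ into $W_{\up{qi}}$ — immediate, since we are dealing with cofibrant objects.

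The coherences are where the work lies. Associativity and unitality of $\circledcirc$ are the computation already carried out for $\cofdgMod(R')$ over $\cofdgMod(A'')$; those of $\odot$ follow from associativity of $-\otimes_{K_A}-$ and the comultiplication of the Hopf algebroid $K_A^2$, as in \cite[Proposition 4.1.5]{toenvezzosi22}, the restriction-of-scalars step being harmless because it leaves underlying dg-modules unchanged. The crux is the commutativity of the two actions: for $M\in\cofdgMod(K_A^2)$, $E\in\cofdgMod(k')$ and $N\in\cofdgMod(A'')$ one produces canonical isomorphisms $(M\odot E)\circledcirc N\simeq M\otimes_{K_A}E\otimes_{A'}N\simeq M\odot(E\circledcirc N)$ and checks that they are compatible with the two associativity constraints. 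This works because $\odot$ only involves the $K_A$-module structure on $k'$ (coming from $s'\to s$) while $\circledcirc$ only involves its $A'$-module structure (coming from $s'\to S'$), and in $k'=k\otimes_A A'$ these two structures commute; the pushforward step in $\odot$ does not interfere, being restriction along a $k'$-algebra map, hence $A'$-linear.

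Finally, since $\odot$, $\circledcirc$ and all the isomorphisms above are built from cofibrant objects, invoking \cite[Appendix A]{toenvezzosi22} (descent of module structures along dg-localisations) yields the asserted $(\sB^+,\sC^+)$-bimodule structure on $\sE^+ = L_{W_{\up{qi}}}\bigl(\cofdgMod(k')\bigr)\simeq\Coh(s')$. The principal obstacle I anticipate is not the existence of these isomorphisms but the bookkeeping needed to upgrade them to a coherent bimodule datum — the pentagon- and hexagon-type compatibilities relating the left action, the right action and the two associativity constraints — though, as is usual in this framework, the formalism of \cite{toenvezzosi22} reduces this to a finite, essentially mechanical verification at the level of strict dg-modules.
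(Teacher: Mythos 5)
Your overall strategy --- define both actions on strict models, identify both iterated actions with the threefold tensor product $M\otimes_{K_A}E\otimes_{A'}N$, and descend through the localisation machinery of \cite[Appendix A]{toenvezzosi22} --- is the paper's, and your right action $E\circledcirc N=E\otimes_{A'}N$ is exactly theirs. But your construction of the left action contains a genuine error. You model $\Coh(s')$ by modules over the discrete ring $k'$, let $\varepsilon$ act as $0$ on $E$, and then pass from the $k'[\delta]$-module $M\otimes_{K_A}E$ back to a $k'$-module by restriction of scalars along $k'\hookrightarrow k'[\delta]$, asserting that this \emph{is} the pushforward along the action map $G\times_s s'\to s'$. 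It is not. In the strict model $k'[\delta]\simeq K_A\otimes_A k'$ for $\ccO(G\times_s s')$, the variable $\delta=\varepsilon_1$ carries the \emph{first} $s$-leg, which is precisely the leg the action map remembers; the action map is therefore modelled by $K_{A'}=A'[\varepsilon]\to k'[\delta]$, $\varepsilon\mapsto\delta$, and admits no strict model with source the discrete ring $k'$ (there is no dg-algebra map $k=A/\pi_{\uK}\to A[\varepsilon_1]$). The inclusion $k'\hookrightarrow k'[\delta]$ models the \emph{projection} $G\times_s s'\to s'$ instead, so your $M\odot E$ computes $\pi^*\sigma_*(M)\otimes E$ (projection formula plus base change) rather than the convolution.

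The damage shows up exactly where you declare the restriction-of-scalars step ``harmless'': associativity. After forgetting $\delta$, the object $M_2\odot E$ carries the zero $\varepsilon$-action, so in $M_1\odot(M_2\odot E)$ the tensor over $K_A$ imposes the relation $\varepsilon_2 m_1\otimes(m_2\otimes e)=0$, whereas $(M_1\odot M_2)\odot E$ imposes $\varepsilon_2 m_1\otimes m_2\otimes e=m_1\otimes\varepsilon_1 m_2\otimes e$. These are different quotients of $M_1\otimes_A M_2\otimes_A E$, and there is no canonical comparison map: your $\odot$ is not a module structure over the convolution monoidal category $\sB^+$. The paper sidesteps all of this by taking the strict model of $\Coh(s')$ to be $\cofdgMod(K_{A'})$ with $K_{A'}=K_A\otimes_A A'$ (this is what is meant, despite the notation $\cofdgMod(k')$ in the statement): then $M\otimes_{K_A}E$, with $\varepsilon$ acting through $\varepsilon_1$ on $M$ and $x$ through $E$, is already an object of the model, no forgetting occurs, and associativity is the same bookkeeping as for the $\sB^+$-module $\sT^+$. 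Your commutativity check of the two actions and the descent along $W_{\up{qi}}$ then go through verbatim.
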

\begin{proof}
Let $M \in \cofdgMod(K_A^2)$, $E\in \cofdgMod(K_{A'})$ and $N \in \cofdgMod(A'')$. 
We need to show that there is a canonical isomorphism
$$
  (M\odot E)\circledcirc N
  \simeq
  M\odot (E\circledcirc N).
$$
Unraveling the definitions, we see that both sides of the displayed equation are canonically isomorphic to the $A$-dg module
$$
  M\otimes_{K_A}E\otimes_{A'}N,
$$
endowed with the structure of a dg-module over $K_{A'}\simeq K_A\otimes_AA'\simeq A[\varepsilon,x]$ via
$$
  \varepsilon \cdot (m\otimes e \otimes n)
  =
  (\varepsilon_1 \cdot m)\otimes e \otimes n,
$$
$$
  x \cdot (m\otimes e \otimes n)
  =
  m\otimes e \otimes (x_2 \cdot n).
$$
The final statement follows from \cite[Appendix A]{toenvezzosi22} and from the observation that
$$
  \cofdgMod(K_A^2)\otimes_A \cofdgMod(k')_A \otimes \cofdgMod(A'') 
  \to 
  \cofdgMod(k')
$$
sends $(W_{\up{qi}}\otimes \id \otimes \id) \cup (\id \otimes W_{\up{qi}} \otimes \id) \cup (\id \otimes \id \otimes W_{\up{qi}})$ into $W_{\up{qi}}$, which is true since we are working with cofibrant dg-modules and units.
\end{proof}

\begin{lem}
The above actions descend to make $\sE$ into a $(\sB,\sC)$-bimodule.
\end{lem}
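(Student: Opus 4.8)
The strategy is to invoke once more the localization machinery of \cite[Appendix A]{toenvezzosi22}, exactly as in the proofs of Lemmas \ref{lem: compatibility odot with Wqi and Wpe}, \ref{lem: compatibility circledcirc with Wqi and Wpe} and \ref{lem: left C module structure}. In the proof of Lemma \ref{lem: E^+ (B^+,C^+)-bimod} the strict model $\cofdgMod(K_{A'})$ was used for $\sE^+$, together with the action functor
$$
  \cofdgMod(K_A^2)\otimes_A \cofdgMod(K_{A'}) \otimes_A \cofdgMod(A'')
  \to
  \cofdgMod(K_{A'}),
$$
which was shown there to be compatible with $W_{\up{qi}}$ in each variable. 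What remains is to check that this functor is compatible with $W_{\up{pe}}$ in each of the three variables as well, i.e. that it sends $(W_{\up{pe}}\otimes\id\otimes\id)\cup(\id\otimes W_{\up{pe}}\otimes\id)\cup(\id\otimes\id\otimes W_{\up{pe}})$ into $W_{\up{pe}}$.

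Since both $\odot$ and $\circledcirc$ preserve homotopy colimits, this reduces to showing that $M\odot E\circledcirc N$ is a perfect $K_{A'}$-dg-module whenever at least one of $M\in\cofdgMod(K_A^2)$, $E\in\cofdgMod(K_{A'})$, $N\in\cofdgMod(A'')$ is perfect over $K_A^2$, $K_{A'}$, $A''$ respectively. As the perfect complexes over each of these rings are Karoubi-generated by the ring itself, it suffices to treat the cases $M=K_A^2$, $E=K_{A'}$, and $N=A''$. In each case, unwinding the identifications from the proof of Lemma \ref{lem: E^+ (B^+,C^+)-bimod} exhibits $M\odot E\circledcirc N$ as an external tensor product over $A$ of a perfect $K_A$-module and a perfect $A'$-module (after, if needed, an intermediate tensor product over $A'$, or over $K_A$, of two perfect modules), hence as a perfect module over $K_A\otimes_A A'=K_{A'}$. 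Here one uses repeatedly that $K_A\simeq k$ and $A'$ are regular, so that the modules at play, being strictly perfect over $A$, are automatically perfect over $K_A$ and over $A'$, together with the stability of $\Perf$ under tensor product and under tensoring with a perfect complex.

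Granting this compatibility, \cite[Appendix A]{toenvezzosi22} equips $\sE = L_{W_{\up{pe}}}\bigt{\cofdgMod(K_{A'})}\simeq\Sing(s')$ with a $(\sB,\sC)$-bimodule structure sitting over the monoidal localizations $\sB^+\to\sB$ and $\sC^+\to\sC$ of Lemmas \ref{lem: compatibility odot with Wqi and Wpe} and \ref{lem: compatibility circledcirc with Wqi and Wpe}. As the forgetful functor to $\dgCat_A$ commutes with these dg-localizations, the underlying $A$-linear dg-category of this bimodule is indeed $\Sing(s')$. The only genuine difficulty is the bookkeeping: one must keep careful track of which source/target maps ($\varepsilon\mapsto\varepsilon_i$ for $K_A^2$, $x\mapsto x_j$ for $A''$, and $K_A\to K_{A'}\leftarrow A'$) are used in forming $M\odot E\circledcirc N$, so that the reductions to external tensor products are legitimate; beyond this, no idea is required that was not already present in the earlier lemmas.
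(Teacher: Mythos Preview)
Your proposal is correct and follows the same overall strategy as the paper: invoke \cite[Appendix A]{toenvezzosi22} after checking that $M\odot E\circledcirc N$ lands in $\Perf(K_{A'})$ whenever one of $M,E,N$ is perfect. The two proofs diverge in how this check is carried out. The paper dispatches the cases $M$ perfect and $N$ perfect by citing Section~\ref{left B module structure} and Lemma~\ref{lem: left C module structure}, and then treats the genuinely new case $E$ perfect by a geometric computation: taking $E=k'$ gives $M\odot k'\circledcirc N\simeq M\otimes_A N$, and the perfection of the latter over $k'$ is established via projection formulas and base-change along a sequence of cartesian squares built from the groupoids $G$ and $H$. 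Your route is more uniform and more elementary: for $E=K_{A'}$ one again obtains $M\otimes_A N$, but now one simply observes that $M$ is perfect over $K_A$ (cohomology is killed by $\pi_{\uK}$, hence finite over $k\simeq K_A$) and $N$ is perfect over $A'$ (coherent over the regular ring $A'$), so their external product is perfect over $K_A\otimes_A A'=K_{A'}$. This avoids all the geometry. The one place where your sketch deserves a word of caution is the $M=K_A^2$ case: the ``intermediate tensor product'' $E\otimes_{A'}N$ is perfect for the $A'$-structure used in the tensoring, but the relevant $A'$-structure for the external decomposition is the one via $x_2$ on $N$; one passes between them by noting that coherence over $A'$ in one structure forces coherence over $A$, hence over $A'$ in the other. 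With that detail made explicit, your argument is complete and arguably cleaner than the paper's.
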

\begin{proof}
It suffices to show that the bimodule structure
$$
  \cofdgMod(K_A^2)\otimes_A \cofdgMod(k') \otimes_A \cofdgMod(A'') 
  \to 
  \cofdgMod(k')
$$
sends $(W_{\up{pe}}\otimes \id \otimes \id) \cup (\id \otimes W_{\up{pe}} \otimes \id) \cup (\id \otimes \id \otimes W_{\up{pe}})$ into $W_{\up{pe}}$. Equivalently, that for $M \in \cofdgMod(K_A^2)$, $E \in \cofdgMod(k')$ and $N \in \cofdgMod(A'')$
$$
  M \odot E \circledcirc N
$$
is a perfect $k'$-dg-module as soon as one among $M,E$ and $N$ is so.

This is the case if $M$ or $N$ is perfect by by Lemma \ref{lem: E^+ (B^+,C^+)-bimod}, Section \ref{left B module structure} and Lemma \ref{lem: left C module structure}.
It remains to treat the situation when $E$ is perfect. 
Without loss of generality, we can assume that $E=k'$.
In this case,
$$
M\odot k'\circledcirc N
\simeq 
M\otimes_{K_A}k'\otimes_{A'}N
\simeq
M\otimes_A N.
$$
It is easier to see this using the geometric language.
The functor
$$
  -\otimes_A -:
  \cofdgMod(K_A^2)\otimes_A \cofdgMod(A'')
  \to 
  \cofdgMod(k')
$$
corresponds to the $\oo$-functor
$$
  \pr{14*}(\pr{12}^*\otimes \pr{34}^*):
  \Coh(G)\otimes_A \Coh(H)
  \to
  \Coh(s'),
$$
where
\begin{equation*} 
  \begin{tikzpicture}[scale=1.5]
    \node (Cu) at (2,1) {$s\x{S}s\x{S}S'\x{S}S'$ };
    \node (Ld) at (1,0) {$s\x{S}s$};
    \node (Ru) at (5,1) {$ s\x{S}S'$};
    \node (Rd) at (3,0) {$S'\x{S}S'.$};
    \draw[->] (Cu) to node[left] { $\pr{12}\;\;$ } (Ld);
    \draw[->] (Cu) to node[right] { $\;\; \pr{34}$ } (Rd);
    \draw[->] (Cu) to node[above] {$\pr{14}$} (Ru);
  \end{tikzpicture}
\end{equation*}
We need to show that this factors through $\Perf(s')\subseteq \Coh(s')$.

By decomposing 
$$
  \pr{14}\sim \pr{13}\circ \pr{124}:
  s\x{S}s\x{S}S'\x{S}S'
  \to 
  s\x{S}s\x{S}S' 
  \to 
  s\x{S}S'
$$ 
and 
$$
  \pr{12}\sim \pr{12}\circ \pr{124}:
  s\x{S}s\x{S}S'\x{S}S'
  \to 
  s\x{S}s\x{S}S' 
  \to 
  s\x{S}s,
$$ 
by the projection formula we get that

\begin{align*}
    \pr{14*}(\pr{12}^*M\otimes \pr{34}^*N) & \simeq \pr{13*}\pr{124*}(\pr{124}^*\pr{12}^*M\otimes \pr{34}^*N) \\
     & \simeq \pr{13*}(\pr{12}^*M\otimes \pr{124*}\pr{34}^*N).
\end{align*}

Applying base-change for the cartesian square
\begin{equation*} 
  \begin{tikzpicture}[scale=1.5]
    \node (Lu) at (0,1) {$s\x{S}s\x{S}S'\x{S}S'$ };
    \node (Ld) at (0,0) {$s\x{S}s\x{S}S'$};
    \node (Ru) at (3,1) {$S'\x{S}S'$};
    \node (Rd) at (3,0) {$ S'$};
    \draw[->] (Lu) to node[right] { $\pr{124}$ } (Ld);
    \draw[->] (Ru) to node[right] { $\pr{2}$ } (Rd);
    \draw[->] (Lu) to node[above]{$\pr{34}$} (Ru);
    \draw[->] (Ld) to node[above]{$\pr{3}$} (Rd);
  \end{tikzpicture}
\end{equation*}
we can continue the chain of equivalences:
$$
  \pr{13*}(\pr{12}^*M\otimes \pr{124*}\pr{34}^*N)
  \simeq 
  \pr{13*}(\pr{12}^*M\otimes \pr{3}^*\pr{2*}N).
$$
Since $S'$ is regular, it suffices to show that  $\pr{13*}\pr{12}^*M \in \Perf(s')$ for any $M\in \Coh(G)$. This is an immediate consequence of base-change applied to the fiber square
\begin{equation*} 
  \begin{tikzpicture}[scale=1.5]
    \node (Lu) at (0,1) {$s\x{S}s\x{S}S'$ };
    \node (Ld) at (0,0) {$s\x{S}s$};
    \node (Ru) at (3,1) {$s\x{S}S'$};
    \node (Rd) at (3,0) {$s$};
    \draw[->] (Lu) to node[right] { $\pr{12}$ } (Ld);
    \draw[->] (Ru) to node[right] { $\pr{1}$ } (Rd);
    \draw[->] (Lu) to node[above]{$\pr{13}$} (Ru);
    \draw[->] (Ld) to node[above]{$\pr{1}$} (Rd);
  \end{tikzpicture}
\end{equation*}
and of the regularity of $s$.
\end{proof}

\sec{The fundamental Morita equivalence} \label{sec:morita}

In the previous section, we introduced the monoidal dg-categories $\sB$ and $\sC$, together with a $(\sB,\sC)$-bimodule $\sE$. This structure yields an adjunction
\begin{equation} \label{adj:Bmod vs Cmod}
\begin{tikzpicture}[scale=1.5]
\node (a) at (0,1) {$\Mod_{\sB}$};
\node (b) at (2.5,1) {$\Mod_{\sC}$};
\path[->,font=\scriptsize,>=angle 90]
([yshift= 1.5pt]a.east) edge node[above] {$\sE^\op \otimes_{\sB} -$ } ([yshift= 1.5pt]b.west);
\path[->,font=\scriptsize,>=angle 90]
([yshift= -1.5pt]b.west) edge node[below] {$\Hom_{\sC} (\sE^{\op}, -)$ } ([yshift= -1.5pt]a.east);
\end{tikzpicture}
\end{equation}
of $\infty$-categories. 
Assume from now on that $S' \to S$ is not an isomorphism.
The first goal of this section is to prove that this adjunction is a pair of mutually inverse equivalences. In other words, the algebras $\sB$ and $\sC$ are Morita equivalent by means of $\sE$.
The second goal of this section is to prove that the Hochschild homologies $\HH_*(\sB/A)$ and $\HH_*(\sC/A)$ are naturally equivalent as dg-categories.

\ssec{The algebra of \texorpdfstring{$\sB$}{B}-linear endomorphisms of \texorpdfstring{$\sE$}{E}}

Observe that Theorem \ref{Thm: Kunneth for Sing}, applied to the case $Y=Z=S'$, yields an equivalence
\begin{equation}\label{eq: weak End_B(E)=C}
  \Sing(s')^{\op} \otimes_{\sB}\Sing(s') 
  \simeq
  \Sing(S''),
\end{equation}
that is, an equivalence $\sE^{\op} \otimes_{\sB} \sE \simeq \sC$. In this section, we show that $\sE^{\op} \otimes_{\sB} \sE$ is a monoidal ($A$-linear) dg-category and that this equivalence is monoidal.

\sssec{}

We start by considering the $A$-linear dg-functor
$$
  \hB:
  \cofdgMod(K_{A'})\otimes_A \cofdgMod(K_{A'})^{\op}
  \to
  \cofdgMod(K_A^2)
$$
$$
  (M,N)
  \mapsto
  M\otimes_{A'}N^\vee.
$$
Here, we use the notation
$$
  N^\vee
  :=
  \ul \Hom_{K_{A'}}(N,K_{A'}).
$$
Clearly, the $K_A^2$-dg-module structure on $M\otimes_{A'} N^\vee$ is given by
$$
  \varepsilon_1 
  \cdot
  (m\otimes n)
  =
  \varepsilon \cdot m \otimes n,
$$
$$
  \varepsilon_2 
  \cdot
  (m\otimes n)
  =
  m\otimes \varepsilon \cdot n.
$$
Notice that this dg-functor is well-defined. 
Indeed, since $K_{A'}$ is cofibrant over $A'$, the tensor product  $M\otimes_{A'}N^\vee$ is cofibrant over $K_{A'}^2$. 
As $A'$ is cofibrant over $A$, we deduce $K_{A'}^2$ is cofibrant over $K_A^2$, which implies that $M\otimes_{A'} N^\vee$ is cofibrant over $K_A^2$.

\sssec{}

The dg-functor above is a strict model for the evaluation of $\sE$ over $\sB$.
The following observation is unsurprising.
\begin{lem}\label{lem: spadesuit pseudo associative}
The dg-functor
$$
\hB:
  \cofdgMod(K_{A'})\otimes_A \cofdgMod(K_{A'})^{\op}
  \to
  \cofdgMod(K_A^2)
$$
is $\cofdgMod(K_A^2)$-bilinear.
\end{lem}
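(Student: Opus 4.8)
The plan is to verify the asserted bilinearity directly on the strict model. Conceptually there is nothing new: $\hB$ is a strict model for the evaluation $\ev_{\sE/\sB}\colon\sE\otimes_A\sE^{\op}\to\sB$, and the bilinearity of the evaluation of a (co)tensored/saturated module is part of the general theory recalled in Section \ref{sec: preliminaries}. So the real task is to exhibit the two module structures on the chosen strict model. The source $\cofdgMod(K_{A'})\otimes_A\cofdgMod(K_{A'})^{\op}$ is a bimodule over $\cofdgMod(K_A^2)$ via the left action of Section \ref{left B module structure} on the first factor and the induced right action on the second, cotensored factor (cf. Remark \ref{rmk: monoidal catogory generated by its unit object = every module is cotensored}), while $\cofdgMod(K_A^2)$ is a bimodule over itself by convolution. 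Concretely, I must produce, naturally in $B\in\cofdgMod(K_A^2)$ and $M,N\in\cofdgMod(K_{A'})$, isomorphisms
\[
  \hB(B\odot M,N)\xto{\ \simeq\ }B\odot\hB(M,N),\qquad\hB(M,N\cdot B)\xto{\ \simeq\ }\hB(M,N)\odot B
\]
in $\cofdgMod(K_A^2)$ — where $N\cdot B$ denotes the induced right action and the $\odot$'s on the right-hand sides are left, resp. right, convolution multiplication — and then check that these are mutually compatible.

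For the first isomorphism I would simply unwind the definitions. Since $\odot$ on $\cofdgMod(K_{A'})$ is $-\otimes_{K_A}-$ with the $A'$-structure carried by the $\cofdgMod(K_{A'})$-slot, and $\odot$ on $\cofdgMod(K_A^2)$ is again $-\otimes_{K_A}-$ with $\varepsilon_1$ acting on $\hB(M,N)=M\otimes_{A'}N^{\vee}$ through its $M$-slot, both sides are canonically the iterated tensor product $B\otimes_{K_A}M\otimes_{A'}N^{\vee}$, and the isomorphism is the associativity constraint; it applies because $A'$ is central in $K_{A'}=A'[\varepsilon]$, so its action commutes with that of $K_A$. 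It then remains only to see that this associator is $K_A^2$-linear, which is checked by tracking through which tensor factor each of $\varepsilon_1,\varepsilon_2$ acts — exactly the bookkeeping already performed in the proofs of Lemma \ref{lem: E^+ (B^+,C^+)-bimod} and Lemma \ref{lem: left C module structure}.

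The second isomorphism is the same computation, with one extra ingredient: the linear dual $N^{\vee}=\ul\Hom_{K_{A'}}(N,K_{A'})$. First I would unwind the right $\cofdgMod(K_A^2)$-module structure on $\cofdgMod(K_{A'})^{\op}$; as for $\sU^{\op}$ in the earlier discussion, the cotensoring is governed by Grothendieck--Serre duality, and tracing it through produces a natural isomorphism $(N\cdot B)^{\vee}\simeq N^{\vee}\odot B$. Here one uses that $K_{A'}$ is perfect and self-dual over $A'$ up to a shift, together with strict perfectness over $A$, so that forming $N^{\vee}$ commutes with the relevant change of rings. Granting this, $\hB(M,N\cdot B)=M\otimes_{A'}(N\cdot B)^{\vee}\simeq M\otimes_{A'}(N^{\vee}\odot B)\simeq(M\otimes_{A'}N^{\vee})\odot B=\hB(M,N)\odot B$ by associativity, the $K_A^2$-structures matching by the same slot-tracking as before. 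The mutual compatibility of the two isomorphisms is then automatic, since every map in sight is an instance of the associativity constraint or of the duality isomorphism for $K_{A'}$, and these are coherent.

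I expect the only genuinely non-routine step to be the treatment of $N^{\vee}$: pinning down the right $\cofdgMod(K_A^2)$-action on $\cofdgMod(K_{A'})^{\op}$ and checking that the strict internal hom $\ul\Hom_{K_{A'}}(-,K_{A'})$ interacts with $\odot$ and $\otimes_{A'}$ as the homotopical statement predicts — this is where the self-duality of $K_{A'}$ over $A'$ and the strict-perfectness hypotheses really enter. Everything else is the formal associativity and unit bookkeeping already carried out repeatedly in Section \ref{sec: nctraces}.
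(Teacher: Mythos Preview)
Your proposal is correct and follows the same underlying idea as the paper: both linearity conditions reduce to the associativity constraint for the iterated tensor product over $K_A$ and $A'$.

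The only difference is one of bookkeeping. The paper dispatches each case in a single line: for the right action it simply writes
\[
  \hB(F_1,F_2)\odot M = (F_1\otimes_{A'}F_2^\vee)\otimes_{K_A}M \simeq F_1 \otimes_{A'}(F_2^\vee \otimes_{K_A}M) = \hB(F_1,F_2\odot M),
\]
treating the right action on the $\op$-slot directly through $F_2^\vee$ (so that ``$F_2\odot M$'' is by definition the object whose dual is $F_2^\vee\otimes_{K_A}M$). You instead first identify the cotensored right action abstractly via Grothendieck--Serre duality and then prove the compatibility $(N\cdot B)^\vee\simeq N^\vee\odot B$. This is not wrong, but it is more machinery than needed: at the strict level, once one records that $(-)^\vee=\ul\Hom_{K_{A'}}(-,K_{A'})$ intertwines the left $K_A^2$-action with the obvious right one on the target, the associativity check is symmetric in the two variables and no separate duality step is required. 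The step you flag as ``the only genuinely non-routine step'' is precisely the one the paper absorbs into its notation.
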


\begin{proof}
Let $F_1,F_2 \in \cofdgMod(K_{A'})$ and $M \in \cofdgMod(K_A^2)$. 
Then there is a canononical isomorphism
$$
  \hB(F_1,F_2)\odot M 
  =
  (F_1\otimes_{A'}F_2^\vee)\otimes_{K_A}M
  \simeq
  F_1 \otimes_{A'}(F_2^\vee \otimes_{K_A}M)
  =
  \hB(F_1,F_2\odot M).
$$
Similarly, there is a canonical isomorphism
$$
  M\odot \hB(F_1,F_2) \simeq \hB(M\odot F_1,F_2).
$$
\end{proof}

\sssec{}

We introduce a non-unital monoidal structure on $ \cofdgMod(K_{A'})^{\op}\otimes_A \cofdgMod(K_{A'})$.
Consider the dg-functor
$$
  -\spadesuit -:
  \bigt{
  \cofdgMod(K_{A'})^{\op}\otimes_A \cofdgMod(K_{A'})
  }^{\otimes_A2}
  \to
  \cofdgMod(K_{A'})^{\op}\otimes_A \cofdgMod(K_{A'})
$$
obtained by composing the canonical isomorphism
$$
  \bigt{
  \cofdgMod(K_{A'})^{\op}\otimes_A \cofdgMod(K_{A'})
  }
  \otimes_A
  \bigt{
  \cofdgMod(K_{A'})^{\op}\otimes_A \cofdgMod(K_{A'})
  }
  \simeq 
$$
$$
  \cofdgMod(K_{A'})^{\op}\otimes_A
  \bigt{
  \cofdgMod(K_{A'})
  \otimes_A
  \cofdgMod(K_{A'})^{\op}
  }
  \otimes_A \cofdgMod(K_{A'})
$$
with
$$
  \cofdgMod(K_{A'})^{\op}\otimes_A
  \Bigt{
  \cofdgMod(K_{A'})
  \otimes_A
  \cofdgMod(K_{A'})^{\op}
  \xto{\hB}
  \cofdgMod(K_A^2)
  }
  \otimes_A \cofdgMod(K_{A'})
$$
and
$$
  \id \otimes (- \odot -):
  \cofdgMod(K_{A'})^{\op}\otimes_A \cofdgMod(K_A^2)\otimes_A \cofdgMod(K_{A'})
  \to
  \cofdgMod(K_{A'})^{\op}\otimes_A \cofdgMod(K_{A'}).
$$

\begin{lem}\label{lem: spadesuit is associative}
$\bigt{ \cofdgMod(K_{A'})^{\op}\otimes_A \cofdgMod(K_{A'}), -\spadesuit -}$ is a non-unital monoidal dg-category.
\end{lem}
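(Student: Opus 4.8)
The plan is to reduce the associativity of $\spadesuit$ to two inputs already at our disposal: the $\cofdgMod(K_A^2)$-bilinearity of $\hB$ recorded in Lemma \ref{lem: spadesuit pseudo associative}, and the associativity of the $\cofdgMod(K_A^2)$-module action $\odot$ on $\cofdgMod(K_{A'})$ established in Section \ref{left B module structure} (via \cite[Proposition 4.1.5]{toenvezzosi22}).

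First I would unwind $\spadesuit$ on objects. Tracing its three defining steps --- reassociating the four tensor factors, applying $\hB$ to the two middle factors, and applying the $\odot$-action to the last two --- one finds
$$
  (N_1,M_1)\spadesuit(N_2,M_2)
  \simeq
  \bigt{N_1,\ \hB(M_1,N_2)\odot M_2}
  =
  \bigt{N_1,\ (M_1\otimes_{A'}N_2^{\vee})\otimes_{K_A}M_2},
$$
the only point to invoke being the well-definedness of $\hB$, already checked when it was introduced. Iterating, the two bracketings of a triple $(N_i,M_i)_{i=1,2,3}$ read $\bigt{N_1,\ \hB\bigt{\hB(M_1,N_2)\odot M_2,\ N_3}\odot M_3}$ and $\bigt{N_1,\ \hB(M_1,N_2)\odot\bigt{\hB(M_2,N_3)\odot M_3}}$.

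Next I would build the associator. Put $P:=\hB(M_1,N_2)\in\cofdgMod(K_A^2)$. Lemma \ref{lem: spadesuit pseudo associative}, in the form $M\odot\hB(F_1,F_2)\simeq\hB(M\odot F_1,F_2)$, supplies a canonical isomorphism $\hB(P\odot M_2,N_3)\simeq P\odot\hB(M_2,N_3)$, which rewrites the first bracketing as $\bigt{N_1,\ (P\odot\hB(M_2,N_3))\odot M_3}$; the associativity of $\odot$ then identifies this with $\bigt{N_1,\ P\odot(\hB(M_2,N_3)\odot M_3)}$, i.e.\ the second bracketing. Everything here is functorial in the $M_i,N_i$, and both sides are canonically isomorphic to $\bigt{N_1,\ M_1\otimes_{A'}N_2^{\vee}\otimes_{K_A}M_2\otimes_{A'}N_3^{\vee}\otimes_{K_A}M_3}$ for a fixed bracketing.

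Finally I would settle coherence. As $\hB$, $\odot$ and the two isomorphisms above are all assembled out of the honest operations $\otimes_A$, $\otimes_{A'}$, $\otimes_{K_A}$ and $\ul\Hom_{K_{A'}}(-,K_{A'})$ on dg-modules, the pentagon and higher coherences of the associator are inherited from the symmetric monoidal structure of dg-modules; one then promotes this strict data to a non-unital associative algebra object of $\dgCat_A$ by \cite[Appendix A]{toenvezzosi22}, just as for $\sB$, $\sC$ and their modules earlier in the paper. I expect the main obstacle to be exactly this last bookkeeping --- verifying that the associator glued from the bilinearity isomorphism of $\hB$ and the associativity isomorphism of $\odot$ is coherent --- rather than anything conceptual, since both constituents ultimately descend from the associativity of $\otimes$.
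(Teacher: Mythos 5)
Your proposal is correct and follows essentially the same route as the paper: unwind $\spadesuit$ on objects, then identify the two bracketings of a triple using the associativity of the $\odot$-action together with the bilinearity isomorphism $M\odot\hB(F_1,F_2)\simeq\hB(M\odot F_1,F_2)$ from Lemma \ref{lem: spadesuit pseudo associative}. Your closing remarks on coherence and on passing to $\dgCat_A$ via \cite[Appendix A]{toenvezzosi22} make explicit what the paper leaves implicit, but do not change the argument.
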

\begin{proof}
We need to show that the square
\begin{equation*} 
  \begin{tikzpicture}[scale=1.5]
    \node (Lu) at (0,1) {$ \bigt{\cofdgMod(K_{A'})^{\op}\otimes_A \cofdgMod(K_{A'})}^{\otimes_A 3}$  };
    \node (Ld) at (0,0) {$\bigt{\cofdgMod(K_{A'})^{\op}\otimes_A \cofdgMod(K_{A'})}^{\otimes_A 2}$};
    \node (Ru) at (6,1) {$\bigt{\cofdgMod(K_{A'})^{\op}\otimes_A \cofdgMod(K_{A'})}^{\otimes_A 2}$};
    \node (Rd) at (6,0) {$\cofdgMod(K_{A'})^{\op}\otimes_A \cofdgMod(K_{A'})$};
    \draw[->] (Lu) to node[right] { $\id \otimes (- \spadesuit -)$ } (Ld);
    \draw[->] (Ru) to node[right] { $-\spadesuit -$ } (Rd);
    \draw[->] (Lu) to node[above]{$(-\spadesuit -)\otimes \id$} (Ru);
    \draw[->] (Ld) to node[above]{$-\spadesuit -$} (Rd);
  \end{tikzpicture}
\end{equation*}
commutes up to canonical isomorphism.
Let $M_i \in \cofdgMod(K_{A'})^\op$, $N_i \in \cofdgMod(K_{A'})$, $i=1,2,3$.
Then we have that
\begin{align*}
    (M_1,N_1)\spadesuit \bigt{(M_2,N_2)\spadesuit (M_3,N_3)} & = (M_1,N_1)\spadesuit \bigt{M_2, \hB(N_2,M_3)\odot N_3}  \\
                                                             & = \Bigt{M_1, \hB(N_1,M_2)\odot \bigt{\hB(N_2,M_3)\odot N_3}}\\
                                                             & \simeq \Bigt{M_1, \bigt{\hB(N_1,M_2)\odot \hB(N_2,M_3)}\odot N_3} && \odot \text{ is assoc.}\\
                                                             & \simeq \Bigt{M_1, \hB\bigt{\hB(N_1,M_2)\odot N_2, M_3}\odot N_3}  && \text{ Lemma \ref{lem: spadesuit pseudo associative}}\\
                                                             & = \bigt{M_1, \hB(N_1,M_2)\odot N_2}\spadesuit (M_3,N_3) \\
                                                             & = \bigt{(M_1,N_1)\spadesuit (M_2,N_2)} \spadesuit (M_3,N_3).
\end{align*}
\end{proof}

\begin{cor}\label{cor: compatibility spadesuit with Wqi and Wpe}
The dg-categories $\sE^{+,\op}\otimes_A \sE^+$ and $\sE^{\op}\otimes_A \sE$ have canonical structures of non-unital monoidal dg-categories.
\end{cor}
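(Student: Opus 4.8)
The plan is to transport the non-unital monoidal structure $-\spadesuit-$ of Lemma \ref{lem: spadesuit is associative} along two dg-localizations, invoking the monoidal-localization formalism of \cite[Appendix A]{toenvezzosi22} exactly as in the constructions of $\sB$, $\sC$ and $\sE$ in Section \ref{sec: nctraces}. First I would record the relevant strict models: taking $R = A'$ in the constructions of Section \ref{sec: nctraces} and using $K_{A'} = K_A\otimes_A A'\simeq k'$, the dg-category $\cofdgMod(K_{A'})$ is a strict model for $\sE^+ = \Coh(s')$ after inverting $W_{\up{qi}}$, and for $\sE = \Sing(s')$ after inverting $W_{\up{pe}}$. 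Since $\oo$-localization of dg-categories commutes with passage to opposite categories and with $-\otimes_A-$, it follows, setting $\ccM := \cofdgMod(K_{A'})^{\op}\otimes_A\cofdgMod(K_{A'})$, that $\sE^{+,\op}\otimes_A\sE^+\simeq L_{\ccW_{\up{qi}}}(\ccM)$ and $\sE^{\op}\otimes_A\sE\simeq L_{\ccW_{\up{pe}}}(\ccM)$, where $\ccW_{\up{qi}}$ (resp. $\ccW_{\up{pe}}$) is the set of morphisms of $\ccM$ generated by the $\alpha\otimes\id$ and $\id\otimes\alpha$ with $\alpha$ a quasi-isomorphism (resp. a morphism with perfect cone) in either factor. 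It then suffices to check that $-\spadesuit-$ sends $(\ccW_?\otimes\id)\cup(\id\otimes\ccW_?)$ into $\ccW_?$ for $?\in\{\up{qi},\up{pe}\}$.

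Recalling the formula $(M_1,N_1)\spadesuit(M_2,N_2) = \bigt{M_1,\hB(N_1,M_2)\odot N_2}$, this reduces to the claim that the dg-functors $\hB$ and $\odot$ each preserve, in every variable, quasi-isomorphisms and morphisms with perfect cone: the $M_1$-slot feeds only into the first output coordinate and is thus harmless, while the remaining slots pass through $\hB$ and/or $\odot$. For $\odot$ the quasi-isomorphism case is clear and the perfect-cone case is Lemma \ref{lem: compatibility odot with Wqi and Wpe}. For $\hB(M,N) = M\otimes_{A'}N^\vee$ the quasi-isomorphism case is again clear, since $-\otimes_{A'}-$ and $(-)^\vee = \ul\Hom_{K_{A'}}(-,K_{A'})$ preserve quasi-isomorphisms between the cofibrant, $A$-perfect dg-modules at hand. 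This leaves the perfect-cone case for $\hB$, the only point requiring genuine input. The cleanest route is to use that $\hB$ is a strict model for the evaluation $\ev_{\sE/\sB}:\sE\otimes_A\sE^{\op}\to\sB$, whose existence at the level of singularity categories — hence the $W_{\up{pe}}$-compatibility of its $\cofdgMod$-level model — is guaranteed by Corollary \ref{cor: T dualizable B module} applied to $X = S'$, a regular $S$-scheme of finite type with smooth geometric generic fiber (as $\uL/\uK$ is separable) and with special fiber $s'$ whose singular locus over $s$ is all of $s'$, hence proper. Alternatively one argues directly: dualizing reduces to showing $\hB(K_{A'},N)$ is perfect over $K_A^2$, and $\hB(K_{A'},N)\simeq K_A\otimes_A N^\vee$ is, as a $K_A^2 = K_A\otimes_A K_A$-dg-module, the external tensor product over $A$ of $K_A$ with the restriction of $N^\vee$ along $K_A\to K_{A'}$; since $K_A\simeq k$ is regular, this restriction lies in $\Coh(K_A) = \Perf(K_A)$, and external products of perfect dg-modules are perfect, so $\hB(K_{A'},N)$ is perfect over $K_A^2$ — the covariant variable of $\hB$ being treated the same way.

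Granting these compatibilities, \cite[Appendix A]{toenvezzosi22} produces the desired non-unital monoidal structures on $L_{\ccW_{\up{qi}}}(\ccM) = \sE^{+,\op}\otimes_A\sE^+$ and on $L_{\ccW_{\up{pe}}}(\ccM) = \sE^{\op}\otimes_A\sE$, and the canonical functor between them is automatically non-unital monoidal. The main — and essentially only — obstacle is the perfect-cone compatibility of $\hB$ in its two non-trivial variables; everything else is formal localization bookkeeping.
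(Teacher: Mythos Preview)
Your proposal is correct and follows essentially the same approach as the paper: reduce via \cite[Appendix A]{toenvezzosi22} to checking that $\spadesuit$ preserves $W_{\up{qi}}$ and $W_{\up{pe}}$, and then observe that $\spadesuit$ is built from $\hB$ and $\odot$, each of which has the required compatibility. The only difference is that the paper delegates the $W_{\up{pe}}$-compatibility of $\hB$ to a citation of \cite{beraldopippi22}, whereas you spell out the direct argument (reducing to $\hB(K_{A'},N)\simeq K_A\otimes_A N^\vee$ and using regularity of $K_A\simeq k$), which is exactly the kind of computation that reference contains.
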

\begin{proof}
By the lemma above and \cite[Appendix A]{toenvezzosi22}, it suffices to show that the operation $\spadesuit$ is compatible with $W_{\up{qi}}$ and $W_{\up{pe}}$. 
This is clear because $\spadesuit$ is defined as a composition of dg-functors with these properties: for the evaluation $\hB$ see \cite{beraldopippi22} and for $\odot$ see Lemma \ref{lem: compatibility odot with Wqi and Wpe}.
\end{proof}

\begin{cor}
The dg-categories $\sE^{+,\op}\otimes_{\sB^+} \sE^+$ and $\sE^{\op}\otimes_{\sB} \sE$ have canonical structures of non-unital monoidal dg-categories. 
\end{cor}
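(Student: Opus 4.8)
The plan is to transport the non-unital monoidal structure $\spadesuit$ of Corollary~\ref{cor: compatibility spadesuit with Wqi and Wpe} across the canonical functor
$$
  q\colon
  \sE^{+,\op}\otimes_A \sE^+
  \longto
  \sE^{+,\op}\otimes_{\sB^+} \sE^+,
$$
and likewise across $\sE^{\op}\otimes_A \sE \to \sE^{\op}\otimes_{\sB}\sE$. By the universal property of the relative tensor product over $\sB^+$ --- the same mechanism, specialised to relative tensor products, as the localization principle of \cite[Appendix A]{toenvezzosi22} invoked in the proofs of the preceding corollaries --- the composite $q\circ\spadesuit$ descends to a dg-functor $\overline\spadesuit$ on $\sE^{+,\op}\otimes_{\sB^+}\sE^+$ as soon as it is $\sB^+$-balanced in each of its two monoidal arguments. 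Granting this, $\overline\spadesuit$ equips $\sE^{+,\op}\otimes_{\sB^+}\sE^+$ with a non-unital multiplication whose associativity is inherited from that of $\spadesuit$ (Lemma~\ref{lem: spadesuit is associative}), because the image of $q$ generates the target.

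To check the $\sB^+$-balancing I would expand the strict formula $(M_1,N_1)\spadesuit (M_2,N_2)=\bigt{M_1,\;\hB(N_1,M_2)\odot N_2}$, the decisive input being that $\hB$ is a strict model for the evaluation $\ev_{\sE^+/\sB^+}$ and is therefore $\cofdgMod(K_A^2)$-bilinear (Lemma~\ref{lem: spadesuit pseudo associative}). For balancing in the second argument --- replacing $(M_2,N_2)$ by $(M_2\cdot b,\; b\odot N_2)$ for $b$ an object of $\sB^+$ --- the first isomorphism of Lemma~\ref{lem: spadesuit pseudo associative} gives $\hB(N_1,M_2\cdot b)\simeq \hB(N_1,M_2)\odot b$ inside $\sB^+$, so that
$$
  \bigt{M_1,\;\hB(N_1,M_2\cdot b)\odot N_2}
  \simeq
  \bigt{M_1,\;\bigt{\hB(N_1,M_2)\odot b}\odot N_2}
  \simeq
  \bigt{M_1,\;\hB(N_1,M_2)\odot (b\odot N_2)}
$$
by associativity of the $\sB^+$-action (Section~\ref{left B module structure}); so the identification required for balancing in the second argument holds already before $q$ is applied. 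For balancing in the first argument --- replacing $(M_1,N_1)$ by $(M_1\cdot b,\; b\odot N_1)$ --- the second isomorphism of Lemma~\ref{lem: spadesuit pseudo associative} gives $\hB(b\odot N_1,M_2)\simeq b\odot \hB(N_1,M_2)$, whence
$$
  \bigt{M_1,\;\hB(b\odot N_1,M_2)\odot N_2}
  \simeq
  \bigt{M_1,\; b\odot\bigt{\hB(N_1,M_2)\odot N_2}},
$$
which differs from $\bigt{M_1\cdot b,\;\hB(N_1,M_2)\odot N_2}$ precisely by the balancing isomorphism of the \emph{target} $\sE^{+,\op}\otimes_{\sB^+}\sE^+$ applied to $\hB(N_1,M_2)\odot N_2$; so the identification required for balancing in the first argument holds after applying $q$. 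Running the identical computation with the models localized at $W_{\up{pe}}$ rather than $W_{\up{qi}}$, and with the $\sE^{\op}\otimes_A\sE$ half of Corollary~\ref{cor: compatibility spadesuit with Wqi and Wpe}, handles the case of $\sE^{\op}\otimes_{\sB}\sE$.

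The step I expect to be most delicate is not any individual identity above but the bookkeeping behind them: keeping straight the handedness of the various $\sB^+$-actions (the action $\odot$ on $\sE^+=\cofdgMod(K_{A'})$, the induced action on $\sE^{+,\op}$, and the roles of the two slots of $\hB\colon\cofdgMod(K_{A'})\otimes_A\cofdgMod(K_{A'})^{\op}\to\cofdgMod(K_A^2)$) so that the two absorption isomorphisms of Lemma~\ref{lem: spadesuit pseudo associative} match up exactly with the two balancings --- which is, after all, the reason that lemma was singled out --- and, more substantially, promoting the object-level manipulations above to the homotopy-coherent balancing datum required by the universal property. For the latter, the most economical route is to stay with strict models throughout and to use a dg-localization presentation of the relative tensor product, so that the descent of $\spadesuit$ becomes a direct application of the machinery of \cite[Appendix A]{toenvezzosi22}, exactly as in the preceding corollaries.
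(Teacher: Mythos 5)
Your proposal is correct and is precisely the argument the paper leaves implicit: the corollary is stated without proof, the intended justification being exactly the descent of $\spadesuit$ along $q$ using the $\cofdgMod(K_A^2)$-bilinearity of $\hB$ established in Lemma~\ref{lem: spadesuit pseudo associative}, with one balancing holding already at the strict level and the other supplied by the balancing of the target. Your choice to remain with strict models and invoke the localization machinery of \cite[Appendix A]{toenvezzosi22} for the coherence of the descended structure is consistent with how the paper handles every other monoidal structure in this section.
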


\sssec{}

We now investigate the behaviour of the equivalence \eqref{eq: weak End_B(E)=C} with respect to the monoidal structures.
Consider the dg-functor
$$
  \ffF^s:
  \cofdgMod(K_{A'})^{\op}\otimes_A \cofdgMod(K_{A'})
  \to 
  \cofdgMod(A'')
$$
$$
  (M,N)
  \mapsto
  M^\vee \otimes_{K_A}N,
$$
where the $A''$-dg-module structure is the obvious one.
Notice that this dg-functor is well-defined and that it provides a strict model for $\ffF$.
\begin{prop}
The dg-functor above is monoidal: there is a functorial equivalence
$$
  \ffF^s(M_1,N_1)\circledcirc \ffF^s(M_2,N_2)
  \simeq
  \ffF^s\bigt{(M_1,N_2)\spadesuit (M_2,N_2)}.
$$
\end{prop}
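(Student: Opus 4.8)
The plan is to argue just as in the proofs of Lemmas \ref{lem: spadesuit pseudo associative} and \ref{lem: spadesuit is associative}: I would unravel both sides of the claimed equivalence into one and the same iterated tensor product of underlying $A$-dg-modules, and then verify that the two ``obvious'' $A''$-dg-module structures one reads off coincide. Since everything is carried out with the chosen strict cofibrant models, no homotopical bookkeeping is involved and the content is purely a matter of tracking module structures.

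First I would unravel the right-hand side. Recall, as in the computation inside the proof of Lemma \ref{lem: spadesuit is associative}, that $(M_1,N_1)\spadesuit(M_2,N_2)=\bigl(M_1,\,\hB(N_1,M_2)\odot N_2\bigr)$, where $\hB(N_1,M_2)=N_1\otimes_{A'}M_2^{\vee}$ with $\varepsilon_1$ acting through $N_1$ and $\varepsilon_2$ through $M_2^{\vee}$, and where $\odot$ is the left $\cofdgMod(K_A^2)$-action on $\cofdgMod(K_{A'})$ of Section \ref{left B module structure} (taken with $R=A'$): it tensors over $K_A$ along the source map $1\otimes\id$, identifying the $\varepsilon_2$-action of $\hB(N_1,M_2)$ with the $\varepsilon$-action of $N_2$, and retains the $A'$-structure of $N_2$. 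Applying $\ffF^s(M,E)=M^{\vee}\otimes_{K_A}E$ to $\bigl(M_1,\hB(N_1,M_2)\odot N_2\bigr)$, the right-hand side becomes $M_1^{\vee}\otimes_{K_A}\bigl((N_1\otimes_{A'}M_2^{\vee})\otimes_{K_A}N_2\bigr)$; its underlying $A$-dg-module is $M_1^{\vee}\otimes_{K_A}N_1\otimes_{A'}M_2^{\vee}\otimes_{K_A}N_2$, and its $A''=A'\otimes_AA'$-structure has the first $A'$-factor acting through $M_1^{\vee}$ and the second through $N_2$, the two intermediate $A'$-actions having been absorbed into the middle $\otimes_{A'}$.

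Next I would unravel the left-hand side. By definition $\ffF^s(M_i,N_i)=M_i^{\vee}\otimes_{K_A}N_i$ is an $A''$-module with first $A'$-factor acting through $M_i^{\vee}$ and second through $N_i$, while $\circledcirc$ on $\cofdgMod(A'')$ is $P\circledcirc Q=P\otimes_{A'}Q$, contracting the second $A'$-factor of $P$ against the first $A'$-factor of $Q$ and keeping the first of $P$ and the second of $Q$. Hence $\ffF^s(M_1,N_1)\circledcirc\ffF^s(M_2,N_2)$ again has underlying $A$-dg-module $M_1^{\vee}\otimes_{K_A}N_1\otimes_{A'}M_2^{\vee}\otimes_{K_A}N_2$, with $A''$-structure once more first factor through $M_1^{\vee}$ and second through $N_2$. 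Comparing the two computations, the equivalence in the statement is the canonical reassociation isomorphism of these iterated tensor products; it is manifestly functorial in $M_1,N_1,M_2,N_2$, and its associativity (hence the coherence of $\ffF^s$ as a non-unital monoidal functor, if one wishes) follows exactly as in Lemma \ref{lem: spadesuit is associative}.

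The one step that genuinely requires care, and which I would spell out, is checking that the iterated tensor product $M_1^{\vee}\otimes_{K_A}N_1\otimes_{A'}M_2^{\vee}\otimes_{K_A}N_2$ is unambiguously defined and that the bracketings coming from the two sides agree with it. Here one uses that $K_{A'}=K_A\otimes_AA'$, so that the datum of a $K_{A'}$-dg-module is an $A$-dg-module equipped with commuting actions of $K_A$ (the ``$\varepsilon$-part'') and of $A'$ that agree over $A$; this independence of $K_A$ and $A'$ inside $K_{A'}$ is precisely what lets one tensor alternately over $K_A$ and over $A'$ without the operations interfering, and it is also what pins down the two $A'$-factors of the resulting $A''$-structure as those of $M_1^{\vee}$ and $N_2$ on both sides. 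Chasing $\varepsilon_1,\varepsilon_2,\varepsilon$ and the two $A'$-actions through the explicit formulas for $\odot$, $\hB$ and $\circledcirc$ is the whole of the verification, and I expect this bookkeeping --- rather than any conceptual difficulty --- to be the main, and indeed the only, obstacle.
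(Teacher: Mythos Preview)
Your proposal is correct and follows essentially the same approach as the paper: both arguments unravel each side to the iterated tensor product $M_1^{\vee}\otimes_{K_A}N_1\otimes_{A'}M_2^{\vee}\otimes_{K_A}N_2$ and identify the $A''$-structures. The paper's proof is simply the terse chain of reassociation isomorphisms that you spell out more carefully.
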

\begin{proof}
Unraveling the definitions, we have that 
\begin{align*}
    \ffF^s(M_1,N_1)\circledcirc \ffF^s(M_2,N_2) & = (M_1^\vee \otimes_{K_A}N_1)\otimes_{A'}(M_2^\vee \otimes_{K_A}N_2) \\
                                                & \simeq M_1^\vee \otimes_{K_A} (N_1\otimes_{A'} M_2^\vee) \otimes_{K_A} N_2 \\
                                                & \simeq \bigt{M_1^\vee \otimes_{K_A} (N_1\otimes_{A'} M_2^\vee)}\otimes_{K_A} N_2 \\
                                                & \simeq M_1^\vee \otimes_{K_A} \bigt{(N_1\otimes_{A'} M_2^\vee) \otimes_{K_A} N_2} \\
                                                & = M_1^\vee \otimes_{K_A}\bigt{\hB(N_1,M_2)\odot N_2} \\
                                                & = \ffF^s\bigt{(M_1,N_1)\spadesuit (M_2,N_2)}.
\end{align*}
\end{proof}

To summarize, we have obtained the following result, which is half of the Morita equivalence we are after.

\begin{cor}\label{cor: End_B(E)=C}
The dg-functor
$$
  \ffF:
   \sE^{\op}\otimes_{\sB}\sE 
  \to
  \sC
$$
is a monoidal equivalence.
\end{cor}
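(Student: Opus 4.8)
The plan is to obtain this statement as a formal consequence of three facts already at our disposal: the strong monoidality of the strict model $\ffF^s$ (the Proposition immediately preceding this Corollary), the localization-of-monoidal-structures formalism of \cite[Appendix A]{toenvezzosi22}, and the fact that $\ffF$ is an equivalence of underlying dg-categories, which is exactly Theorem \ref{Thm: Kunneth for Sing} applied with $Y=Z=S'$. In particular no new construction is needed: everything reduces to descending structure through a dg-localization.

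Here are the steps I would carry out, in order. First, I would recall that $\ffF$ is by construction the dg-localization of $\ffF^s$: on the source one localizes $\bigt{\cofdgMod(K_{A'})^{\op}\otimes_A \cofdgMod(K_{A'}),\ \spadesuit}$ at $W_{\up{pe}}$ — this is the content of Corollary \ref{cor: compatibility spadesuit with Wqi and Wpe} and the corollary following it — while on the target one localizes $\cofdgMod(A'')$ at $W_{\up{pe}}$. Second, I would note that $\ffF^s$ carries $W_{\up{qi}}$ into $W_{\up{qi}}$ and $W_{\up{pe}}$ into $W_{\up{pe}}$; this is part of the input of Theorem \ref{Thm: Kunneth for Sing}, since the descent of $\ffF^s$ to the singularity categories is precisely the functor appearing in \eqref{eq: weak End_B(E)=C}. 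Third — the heart of the matter, but a short one — I would observe that the functorial isomorphism $\ffF^s(M_1,N_1)\circledcirc \ffF^s(M_2,N_2)\simeq \ffF^s\bigt{(M_1,N_1)\spadesuit(M_2,N_2)}$ produced in the Proposition, together with its compatibility with the associativity constraints of Lemma \ref{lem: spadesuit is associative}, is assembled entirely out of the canonical associativity isomorphisms of the tensor products $\otimes_{A'}$ and $\otimes_{K_A}$, hence consists of \emph{strict} isomorphisms of dg-modules; it is therefore automatically compatible with $W_{\up{qi}}$ and $W_{\up{pe}}$. Invoking \cite[Appendix A]{toenvezzosi22}, the (non-unital) monoidal dg-functor $\ffF^s$ then descends to a (non-unital) monoidal dg-functor between the localizations, and this is $\ffF$. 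Fourth, since the forgetful functor from (non-unital) monoidal dg-categories to dg-categories is conservative and $\ffF$ is an equivalence of underlying dg-categories by Theorem \ref{Thm: Kunneth for Sing}, the dg-functor $\ffF$ is an equivalence of (non-unital) monoidal dg-categories. Finally, to see that $\ffF$ is a \emph{unital} monoidal equivalence, I would transport the unit $A'=1_{\sC}$ of the convolution structure $\circledcirc$ along this equivalence, which upgrades $\bigt{\sE^{\op}\otimes_{\sB}\sE,\ \spadesuit}$ to a unital monoidal dg-category with unit $\ffF^{-1}(A')$.

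The main obstacle here is bookkeeping rather than mathematics: one must keep the two nested localizing classes $W_{\up{qi}}\subseteq W_{\up{pe}}$ straight on both sides and track the passage from $\otimes_A$ to $\otimes_{\sB}$, and one must be content to cite \cite[Appendix A]{toenvezzosi22} for the otherwise delicate promotion of the strict monoidal data to coherent $\infty$-categorical monoidal data. No estimate or genuinely new argument is required; the substantive work — the existence and associativity of $\spadesuit$ (Lemma \ref{lem: spadesuit is associative}) and the strong monoidality of $\ffF^s$ (the Proposition above) — has already been done, and the underlying equivalence is supplied by the Künneth theorem, so the present Corollary is essentially a matter of verifying that the localization functor respects all of this structure.
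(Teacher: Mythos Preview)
Your proposal is correct and matches the paper's intended argument: the Corollary is presented as a summary (``To summarize, we have obtained the following result'') of the immediately preceding Proposition together with the K\"unneth equivalence \eqref{eq: weak End_B(E)=C}, exactly as you spell out. For the unit, the paper's parallel argument in Theorem \ref{thm: End_C(E)=B} invokes \cite[Remark 2.1.3.8]{lurieha} rather than transporting the unit along the equivalence, but this amounts to the same thing.
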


\ssec{The algebra of \texorpdfstring{$\sC$}{C}-linear endomorphisms of \texorpdfstring{$\sE$}{E}}

Next, we tackle the second half of the Morita equivalence: we will prove that the dg-category $\sE \otimes_{\sC} \sE^{\op}$ is monoidal and monoidally equivalent to $\sB$.

\sssec{}

Consider the $A$-linear dg-functor
$$
  \hC:
  \cofdgMod(K_{A'})^{\op}\otimes_{A} \cofdgMod(K_{A'})
  \to
  \cofdgMod(A'')
$$
$$
  (M,N)
  \mapsto
  M^\vee \otimes_{K_A}N,
 $$
for $M,N \in \cofdgMod(K_{A'})$.
As above, we have adopted the notation
$$
  M^\vee
  =
  \ul \Hom_{K_{A'}}(M,K_{A'}).
$$
Notice that $M^\vee \otimes_{K_A}N$ is naturally a $K_{A'}\otimes_{K_A}K_{A'}$-dg module. 
Therefore, it is naturally an $A''$-dg module via the morphism $A''\to K_{A'}\otimes_{K_A}K_{A'}$.
Notice that this dg-functor is well-defined, i.e. it preserves cofibrant dg-modules and unit objects.
Analogously to the previous section, we have the following result. 

\begin{lem}
The dg-functor
$$
  \hC:
  \cofdgMod(K_{A'})^{\op}\otimes_{A} \cofdgMod(K_{A'})
  \to
  \cofdgMod(A'')
$$
is $\cofdgMod(A'')$-bilinear.
\end{lem}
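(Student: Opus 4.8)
The statement is the exact $\sC$-side analogue of Lemma~\ref{lem: spadesuit pseudo associative}, and the plan is to prove it by the same method: unravel the definitions and produce the relevant canonical isomorphisms of derived tensor products, keeping careful track of the $A''$-module structures throughout. The dg-functor $\hC$ is a strict model for the evaluation of $\sE^{\op}$ over $\sC$, so the two $\cofdgMod(A'')$-actions at play on the source $\cofdgMod(K_{A'})^{\op}\otimes_{A}\cofdgMod(K_{A'})$ are: one on the first variable, coming from the left $\sC$-module structure on $\sE^{\op}$, and one on the second variable, coming from the right $\sC$-module structure on $\sE$ — both part of the $(\sB,\sC)$-bimodule structure of $\sE$ constructed in Section~\ref{sec: nctraces}, and both realized strictly via $\circledcirc$ together with the appropriate source and target maps $A'\to A''$. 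Being $\cofdgMod(A'')$-bilinear thus means producing, for $P\in\cofdgMod(A'')$ and $M,N\in\cofdgMod(K_{A'})$, canonical and coherent isomorphisms
$$
\hC(P\circledcirc M,\,N)\;\simeq\;P\circledcirc\hC(M,N),
\qquad
\hC(M,\,N\circledcirc P)\;\simeq\;\hC(M,N)\circledcirc P .
$$

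First I would recall that $\hC(M,N)=M^{\vee}\otimes_{K_A}N$ with $M^{\vee}=\ul\Hom_{K_{A'}}(M,K_{A'})$, and note that since $M$ is cofibrant and strictly perfect over $A$ — hence a strictly perfect $K_{A'}$-dg-module — its dual $M^{\vee}$ is again of this kind, formation of $(-)^{\vee}$ commutes with the base changes at play, and $M^{\vee}\otimes_{K_A}N$ is a well-defined cofibrant $K_{A'}\otimes_{K_A}K_{A'}$-dg-module, made into an $A''$-dg-module via the morphism $A''\to K_{A'}\otimes_{K_A}K_{A'}$ (the first copy of $K_{A'}$ acting on $M^{\vee}$, the second on $N$). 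Then, expanding the definitions, both sides of the first displayed equation become canonically identified, as $A$-dg-modules, with a single triple tensor product of the shape
$$
P\otimes_{A'}M^{\vee}\otimes_{K_A}N ,
$$
by repeated use of the associativity of $\otimes$ and of the behaviour of $(-)^{\vee}$ on strictly perfect tensor products; the second isomorphism is obtained symmetrically, using $N\circledcirc P=N\otimes_{A'}P$.

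The remaining — and only delicate — point is to verify that the two $A''$-module structures one obtains on such a triple tensor product genuinely coincide, not merely that the underlying $A$-dg-modules agree. This amounts to the commutativity of small diagrams of ring homomorphisms involving the Hopf-algebroid source, target and multiplication maps of $A''$, the map $A''\to K_{A'}\otimes_{K_A}K_{A'}$, and $K_A\to K_{A'}$ — concretely, to checking that each of the variables $x_1,x_2$ acts through the expected tensor factor. I expect this bookkeeping to be the main obstacle, but it is entirely routine and parallels the $\sB$-side computations in Lemma~\ref{lem: spadesuit pseudo associative} and Lemma~\ref{lem: compatibility odot with Wqi and Wpe}; in particular there is no homotopical subtlety, since we work throughout with honest cofibrant, strictly $A$-perfect representatives. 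Finally, the higher coherences of the bilinearity are inherited from the associativity and unit constraints of the tensor product, which finishes the argument.
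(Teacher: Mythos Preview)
Your proposal is correct and follows essentially the same approach as the paper: unwind the definition $\hC(M,N)=M^{\vee}\otimes_{K_A}N$ and reduce each required isomorphism to associativity of tensor products. The paper's own proof is the one-line computation $\hC(F_1,F_2)\circledcirc M = (F_1^{\vee}\otimes_{K_A}F_2)\otimes_{A'}M \simeq F_1^{\vee}\otimes_{K_A}(F_2\otimes_{A'}M)=\hC(F_1,F_2\circledcirc M)$, with the other direction declared ``similar''; your version is simply more explicit about the $A''$-module-structure bookkeeping and the behaviour of $(-)^{\vee}$, which the paper leaves implicit.
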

\begin{proof}
Let $F_1,F_2 \in \cofdgMod(K_{A'})$ and $M\in \cofdgMod(A'')$.
We have that 
$$
    \hC(F_1,F_2)\circledcirc M 
    = 
    (F_1^\vee \otimes_{K_A}F_2)\otimes_{A'}M
    \simeq 
    F_1^\vee \otimes_{K_A}(F_2 \otimes_{A'}M)
    = 
    \hC(F_1,F_2\circledcirc M).
$$
Similarly, one sees that 
$$
   M\circledcirc \hC(F_1,F_2)\simeq \hC(M\circledcirc F_1,F_2).
$$
\end{proof}

\sssec{}

Similarly to the previous section, we introduce a non-unital pseudo monoidal operation on $\cofdgMod(K_{A'})\otimes_A \cofdgMod(K_{A'})^{\op}$.
Consider the $A$-linear dg-functor
$$
  - \clubsuit -:
  \bigt{
  \cofdgMod(K_{A'})\otimes_A \cofdgMod(K_{A'})^{\op}
  }^{\otimes_A2}
  \to
  \cofdgMod(K_{A'})\otimes_A \cofdgMod(K_{A'})^{\op}
$$
obtained by composing the canonical isomorphism
$$
  \bigt{
  \cofdgMod(K_{A'})\otimes_A \cofdgMod(K_{A'})^{\op}
  }
  \otimes_A
  \bigt{
  \cofdgMod(K_{A'})\otimes_A \cofdgMod(K_{A'})^{\op}
  }
  \simeq 
$$
$$
  \cofdgMod(K_{A'})\otimes_A 
  \bigt{
  \cofdgMod(K_{A'})^{\op}
  \otimes_A
  \cofdgMod(K_{A'})
  }
  \otimes_A \cofdgMod(K_{A'})^{\op}
$$
with
$$
  \cofdgMod(K_{A'})\otimes_A 
  \Bigt{
  \cofdgMod(K_{A'})^{\op}
  \otimes_A
  \cofdgMod(K_{A'}) 
  \xto{\hC}
  \cofdgMod(A'')
  }
  \otimes_A \cofdgMod(K_{A'})^{\op}
$$
and
$$
  \id \otimes (- \circledcirc -):
  \cofdgMod(K_{A'})\otimes_A \cofdgMod(A'')\otimes_A \cofdgMod(K_{A'})^{\op}
  \to
  \cofdgMod(K_{A'})\otimes_A \cofdgMod(K_{A'})^{\op}.
$$

\begin{lem}\label{lem: clubsuit is associative}
$\bigt{ \cofdgMod(K_{A'})\otimes_A \cofdgMod(K_{A'})^{\op}, -\clubsuit -}$ is a non-unital monoidal dg-category.
\end{lem}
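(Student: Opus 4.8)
The plan is to carry out the proof of Lemma~\ref{lem: spadesuit is associative} with the two tensor factors interchanged. Since no unit is being claimed, it suffices to produce a canonical isomorphism, natural in all of its arguments, filling the associativity square for $\clubsuit$. I expect this to drop out formally from the associativity of $\circledcirc$ together with the $\cofdgMod(A'')$-bilinearity of $\hC$ just established.

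Concretely, I would fix $M_1,M_2,M_3 \in \cofdgMod(K_{A'})$ and $N_1,N_2,N_3 \in \cofdgMod(K_{A'})^{\op}$ and unwind both bracketings directly from the definition of $\clubsuit$. On one side,
\[
(M_1,N_1)\clubsuit\bigl((M_2,N_2)\clubsuit(M_3,N_3)\bigr) = \bigl(M_1,\ \hC(N_1,M_2)\circledcirc(\hC(N_2,M_3)\circledcirc N_3)\bigr),
\]
while on the other side,
\[
\bigl((M_1,N_1)\clubsuit(M_2,N_2)\bigr)\clubsuit(M_3,N_3) = \bigl(M_1,\ \hC(\hC(N_1,M_2)\circledcirc N_2,\,M_3)\circledcirc N_3\bigr).
\]
The first coordinate is $M_1$ on both sides, so everything reduces to identifying the second coordinates. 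Associativity of $\circledcirc$ rewrites the left-hand second coordinate as $(\hC(N_1,M_2)\circledcirc\hC(N_2,M_3))\circledcirc N_3$, and then the bilinearity isomorphism $M\circledcirc\hC(F_1,F_2)\simeq\hC(M\circledcirc F_1,F_2)$, applied with $M=\hC(N_1,M_2)$, $F_1=N_2$ and $F_2=M_3$, identifies $\hC(N_1,M_2)\circledcirc\hC(N_2,M_3)$ with $\hC(\hC(N_1,M_2)\circledcirc N_2,\,M_3)$. Chaining these canonical isomorphisms produces the desired filler, naturality being immediate since each step is natural in the inputs. The dg-localization machinery of \cite[Appendix~A]{toenvezzosi22} then descends this to the analogous statement for the relevant dg-localizations, exactly as for $\spadesuit$ in Corollary~\ref{cor: compatibility spadesuit with Wqi and Wpe}.

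I do not anticipate a genuine obstacle: this is the exact mirror of Lemma~\ref{lem: spadesuit is associative}. The one point requiring care is the variance bookkeeping — in $\clubsuit$ the pair consumed by $\hC$ is $(N_i,M_{i+1})$ with $N_i$ living in the opposite category, so one must invoke the \emph{second} of the two bilinearity isomorphisms for $\hC$ (the one absorbing a factor on the left of $\hC(F_1,F_2)$) rather than the first; the other choice leads to an expression that does not simplify. Coherence of all the canonical isomorphisms involved is automatic, since we are manipulating strict tensor products of dg-modules and not merely working up to homotopy.
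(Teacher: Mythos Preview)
Your proposal is correct and mirrors exactly the argument the paper has in mind: the paper's own proof simply reads ``This is analogous to the proof of Lemma~\ref{lem: spadesuit is associative}'', and your computation is precisely that analogue, with $\odot$ replaced by $\circledcirc$ and $\hB$ by $\hC$, using associativity of $\circledcirc$ followed by the second bilinearity identity $M\circledcirc \hC(F_1,F_2)\simeq \hC(M\circledcirc F_1,F_2)$. The remark about dg-localization belongs to the subsequent corollary rather than to this lemma, but the associativity verification itself is exactly what is needed.
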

\begin{proof}
This is analogous to the proof of Lemma \ref{lem: spadesuit is associative}.
\end{proof}

\begin{cor}
The dg-categories $\sE^+ \otimes_A \sE^{+,\op}$ and $\sE \otimes_A \sE^{\op}$ have canonical structures of non-unital monoidal dg-categories induced by $\clubsuit$.
\end{cor}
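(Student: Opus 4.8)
The plan is to replay the proof of Corollary~\ref{cor: compatibility spadesuit with Wqi and Wpe}, with the roles of $\sB$, $\hB$, $\odot$, $\spadesuit$ now played by $\sC$, $\hC$, $\circledcirc$, $\clubsuit$. By Lemma~\ref{lem: clubsuit is associative} the operation $\clubsuit$ already makes the strict dg-category $\cofdgMod(K_{A'})\otimes_A \cofdgMod(K_{A'})^{\op}$ into a non-unital monoidal dg-category, and by \cite[Appendix~A]{toenvezzosi22} this structure descends along the dg-localizations computing $\sE^+\otimes_A\sE^{+,\op}$ and $\sE\otimes_A\sE^{\op}$ as soon as $\clubsuit$ is compatible with $W_{\up{qi}}$ and with $W_{\up{pe}}$, i.e.\ carries $W_{\up{qi}}\otimes\id\cup\id\otimes W_{\up{qi}}$ into $W_{\up{qi}}$ and $W_{\up{pe}}\otimes\id\cup\id\otimes W_{\up{pe}}$ into $W_{\up{pe}}$.

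By construction $\clubsuit$ is the composite of a canonical coherence isomorphism, the evaluation dg-functor $\hC$ inserted in the two inner slots, and the product $\circledcirc$; hence it suffices to verify the two compatibilities for each of these three factors. The coherence isomorphism is an isomorphism and preserves everything; for $\circledcirc$ the assertion is precisely Lemma~\ref{lem: compatibility circledcirc with Wqi and Wpe}; and for $\hC$ the compatibility with $W_{\up{qi}}$ is immediate (we only manipulate cofibrant dg-modules, together with the units), while the compatibility with $W_{\up{pe}}$ is established in the companion paper \cite{beraldopippi22}, exactly as for the twin functor $\hB$. If a self-contained check is preferred: since perfect $K_{A'}$-dg-modules are Karoubi-generated by $K_{A'}$ and $M\mapsto M^\vee$ respects retracts, one reduces to showing that $\hC(K_{A'},N)\simeq K_{A'}\otimes_{K_A}N$ is a perfect $A''$-dg-module for every $N\in\cofdgMod(K_{A'})$; unwinding the $A''$-action, this object is identified with the base change $A''\otimes_{A'}N$ along the second structure map $A'\to A''$, hence perfect over $A''$ because $N$ has perfect underlying $A'$-module and $A'$ is regular. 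The remaining case, in which the second argument is perfect, is entirely symmetric.

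I expect the only non-formal point to be this $W_{\up{pe}}$-compatibility of $\hC$, and even there the single subtlety is bookkeeping --- keeping track of which of the two $K_A$-module structures on $K_{A'}$ is used in each tensor factor so as to correctly identify $K_{A'}\otimes_{K_A}N$ with $A''\otimes_{A'}N$. All the rest is a routine transport of structure through dg-localizations, identical to the $\sB$-side computation already carried out.
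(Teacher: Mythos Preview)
Your proof is correct and follows essentially the same approach as the paper's: both reduce to checking that $\clubsuit$ is compatible with $W_{\up{qi}}$ and $W_{\up{pe}}$, then decompose $\clubsuit$ into $\hC$ and $\circledcirc$, invoking Lemma~\ref{lem: compatibility circledcirc with Wqi and Wpe} for the latter and \cite{beraldopippi22} for the former. Your additional self-contained verification of the $W_{\up{pe}}$-compatibility of $\hC$ (via the identification $K_{A'}\otimes_{K_A}N\simeq A''\otimes_{A'}N$) is correct and goes slightly beyond what the paper spells out.
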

\begin{proof}
As for Corollary \ref{cor: compatibility spadesuit with Wqi and Wpe}, it suffices to show that the operation $\clubsuit$ is compatible with $W_{\up{qi}}$ and $W_{\up{pe}}$. 
As observed for $\spadesuit$, this is clear because $\clubsuit$ is defined as a composition of dg-functors with these properties: for $\hC$, this is analogous to the case considered in \cite{beraldopippi22} and, for $\circledcirc$, it follows from Lemma \ref{lem: compatibility circledcirc with Wqi and Wpe}.
\end{proof}

\begin{cor}
The dg-categories $\sE^{+}\otimes_{\sC^+} \sE^{+,\op}$ and $\sE \otimes_{\sC} \sE^{\op}$ have canonical structures of non-unital monoidal dg-categories.
\end{cor}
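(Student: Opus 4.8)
The plan is to obtain the asserted non-unital monoidal structures by descending the operation $\clubsuit$ along the canonical functors
$$
\sE^{+}\otimes_{A}\sE^{+,\op}\longrightarrow \sE^{+}\otimes_{\sC^+}\sE^{+,\op},
\qquad
\sE\otimes_{A}\sE^{\op}\longrightarrow \sE\otimes_{\sC}\sE^{\op},
$$
in the same spirit as the descent of $\clubsuit$ from $\cofdgMod(K_{A'})\otimes_{A}\cofdgMod(K_{A'})^{\op}$ to the singularity categories carried out in the previous corollary. By that corollary, $\clubsuit$ already makes $\sE^{+}\otimes_{A}\sE^{+,\op}$ and $\sE\otimes_{A}\sE^{\op}$ non-unital monoidal. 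In the strict-model formalism of \cite[Appendix A]{toenvezzosi22}, the relative tensor products over $\sC^+$ (resp.\ over $\sC$) are realized as dg-localizations of $\sE^{+}\otimes_{A}\sE^{+,\op}$ (resp.\ of $\sE\otimes_{A}\sE^{\op}$) at the appropriate class $W$ of $\sC^+$- (resp.\ $\sC$-) balancing morphisms. Invoking \cite[Appendix A]{toenvezzosi22} once more, it therefore suffices to verify that $\clubsuit$ sends $W\otimes\id\cup\id\otimes W$ into $W$.

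This last point is formal and mirrors the $\sB$-side argument. Writing $(M_1,N_1)\clubsuit(M_2,N_2)=\bigt{M_1,\ \hC(N_1,M_2)\circledcirc N_2}$, one observes that $\clubsuit$ is assembled entirely from $\cofdgMod(A'')$-linear data: $\hC$ is $\cofdgMod(A'')$-bilinear (established above) and $\circledcirc$ is an associative and unital $\cofdgMod(A'')$-action. Hence a balancing morphism in either tensor slot can be slid through $\hC$ using its $\cofdgMod(A'')$-bilinearity and then absorbed into the output using associativity of $\circledcirc$, so that its $\clubsuit$-image is again a composite of balancing morphisms --- precisely the kind of manipulation already carried out when checking associativity in Lemma \ref{lem: clubsuit is associative}. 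Since, moreover, $\hC$ preserves $W_{\up{qi}}$ and $W_{\up{pe}}$ (analogously to the case treated in \cite{beraldopippi22}) and $\circledcirc$ does as well (Lemma \ref{lem: compatibility circledcirc with Wqi and Wpe}), there is no clash with the localizations defining $\sE$, $\sE^+$, $\sC$ and $\sC^+$, and the descent goes through.

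The only mildly delicate point I expect is organizational: pinning down a convenient set of strict generators of the $\sC^+$- (resp.\ $\sC$-) balancing relations inside the threefold product $\cofdgMod(K_{A'})\otimes_{A}\cofdgMod(A'')\otimes_{A}\cofdgMod(K_{A'})^{\op}$ and tracking them through the reshuffle isomorphism that enters the definition of $\clubsuit$. Once this is in place, the required inclusion follows from the same one-line chains of canonical isomorphisms as in Lemma \ref{lem: clubsuit is associative}, and we conclude that $\clubsuit$ endows $\sE^{+}\otimes_{\sC^+}\sE^{+,\op}$ and $\sE\otimes_{\sC}\sE^{\op}$ with non-unital monoidal structures. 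This is exactly the input needed next to interpret $\sE\otimes_{\sC}\sE^{\op}$ as the algebra of $\sC$-linear endomorphisms of $\sE$ and to identify it monoidally with $\sB$, thereby completing the Morita equivalence.
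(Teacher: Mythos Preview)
Your proposal is correct and matches the paper's approach: the paper states this corollary without proof, treating it as a formal consequence of the preceding corollary together with the $\cofdgMod(A'')$-bilinearity of $\hC$, which is precisely the descent argument you outline. Your identification of the key ingredients (bilinearity of $\hC$ to slide balancing morphisms through, associativity of $\circledcirc$ to absorb them) and of the only delicate point (bookkeeping the generators of the balancing relations) is accurate.
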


\sssec{}

Next, we explore the relationship between the dg-categories $\sE \otimes_{\sC}\sE^{\op}$ and $\sB$, viewed as non-unital monoidal dg-categories. Consider the dg-functor
$$
  \ffG^s:
  \cofdgMod(K_{A'})\otimes_A \cofdgMod(K_{A'})^{\op}
  \to 
  \cofdgMod(K_A^2)
$$
$$
  (M,N)
  \mapsto
  M \otimes_{A'}N^\vee,
$$
where the $K_A^2$-dg-module structure is the obvious one.
Notice that this dg-functor is well-defined.

\begin{prop}
The dg-functor above is monoidal, i.e. there is a canonical equivalence
$$
  \ffG^s(M_1,N_1)\odot \ffG^s(M_2,N_2)
  \simeq
  \ffG^s\bigt{(M_1,N_2)\clubsuit (M_2,N_2)}.
$$
\end{prop}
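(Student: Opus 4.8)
The plan is to mirror, on the $\sB$-side, the argument that established the monoidality of $\ffF^s$ in the previous subsection: one produces the asserted equivalence as a chain of canonical isomorphisms between the underlying $A$-dg-modules, and then checks that each arrow in the chain is already an isomorphism of $K_A^2$-dg-modules and that the whole composite is natural in $(M_1,N_1,M_2,N_2)$.

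Concretely, I would first unwind the left-hand side. By the definition of $\ffG^s$ and of the convolution product $\odot$ on $\cofdgMod(K_A^2)$ — in which $\odot$ contracts the $\varepsilon_2$-action of the first factor against the $\varepsilon_1$-action of the second — one gets
$$
  \ffG^s(M_1,N_1)\odot\ffG^s(M_2,N_2)
  =
  (M_1\otimes_{A'}N_1^\vee)\otimes_{K_A}(M_2\otimes_{A'}N_2^\vee),
$$
where the $K_A$ over which we tensor acts through $N_1^\vee$ on the left and through $M_2$ on the right. Using only associativity of the derived tensor product together with the compatibility of the various $A'$- and $K_A$-module structures, I would rebracket this as
$$
  M_1\otimes_{A'}\Bigt{(N_1^\vee\otimes_{K_A}M_2)\otimes_{A'}N_2^\vee}.
$$
Now $N_1^\vee\otimes_{K_A}M_2$ is, by definition, $\hC(N_1,M_2)$ (with $N_1$ viewed as an object of $\cofdgMod(K_{A'})^{\op}$), and the remaining operation $(-)\otimes_{A'}N_2^\vee$, composed with the outer $M_1\otimes_{A'}(-)^\vee$, is precisely the recipe computing $\ffG^s\bigt{M_1,\,\hC(N_1,M_2)\circledcirc N_2}$. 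Chaining these identifications yields
$$
  \ffG^s(M_1,N_1)\odot\ffG^s(M_2,N_2)
  \simeq
  \ffG^s\bigt{(M_1,N_1)\clubsuit(M_2,N_2)},
$$
and I would conclude by observing that the composite isomorphism is natural in all four arguments, since every step is a canonical constraint of the symmetric monoidal structure on $\cofdgMod(A)$ together with the evident functoriality of $(-)^\vee$, $\hC$, $\odot$ and $\circledcirc$.

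The one step that is not purely formal — and which I expect to be the \emph{main obstacle} — is the last identification, namely matching $\bigt{\hC(N_1,M_2)\circledcirc N_2}^\vee$ with $\hC(N_1,M_2)\otimes_{A'}N_2^\vee$: here one must invoke the explicit description of the left action $\circledcirc$ of $\cofdgMod(A'')$ on $\cofdgMod(K_{A'})^{\op}$ that enters the definition of $\clubsuit$ — the action obtained, via Grothendieck duality, from the right $\sC$-module structure carried by the $(\sB,\sC)$-bimodule $\sE$ — and check that it is set up precisely so that this identity holds on the nose and compatibly with the $\varepsilon$-actions. Everything else is the routine-but-delicate bookkeeping of making sure that the several $A'$-structures and the two $\varepsilon$-structures on the complex $M_1\otimes_{A'}N_1^\vee\otimes_{K_A}M_2\otimes_{A'}N_2^\vee$ are glued as claimed, so that the chain lives in $\cofdgMod(K_A^2)$ (with $\varepsilon_1$ acting through $M_1$ and $\varepsilon_2$ through $N_2^\vee$) rather than merely in $\cofdgMod(A)$; this verification is formally identical to the one already carried out for $\ffF^s$.
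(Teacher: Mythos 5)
Your proposal is correct and follows essentially the same route as the paper: unravel $\ffG^s$ and $\odot$, rebracket the four-fold tensor product by associativity, identify $N_1^\vee\otimes_{K_A}M_2$ with $\hC(N_1,M_2)$, and recognize the result as $\ffG^s$ of the $\clubsuit$-product. The one point you flag as the main obstacle — matching the dual-side action $\circledcirc$ on $\cofdgMod(K_{A'})^{\op}$ with $\hC(N_1,M_2)\otimes_{A'}N_2^\vee$ — is indeed exactly where the paper's last two displayed equalities silently use the definition of that action, so your emphasis there is well placed.
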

\begin{proof}
Unraveling the definitions, we have that 
\begin{align*}
    \ffG^s(M_1,N_1)\odot \ffG^s(M_2,N_2) & = (M_1 \otimes_{A'}N_1^\vee)\otimes_{K_A}(M_2 \otimes_{A'}N_2^\vee) \\
                                         & \simeq M_1 \otimes_{A'} (N_1^\vee\otimes_{K_A} M_2) \otimes_{A'} N_2^\vee \\
                                         & \simeq \bigt{M_1 \otimes_{A'} (N_1^\vee\otimes_{K_A}M_2)}\otimes_{A'} N_2^\vee \\
                                         & \simeq M_1 \otimes_{A'} \bigt{(N_1^\vee\otimes_{K_A} M_2) \otimes_{A'} N_2^\vee} \\
                                         & = M_1 \otimes_{A'}\bigt{\hC(N_1,M_2)\circledcirc N_2^\vee} \\
                                         & = \ffG^s\bigt{(M_1,N_1)\clubsuit (M_2,N_2)}.
\end{align*}
\end{proof}

\sssec{}

We can now prove the following theorem, companion to Corollary \ref{cor: End_B(E)=C}.

\begin{thm}\label{thm: End_C(E)=B}
The dg-functor
$$ 
 \ffG: 
 \sE\otimes_{\sC}\sE^{\op} 
  \to
  \sB
$$
induced by $\ffG^s$ is a monoidal equivalence of dg-categories.

\end{thm}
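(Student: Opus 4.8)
The monoidality of $\ffG$ has just been established, so the task reduces to showing that the underlying dg-functor $\ffG\colon\sE\otimes_\sC\sE^\op\to\sB$ is a Morita equivalence. The difficulty is that, contrary to the companion Corollary \ref{cor: End_B(E)=C}, we cannot read this off the K\"unneth Theorem \ref{thm:Kunneth}, which only computes tensor products over $\sB$. My plan is therefore to bootstrap from Corollary \ref{cor: End_B(E)=C}: first use it to turn the adjunction \eqref{adj:Bmod vs Cmod} into a localization, and then show that this localization has trivial kernel.

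Concretely, write $L:=\sE^\op\otimes_\sB-\colon\Mod_\sB\to\Mod_\sC$ and let $R:=\Hom_\sC(\sE^\op,-)$ be its right adjoint. Since $\sB$ is Karoubi-generated by its unit, every $\sB$-module is cotensored, and $\sE=\Sing(s')$, being the singularity category of an affine Gorenstein scheme, is saturated over $\sB$ with right dual $\sE^\op$; hence $L$ and $R$ are both continuous and $R\simeq\sE\otimes_\sC-$ under the standard identifications. Evaluating the counit $LR\to\id_{\Mod_\sC}$ on the unit object $\sC$ produces, after unwinding the duality data, exactly the map $\ffF\colon\sE^\op\otimes_\sB\sE\to\sC$, which is an equivalence by Corollary \ref{cor: End_B(E)=C}. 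As $\sC$ generates $\Mod_\sC$ and $L,R$ preserve colimits, the counit is an equivalence everywhere; equivalently, $L$ exhibits $\Mod_\sC$ as the Verdier quotient $\Mod_\sB/\ker(L)$, where $\ker(L)=\{M\in\Mod_\sB:\sE^\op\otimes_\sB M\simeq0\}$.

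It remains to prove $\ker(L)=0$, i.e.\ that the $\sB$-module $\sE$ is faithful, equivalently that $\sE$ generates $\Mod_\sB$ under colimits and retracts. Since $G=s\times_S s$ is a (derived) local scheme, $\Mod_\sB$ is generated by $\sB$ itself, so it suffices to place a compact generator of $\sB$ in the colimit-and-retract closure of $\sE$. This is where the running hypothesis that $S'\to S$ is non-trivial enters: the base change $s'=s\times_S S'\to s$ is then finite, flat and surjective of degree $e=\deg E\geq2$, and so is its further base change $G\times_S S'\to G$ along $G\to s$. Pushforward along this finite faithfully flat map is conservative and admits both adjoints, which lets one recover a compact generator of $\sB=\Sing(G)$ as a retract of a finite iterated cone of $\sB$-linear images of $\sE$ — this is the analogue, for the groupoid $G$, of the statement that a finite faithfully flat cover is of effective descent for singularity categories. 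Hence $\sE$ generates $\Mod_\sB$, $L$ is conservative, and the localization $L$ (and with it $R$) is an equivalence: the adjunction \eqref{adj:Bmod vs Cmod} is a Morita equivalence.

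Finally, a Morita equivalence implemented by $\sE$ forces $\sE$ to be an invertible $(\sB,\sC)$-bimodule with inverse $\sE^\op$, so the canonical evaluation $\sE\otimes_\sC\sE^\op\to\sB$ — which by construction is $\ffG$ — is an equivalence of dg-categories; together with the monoidality already shown, this proves the theorem. The main obstacle is the faithfulness/generation step of the third paragraph: everything else is formal once Corollary \ref{cor: End_B(E)=C} is in hand, but showing that the ramified extension $S'/S$ is large enough for $\sE$ to generate $\Mod_\sB$ is the one genuinely geometric input, and it is precisely where the assumption $S'\neq S$ is used. (Alternatively one could try a direct computation, mimicking \eqref{eq: weak End_B(E)=C}, identifying $\sE\otimes_\sC\sE^\op$ with $\Sing(G)$; but the Morita route seems cleaner, since no K\"unneth formula over $\sC$ is available.)
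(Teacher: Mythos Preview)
Your Morita-bootstrap strategy is genuinely different from the paper's approach. The paper argues directly: it first checks essential surjectivity by the single computation $\ffG(t_*k,t_*k)\simeq 1_\sB\oplus 1_\sB[1]$ (so the image contains a Karoubi-generator of $\sB$), and then verifies full faithfulness by an explicit Hom-comparison, computing $\Hom_\sE(t_*k,t_*k)\simeq k[u,u^{-1}]\oplus k[u,u^{-1}][1]$ and $\End_{\sC}(\Delta_{S'})\simeq\HH^*(A'/A)$, and combining them to match $\Hom_{\sE\otimes_\sC\sE^\op}\bigl((t_*k,t_*k),(t_*k,t_*k)\bigr)$ with $\Hom_\sB(1_\sB\oplus 1_\sB[1],1_\sB\oplus 1_\sB[1])$. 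Your route trades the Hom-computation for the formal observation that the counit of \eqref{adj:Bmod vs Cmod} is an equivalence once Corollary~\ref{cor: End_B(E)=C} is known, which is attractive.

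The substantive gap is your third paragraph. The faithful-flatness argument you sketch concerns conservativity of pushforward on $\Coh$, but what you actually need is a \emph{$\sB$-linear} generation statement at the level of \emph{singularity} categories, and this does not follow formally: restriction along a finite flat map can send a non-perfect complex to a perfect one (take $A=k$, $B=k[x]/x^2$, $N=k$), so conservativity on $\Sing$ is not automatic, and the groupoid-equivariance adds a further layer you do not address. What you need is precisely that $1_\sB$ lies in the thick subcategory generated by the image of $\ev_{\sE/\sB}\colon\sE\otimes_A\sE^\op\to\sB$; the cleanest way to see this is the very computation $\ffG(t_*k,t_*k)\simeq 1_\sB\oplus 1_\sB[1]$ that the paper performs. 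With that one line supplied, your argument goes through and becomes a pleasant alternative to the paper's full-faithfulness calculation. Two smaller points also deserve a sentence each: the identification $R\simeq\sE\otimes_\sC-$ presupposes that $\sE^\op$ is saturated over $\sC$, which is not yet available (the paper establishes the bimodule duality only \emph{after} this theorem), so you should either justify it independently or run the argument in $\dgCAT_A$; and in your last paragraph you should say why the abstract unit $\sB\to\sE\otimes_\sC\sE^\op$ of the adjunction has inverse equal to $\ffG$ (both are $\sB^\env$-linear and agree on the unit, so this is immediate once stated).
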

\begin{proof}
We already know from the above proposition that $\ffG$ is monoidal. Since $\sB$ is unital, by \cite[Remark 2.1.3.8]{lurieha}, it suffices to prove that $\ffG$ is an equivalence of plain dg-categories.
For the computations that we will perform below, it is convenient to consult the following diagram:
\begin{equation*}
  \begin{tikzpicture}[scale=1.5]
    \node (LLc) at (0,2) {$s$};
    \node (Lu) at (1,3) {$s\times_{S'}s'$};
    \node (Ld) at (1,1) {$s'$}; 
    \node (Cuu) at (2,4) {$s\x{S'}s$};
    \node (Cc) at (2,2) {$s'\x{S'}s'$};
    \node (Ru) at (3,3) {$s'\times_{S'}s$};
    \node (Rd) at (3,1) {$s'$};
    \node (RRc) at (4,2) {$s$};
    \node (RRRc) at (6,2) {$s\times_S s$};
    \draw[ ->] (Cuu) to node[left] {$\id \times t\;$} (Lu);
    \draw[ ->] (Cuu) to node [right]  {$\;t \times \id$} (Ru);
    \draw[ ->] (Cuu.east) .. controls +(right:9mm) and +(up:9mm) .. (RRRc.north);
    \draw[ ->] (Lu) to node [left] {$\pr{1}\;$} (LLc);
    \draw[ ->] (Lu) to node [left] {$t\times \id \;$} (Cc);
    \draw[ ->] (Ru) to node [right] {$\; \pr{2}$} (RRc);
    \draw[ ->] (Ru) to node[right] {$\; \id \times t$} (Cc);
    \draw[ ->] (LLc) to node[left] {$t\;$} (Ld);
    \draw[ ->] (Cc) to node[left] {$\pr{1}\;$} (Ld);
    \draw[ ->] (Cc) to node[right] {$\; \pr{2}$} (Rd);
    \node at (4,0.2) {$j$};
    \draw[ ->] (Cc.south) .. controls +(down:20mm) and +(down:20mm) .. (RRRc.south);
    \draw[->] (RRc) to node[right] {$\;t$} (Rd);
    
  \end{tikzpicture}
\end{equation*}
Observe that the dg-functor
$$
\ffG:
\Sing(s')\otimes_{\sC}\Sing(s')^{\op}
\to
\Sing(s\times_S s)
$$
is induced by
$$
j_*\circ (-\boxtimes_{S'}(-)^\vee):
\Sing(s')\otimes_A \Sing(s')^{\op}
\to
\Sing(s\times_Ss).
$$
We first prove that $\sB$ is Karoubi-generated by the essential image of $\ffG$. To see this, notice that $\Sing(s')$ is Karoubi-generated by $t_*k$ and a routine computation yields 
$$
  \ffG(t_*k,t_*k)\simeq 1_{\sB}\oplus 1_{\sB}[1].
$$
Then the claim follows from the fact that $\sB$ is Karoubi-generated by its unit.

It remains to show that $\ffG$ is fully-faithful. To begin, we observe that
$$
  \Hom_{\sE}(t_*k,t_*k)
  \simeq
  k[u,u^{-1}]\oplus k[u,u^{-1}][1].
$$
Also notice that 
$$
\Hom_{\sC^+}(\Delta_{S'},\Delta_{S'})
= 
\HH^*(A'/A).
$$
In concrete terms, $\Hom_{\sC^+}(\Delta_{S'},\Delta_{S'})$ is equivalent to the cochain complex
$$
  0\to A' \xto{0} A' \xto{E'(\pi_{\uL})} A' \xto{0} A' \xto{E'(\pi_{\uL})} \dots
$$
with first nontrivial entry in degree $0$. It follows that 
$$
  \Hom_{\sE}(t_*k,t_*k) 
  \simeq
  \coFib \bigt{\Hom_{\sC}(\Delta_{S'},\Delta_{S'})\xto{\pi_{\uL}}\Hom_{\sC}(\Delta_{S'},\Delta_{S'})}.
$$
Let $\End_{\sC}(\Delta_{S'}):= \Hom_{\sC}(\Delta_{S'},\Delta_{S'})$.
We can then compute
\begin{align*}
  \Hom_{\sE \otimes_{\sC}\sE^{\op}}\bigt{(t_*k,t_*k),(t_*k,t_*k)} & = \Hom_{\sE}(t_*k,t_*k) \otimes_{\End_{\sC}(\Delta_{S'})}\Hom_{\sE}(t_*k,t_*k) \\
               & \simeq \coFib \bigt{\End_{\sC}(\Delta_{S'})\xto{\pi_{\uL}}\End_{\sC}(\Delta_{S'})}\otimes_{\End_{\sC}(\Delta_{S'})}\Hom_{\sE}(t_*k,t_*k) \\
                                                                  & \simeq \coFib \bigt{ \Hom_{\sE}(t_*k,t_*k)\xto{0}\Hom_{\sE}(t_*k,t_*k)} \\
                                                                  & \simeq \Hom_{\sE}(t_*k,t_*k) \oplus \Hom_{\sE}(t_*k,t_*k)[1].
\end{align*}
A simple computation yields
$$
\Hom_{\sB}(1_{\sB}\oplus 1_{\sB}[1],1_{\sB}\oplus 1_{\sB}[1])
\simeq 
  k[u,u^{-1}] ^{\oplus 2 }\oplus k[u,u^{-1}] ^{\oplus 2}[1]
\simeq
\Hom_{\sE}(t_*k,t_*k) \oplus \Hom_{\sE}(t_*k,t_*k)[1].
$$
Tracing through the above,  we have proven that $\ffG$ induces an equivalence
$$
  \Hom_{\sE \otimes_{\sC}\sE^{\op}}\bigt{(t_*k,t_*k),(t_*k,t_*k)} 
  \xto{ \simeq }
  \Hom_{\sB}(1_{\sB}\oplus 1_{\sB}[1],1_{\sB}\oplus 1_{\sB}[1]),
$$
which implies the claimed fully faithfulness.
\end{proof}

\ssec{The Morita equivalence}

\sssec{}

We can rewrite the right adjoint in \eqref{adj:Bmod vs Cmod} as a tensor product. For this, it suffices to notice that $\sE$ is dualizable as a $(\sB,\sC)$-bimodule, with dual equal to $\sE^\op$.
We check this directly, by writing a duality datum.

\begin{prop}
The $(\sB,\sC)$-bimodule $\sE$ admits $\sE^\op$ as a dual.
\end{prop}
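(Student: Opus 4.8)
The plan is to exhibit an explicit duality datum for $\sE$ over $(\sB,\sC)$: a coevaluation $\coev_{\sE}\colon\sB\to\sE\otimes_{\sC}\sE^{\op}$ in $\biMod_{(\sB,\sB)}$ and an evaluation $\ev_{\sE}\colon\sE^{\op}\otimes_{\sB}\sE\to\sC$ in $\biMod_{(\sC,\sC)}$ satisfying the two zigzag identities. I would take $\ev_{\sE}:=\ffF$, which by Corollary \ref{cor: End_B(E)=C} is even a monoidal equivalence, and $\coev_{\sE}:=\ffG^{-1}$, a chosen quasi-inverse of the monoidal equivalence $\ffG$ of Theorem \ref{thm: End_C(E)=B}. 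That these are morphisms in the stated bimodule $\infty$-categories is the same kind of strict-model associativity statement that already underlies the monoidality of $\ffF^{s}$ and $\ffG^{s}$: unwinding $\ffF^{s}(M,N)=M^{\vee}\otimes_{K_A}N$ and $\ffG^{s}(M,N)=M\otimes_{A'}N^{\vee}$ (cf. Lemma \ref{lem: spadesuit pseudo associative} and the bilinearity lemmas for $\hB$ and $\hC$), one sees that $\ffF^{s}$ is $\cofdgMod(A'')$-bilinear and $\ffG^{s}$ is $\cofdgMod(K_A^2)$-bilinear for the outer actions, whence the claim.

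The substance is the zigzag identities. The first one requires the composite $\sE\xrightarrow{\coev_{\sE}\otimes_{\sB}\id_{\sE}}\sE\otimes_{\sC}\sE^{\op}\otimes_{\sB}\sE\xrightarrow{\id_{\sE}\otimes_{\sC}\ev_{\sE}}\sE$ to be equivalent to $\id_{\sE}$ in $\biMod_{(\sB,\sC)}$, and the second the mirror statement for $\sE^{\op}$. Since $\ev_{\sE}=\ffF$ and $\coev_{\sE}=\ffG^{-1}$ are equivalences and $(\ffG\otimes_{\sB}\id_{\sE})\circ(\ffG^{-1}\otimes_{\sB}\id_{\sE})\simeq\id_{\sE}$, the first zigzag follows once one knows the single associativity-type compatibility $\ffG\otimes_{\sB}\id_{\sE}\simeq\id_{\sE}\otimes_{\sC}\ffF$ of functors $\sE\otimes_{\sC}\sE^{\op}\otimes_{\sB}\sE\to\sE$, and symmetrically the second zigzag reduces to $\ffF\otimes_{\sC}\id_{\sE^{\op}}\simeq\id_{\sE^{\op}}\otimes_{\sB}\ffG$ on $\sE^{\op}\otimes_{\sB}\sE\otimes_{\sC}\sE^{\op}$.

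These two compatibilities I would verify on the strict models, in the style of the monoidality proofs for $\ffF^{s},\ffG^{s}$ and of Lemma \ref{lem: spadesuit is associative}: on an object $(M,N,P)$ of $\cofdgMod(K_{A'})\otimes_{A}\cofdgMod(K_{A'})^{\op}\otimes_{A}\cofdgMod(K_{A'})$ the two sides are $\ffG^{s}(M,N)\odot P=(M\otimes_{A'}N^{\vee})\otimes_{K_A}P$ and $M\circledcirc\ffF^{s}(N,P)=M\otimes_{A'}(N^{\vee}\otimes_{K_A}P)$, which are identified by the canonical associativity isomorphism once one matches the $A'$-, $K_A$- and $K_{A'}$-module structures (exactly as in Lemma \ref{lem: spadesuit pseudo associative}); the mirror identity follows after replacing $M,P$ by their $K_{A'}$-duals. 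These equivalences of dg-functors then descend from the strict models to $\sE,\sB,\sC$ by \cite[Appendix A]{toenvezzosi22}, every operation in sight being compatible with $W_{\up{pe}}$ (Lemmas \ref{lem: compatibility odot with Wqi and Wpe}, \ref{lem: compatibility circledcirc with Wqi and Wpe}, Corollary \ref{cor: compatibility spadesuit with Wqi and Wpe}). I expect the main obstacle to be precisely this bookkeeping --- keeping careful enough track of which tensor factor carries which module structure so that the chain of canonical isomorphisms assembles into the \emph{identity} natural transformation, and not some a priori nontrivial self-equivalence of $\sE$; put differently, the real point is that the Morita context $(\ffF,\ffG)$ is \emph{strictly} compatible, all the genuinely geometric input (the K\"unneth formula, the dualizability of singularity categories) having been spent already in Corollary \ref{cor: End_B(E)=C} and Theorem \ref{thm: End_C(E)=B}.
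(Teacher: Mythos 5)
Your proposal is correct and follows essentially the same route as the paper: the duality datum is built from the pair of monoidal equivalences $\ffF$ and $\ffG$ (the paper takes $\coev=\ffF^{-1}$, $\ev=\ffG$, you take the mirror orientation $\coev=\ffG^{-1}$, $\ev=\ffF$, which is immaterial since both are equivalences), and the triangle identities reduce to the observation that the composite acts on $\sE$ as the unit object. Your reduction to the single compatibility $\ffG\otimes_{\sB}\id_{\sE}\simeq\id_{\sE}\otimes_{\sC}\ffF$, checked by associativity on the strict models, is just a more explicit rendering of the paper's one-line verification.
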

\begin{proof}
One verifies that the maps
$$
  \ffF^{-1}:\sC\to \sE^{\op}\otimes_{\sB}\sE,
$$
$$
  \ffG: \sE \otimes_{\sC} \sE^{\op}\to \sB
$$
provide a duality datum:
consider the composition
$$
  \sE 
  \xto{\simeq}
  \sE \otimes_{\sC}\sC
  \xto{\id \otimes \ffF}
  \sE \otimes_{\sC} \sE^{\op} \otimes_{\sB}\sE
  \xto{\ffG \otimes \id}
  \sB \otimes_{\sB}\sE
  \xto{\simeq}
  \sE.
$$
This corresponds, under the canonical equivalence $\sE \simeq \sE \otimes_{\sC}\sC$, to the morphism induced by acting on $\sE$ by the unit object $1_{\sC}$. This is clearly homotopic to the identity map.
The other triangular identity works in the same manner.
\end{proof}

\sssec{}

Hence, the adjunction (\ref{adj:Bmod vs Cmod}) can be rewritten as
\begin{equation} \label{adj:Morita-tensor}
\begin{tikzpicture}[scale=1.5]
\node (a) at (0,1) {$\Mod_\sB$};
\node (b) at (2.5,1) {$\Mod_\sC$.};
\path[->,font=\scriptsize,>=angle 90]
([yshift= 1.5pt]a.east) edge node[above] {$\sE^\op \otimes_{\sB} -$ } ([yshift= 1.5pt]b.west);
\path[->,font=\scriptsize,>=angle 90]
([yshift= -1.5pt]b.west) edge node[below] {
$
\sE \otimes_{\sC} -
$ } ([yshift= -1.5pt]a.east);
\end{tikzpicture}
\end{equation}

\begin{cor} \label{cor:Morita}
The bimodule $\sE$ yields a Morita equivalence between the monoidal dg-categories $\sB$ and $\sC$. 
In other words, the adjoint functors \eqref{adj:Morita-tensor} are mutually inverse equivalences.
\end{cor}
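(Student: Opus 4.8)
The plan is to deduce the statement formally from the two monoidal equivalences already in hand — $\ffF\colon \sE^{\op}\otimes_{\sB}\sE \xto{\sim}\sC$ (Corollary~\ref{cor: End_B(E)=C}) and $\ffG\colon \sE\otimes_{\sC}\sE^{\op}\xto{\sim}\sB$ (Theorem~\ref{thm: End_C(E)=B}) — together with the observation, already recorded in the Proposition above, that the pair $(\ffF^{-1},\ffG)$ is a duality datum exhibiting $\sE^{\op}$ as a dual of the $(\sB,\sC)$-bimodule $\sE$. The key remark is that this duality datum is nothing but the unit and counit of an adjunction of $\oo$-functors
$$
  \sE\otimes_{\sC}-\;\dashv\;\sE^{\op}\otimes_{\sB}-
$$
between $\Mod_{\sC}$ and $\Mod_{\sB}$: the unit at $\sN\in\Mod_{\sC}$ is $\ffF^{-1}\otimes_{\sC}\sN\colon \sN\to\sE^{\op}\otimes_{\sB}\sE\otimes_{\sC}\sN$ and the counit at $\sM\in\Mod_{\sB}$ is $\ffG\otimes_{\sB}\sM\colon \sE\otimes_{\sC}\sE^{\op}\otimes_{\sB}\sM\to\sM$.

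From here the argument is immediate. Since $\ffF$ and $\ffG$ are equivalences, so are $\ffF^{-1}$ and $\ffG$, and hence so are all of their base changes $\ffF^{-1}\otimes_{\sC}\sN$ and $\ffG\otimes_{\sB}\sM$ (applying $-\otimes_{\sC}\sN$, resp.\ $-\otimes_{\sB}\sM$, to an equivalence produces an equivalence). Thus the unit and counit of the displayed adjunction are natural equivalences, which is exactly the assertion that $\sE\otimes_{\sC}-$ and $\sE^{\op}\otimes_{\sB}-$ are mutually inverse equivalences. Since these are precisely the two functors of \eqref{adj:Morita-tensor} — the adjunction there being the tensor--hom adjunction \eqref{adj:Bmod vs Cmod} rewritten using the very same dualizability of $\sE$, possibly with the left and right adjoint interchanged — it follows that \eqref{adj:Morita-tensor} is an adjoint equivalence and that $\sE$ implements a Morita equivalence between $\sB$ and $\sC$.

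I do not expect any real obstacle here: all the mathematical content sits in Corollary~\ref{cor: End_B(E)=C} and Theorem~\ref{thm: End_C(E)=B} (which themselves rest on the K\"unneth formula of Theorem~\ref{Thm: Kunneth for Sing} and on explicit $\Hom$-complex computations), and the rest is formal. The only point worth checking carefully is the bookkeeping in the first paragraph, namely that the duality datum $(\ffF^{-1},\ffG)$ produced in the Proposition above really does assemble into the unit and counit of the adjunction $\sE\otimes_{\sC}-\dashv\sE^{\op}\otimes_{\sB}-$, with components the stated base changes; this is a routine unwinding of the definition of dualizability for bimodules over a monoidal dg-category (compare \cite[Section 4.6.1]{lurieha} and \cite[Section 2]{toenvezzosi22}). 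If one prefers to avoid this unwinding, one can instead evaluate the unit and counit of \eqref{adj:Morita-tensor} directly on the free rank-one modules $\sB$ and $\sC$ — where, up to canonical identifications, they reduce to the equivalences $\ffF$ and $\ffG$ (possibly inverted) — and then conclude, using that both functors preserve colimits and retracts and that $\Mod_{\sB}$ (resp.\ $\Mod_{\sC}$) is generated under these operations by its unit object.
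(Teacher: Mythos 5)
Your proposal is correct and takes essentially the same route as the paper: the paper also computes the two composites $\sE^{\op}\otimes_{\sB}(\sE\otimes_{\sC}-)\simeq(\sE^{\op}\otimes_{\sB}\sE)\otimes_{\sC}-\simeq\id$ and $\sE\otimes_{\sC}(\sE^{\op}\otimes_{\sB}-)\simeq(\sE\otimes_{\sC}\sE^{\op})\otimes_{\sB}-\simeq\id$ directly from Corollary \ref{cor: End_B(E)=C} and Theorem \ref{thm: End_C(E)=B} via associativity of the relative tensor product. Your extra bookkeeping identifying these equivalences with the unit and counit of the adjunction is a mild refinement, not a different argument.
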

\begin{proof}
By Corollary \ref{cor: End_B(E)=C}, we have
$$
\sE^{\op} \otimes_{\sB} (\sE \otimes_{\sC} - )  \simeq 
(\sE^{\op}\otimes_{\sB} \sE) \otimes_{\sC} -
\simeq 
\sC \otimes_{\sC} -
\simeq \id.
$$
Similarly, Theorem \ref{thm: End_C(E)=B} yields
$$
  \sE \otimes_{\sC} (\sE^{\op} \otimes_{\sB} - ) 
 \simeq 
 (\sE\otimes_{\sC} \sE^{\op}) \otimes_{\sB} - 
 \simeq 
 \sB \otimes_{\sB} - 
 \simeq 
 \id.
$$
\end{proof}

\sssec{} 

An immediate consequence is the following version of Theorem \ref{Thm: Kunneth for Sing} for the other side of the Morita equivalence.

\begin{cor}\label{cor: Kunneth for Sing (C side)}
Let $Y$ and $Z$ be regular $S$-schemes of finite type with smooth generic fibers.
Consider the dg-categories $\Sing(Y')$ and $\Sing(Z')$, both equipped with the convolution action of $\sC$. There is a canonical equivalence
$$
\Sing(Y')^\op
\otimes_{\sC}
\Sing(Z')
\xto{\;\; \simeq \;\;}
\Sing(Y \times_S Z).
$$
\end{cor}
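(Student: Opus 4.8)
The plan is to deduce Corollary~\ref{cor: Kunneth for Sing (C side)} from the $\sB$-side Künneth formula (Theorem~\ref{Thm: Kunneth for Sing}) by transporting it through the Morita equivalence of Corollary~\ref{cor:Morita}; recall that throughout this section $S'\to S$ is assumed non-trivial, so that equivalence is available. First I would observe that $S' = \Spec{A'}$ is itself a regular $S$-scheme of finite type with smooth generic fibre (it is the spectrum of a DVR, and $S'_\eta = \Spec{\uL}$ is étale over $\eta$). Hence Theorem~\ref{Thm: Kunneth for Sing}, applied with one of the two schemes equal to $S'$ and the other equal to an arbitrary regular $S$-scheme $X$ of finite type with smooth generic fibre, yields a canonical equivalence
$$
\Sing((S')_s)^{\op} \otimes_{\sB} \Sing(X_s) \xto{\;\;\simeq\;\;} \Sing(S' \x{S} X).
$$
Since $(S')_s = S' \x{S} s = s'$ and $S' \x{S} X = X'$, this reads $\sE^{\op} \otimes_{\sB} \Sing(X_s) \simeq \Sing(X')$. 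Moreover, by the explicit geometric description of $\ffF$, this equivalence is $\sC$-linear, where $\sC$ acts on the right-hand side by convolution and on the left-hand side through the $(\sC,\sB)$-bimodule structure on $\sE^{\op}$. In other words, under the Morita equivalence $\sE^{\op}\otimes_{\sB}- \colon \Mod_{\sB} \xrightarrow{\ \simeq\ } \Mod_{\sC}$ the $\sB$-module $\Sing(X_s)$ goes to the $\sC$-module $\Sing(X')$.

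Next I would apply this to $X = Y$ and $X = Z$, and take opposites: from $\Sing(Y') \simeq \sE^{\op}\otimes_{\sB}\Sing(Y_s)$ we get $\Sing(Y')^{\op} \simeq \Sing(Y_s)^{\op}\otimes_{\sB}\sE$ as right $\sC$-modules (using that taking opposites turns a relative tensor product into the relative tensor product of the opposites, in reversed order, and $(\sE^{\op})^{\op}=\sE$). Then the corollary follows from the chain
\begin{align*}
\Sing(Y')^{\op} \otimes_{\sC} \Sing(Z')
&\simeq \bigt{\Sing(Y_s)^{\op} \otimes_{\sB} \sE} \otimes_{\sC} \bigt{\sE^{\op} \otimes_{\sB} \Sing(Z_s)} \\
&\simeq \Sing(Y_s)^{\op} \otimes_{\sB} \bigt{\sE \otimes_{\sC} \sE^{\op}} \otimes_{\sB} \Sing(Z_s) \\
&\simeq \Sing(Y_s)^{\op} \otimes_{\sB} \sB \otimes_{\sB} \Sing(Z_s) \\
&\simeq \Sing(Y_s)^{\op} \otimes_{\sB} \Sing(Z_s) \\
&\simeq \Sing(Y \x{S} Z),
\end{align*}
where the second step is associativity of the relative tensor product, the third is Theorem~\ref{thm: End_C(E)=B} (which identifies $\sE\otimes_{\sC}\sE^{\op}$ with $\sB$ compatibly with the $\sB$-bimodule structures), and the last step is again Theorem~\ref{Thm: Kunneth for Sing}. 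Conceptually, this is just the statement that the Morita equivalence induced by $\sE$ preserves the two-sided bar/tensor construction computing $\Sing(Y_s)^{\op}\otimes_{\sB}\Sing(Z_s)$.

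The hard part will be purely bookkeeping: verifying that the equivalence $\sE^{\op}\otimes_{\sB}\Sing(X_s)\simeq\Sing(X')$ is genuinely $\sC$-equivariant. Both sides carry a $\sC$-action coming from $S'\x{S}S'$ acting on the $S'$-coordinate, and one must check that $\ffF$ (which is assembled from exterior products and proper pushforwards, cf. the construction in Section~\ref{sec: nctraces}) intertwines them; this is entirely parallel to the monoidality statements already proved for $\ffF$ and $\ffG$ and to the compatibility of the $(\sB,\sC)$-bimodule structure on $\sE$ with the $W_{\up{pe}}$-localizations. I expect no conceptual obstruction beyond carefully matching these actions, and once that is in place the displayed chain of equivalences gives the result immediately.
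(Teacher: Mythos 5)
Your proposal is correct and follows essentially the same route as the paper: both deduce $\Sing(X')\simeq\sE^{\op}\otimes_{\sB}\Sing(X_s)$ from Theorem~\ref{Thm: Kunneth for Sing} applied with one factor equal to $S'$, pass to opposites (the paper invokes Grothendieck duality for the exchange of left and right $\sB$-module structures, which is the precise mechanism behind your bookkeeping step), and then run the identical chain of equivalences using associativity of the relative tensor product, Theorem~\ref{thm: End_C(E)=B}, and the $\sB$-side Künneth formula.
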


\begin{proof}
By Theorem \ref{Thm: Kunneth for Sing} we know that
$$
  \Sing(Y')\simeq E^{\op}\otimes_{\sB}\Sing(Y_s), \hspace{0.5cm} \Sing(Z')\simeq E^{\op}\otimes_{\sB}\Sing(Z_s).
$$
Moreover, since Grothendieck duality exchanges left and right $\sB$-module structures and it is compatible with the equivalences above, we have that 
$$
  \Sing(Y')^{\op}\simeq \Sing(Y_s)^{\op}\otimes_{\sB}\sE.
$$
Therefore, 
\begin{align*}
  \Sing(Y')^{\op}\otimes_{\sC}\Sing(Z') 
 & \simeq 
  \bigt{\Sing(Y_s)^{\op}\otimes_{\sB}\sE}\otimes_{\sC} \bigt{\sE^{\op}\otimes_{\sB}\Sing(Z_s)} 
  \\
& \simeq 
\Sing(Y_s)^{\op}\otimes_{\sB}(\sE \otimes_{\sC} \sE^{\op})\otimes_{\sB}\Sing(Z_s)
\\
& \simeq 
\Sing(Y_s)^{\op}\otimes_{\sB}\sB \otimes_{\sB}\Sing(Z_s) 
\\
& \simeq 
\Sing(Y_s)^{\op}\otimes_{\sB}\Sing(Z_s)       
\\
& \simeq \Sing(Y\times_S Z),                                   
\end{align*}
where the third equivalence follows from Theorem \ref{thm: End_C(E)=B} and the last one from Theorem \ref{Thm: Kunneth for Sing}.
\end{proof}

\ssec{Duality datum for \texorpdfstring{$\sU$}{U} over \texorpdfstring{$\sC$}{C}}\label{ssec: duality datum for U over C}

Here, we show that $\sU$ is dualizable as a left $\sC$-module. We prove this by combining the Morita equivalence proven above with the known dualizability of $\sT$ as a left $\sB$-module.

\sssec{}

Recall that $\sT$ is a dualizable left $\sB$-module, with dual the right $\sB$-module $\sT^{\op}$.

\begin{prop}\label{prop: U dualizable C module}
The left $\sC$-module $\sU$ and the right $\sC$-module $\sU^\op$ are mutually dual. 
\end{prop}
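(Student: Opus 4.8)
The plan is to transport the saturatedness of $\sT$ over $\sB$ (Corollary \ref{cor: T dualizable B module}) across the Morita equivalence of Corollary \ref{cor:Morita}, using the explicit duality datum $(\ev_{\sT/\sB},\coev_{\sT/\sB})$. The first step is to identify the relevant $\sC$-modules. Applying the Künneth formula (Theorem \ref{Thm: Kunneth for Sing}) with $Y=S'$ and $Z=X$, and using that the special fibre of $S'$ is $s' = s\times_S S'$, so that $\Sing(s') = \sE$, we obtain
$$
\sE^{\op}\otimes_{\sB}\sT \;=\; \Sing(s')^{\op}\otimes_{\sB}\Sing(X_s) \;\xto{\;\simeq\;}\; \Sing(S'\times_S X) \;=\; \sU ,
$$
and one checks on the strict models of Sections \ref{sec: nctraces} and \ref{sec:morita} that this is an equivalence of \emph{left $\sC$-modules}, the action on the source being the one carried by the bimodule $\sE^{\op}$. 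Dually, exactly as in the proof of Corollary \ref{cor: Kunneth for Sing (C side)}, Grothendieck duality yields an equivalence of right $\sC$-modules $\sU^{\op}\simeq \sT^{\op}\otimes_{\sB}\sE$.

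Next I would build a duality datum for $\sU$ over $\sC$ by conjugating the one for $\sT$ over $\sB$ with $\sE$. Since the external product $\otimes_A$ of $\dgCat_A$ commutes with the relative tensor products over $\sB$ (checked through the bar construction, $\otimes_A$ preserving geometric realizations in each variable), we have $\sU\otimes_A\sU^{\op}\simeq \sE^{\op}\otimes_{\sB}\bigt{\sT\otimes_A\sT^{\op}}\otimes_{\sB}\sE$. As $\ev_{\sT/\sB}:\sT\otimes_A\sT^{\op}\to\sB$ is $\sB$-bilinear and $\ffF$ is the monoidal equivalence of Corollary \ref{cor: End_B(E)=C}, we may form
$$
\ev_{\sU/\sC}:\;\sU\otimes_A\sU^{\op}\;\simeq\;\sE^{\op}\otimes_{\sB}\bigt{\sT\otimes_A\sT^{\op}}\otimes_{\sB}\sE\;\xto{\;\id\otimes\ev_{\sT/\sB}\otimes\id\;}\;\sE^{\op}\otimes_{\sB}\sB\otimes_{\sB}\sE\;\simeq\;\sE^{\op}\otimes_{\sB}\sE\;\xto{\;\ffF\;}\;\sC ,
$$
and, inserting the equivalence $\ffG^{-1}:\sB\xto{\simeq}\sE\otimes_{\sC}\sE^{\op}$ of Theorem \ref{thm: End_C(E)=B} into the coevaluation of $\sT$,
$$
\coev_{\sU/\sC}:\;\Perf(S)\;\xto{\;\coev_{\sT/\sB}\;}\;\sT^{\op}\otimes_{\sB}\sT\;\simeq\;\sT^{\op}\otimes_{\sB}\sB\otimes_{\sB}\sT\;\xto{\;\id\otimes\ffG^{-1}\otimes\id\;}\;\sT^{\op}\otimes_{\sB}\sE\otimes_{\sC}\sE^{\op}\otimes_{\sB}\sT\;\simeq\;\sU^{\op}\otimes_{\sC}\sU .
$$

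It then remains to verify the two triangle identities for $(\ev_{\sU/\sC},\coev_{\sU/\sC})$. This is a diagram chase that reduces them to the triangle identities for $(\ev_{\sT/\sB},\coev_{\sT/\sB})$ — which hold because $\sT$ is saturated over $\sB$ — together with the compatibility of $\ffF$ and $\ffG$ witnessing that $\sE$ admits $\sE^{\op}$ as a dual (the Proposition just before Corollary \ref{cor:Morita}). Equivalently and more conceptually, one invokes that in the Morita $(\infty,2)$-category composition of dualizable $1$-morphisms is dualizable, with dual the composite of the duals in the opposite order, applied to $\sU=\sE^{\op}\otimes_{\sB}\sT$. Since all the functors used lie in $\dgCat_A\subseteq\dgCAT_A$, the datum produced lies in $\dgCat_A$, so in fact $\sU$ is \emph{saturated} over $\sC$ (recall it is already cotensored over $\sC$).

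The main obstacle I anticipate is the bookkeeping in the first two steps: making sure the Künneth equivalence $\sU\simeq\sE^{\op}\otimes_{\sB}\sT$ is genuinely an equivalence of left $\sC$-modules, with the convolution $\sC$-action on $\sU$ matched with the $\sE^{\op}$-action on the right-hand side, and likewise that the rewriting of $\sU\otimes_A\sU^{\op}$ respects all the ambient $(\sC,\sC)$-bimodule structures. Once these compatibilities are pinned down on the explicit strict models, the construction of $\ev_{\sU/\sC}$ and $\coev_{\sU/\sC}$ and the check of the triangle identities are formal.
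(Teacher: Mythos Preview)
Your proposal is correct and follows essentially the same route as the paper: identify $\sU\simeq\sE^{\op}\otimes_{\sB}\sT$ and $\sU^{\op}\simeq\sT^{\op}\otimes_{\sB}\sE$ via K\"unneth, then transport the duality datum $(\coev_{\sT/\sB},\ev_{\sT/\sB})$ across the Morita equivalence using $\ffF$ and $\ffG$, and reduce the triangle identities for $\sU/\sC$ to those for $\sT/\sB$. The paper's write-up is slightly terser (it packages your insertion of $\ffG^{-1}$ as the single equivalence $\sU^{\op}\otimes_{\sC}\sU\simeq\sT^{\op}\otimes_{\sB}\sT$), but the content and the verification of the triangle identities via the displayed commutative diagram are the same.
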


\begin{proof}
The K\"unneth formula of Theorem \ref{thm:Kunneth} yields a $(\sC,A)$-linear equivalence
$$
  \sE^\op \otimes_\sB \sT
  \simeq
  \Sing(S' \times_S X)
  = 
  \sU
$$
and, consequently, an $(A,\sC)$-linear equivalence
$$
\sT^\op \otimes_\sB \sE
\simeq
\sU^\op.
$$
Then the assertion follows formally from the Morita equivalence, Corollary \ref{cor:Morita}.
In order to see this, fix a duality datum $\bigt{\coev_{\sT/\sB}:A\to \sT^{\op}\otimes_{\sB}\sT, \ev_{\sT/\sB}:\sT\otimes_A \sT^{\op}\to \sB}$ for $\sT$ over $\sB$. 
We define
$$
  \coev_{\sU/\sC}:A
  \to 
  \sU^{\op}\otimes_{\sC}\sU
$$
to be the map that corresponds to $\coev_{\sT/\sB}$ under the equivalences
$$
  \sU^{\op}\otimes_{\sC}\sU
  \simeq 
  (\sT^{\op}\otimes_{\sB}\sE)\otimes_{\sC}(\sE^{\op}\otimes_{\sB}\sT)
  \simeq 
  \sT^{\op}\otimes_{\sB}\sT.
$$
We define
$$
  \ev_{\sU/\sC}:\sU \otimes_A \sU^{\op}\to \sU
$$
as the composition
$$
  \sU \otimes_A \sU^{\op}
  \simeq 
  \sE^{\op}\otimes_{\sB}\sT \otimes_A \sT^{\op}\otimes_{\sB}\sE^{\op}
  \xto{\id \otimes \ev_{\sT/\sB} \otimes \id}
  \sE^{\op}\otimes_{\sB}\sB\otimes_{\sB}\sE^{\op}
  \simeq
  \sC.
$$
The pair $(\coev_{\sU/\sC},\ev_{\sU/\sC})$ verifies the first triangular identity because of the following commutative diagram

\begin{equation*}
  \begin{tikzpicture}[scale=1.5]
    \node (LLu) at (0,1) {$\sU$};
    \node (Lu) at (1.4,1) {$\sU \otimes_AA$};
    \node (Cu) at (4.9,1) {$\sU \otimes_A \sU^{\op}\otimes_{\sC}\sU$};
    \node (Ru) at (8.4,1) {$\sC \otimes_{\sC}\sU$};
    \node (RRu) at (9.8,1) {$\sU$};
    \draw[->] (LLu) to node[above] {$\simeq$} (Lu);
    \draw[->] (Lu) to node[above] {$\id \otimes \coev_{\sU/\sC}$} (Cu);
    \draw[->] (Cu) to node[above] {$\ev_{\sU/\sC}\otimes \id$} (Ru);
    \draw[->] (Ru) to node[above] {$\simeq$} (RRu);
    \node (LLd) at (0,0) {$\sE\otimes_{\sB}\sT$};
    \node (Ld) at (1.4,0) {$\sE\otimes_{\sB}\sT \otimes_AA$};
    \node (Cd) at (4.9,0) {$\sE\otimes_{\sB}\sT \otimes_A \sT^{\op}\otimes_{\sB}\sT$};
    \node (Rd) at (8.4,0) {$\sE\otimes_{\sB}\sB \otimes_{\sB}\sT$};
    \node (RRd) at (9.8,0) {$\sE\otimes_{\sB}\sT.$};
    \draw[->] (LLd) to node[above] {$\simeq$} (Ld);
    \draw[->] (Ld) to node[above] {$\id \otimes \coev_{\sT/\sB}$} (Cd);
    \draw[->] (Cd) to node[above] {$\id \otimes \ev_{\sT/\sB}\otimes \id$} (Rd);
    \draw[->] (Rd) to node[above] {$\simeq$} (RRd);
    \draw[->] (LLu) to node[left] {$\simeq$} (LLd);
    \draw[->] (Lu) to node[left] {$\simeq$} (Ld);
    \draw[->] (Cu) to node[left] {$\simeq$} (Cd);
    \draw[->] (Ru) to node[left] {$\simeq$} (Rd);
    \draw[->] (RRu) to node[left] {$\simeq$} (RRd);
  \end{tikzpicture}
\end{equation*}
The composition of the arrows at the bottom is homotopic to $\sE\otimes_{\sB}\id_{\sT}\sim \id_{\sE\otimes_{\sB}\sE}$.
The verification that the second triangular identity holds too is similar and left to the reader.
\end{proof}

\begin{rmk}
The above proposition is actually a particular case of the following statement: let $\sM$ be a dualizable $\sB$-module with dual $\sM^\vee$. Then $\sE^{\op}\otimes_{\sB}\sM$ is a dualizable $\sC$-module with dual $\sM^{\vee}\otimes_{\sB}\sE$.
The proof provided above works in this case as well.
\end{rmk}

\ssec{Hochschild (co)homologies}

In this section, we establish relations between the Hochschild homology of $\sB$ and that of $\sC$.

\sssec{}

Given an $A$-linear monoidal dg-category $\sM$, its \emph{Drinfeld center} (or \emph{Hochschild cohomology}) is defined to be the dg-category (actually, an $\bbE_2$-monoidal dg-category)
$$
\HH^*(\sM/A)
:=
\Hom_{\sM \otimes_A \sM^\rev} (\sM,\sM).
$$
Similarly, the \emph{Hochschild homology} of $\sM$ is the tensor product
$$
\HH_*(\sM/A)
:=
\sM 
\otimes_{\sM \otimes_A \sM^\rev} 
\sM.
$$

\sssec{}

In the above section, we constructed a Morita equivalence between $\sB$ and $\sC$ by means of the bimodule $\sE$. 

\sssec{}
Observe the dg-category $\sE \otimes_A \sE^\op$ is a $(\sCenv,\sBenv)$-bimodule.
Using this, the Morita equivalence $\sB\mmod \simeq \sC\mmod$ of Theorem \ref{cor:Morita} can be \virg{doubled} as follows.
\begin{cor}
The functor
$$
(\sE \otimes_A \sE^\op) \otimes_{\sBenv} -:
\sBenv
\mmod
\longto
\sCenv \mmod
$$
is an equivalence of $\infty$-categories.
\end{cor}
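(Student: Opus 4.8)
The plan is to deduce the statement formally from the Morita equivalence of Corollary~\ref{cor:Morita} --- more precisely, from the two monoidal equivalences $\ffF\colon \sE^{\op}\otimes_{\sB}\sE \xto{\ \simeq\ } \sC$ of Corollary~\ref{cor: End_B(E)=C} and $\ffG\colon \sE\otimes_{\sC}\sE^{\op}\xto{\ \simeq\ }\sB$ of Theorem~\ref{thm: End_C(E)=B}. First I would record the purely formal fact that, for \emph{any} $A$-linear monoidal dg-category $\sA_3$, the $(\sC,\sB)$-bimodule $\sE^{\op}$ induces an equivalence of $\infty$-categories
$$\sE^{\op}\otimes_{\sB}-\colon \biMod_{(\sB,\sA_3)}\xto{\ \simeq\ }\biMod_{(\sC,\sA_3)},$$
with quasi-inverse $\sE\otimes_{\sC}-$: the two composites are $\sE\otimes_{\sC}\sE^{\op}\otimes_{\sB}-\simeq\sB\otimes_{\sB}-\simeq\id$ and $\sE^{\op}\otimes_{\sB}\sE\otimes_{\sC}-\simeq\sC\otimes_{\sC}-\simeq\id$, where one uses that $\ffG$ and $\ffF$ are equivalences of \emph{bimodules} --- a monoidal equivalence in particular matches the two-sided action of the unit object, i.e.\ the regular bimodule structure. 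Symmetrically, $-\otimes_{\sB}\sE\colon \biMod_{(\sA_1,\sB)}\to\biMod_{(\sA_1,\sC)}$ is an equivalence with quasi-inverse $-\otimes_{\sC}\sE^{\op}$.

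Composing these two equivalences (with $\sA_1=\sA_3=\sC$) I obtain an equivalence
$$\biMod_{(\sB,\sB)}\xto{\ \simeq\ }\biMod_{(\sC,\sC)},\qquad M\longmapsto \sE^{\op}\otimes_{\sB}M\otimes_{\sB}\sE,$$
and it then only remains to identify this with the functor in the statement. For that I would use the identifications $\sBenv\mmod\simeq\biMod_{(\sB,\sB)}$ and $\sCenv\mmod\simeq\biMod_{(\sC,\sC)}$ together with the base-change identity $\sN\otimes_{\sB}M\otimes_{\sB}\sN'\simeq(\sN\otimes_A\sN')\otimes_{\sBenv}M$ --- the version with coefficients in a bimodule of the formula $\sN\otimes_{\sA}\sN'\simeq(\sN\otimes_A\sN')\otimes_{\sA^\env}\sA$ recalled in Section~\ref{sec: preliminaries} --- valid for $\sN$ a right $\sB$-module, $\sN'$ a left $\sB$-module and $M$ a $\sB$-bimodule: applied with $\sN=\sE^{\op}$ and $\sN'=\sE$ it rewrites $(\sE^{\op}\otimes_A\sE)\otimes_{\sBenv}M$ as $\sE^{\op}\otimes_{\sB}M\otimes_{\sB}\sE$, and a final application of the symmetric-monoidal swap $\sE\otimes_A\sE^{\op}\simeq\sE^{\op}\otimes_A\sE$ (which intertwines the two $(\sCenv,\sBenv)$-bimodule structures) brings this into the form stated. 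The one step that requires genuine, if entirely routine, care is this last bookkeeping --- tracking which of the $\sB$- and $\sC$-actions on $\sE$ and $\sE^{\op}$ gets matched with which tensor factor of $\sBenv$ and $\sCenv$ --- and this is what I expect to be the main obstacle.

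Conceptually, the statement is an instance of the principle that Morita equivalences of algebra objects in $\dgCat_A$ are closed under the symmetric monoidal product $\otimes_A$ and under passage to the reversed algebra: since $\sB\simeq\sC$ is witnessed by $\sE$ and $\sB^{\rev}\simeq\sC^{\rev}$ by $\sE^{\op}$, the enveloping algebras $\sBenv=\sB^{\rev}\otimes_A\sB$ and $\sCenv=\sC^{\rev}\otimes_A\sC$ are Morita equivalent via $\sE^{\op}\otimes_A\sE$, equivalently via $\sE\otimes_A\sE^{\op}$; and an equivalence of module $\infty$-categories is precisely the content of such a Morita equivalence. One could take this viewpoint as the proof outright and merely cite the closure property, using the bookkeeping of the previous paragraph only to check that the resulting equivalence is the specific functor written in the statement.
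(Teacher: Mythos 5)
Your proposal is correct and follows essentially the route the paper intends: the paper leaves the proof as an exercise "similar to that of Corollary \ref{cor:Morita}", i.e.\ exhibit $(\sE^{\op}\otimes_A\sE)\otimes_{\sCenv}-$ as an inverse and identify both composites via the monoidal (hence regular-bimodule) equivalences $\ffF$ and $\ffG$, which is exactly what your factorization through the two one-sided bimodule equivalences accomplishes. The only caveats are cosmetic: the parenthetical "$\sA_1=\sA_3=\sC$" should read $\sA_3=\sB$ for the first functor and $\sA_1=\sC$ for the second, and the final bookkeeping matching $\sE^{\op}\otimes_{\sB}(-)\otimes_{\sB}\sE$ with $(\sE\otimes_A\sE^{\op})\otimes_{\sBenv}-$ is indeed the only point needing care, as you note.
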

\begin{proof}
The proof is similar to that of Corollary \ref{cor:Morita} and it is left as an exercise to the reader.
\end{proof}

\sssec{}

Observe that $\sB$ is naturally a left $\sBenv$-module: when we wish to emphasize this structure, we write $\sB^L$. Similarly, $\sB$ is naturally a right $\sBenv$-module: to emphasize this structure we use the symbol $\sB^R$.
We will use parallel notations for $\sC$.

\sssec{}
The above functor sends the left $\sBenv$-module $\sB^L$ to the left $\sCenv$-module
\begin{equation} \label{eqn:double-morita-for-BL}
(\sE \otimes_A \sE^\op) \otimes_{\sBenv} \sB^L
\simeq
\sE \otimes_\sB \sE^\op
\simeq \sC^L.
\end{equation}
Similarly, we have an equivalence
\begin{equation} \label{eqn:second-double-morita-for-BL}
\sC^R
\otimes_{\sCenv} 
(\sE \otimes_A \sE^\op) 
\simeq
\sE^\op \otimes_\sC \sE
\simeq
\sB^R
\end{equation}
of right $\sBenv$-modules.

\sssec{}

This immediately implies that the Drinfeld centers of $\sB$ and $\sC$ are equivalent.

\begin{cor}
There is a canonical equivalence
$$
\HH^*(\sB/A)
\simeq
\HH^*(\sC/A).
$$
\end{cor}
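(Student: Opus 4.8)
The plan is to obtain this as a formal consequence of the \virg{doubled} Morita equivalence established just above. Recall that $\HH^*(\sB/A) := \Hom_{\sB \otimes_A \sB^\rev}(\sB,\sB)$. Using the symmetry of the tensor product $\otimes_A$ on $\dgcat_A$, I would identify the monoidal dg-category $\sB \otimes_A \sB^\rev$ with $\sBenv = \sB^\rev \otimes_A \sB$; under this identification the diagonal bimodule $\sB$ corresponds to the left $\sBenv$-module $\sB^L$. Hence $\HH^*(\sB/A) \simeq \Hom_{\sBenv}(\sB^L, \sB^L)$, the $\Hom$ being taken in the $\oo$-category $\sBenv\mmod$, and likewise $\HH^*(\sC/A) \simeq \Hom_{\sCenv}(\sC^L, \sC^L)$.

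Next, let $\Psi := (\sE \otimes_A \sE^\op) \otimes_{\sBenv} (-)\colon \sBenv\mmod \longto \sCenv\mmod$ be the equivalence of $\oo$-categories from the Corollary preceding the statement. Any equivalence of $\oo$-categories induces equivalences on mapping objects, so
$$
\Hom_{\sBenv}(\sB^L, \sB^L) \simeq \Hom_{\sCenv}\bigl(\Psi(\sB^L), \Psi(\sB^L)\bigr).
$$
By the equivalence \eqref{eqn:double-morita-for-BL}, $\Psi(\sB^L) \simeq \sC^L$ as left $\sCenv$-modules. Combining the two previous displays gives the desired equivalence $\HH^*(\sB/A) \simeq \HH^*(\sC/A)$ of dg-categories, and it is canonical because every step is.

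Since the Drinfeld center is naturally an $\bbE_2$-monoidal dg-category, one may wish to upgrade the above to an equivalence respecting this structure. The $\bbE_1$-part (composition of endomorphisms) is automatic from the functoriality of mapping objects under an equivalence. For the $\bbE_2$-refinement it suffices that $\Psi$ be compatible with the monoidal structures of $\sB$ and $\sC$: this is exactly the content of Corollary \ref{cor: End_B(E)=C} and Theorem \ref{thm: End_C(E)=B}, which together say that $\sE \otimes_A \sE^\op$ is an algebra object in $(\sCenv,\sBenv)$-bimodules intertwining the canonical algebra structures of $\sBenv$ and $\sCenv$; the induced equivalence on centers is then one of $\bbE_2$-algebras.

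The argument is essentially formal; the only point requiring care is the bookkeeping of left/right module structures and of the $(-)^\rev$ conventions, together with the verification that the equivalences \eqref{eqn:double-morita-for-BL} and \eqref{eqn:second-double-morita-for-BL} are precisely those induced by $\Psi$ on the distinguished modules $\sB^L$ and $\sB^R$. This amounts to the small diagram chase already sketched in the discussion preceding the statement, and presents no genuine difficulty.
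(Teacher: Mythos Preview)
Your proposal is correct and follows essentially the same approach as the paper: both identify $\HH^*$ as endomorphisms of the diagonal bimodule over the enveloping algebra and then invoke the doubled Morita equivalence together with \eqref{eqn:double-morita-for-BL} to transport $\sB^L$ to $\sC^L$. Your write-up simply spells out in more detail what the paper compresses into two sentences, and your additional paragraph on the $\bbE_2$-refinement goes beyond what the paper claims or proves here.
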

\begin{proof}
By definition, we have
$$
\HH^*(\sB/A)
=
\Hom_{\sB^{\env}}(\sB,\sB),
\hspace{0.5cm}
\HH^*(\sC/A)
=
\Hom_{\sC^{\env}}(\sC,\sC).
$$
Since $\sC\simeq (\sE\otimes_A \sE^{\op})\otimes_{\sB^{\env}}\sB$, the equivalence above fulfills the desired equivalence
$$
  \Hom_{\sB^{\env}}(\sB,\sB)
  \simeq 
  \Hom_{\sC^{\env}}(\sC,\sC).
$$
\end{proof}
However, since we will need to work with traces, we are more interested in the Hochschild homologies. The main result of this section is the following fact.

\begin{thm} \label{thm:HH-are-equal}
There is a canonical equivalence
$$
\HH_*(\sB/A)
\simeq
\HH_*(\sC/A).
$$
\end{thm}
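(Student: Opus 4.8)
The plan is to deduce Theorem~\ref{thm:HH-are-equal} formally from the doubled Morita equivalence recorded just above, by realizing each Hochschild homology as a relative tensor product over the respective enveloping category and then transporting one expression into the other along the $(\sCenv,\sBenv)$-bimodule $\sE\otimes_A \sE^\op$. The point is that $\HH_*(-/A)$ is, essentially by definition, the ``trace of the identity bimodule'', and a Morita equivalence that has been upgraded to an equivalence $\sBenv\mmod\simeq\sCenv\mmod$ (together with explicit comparisons of the unit bimodules) automatically identifies these traces.

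Concretely, I would proceed as follows. First, unwind the definition: since $\sB\otimes_A\sB^\rev\simeq(\sBenv)^\rev$, the datum of a module over $\sB\otimes_A\sB^\rev$ is the datum of a $\sBenv$-module of the opposite handedness, so $\HH_*(\sB/A)=\sB\otimes_{\sB\otimes_A\sB^\rev}\sB$ is computed as the relative tensor product over $\sBenv$ of the right $\sBenv$-module $\sB^R$ with the left $\sBenv$-module $\sB^L$, and likewise $\HH_*(\sC/A)\simeq \sC^R\otimes_{\sCenv}\sC^L$. Second, substitute the two identifications established above, both induced by the single $(\sCenv,\sBenv)$-bimodule $\sE\otimes_A\sE^\op$: the equivalence \eqref{eqn:double-morita-for-BL}, namely $\sC^L\simeq(\sE\otimes_A\sE^\op)\otimes_{\sBenv}\sB^L$ of left $\sCenv$-modules, and the equivalence \eqref{eqn:second-double-morita-for-BL}, namely $\sB^R\simeq\sC^R\otimes_{\sCenv}(\sE\otimes_A\sE^\op)$ of right $\sBenv$-modules. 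Third, reassociate the relative tensor products to run the chain
\begin{align*}
\HH_*(\sC/A)
&\simeq \sC^R\otimes_{\sCenv}\sC^L
\\
&\simeq \sC^R\otimes_{\sCenv}\bigt{(\sE\otimes_A\sE^\op)\otimes_{\sBenv}\sB^L}
\\
&\simeq \bigt{\sC^R\otimes_{\sCenv}(\sE\otimes_A\sE^\op)}\otimes_{\sBenv}\sB^L
\\
&\simeq \sB^R\otimes_{\sBenv}\sB^L
\\
&\simeq \HH_*(\sB/A).
\end{align*}

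The main obstacle I anticipate is not a real difficulty but a matter of bookkeeping: one must be careful that \eqref{eqn:double-morita-for-BL} and \eqref{eqn:second-double-morita-for-BL} are compatible enough for the reassociation step to be legitimate, i.e.\ that they are both genuinely realized by the \emph{same} dualizable $(\sCenv,\sBenv)$-bimodule $\sE\otimes_A\sE^\op$ (with dual $\sE^\op\otimes_A\sE$), so that under them $\sC^L$ inherits exactly the correct residual left $\sCenv$-structure and $\sB^R$ the correct residual right $\sBenv$-structure. This is precisely the content of the doubled Morita equivalence established above, so the chain of equivalences is legitimate once the module structures have been tracked carefully. (Equivalently, one could phrase the whole argument by saying that $\HH_*(-/A)$ is the categorical trace of the identity bimodule and is therefore a Morita invariant; the work above is exactly what makes this invariance explicit at the level of dg-categories rather than merely of their homotopy groups.)
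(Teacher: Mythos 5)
Your proposal is correct and follows essentially the same route as the paper: both express $\HH_*(\sB/A)$ and $\HH_*(\sC/A)$ as $\sB^R\otimes_{\sBenv}\sB^L$ and $\sC^R\otimes_{\sCenv}\sC^L$, then substitute the equivalences \eqref{eqn:double-morita-for-BL} and \eqref{eqn:second-double-morita-for-BL} induced by the single bimodule $\sE\otimes_A\sE^\op$ and reassociate the relative tensor products. Your added remarks on tracking the residual module structures simply make explicit the bookkeeping the paper leaves implicit.
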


\begin{proof}
With the above conventions, the Hochschild homologies of $\sB/A$ and $\sC/A$ are given by  
$$
\HH_*(\sB/A)
:=
\sB^R
\otimes_{\sBenv} 
\sB^L
$$
$$
\HH_*(\sC/A)
:=
\sC^R
\otimes_{\sCenv} 
\sC^L.
$$
We wish to construct an equivalence between them.

Combining the two equivalences (\ref{eqn:double-morita-for-BL}) and (\ref{eqn:second-double-morita-for-BL}) above, we formally obtain
\begin{eqnarray}
\nonumber
\HH_*(\sC/A)
& = &
\sC^R
\otimes_{\sCenv} 
\sC^L 
\\
\nonumber
& \simeq &
\sC^R
\otimes_{\sCenv} 
(\sE \otimes_A \sE^\op) \otimes_{\sBenv} \sB^L
\\
\nonumber
& \simeq &
\sB^R
\otimes_{\sBenv} 
\sB^L 
\\
\nonumber
& = &
 \HH_*(\sB/A),
\end{eqnarray}
as claimed.
\end{proof}

\ssec{The main diagram}
\sssec{}

The Hochschild homology $\HH_*(\sB/A)$ appeared in \cite{toenvezzosi22, beraldopippi22} because of the following commutative square, which is a standard construction in the theory of traces of modules over non-commutative algebras:

\begin{equation} 
  \begin{tikzpicture}[scale=1.5]
    \node (Lu) at (0,1) {$\sT \otimes_\sA \sT^\op$};
    \node (Ld) at (0,0) {$\sB$};
    \node (Ru) at (3,1) {$ \sT^\op \otimes_\sB \sT$};
    \node (Rd) at (3,0) {$ \HH_*(\sB/A).$};
    \draw[->] (Lu) to node[left] { $\ev_{\sT/\sB}$ } (Ld);
    \draw[->] (Ru) to node[right] { $\ev^{\HH}_{\sT/\sB}$ } (Rd);
    \draw[->] (Lu) to node[above] {$\on{proj}\circ \on{swap}$} (Ru);
    \draw[->] (Ld) to node[above]{$\ev^{\HH}_{\sB/\sB}$} (Rd);
  \end{tikzpicture}
\end{equation}
%
Recall that the right vertical arrow is written as
$$
\ev^{\HH}_{\sT/\sB}:
\sT^\op 
\otimes_\sB 
\sT
\simeq
\bigt{ \sT \otimes_A \sT^\op  }
\otimes_{\sB \otimes_A \sB^\rev}
\sB
\xto{\ev_{\sT/\sB} \otimes \id}
\sB
\otimes_{\sB \otimes_A \sB^\rev}
\sB
= 
\HH_*(\sB/A).
$$

\sssec{}

Let us now draw the same diagram, but with the pair $(\sB,\sT)$ replaced by the pair $(\sC,\sU)$. For later convenience, we perform a symmetry along the middle vertical axis:
\begin{equation} 
  \begin{tikzpicture}[scale=1.5]
    \node (Lu) at (0,1) {$ \sU^\op \otimes_\sC \sU$};
    \node (Ld) at (0,0) {$ \HH_*(\sC/A)$};
    \node (Ru) at (3,1) {$\sU \otimes_\sA \sU^\op$};
    \node (Rd) at (3,0) {$\sC.$};
    \draw[->] (Lu) to node[left] { $\ev^{\HH}_{\sU/\sC}$ } (Ld);
    \draw[->] (Ru) to node[right] { $\ev_{\sU/\sC}$ } (Rd);
    \draw[<-] (Lu) to node[above] {$\on{proj}\circ \on{swap}$} (Ru);
    \draw[<-] (Ld) to node[above]{$\ev^{\HH}_{\sC/\sC}$} (Rd);
  \end{tikzpicture}
\end{equation}


\sssec{}

It turns out that we can juxtapose these diagrams. Indeed, in Theorem \ref{thm:HH-are-equal} we proved that $\HH_*(\sB/A) \simeq \HH_*(\sC/A)$, while the Corollary \ref{cor: Kunneth for Sing (C side)} yields
$$
\sU^\op 
\otimes_\sC 
\sU
\simeq
\sT^\op 
\otimes_\sB 
\sT.
$$
These equivalences are compatible with the $\HH$-evaluations functors, that is, the diagram
\begin{equation} \label{diag:HH-evaluations}
  \begin{tikzpicture}[scale=1.5]
    \node (Lu) at (0,1) {$ \sT^\op \otimes_\sB \sT$};
    \node (Ld) at (0,0) {$ \HH_*(\sB/A)$};
    \node (Ru) at (3,1) {$\sU^{\op} \otimes_\sC \sU$};
    \node (Rd) at (3,0) {$\HH_*(\sC/A)$};
    \draw[->] (Lu) to node[left] { $\ev^{\HH}_{\sT/\sC}$ } (Ld);
    \draw[->] (Ru) to node[right] { $\ev^{\HH}_{\sU/\sC}$ } (Rd);
    \draw[<->] (Lu) to node[above] {$\simeq$} (Ru);
    \draw[<->] (Ld) to node[above]{$\simeq$} (Rd);
  \end{tikzpicture}
\end{equation}
is commutative.

\sssec{}

The glued commutative diagram now looks as follows:
\begin{equation}  \label{diag:big-comm-diagram}
  \begin{tikzpicture}[scale=1.5]
    \node (Lu) at (-4,1) {$\sT \otimes_\sA \sT^\op$};
    \node (Ld) at (-4,0) {$\sB $};
    \node (Cu) at (0,1) {$\sT^\op \otimes_\sB \sT \simeq \sU^\op \otimes_\sC \sU$ };
    \node (Cd) at (0,0) {$\HH_*(\sB/A) \simeq \HH_*(\sC/A)$};
    \node (Ru) at (4,1) {$\sU \otimes_\sA \sU^\op$};
    \node (Rd) at (4,0) {$ \sC$.};
    \draw[->] (Lu) to node[right] { $\ev_{\sT/\sB}$ } (Ld);
    \draw[->] (Cu) to node[right] { $\ev^{\HH}_{\sT/\sB}\simeq \ev^{\HH}_{\sU/\sC}$ } (Cd);
    \draw[->] (Ru) to node[right] { $\ev_{\sU/\sC}$ } (Rd);
    \draw[->] (Ru) to node[above]{$\on{proj}\circ \on{swap}$} (Cu);
    \draw[->] (Rd) to node[above]{$\ev^{\HH}_{\sC/\sC}$} (Cd);
    \draw[->] (Lu) to node[above]{$\on{proj}\circ \on{swap}$} (Cu);
    \draw[->] (Ld) to node[above]{$\ev^{\HH}_{\sB/\sB}$} (Cd);
  \end{tikzpicture}
\end{equation}
\section{\texorpdfstring{$\ell$}{l}-adic realizations of \texorpdfstring{$\sT$}{T} and \texorpdfstring{$\sU$}{U}}\label{sec: realizations}

In this section, we look at the decategorifications of $\sT$ and $\sU$. 
More precisely, we show that the duality datum of the left $\sB$-module $\sT$ (equivalently, of the left $\sC$-module $\sU$) induces duality data for the left $\rl_S(\sB)$-module $\rl_S(\sT)$ and for the left $\rl_S(\sC)$-module $\rl_S(\sU)$.
Furthermore, we prove that the decategorification of the tensor product splits as the sums of the tensor products of the decategorifications: 
$$
  \rl_S(\sT\otimes_{\sB}\sT)
  \simeq
  \rl_S(\sT)\otimes _{\rl_S(\sB)}\rl_S(\sT)\oplus  \rl_S(\sU)\otimes_{\rl_S(\sC)}\rl_S(\sU).
$$

\ssec{Non-commutative motives and \texorpdfstring{$\ell$}{l}-adic realizations of dg-categories}

\sssec{}

Let $\SH_S$ denote the stable homotopy category of $S$-schemes, see \cite{morelvoevodsky99,robalo15}.
This is a presentable stable symmetric monoidal $\oo$-category endowed with an $\oo$-functor
$$
  \Sigma^{\oo}_+:
  \Sch_S^{\up{sm}}
  \to 
  \SH_S,
$$ 
universal among those symmetric monoidal $\oo$-functors satisfying Nisnevich descent, $\bbA^1$-invariance and such that 
$$
  (\bbP^1,\oo)
  :=
  \coFib (\Sigma^{\oo}_+S \xto{\oo} \Sigma^{\oo}_+\bbP^1)
$$
is an invertible object.

\sssec{}

We will denote $\BU_S$ the object in $\SH_S$ which represents non-connective homotopy-invariant algebraic K-theory:
for every smooth $S$-scheme $Y$,
$$
  \BU_S(Y)
  =
  \HK(Y).
$$

\sssec{}

In \cite[Section 3.2]{brtv18}, Blanc--Robalo--{\TV} construct an $\oo$-functor
$$
  \Mv_S:
  \dgCat_A
  \to
  \Mod_{\BU_S}(\SH_S),
$$
called the \emph{motivic realization of ($A$-linear) dg-categories}.
The main features of $\Mv_S$ are the following:
\begin{itemize}
\item 
it is lax-symmetric monoidal;
\item it is a localizing invariant: it sends localization sequences in $\dgCat_A$ to fiber-cofiber sequences in $\Mod_{\BU_S}(\SH_S)$;
\item it commutes with filtered colimits;
\item for $q:Y\to S$ a quasi-compact quasi-separated $S$-scheme,
      $$
        \Mv_S(\Perf(Y))
        \simeq
        q_*\BU_Y;
      $$
\item  
 it is unital: as a particular instance of the above item, we have
      $$
        \Mv_S(\Perf(S))
        \simeq
        \BU_S.
      $$
\end{itemize}

\sssec{}

Let $\Shvcon(S)$ be the stable $\oo$-category of constructible $\Qell$-adic sheaves on $S_\et$ and let 
$$
  \Shv(S)
  :=
  \Ind \bigt{\Shvcon(S)} 
$$
be its ind-completion.
By \cite{robalo15}, there exists a symmetric monoidal $\oo$-functor
$$
  \rho^\ell_S:
  \SH_S
  \to
  \Shv(S)
$$
and it follows  from \cite{riou10} that
$$
  \rho^\ell_S(\BU_S)
  \simeq
  \Ql{,S}(\beta)
  :=
  \Ql{,S}[\beta, \beta^{-1}],
$$
where $\Ql{,S}[\beta]:=\Sym_{\Ql{,S}}\bigt{\Ql{,S}(1)[2]}$.

\sssec{}

Following \cite{brtv18}, we refer to the composition
$$
  \rl_S:
  \dgCat_S
  \xto{\Mv_S}
  \Mod_{\BU_S}(\SH_S)
  \xto{\rho^\ell_S}
  \Mod_{\Ql{,S}(\beta)}\bigt{\Shv(S)}
$$ 
as the \emph{$\ell$-adic realization of ($A$-linear) dg-categories}.
This $\infty$-functor inherits features similar to those of $\Mv_S$.
In particular, it is lax-symmetric monoidal and unital, it is a localizing invariant and commutes with filtered colimits.
We will usually employ the notation $\rl_S(A)$ instead of $\rl_S(\Perf(S))$.

\begin{rem}\label{rmk: rl over general bases}
As in \cite[Remark 2.2.2]{toenvezzosi22},
both $\Mv_S$ and $\rl_S$ can be defined for $S$ an arbitrary base scheme. 
\end{rem}

\sssec{}

By \cite[Section 2.3]{toenvezzosi22}, the motivic and $\ell$-adic realizations of dg-categories are related by the \emph{non-commutative $\ell$-adic Chern character}, that is, a lax-monoidal natural transformation
$$
  \chern:
  \HK
  \to
  |\rl_S|,
$$
where $|-|$ denotes the Dold--Kan functor.

\ssec{Inertia-invariant vanishing cycles}

We review the computation of the $\ell$-adic realization of $\sT$, which was performed in \cite{brtv18}. Before doing this, we need to set some notation for the vanishing cohomology of $p:X\to S$.

\sssec{}

Let $\scrV_{X/S}$ be the $2$-periodized sheaf of vanishing cycles
$$
  \scrV_{X/S}
  :=
  \Phi_{p}\bigt{\Ql{,X}(\beta)}
  \simeq
  \Phi_{p}\bigt{\Ql{,X}}\otimes_{\Ql{,X_s}}\Ql{,X_s}(\beta).
$$
This is an ind-constructible $\Qell$-adic sheaf on $X_s$ which is equipped with a continuous action of $\uI_{\uK}$.
It is supported on the singular locus of $X/S$.
To shorten the formulas below, we will adopt the following notation:
$$
  \nu
  :=
  p_{s*}(\scrV_{X/S})
  \simeq
  \uH^*_{\et} \Bigt{X_s, \Phi_p\bigt{\Ql{,X}(\beta)}}.
$$

\sssec{}

Consider $\nu^{\IK}$ and $\nu^{\IL}$, i.e. the fixed points of vanishing cohomology with respect to the two inertia groups $\IK$ and $\IL$.  
Notice that $\nu^{\IL}$ is endowed with an $\GLK$-action and 
$$
\nu^{\IK}
\simeq
\bigt{\nu^{\IL}}^{\GLK}.
$$
We define
$$
\nuIquot
:=
\coFib
\bigt{
\nu^{\IK} \to \nu^{\IL}
}
$$
to be the cone of the natural map.

\sssec{}

The fiber-cofiber sequence
\begin{equation}\label{eqn: nu^IK to nu^IL to nuIquot}
\nu^{\IK}
\to
\nu^{\IL}
\to
\nuIquot
\end{equation}
is split, as it is obtained from the split sequence
$$
\QellI
\to
\QellIG
\to
\QellIGred
$$
of $\GLK$-modules upon tensoring up with $\nu^{\IL}$.
In particular, we have
$$
\nuIquot
\simeq
\bbQ_\ell^{\uI}(\GLK)
\usotimes_{\bbQ_\ell^{\uI}[\GLK]}
\nu^{\IL},
$$
and thus
\begin{equation} \label{eqn:auxiliar-for-nu}
\mathcal{F}
\usotimes_{\bbQ_\ell^{\uI}(\GLK)}
\nuIquot
\simeq
\mathcal{F}
\usotimes_{\bbQ_\ell^{\uI}[\GLK]}
\nu^{\IL}
\end{equation}
for any sheaf $\ccF$ equipped with an action of $\bbQ_\ell^{\uI}(\GLK)$.

\sssec{}

The sequence \eqref{eqn: nu^IK to nu^IL to nuIquot} plays a crucial role in this work. 
The following theorem relies its first term with the $\ell$-adic realization of the singularity category of the special fiber.

\begin{thm}[{\cite[Theorem 4.39]{brtv18}}]\label{thm: mainthm brtv18}
We have
\begin{equation}\label{equiv: main equivalence brtv18}
\rl_{X_s} (\sT) \simeq \scrV_{X/S}^{\IK}[-1].
\end{equation}
\end{thm}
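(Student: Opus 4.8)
The plan is to reconstruct the computation of $\rl_{X_s}(\sT)$ by reducing to a local matrix-factorization model and then comparing its $\ell$-adic realization with the nearby-cycles complex equipped with its monodromy action. First I would reduce to the affine case: as functors on the small Nisnevich site of $X$, both $U\mapsto \rl_{U_s}(\Sing(U_s))$ and $U\mapsto \scrV_{U/S}^{\IK}[-1]$ are Nisnevich sheaves commuting with filtered colimits --- the former because $\Sing(-)$ satisfies Nisnevich descent through Thomason--Trobaugh-type localization sequences and $\rl$ is a localizing invariant commuting with filtered colimits, the latter because \'etale-local vanishing cycles and homotopy fixed points share these properties. Hence it suffices to treat $X=\Spec R$, and after a standard-\'etale reduction one may take $X$ as explicit as desired; throughout, $X_s = V(\pi_{\uK})$ is cut out inside the regular scheme $X$ by the global function $\pi_{\uK}$, which is a non-zero-divisor. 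By Orlov's theorem \cite{orlov04, buchweitz21} --- or directly through the strict model $\cofdgMod(K_R)$ of Section \ref{sec: nctraces}, whose objects already encode matrix-factorization data --- one has $\sT \simeq \MF(X,\pi_{\uK})$, so the goal becomes the identity $\rl_{X_s}\bigt{\MF(X,\pi_{\uK})} \simeq \scrV_{X/S}^{\IK}[-1]$.

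To compute the left-hand side I would run a d\'evissage built on the localization sequence $\Perf(X_s)\to \Coh(X_s)\to \Sing(X_s)$. Applying the localizing invariant $\rl_{X_s}$ yields a fiber-cofiber sequence, and $\rl_{X_s}(\Perf(X_s))\simeq \Ql{,X_s}(\beta)$ by the known value of $\rl$ on perfect complexes; everything then comes down to describing $\rl_{X_s}(\Coh(X_s))$ in terms of the nearby-cycles complex $\Psi_p(\Ql{,X})$ equipped with the circle (degree $-1$) structure furnished by the variable $\varepsilon$, which realizes the monodromy. One matches the associated Tate / homotopy-fixed-point construction with the $\IK$-homotopy fixed points of vanishing cohomology, passes from nearby cycles to the vanishing-cycles complex $\Phi_p$ by quotienting out the contribution of $\Perf(X_s)$, and collects the shift $[-1]$ together with the two-periodization $(\beta)$ along the way. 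After the descent reduction one may in fact argue stalkwise at a closed point $x$ of $X_s$: passing to the complete local ring and using Weierstrass-type preparation, $X$ becomes an explicit regular local ring, $\pi_{\uK}$ an explicit element, the stalk of $\scrV_{X/S}$ at $x$ is the two-periodized reduced cohomology of the Milnor fibre at $x$ with its monodromy, and $\MF$ becomes matrix factorizations of a polynomial, whose realization one compares with this Milnor-fibre computation; Thom--Sebastiani compatibility, on the $\MF$ side and on the vanishing-cycles side, reduces matters to still simpler models.

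The hard part will be precisely this last comparison: identifying, on the nose, the monodromy action on the realization of matrix factorizations with the inertia action on $\ell$-adic vanishing cycles, uniformly in mixed characteristic and with wild ramification present. In the tame case the monodromy is a single operator $T$ and the homotopy-fixed-point complex is just the mapping fibre of $1-T$, which can be handled directly; the delicate point is that $\IK$ is a genuine profinite group whose wild part $\uP_{\uK}$ is pro-$p$ and acts through a finite $p$-group, so one must check that $\rl$ of matrix factorizations detects the full $\IK$-homotopy fixed points --- and not merely the tame ones --- and that the categorical two-periodization is compatible with the $\ell$-adic Bott-type element $\beta$. Once this is in place, the descent step of the first paragraph promotes the local statement to the claimed equivalence of sheaves on $X_s$.
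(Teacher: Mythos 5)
Your strategy and the paper's are genuinely different, and yours has a gap at exactly the point where the theorem's content lives. You reduce to an affine/local matrix-factorization model and then plan to compare, stalkwise, the realization of $\MF(X,\pi_{\uK})$ (with its circle/$\varepsilon$-structure) against the two-periodized vanishing cohomology with its inertia action; you yourself flag this comparison as ``the hard part'' and leave it unresolved. But that comparison \emph{is} the theorem: once you have identified the categorical monodromy on $\rl$ of matrix factorizations with the $\ell$-adic inertia action (including the wild part, in mixed characteristic), there is nothing left to prove. Deferring it to a final step, with only the tame case sketched as a mapping fibre of $1-T$, does not constitute a proof. The auxiliary reductions also lean on tools that are not clearly available here: there is no classical Milnor fibre or Weierstrass preparation in mixed characteristic, and Thom--Sebastiani for $\ell$-adic vanishing cycles is itself a deep statement (it is essentially Theorem \ref{thm: I-invariant Thom-Sebastiani} of this paper), so invoking it to ``reduce to simpler models'' risks circularity.

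The proof recorded in the paper (following \cite[Theorem 4.39]{brtv18}) sidesteps the monodromy comparison entirely by a global, formal argument. One never computes $\rl$ of matrix factorizations directly. Instead, for any generically smooth flat $V/S$ one combines the defining fiber sequence of $\Phi^{\IK}$, the identification $\bigt{\Psi_{V/S}(\Ql{,V_\eta})}^{\IK}\simeq i_V^*j_{V*}\Ql{,V_\eta}$, the localization sequence $i_V^!\Ql{,V}\to \Ql{,V_s}\to i_V^*j_{V*}\Ql{,V_\eta}$, and an octahedron to produce a fiber-cofiber sequence $\Ql{,V_s}(\beta)\to i_V^!\Ql{,V}(\beta)\to \nu_{V/S}^{\IK}[-1]$; the first arrow is recognized as $i_V^*\rl_V\bigt{\Perf(V_s)\to \Perf(V)_{V_s}}$ using only the known value of $\rl$ on perfect complexes and the fact that $\rl$ is a localizing invariant. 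Comparing with the square relating $\Perf$ and $\Coh$ yields a canonical map $\alpha_V:\scrV_{V/S}^{\IK}[-1]\to \rl_{V_s}\bigt{\Sing(V_s)}$ with cofiber $i_V^*\rl_V\bigt{\Sing(V)}$, and regularity of $X$ forces $\Sing(X)\simeq 0$, so $\alpha_X$ is an equivalence. The inertia invariants thus enter automatically through $i^*j_*$ rather than through any explicit matching of monodromy operators; this is precisely the step your outline cannot supply.
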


\begin{proof}
For future reference, we recall the constructions involved in the proof. 
Let $V$ be a flat $S$-scheme of finite type, which we assume generically smooth.
We will use the following notation
$$
  s\times_SV=V_s
  \xto{i_V}
  V
  \xleftarrow{j_V}
  V_{\eta}=\eta \times_SV.
$$

By definition, we have a fiber-cofiber sequence
$$
  \Ql{,V_s}^\IK
  \to
  \bigt{\Psi_{V/S}(\Ql{,V_\eta})}^\IK
  \to
  \bigt{\Phi_{V/S}(\Ql{,V})}^\IK,
$$
where $\Psi_{V/S}$ denotes the \emph{nearby cycles} $\oo$-functor.
Moreover, consider the split exact sequence
$$
  \Ql{,V_s}
  \to
  \Ql{,V_s}^\IK
  \to
  \Ql{,V_s}(-1)[-1].
$$
Taking into account the canonical equivalence
$$
  \bigt{\Psi_{V/S}(\Ql{,V_\eta})}^\IK
  \simeq
  i_V^* j_{V*}\Ql{,V_\eta}
$$
and the localization sequence
$$
  i_V^!\Ql{,V}
  \to
  \Ql{,V_s}
  \to
  i_V^* j_{V*}\Ql{,V_\eta},
$$
applying the octahedron to the triangle
\begin{equation*}
  \begin{tikzpicture}[scale=1.5]
    \node (UL) at (0,1) {$\Ql{,V_s}$};
    \node (UR) at (2,1) {$\Ql{,V_s}^\IK$};
    \node (C) at (1,0) {$i_V^* j_{V*}\Ql{,V_\eta}$};
    \draw[->] (UL) to node[above] {${}$} (UR);
    \draw[->] (UL) to node[above] {${}$} (C);
    \draw[->] (UR) to node[above] {${}$} (C);
  \end{tikzpicture}
\end{equation*}
we get a canonical fiber-cofiber sequence
$$
  \Ql{,V_s}(-1)[-1]
  \to
  i_V^!\Ql{,V}[1]
  \to
  \bigt{\Phi_{V/S}(\Ql{,V})}^\IK.
$$
Therefore, at the level of $\Ql{,V_s}(\beta)$-modules, we have a canonical fiber-cofiber sequence
$$
  \Ql{,V_s}(\beta)
  \to
  i_V^!\Ql{,V}(\beta)
  \to
  \nu_{V/S}^\IK[-1].
$$
The morphism $\Ql{,V_s}(\beta) \to i_V^!\Ql{,V}(\beta)$ identifies with 
$$
  i_V^*\rl_{V}\bigt{\Perf(V_s)\to \Perf(V)_{V_s}}.
$$
Since we have a commutative diagram
\begin{equation*}
  \begin{tikzpicture}[scale=1.5]
    \node (UL) at (0,1) {$\Perf(V_s)$};
    \node (UR) at (2,1) {$\Perf(V)_{V_s}$};
    \node (LL) at (0,0) {$\Coh(V_s)$};
    \node (LR) at (2,0) {$\Coh(V)_{V_s}$};
    \draw[->] (UL) to node[above] {${}$} (UR);
    \draw[->] (UL) to node[above] {${}$} (LL);
    \draw[->] (UR) to node[right] {${}$} (LR);
    \draw[->] (LL) to node[above] {${}$} (LR);
  \end{tikzpicture}
\end{equation*}
we obtain a canonical fiber-cofiber sequence
$$
  \scrV_{V/S}^\IK[-1]
  \xto{\alpha_V}
  \rl_{V_s}\bigt{\Sing(V_s)}
  \to
  i_V^*\rl_{V}\bigt{\Sing(V)},
$$
where we used the fact that
\begin{align*}
  i_V^*\rl_{V}\bigt{\Perf(V_s)\to \Coh(V)_{V_s}} & \sim \bigt{\Ql{,V_s}(\beta)\to i_V^!\oml{,V}(\beta)}\\
                                              & \sim \bigt{\Ql{,V_s}(\beta)\to \oml{,V_s}(\beta)}\\
                                              & \sim \rl_{V_s}\bigt{\Perf(V_s)\to \Coh(V_s)}\\
\end{align*}
and that $V/S$ is generically smooth, so that
$\Sing(V)_{V_s}
  \simeq
\Sing(V)$.
Since $X$ is assumed to be regular, $\Sing(X)\simeq 0$ so that $\alpha_X$ is an equivalence.
\end{proof}

\begin{rem}
With a similar argument, one gets
$$
  \rl_S (\sT) 
  \simeq 
  (i_S)_*\nu^\IK[-1],
$$
as well as an isomorphism
$$
  \rl_S(\sB)
  \simeq
  (i_S)_*\Ql{}^\IK(\beta)
$$
of commutative algebra objects. 
It is proven in \cite[Theorem 4.39]{brtv18} that the natural $\rl_S(\sB)$-module structure on $ \rl_{S} (\sT)$ agrees with the natural one on the right-hand sides of the above isomorphisms. 

\end{rem}

\ssec{K\"unneth formula for inertia-invariant vanishing cycles}
We now recall the construction of the \emph{K\"unneth morphism} defined by {\TV}, which is necessary for the computation of the $\ell$-adic realization of the dg-category of singularities of a fiber product.

We closely follow \cite[Section 3]{toenvezzosi22} and just add the minor changes needed to keep track of the Tate twists.

\sssec{}

Recall that the functor that computes (homotopy) fixed points
$$
  (-)^\IK:
  \Shv^\IK(X_s)
  \to
  \Shv(X_s)
$$
is compatible with computing stalks in the following sense: for any geometric point $x:\Spec{\Omega}\to X_s$ and any $E\in \Shv^\IK(X_s)$, there is a canonical isomorphism
$$
  (E^{\IK})_x
  \simeq
  (E_x)^\IK.
$$

\sssec{}

Consider the $\ell$-adic dualizing complex $\oml{,X_s}$ as a trivial $\ell$-adic $\IK$-representation and define the functor
$$
  \bbD^\IK_{X_s}
  :=
  \ul \Hom(-,\oml{,X_s}):
  \Shvcon^\IK(X_s)
  \to
  \Shvcon^\IK(X_s)^\op.
$$
This is the \emph{Grothendieck--Verdier duality for $\IK$-representations}: it is indeed a duality, since it is compatible with the (conservative) forgetful dg-functor
$$
\Shvcon^\IK(X_s)
\to
\Shvcon(X_s).
$$
The compatibility between $  \bbD^\IK_{X_s}$ and taking $\IK$-fixed points works as follows.

\begin{lem}\label{lem: (-)^IK vs bbD}
For every $E\in \Shvcon^\IK(X_s)$, there is a functorial equivalence
$$
  \bbD_{X_s}(E^\IK)(-1)[-1]
  \simeq
  \bigt{\bbD^\IK_{X_s}(E)}^\IK
$$
in $\Shvcon(X_s)$.
\end{lem}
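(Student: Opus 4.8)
The statement to prove is Lemma \ref{lem: (-)^IK vs bbD}: for $E \in \Shvcon^\IK(X_s)$, there is a functorial equivalence $\bbD_{X_s}(E^\IK)(-1)[-1] \simeq (\bbD^\IK_{X_s}(E))^\IK$. The plan is to recall that $\IK$ (or rather the relevant quotient through which everything acts) has $\ell$-cohomological dimension $1$, so that the homotopy-fixed-points functor is controlled by a two-term complex. Concretely, $\QellI$ — the derived $\IK$-fixed points of the trivial representation $\Qell$ — sits in a fiber-cofiber sequence $\Qell \to \QellI \to \Qell(-1)[-1]$ (this is exactly the split sequence recalled in the proof of Theorem \ref{thm: mainthm brtv18}). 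For a general $E$, the homotopy fixed points $E^\IK$ can be computed as $\Qell^\IK \otimes_{\Qell[\IK]} E$ in the appropriate sense; I would phrase this via the bar/cobar resolution, so that $(-)^\IK$ is realized by a totalization which, because of the cohomological dimension bound, collapses to a two-term complex $[E^{\IK_0} \to \dots]$ — but more usefully, I would work with the formula $E^\IK \simeq \Qell^\IK \usotimes_{\Qell[\IK]} E$ and exploit that $\Qell^\IK$ is a \emph{perfect} $\Qell[\IK]$-module (being a two-term complex of finite free modules, up to the pro-$\ell$ subtleties which are handled by passing to the finite quotient $\GLK$ as in the body of the paper).

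**The main step: duality commutes with the resolution.** Since $\bbD_{X_s}$ and $\bbD^\IK_{X_s}$ are both compatible with the forgetful functor to $\Shvcon(X_s)$, and Grothendieck--Verdier duality sends a colimit (the totalization/geometric realization computing $E^\IK$) to the corresponding limit, I would compute $\bbD_{X_s}(E^\IK)$ as a limit of $\bbD_{X_s}$ applied termwise. Dually, $(\bbD^\IK_{X_s}(E))^\IK$ is a limit of the same shape applied to $\bbD^\IK_{X_s}(E)$. The content is then the single identity: for the trivial representation, $\bbD_{X_s}(\Qell^\IK) \simeq (\bbD^\IK_{X_s}(\Qell))^\IK$ up to the twist-and-shift $(-1)[-1]$ — equivalently, dualizing the sequence $\Qell \to \Qell^\IK \to \Qell(-1)[-1]$ gives $\Qell(1)[1] \to \bbD(\Qell^\IK) \to \Qell$, i.e. $\bbD(\Qell^\IK)(-1)[-1] \simeq \Qell^\IK$ after reindexing, which matches $(\bbD^\IK(\Qell))^\IK = \Qell^\IK$. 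Feeding $E$ through by $\otimes_{\Qell[\IK]}$-linearity (using that $\bbD^\IK_{X_s}(E)$ has underlying sheaf $\bbD_{X_s}(E)$ with the contragredient action, and that $\ul\Hom$ turns the relevant tensor into a $\Hom$) yields the asserted equivalence, functorially in $E$.

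**Why the twist and shift appear.** The heuristic I would make precise: $H^*(\IK, \Qell)$ (tame part) is $\Qell$ in degree $0$ and $\Qell(-1)$ in degree $1$, so $\Qell^\IK$ is \emph{self-dual up to a twist by $(-1)$ and a shift by $[-1]$} — this is Poincaré duality for the "circle" $B\IK^t$, or concretely the statement that the dualizing complex of the classifying object of the tame inertia is $\Qell(1)[1]$. Thus $\bbD(\Qell^\IK) \simeq \Qell^\IK(1)[1]$, which rearranges to $\bbD(\Qell^\IK)(-1)[-1] \simeq \Qell^\IK$. The $(-1)[-1]$ in the lemma is exactly this discrepancy, now tensored into the general statement.

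**Anticipated obstacle.** The main subtlety is that $\IK$ is only \emph{pro-}$\ell$-of-cohomological-dimension-one after killing the wild (pro-$p$) part and the prime-to-$\ell$ part of the tame quotient, so "$\Qell^\IK$ is perfect over $\Qell[\IK]$" is false literally; one must replace $\IK$ by the finite quotient $\GLK$ through which the action on $E$ factors (this is precisely why the paper fixed such an $\uL$), and check that $\GLK$-fixed points then agree with $\IK$-fixed points and that everything is compatible with stalks — but the needed compatibility of $(-)^\IK$ with stalks was already recorded just before the lemma, so this is bookkeeping rather than a genuine difficulty. The one place requiring care is tracking that the \emph{contragredient} $\GLK$-action on $\bbD_{X_s}(E)$ is what makes $(\bbD^\IK_{X_s}(E))^\IK$ come out right; I would verify this by reducing to $E = \Qell[\GLK]$ (the regular representation), where $\bbD^\IK$ of the regular representation is again the regular representation and both sides are computed explicitly, and then extend by the fact that $\Shvcon^\IK(X_s)$ is generated under finite colimits and retracts by sheaves of the form $\ccF \otimes \Qell[\GLK]$ with $\ccF$ carrying the trivial action.
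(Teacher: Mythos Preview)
Your core idea---that the twist-and-shift reflects Poincar\'e duality for the ``cohomological circle'' $\rmB\IK^{\mathrm t}$, since $\uH^*(\IK,\Qell)\simeq \Qell\oplus\Qell(-1)[-1]$---is exactly the content of the lemma, and the paper's proof ultimately rests on the same fact. However, the paper's route is quite different from yours and considerably more direct. Rather than computing $(-)^{\IK}$ via a two-term resolution and dualizing termwise, the paper first \emph{constructs} the comparison map: the lax-monoidal structure of $(-)^{\IK}$ gives
\[
E^{\IK}\otimes\bigl(\bbD^{\IK}_{X_s}(E)\bigr)^{\IK}\;\longrightarrow\;\bigl(E\otimes\bbD^{\IK}_{X_s}(E)\bigr)^{\IK}\;\xrightarrow{\ \ev\ }\;\omega_{X_s}^{\IK}\;\simeq\;\omega_{X_s}\oplus\omega_{X_s}(-1)[-1],
\]
and projecting to the second summand and adjunction yield a natural map $\bigl(\bbD^{\IK}_{X_s}(E)\bigr)^{\IK}\to\bbD_{X_s}(E^{\IK})(-1)[-1]$. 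One then checks this is an isomorphism on stalks, where it becomes Poincar\'e duality in Galois cohomology for $\IK$. This buys functoriality for free and avoids having to track a choice of topological generator of the tame quotient through the argument.

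Your ``anticipated obstacle'' paragraph, however, contains a genuine error that would break the proof if you followed it. You propose to replace $\IK$ by the finite quotient $\GLK$ ``through which the action on $E$ factors''. But the $\IK$-action on a general constructible $E$ does \emph{not} factor through $\GLK$: the $\bbZ_\ell$-part of the tame inertia can act with infinite image. More importantly, even when it does factor, $(-)^{\GLK}$ on $\Qell$-modules is exact (the group is finite of order invertible in $\Qell$), so it has no higher cohomology and hence no $(-1)[-1]$ contribution---precisely the piece you are trying to explain. The extension $\uK\subseteq\uL$ in the paper is chosen so that $\IL$ acts \emph{unipotently} on specific cohomology objects, not trivially, and $(-)^{\IL}$ still carries the $(-1)[-1]$; the finite group $\GLK$ enters only \emph{after} taking $\IL$-invariants. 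Likewise, your proposed reduction to $E=\ccF\otimes\Qell[\GLK]$ fails to generate $\Shvcon^{\IK}(X_s)$. Your argument is salvageable if you drop this paragraph entirely and work directly with the pro-$\ell$ quotient of $\IK$ (whose cohomological dimension is $1$), but then you are essentially reproving Poincar\'e duality for $\IK$---which is what the paper invokes at the stalk level after having constructed the map cleanly.
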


\begin{proof}
We repeat the proof of \cite[Lemma 3.2.3]{toenvezzosi22} and verify that it works without trivializing the Tate twist.
Using the lax monoidal structure of $(-)^\IK$, one produces a map
$$
  E^\IK \otimes \bigt{\bbD^\IK_{X_s}(E)}^\IK
  \to
  \bigt{E \otimes \bbD^\IK_{X_s}(E)}^\IK
  \xto{\on{evaluation}}
  \oml{,X_s}^\IK.
$$
Since $\IK$ acts trivially on $\oml{,X_s}$, we have an equivalence
$$
  \oml{,X_s}^\IK
  \simeq
  \oml{,X_s}\otimes \Ql{,X_s}^\IK
  \simeq 
  \oml{,X_s} \oplus \oml{,X_s}(-1)[-1].
$$
By adjunction, we get a map
\begin{align*}
  \bigt{\bbD^\IK_{X_s}(E)}^\IK & \to \ul \Hom\bigt{E^\IK,\oml{,X_s} \oplus \oml{,X_s}(-1)[-1]} \\
                               & \simeq \bbD_{X_s}\bigt{E^\IK} \oplus \bbD_{X_s}\bigt{E^\IK}(-1)[-1]\\
                               & \to \bbD_{X_s}\bigt{E^\IK}(-1)[-1].
\end{align*}
It suffices to check that this morphism induces an isomorphism on the stalks: for a geometric point $x$, we need to show that the arrow
$$
  \Bigt{\bigt{\bbD^\IK_{X_s}(E)}^\IK}_x 
  \simeq
  \Bigt{\bigt{\bbD^\IK_{X_s}(E)}_x}^\IK 
  \to
  \Bigt{\bbD_{X_s}\bigt{E^\IK}}_x(-1)[-1]
$$
is an isomorphism.
The left-hand side can be rewritten as
$$
  \Bigt{\bigt{\bbD^\IK_{X_s}(E)}_x}^\IK 
  \simeq
  \bigt{\ul \Hom(x^!E,\Ql{})}^\IK ,
$$
while the right-hand side can be rewritten as
$$
  \Bigt{\bbD_{X_s}\bigt{E^\IK}}_x(-1)[-1]
  \simeq
  \ul \Hom \bigt{x^!(E^\IK),\Ql{}}(-1)[-1]
  \simeq
  \ul \Hom \bigt{(x^!E)^\IK,\Ql{}}(-1)[-1].
$$
Then the assertion follows from Poincar\'e duality in Galois cohomology, induced by the fundamental class in $\uH^1(\IK,\Ql{})\simeq \Ql{}(1)$.
\end{proof}

\sssec{}

We are now ready to define the K\"unneth morphism of \TV.
As usual, for two separated $s$-schemes $p_s:X_s\to s$ and $q_s:Y_s\to s$, we denote by
$$
  -\boxtimes_s-
  :=
  p_s^*\otimes q_s^*:
  \Shvcon(X_s)\times \Shvcon(Y_s)
  \to
  \Shvcon(X_s\x{s}Y_s)
$$
the external tensor product of $\ell$-adic sheaves.
By abuse of notation, we will also denote $-\boxtimes_s-$ the external tensor product of $\ell$-adic $\IK$-representations.

\sssec{}

Let $E\in \Shvcon^\IK(X_s)$ and  $F\in \Shvcon^\IK(Y_s)$.
The lax-monoidal structure on $(-)^{\IK}$
yields a map
$$
  \bigt{\bbD^\IK_{X_s}(E)}^\IK \boxtimes_s \bigt{\bbD^\IK_{Y_s}(F)}^\IK
  \to
  \bigt{\bbD^\IK_{X_s}(E) \boxtimes_s \bbD^\IK_{Y_s}(F)}^\IK,
$$
which we compose with
$$
  \bigt{\bbD^\IK_{X_s}(E) \boxtimes_s \bbD^\IK_{Y_s}(F)}^\IK
  \to
  \bigt{\bbD^\IK_{X_s\x{s}Y_s}(E\boxtimes_sF)}^\IK.
$$
The latter map arises by duality from the external product of the evaluation maps
$$
  p_s^*\ul \Hom_{X_s}(E,\omega_{X_s}) \otimes q_s^*\ul \Hom_{Y_s}(F,\omega_{Y_s}) \otimes p_s^*E \otimes q_s^*F
  \to
  \omega_{X_s\times_sY_s}
  \simeq
  p_s^*\omega_{X_s}\otimes q_s^*\omega_{Y_s}.
$$
Applying now $\bbD_{X_s\x{s}Y_s}$, we obtain a morphism
$$
  \bbD_{X_s\x{s}Y_s}\Bigt{\bigt{\bbD^\IK_{X_s\x{s}Y_s}(E\boxtimes_sF)}^\IK}
  \to
  \bbD_{X_s\x{s}Y_s}\Bigt{\bigt{\bbD^\IK_{X_s}(E)}^\IK \boxtimes_s \bigt{\bbD^\IK_{Y_s}(F)}^\IK}.
$$
By Lemma \ref{lem: (-)^IK vs bbD}, this identifies with
$$
  (E\boxtimes_sF)^\IK(1)[1] 
  \to
  E^\IK \boxtimes_s F^\IK(2)[2]
$$
and we define the \emph{K\"unneth morphism} as the induced map
$$
  \on{k}:
  (E\boxtimes_sF)^\IK(-1)[-1]
  \to
  E^\IK \boxtimes_s F^\IK.
$$

\sssec{}

Following \TV{}, we define the $\IK$-invariant convolution product of $E$ and $F$ as
$$
  (E\boxast{s} F)^\IK
  :=
  \coFib \bigt{(E\boxtimes_sF)^\IK(-1)[-1]
  \to
  E^\IK \boxtimes_s F^\IK}.
$$
When $X_s=s=Y_s$, we will write $E\circledast F$ instead of $E \boxast{s} F$.
Notice that, since both external products and computing fixed points are compatible with $!$-pushforwards, we have
$$
  (X_s\times_sY_s \to s)_!\bigt{(E\boxast{s}F)^\IK}
  \simeq
  (p_{s!}E\circledast q_{s!}F)^\IK.
$$

\sssec{}

The $\IK$-invariant convolution product allows to compute the $\ell$-adic realization of $\sT^\op\otimes_\sB \sT$.
More precisely, we have the following result.

\begin{thm}[{\cite[Theorem 3.4.2]{toenvezzosi22}}]\label{thm: I-invariant Thom-Sebastiani}
Let $Y\to S$ and $Z\to S$ be two flat, regular, separated and generically smooth $S$-schemes of finite type.
There is a canonical equivalence
$$
  \rl_{Y\times_S Z}\bigt{\Sing(Y\times_S Z)}
  \simeq
  (Y_s\times_sZ_s\to Y\times_SZ)_*\bigt{\scrV_{Y/S}\boxast{s} \scrV_{Z/S}}^\IK(1).
$$
\end{thm}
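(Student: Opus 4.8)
The plan is to deduce the statement by combining two results already at hand --- the K\"unneth formula for singularity categories (Theorem \ref{Thm: Kunneth for Sing}) and the computation of the $\ell$-adic realization of the singularity category of a special fiber (Theorem \ref{thm: mainthm brtv18}) --- glued by means of the lax-monoidal structure of $\rl$ and the fact that $\rl$ is a colimit-preserving localizing invariant. An alternative route, closer in spirit to \cite{brtv18}, would be to argue \'etale-locally: reduce to the case in which $Y$ and $Z$ are hypersurfaces in smooth $S$-schemes, describe the three singularity categories via matrix factorizations, and invoke the classical Thom--Sebastiani theorem for vanishing cycles together with a careful bookkeeping of the inertia action. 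We prefer the global argument below, which keeps track of the $\IK$-representations automatically.

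\textbf{Reduction and realization of the factors.} Set $\sT_Y := \Sing(Y_s)$ and $\sT_Z := \Sing(Z_s)$. Theorem \ref{Thm: Kunneth for Sing}, applied to $Y$ and $Z$, gives a canonical equivalence $\Sing(Y\times_S Z)\simeq \sT_Y^\op\otimes_\sB\sT_Z$ induced by the explicit dg-functor $(Y_s\times_s Z_s\to Y\times_S Z)_*\bigt{\ul\Hom(-,\ccO)\boxtimes_s -}$. Because $Y_\eta\times_\eta Z_\eta$ is smooth, $\Sing(Y\times_S Z)$ is supported on $Y_s\times_s Z_s$, so it suffices to compute $\rl$ of $\sT_Y^\op\otimes_\sB\sT_Z$ as a sheaf on $Y_s\times_s Z_s$ and push it forward along $Y_s\times_s Z_s\to Y\times_S Z$ (a pushforward compatible with $\rl$). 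Now by Theorem \ref{thm: mainthm brtv18} one has $\rl_{Y_s}(\sT_Y)\simeq \scrV_{Y/S}^\IK[-1]$ and $\rl_{Z_s}(\sT_Z)\simeq \scrV_{Z/S}^\IK[-1]$, compatibly with the module structures over $\rl_s(\sB)\simeq \Ql{,s}^\IK(\beta)$; and since $\sT_Y^\op$ is the Grothendieck--Verdier dual of $\sT_Y$, Lemma \ref{lem: (-)^IK vs bbD} (together with Verdier self-duality of vanishing cycles, available after $\beta$-periodization) identifies $\rl_{Y_s}(\sT_Y^\op)$ with the $\IK$-fixed points of $\bbD^\IK_{Y_s}\scrV_{Y/S}$, up to an explicit Tate twist and shift.

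\textbf{Realization of the relative tensor product.} Resolve $\sT_Y^\op\otimes_\sB\sT_Z$ by the two-sided bar construction over $\sB$. As $\rl$ preserves colimits and the terms $\sT_Y^\op\otimes_A\sB^{\otimes n}\otimes_A\sT_Z$ are built from dualizable modules (so that the lax structure maps of $\rl$ are equivalences on them, by the dualizability results recalled in Section \ref{sec: preliminaries}), the realization of $\sT_Y^\op\otimes_\sB\sT_Z$ is computed by the corresponding bar construction over the algebra $\rl_s(\sB)\simeq \Ql{,s}^\IK(\beta)$. Using the splitting $\Ql{}^\IK\simeq \Ql{}\oplus\Ql{}(-1)[-1]$, this bar construction is the cofiber of the K\"unneth morphism
\[
  \on{k}:
  \bigt{\scrV_{Y/S}\boxtimes_s\scrV_{Z/S}}^\IK(-1)[-1]
  \longrightarrow
  \scrV_{Y/S}^\IK\boxtimes_s\scrV_{Z/S}^\IK,
\]
i.e.\ it is $\bigt{\scrV_{Y/S}\boxast{s}\scrV_{Z/S}}^\IK$, up to the global twist and shift coming from the previous step; these collapse to the single normalizing twist $(1)$ in the statement. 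Pushing forward along $Y_s\times_s Z_s\to Y\times_S Z$ finishes the argument.

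\textbf{Main obstacle.} The delicate point is the last step: showing that $\rl$ converts the relative tensor product $-\otimes_\sB-$ into precisely the cofiber-of-K\"unneth-morphism construction. Three things must be verified: (i) that $\rl$ commutes with the geometric realization of the bar complex (immediate, since $\rl$ is a colimit-preserving localizing invariant); (ii) that $\rl$ is compatible with the external tensor products $\otimes_A$ on the categorical side and $\boxtimes_s$ on the sheaf side, which amounts to the lax structure map being an equivalence on dualizable modules; and (iii) matching, via the strict models of $\sB$, $\sT_Y$ and $\sT_Z$, the differential of the resulting complex with the map $\on{k}$ --- in particular recognizing that the extra summand $\Ql{}(-1)[-1]$ of $\rl_s(\sB)\simeq \Ql{,s}^\IK(\beta)$ is exactly what produces the twist-shift $(-1)[-1]$ on the source of $\on{k}$. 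The Tate-twist bookkeeping, routed through Lemma \ref{lem: (-)^IK vs bbD} and the self-duality of vanishing cycles, is the only genuinely delicate part of the computation.
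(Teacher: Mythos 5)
There is a genuine gap at the step you yourself flag as the main obstacle, and neither of your proposed resolutions (i) nor (ii) works. First, $\rl_S$ is a localizing invariant that commutes with \emph{filtered} colimits; it does not commute with geometric realizations of simplicial objects, so passing $\rl_S$ through the bar resolution of $\sT_Y^\op\otimes_\sB\sT_Z$ is not ``immediate.'' Second, and more seriously, if the lax structure maps of $\rl_S$ were equivalences on all the bar terms $\sT_Y^\op\otimes_A\sB^{\otimes n}\otimes_A\sT_Z$, your argument would output the relative tensor product $\rl_S(\sT_Y)\otimes_{\rl_S(\sB)}\rl_S(\sT_Z)$, i.e.\ essentially $\scrV_{Y/S}^\IK\boxtimes_s\scrV_{Z/S}^\IK$ up to twist and shift --- \emph{not} the convolution $\bigt{\scrV_{Y/S}\boxast{s}\scrV_{Z/S}}^\IK(1)$. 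These two objects genuinely differ: in the case $Y=Z=X$, Proposition \ref{prop: key splif fib-cofib sequence} shows that the canonical map $\rl_S(\sT)\otimes_{\rl_S(\sB)}\rl_S(\sT)\to\rl_S(\sT^\op\otimes_\sB\sT)$ is a split monomorphism with nontrivial cofiber $\rl_S(\sU)\otimes_{\rl_S(\sC)}\rl_S(\sU)$, and this failure of $\rl_S$ to be strictly monoidal on relative tensor products is precisely what drives the whole strategy of the paper. So your argument is internally inconsistent: the premises (i)+(ii) would force the smaller answer, while the theorem asserts the larger one.

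The paper's actual proof avoids the relative tensor product entirely and works on the sheaf side. It compares two fiber-cofiber sequences: one obtained from the compatibility of nearby cycles with external products, namely $\Phi_Y\boxplus_s\Phi_Z\to\Phi_{Y\times_SZ}\to\Phi_Y\boxtimes_s\Phi_Z$ followed by $(-)^\IK$ and $\beta$-periodization, and one obtained from the localization sequences defining $\omega^{\sg}_{(-)}=\rl\bigt{\Sing(-)}$. The comparison maps are the morphisms $\alpha_V$ constructed in the proof of Theorem \ref{thm: mainthm brtv18}; regularity of $Y$ and $Z$ makes the left vertical map an equivalence, hence the right-hand square is cartesian, and the statement follows by identifying the cofiber of $\alpha_{Y\times_SZ}$ with the cofiber of the K\"unneth morphism $\on{k}$, which is by definition $\bigt{\scrV_{Y/S}\boxast{s}\scrV_{Z/S}}^\IK(1)$. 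To salvage a categorical proof along your lines you would have to compute the precise failure of the lax structure maps on the bar terms, which amounts to reproving the theorem rather than deducing it from Theorem \ref{Thm: Kunneth for Sing}.
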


\begin{proof}

We follow closely the proof in \emph{loc. cit.}

\sssec*{Step 1}
Since nearby cycles are compatible with external tensor products, we have that
$$
  \Psi_{Y_\eta \times_\eta Z_\eta}
  :=
  \Psi_{Y_\eta \times_\eta Z_\eta/\eta}(\Ql{,Y_\eta \times_\eta Z_\eta})
  \simeq
  \Psi_{Y_\eta/\eta}(\Ql{,Y_\eta})
  \boxtimes_s 
  \Psi_{Z_\eta/\eta}(\Ql{,Z_\eta})
  =:
  \Psi_{Y_\eta}
  \boxtimes_s
  \Psi_{Z_\eta}
$$
By \cite[Lemma 3.4.3]{toenvezzosi22}, we obtain the following fiber-cofiber sequence
$$
  \Phi_Y \boxplus_s \Phi_Z 
  \to
  \Phi_{Y\times_S Z}
  \to
  \Phi_Y \boxtimes_s \Phi_Z.
$$
Applying $(-)^\IK$, tensoring with $\Qell(\beta)$ and shifting by $-1$, we obtain the following fiber-cofiber sequence:
\begin{equation}\label{T1}
    \scrV_{Y/S}^\IK[-1] \boxplus_s \scrV_{Z/S}^\IK[-1]
    \to
    \scrV_{Y\times_SZ/S}^\IK[-1]
    \to
    (\scrV_{Y/S} \boxtimes_s \scrV_{Z/S})^\IK[-1].
\end{equation}

\sssec*{Step 2}
For a scheme $V$, we will adopt the notation 
$$
  \oml{,V}^\sg
  :=
  \rl_V\bigt{\Sing(V)}
$$
By \cite[Lemma 3.4.3]{toenvezzosi22} applied to the fiber-cofiber sequences
$$
  \Ql{,Y_s}(\beta)
  \to
  \oml{,Y_s}(\beta)
  \to
  \oml{,Y_s}^\sg
$$
and
$$
  \Ql{,Z_s}(\beta)
  \to
  \oml{,Z_s}(\beta)
  \to
  \oml{,Z_s}^\sg,
$$
we obtain the following fiber-cofiber sequence (notice that \cite[Lemma 3.4.4]{toenvezzosi22} is obviously true in the non-proper case as well)
\begin{equation}\label{T2}
  \oml{,Y_s}^\sg \boxplus_s \omega_{Z_s}^\sg
  \to 
  \oml{,Z_s}^\sg
  \to
  \oml{,Y_s}^\sg \boxtimes_s \oml{,Z_s}^\sg.
\end{equation}

\sssec*{Step 3} The morphisms $\alpha_Y, \alpha_Z$ and $\alpha_{Y\times_SZ}$ provide the following morphism from the fiber-cofiber sequence \eqref{T1} to the fiber-cofiber sequence \eqref{T2}:
\begin{equation*}
    \begin{tikzpicture}[scale=1.5]
      \node (Lu) at (-4,1) {$\scrV_{Y/S}^\IK[-1] \boxplus_s \scrV_{Z/S}^\IK[-1]$};
      \node (Cu) at (0,1) {$\scrV_{Y\times_SZ/S}^\IK[-1]$};
      \node (Ru) at (4,1) {$\bigt{\scrV_{Y/S} \boxtimes_s \scrV_{Z/S}}^\IK[-1]$};
      
      \node (Ld) at (-4,0) {$\oml{,Y_s}^\sg \boxplus_s \oml{,Z_s}^\sg$};
      \node (Cd) at (0,0) {$\oml{,Z_s}^\sg$};
      \node (Rd) at (4,0) {$\oml{,Y_s}^\sg \boxtimes_s \oml{,Z_s}^\sg.$};
      
      \draw[->] (Lu) to node[above] {${}$} (Cu);
      \draw[->] (Cu) to node[above] {${}$} (Ru);
      
      \draw[->] (Ld) to node[above] {${}$} (Cd);
      \draw[->] (Cd) to node[above] {${}$} (Rd);
      
      \draw[->] (Lu) to node[left] {$\alpha_Y \boxplus_s \alpha_Z$} (Ld);
      \draw[->] (Cu) to node[left] {$\alpha_{Y\times_SZ}$} (Cd);
      \draw[->] (Ru) to node[above] {${}$} (Rd);
    \end{tikzpicture}
\end{equation*}
Since $Y$ and $Z$ are regular, Theorem \ref{thm: mainthm brtv18} implies that the left vertical arrow is an equivalence.
It follows that the square on the right is cartesian, so that the cofibers of the central and right vertical maps are equivalent.
As shown in the proof of Theorem \ref{thm: mainthm brtv18}, the cofiber of $\alpha_{Y\times_SZ}$ is equivalent to  
$$
  (Y_s\times_sZ_s\to Y\times_SZ)^*
  \bigt{ \oml{,Y\times_SZ}^\sg}.
$$
On the other hand, the right vertical map identifies with
$$
  \bigt{\scrV_{Y/S} \boxtimes_s \scrV_{Z/S}}^\IK[-1]
  \xto{\up{k}(1)}
  \scrV_{Y/S}^\IK \boxtimes_s \scrV_{Z/S}^\IK(1),
$$
whose cofiber is defined to be $(\scrV_{Y/S} \boxast{s}\scrV_{Z/S})^\IK(1)$.
\end{proof}

\begin{cor}\label{cor: I-invariant Thom-Sebastiani with nu^I=0}
With the same hypothesis as above, assume that either $\scrV_{Y/S}^{\IK}\simeq 0$ or $\scrV_{Z/S}^{\IK}\simeq 0$.
Then there is a canonical equivalence 
$$
  \rl_{Y\times_S Z}\bigt{\Sing(Y\times_S Z)}
  \simeq\
  (Y_s\times_sZ_s\to Y\times_SZ)_*\bigt{\scrV_{Y/S}\boxtimes_s \scrV_{Z/S}}^\IK
$$
in $\Mod_{\Qell(\beta)}\bigt{\Shv^{\IK}(Y\times_S Z)}$.

\end{cor}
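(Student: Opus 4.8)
The plan is to deduce this Corollary directly from Theorem \ref{thm: I-invariant Thom-Sebastiani} by unwinding the definition of the $\IK$-invariant convolution product $\bigt{\scrV_{Y/S}\boxast{s}\scrV_{Z/S}}^\IK$ under the extra vanishing hypothesis. Recall that, by construction, this convolution is the cofiber of the K\"unneth morphism
$$
\on{k}: \bigt{\scrV_{Y/S}\boxtimes_s\scrV_{Z/S}}^\IK(-1)[-1] \longto \scrV_{Y/S}^\IK \boxtimes_s \scrV_{Z/S}^\IK.
$$
So first I would observe that, when $\scrV_{Y/S}^\IK \simeq 0$ (the case $\scrV_{Z/S}^\IK \simeq 0$ being entirely symmetric, the convolution being symmetric in its two arguments up to the canonical swap of factors), the target of $\on{k}$ vanishes: indeed $-\boxtimes_s-$ is defined through $*$-pullbacks followed by a tensor product, and both operations preserve the zero object, so $\scrV_{Y/S}^\IK \boxtimes_s \scrV_{Z/S}^\IK \simeq p_s^*(\scrV_{Y/S}^\IK)\otimes q_s^*(\scrV_{Z/S}^\IK) \simeq 0$.

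Consequently $\on{k}$ is the zero morphism into the zero object, and its cofiber is the suspension of its source:
$$
\bigt{\scrV_{Y/S}\boxast{s}\scrV_{Z/S}}^\IK \simeq \bigt{\scrV_{Y/S}\boxtimes_s\scrV_{Z/S}}^\IK(-1)[-1][1] \simeq \bigt{\scrV_{Y/S}\boxtimes_s\scrV_{Z/S}}^\IK(-1).
$$
Feeding this back into the equivalence of Theorem \ref{thm: I-invariant Thom-Sebastiani}, the Tate twist $(1)$ occurring there cancels the twist $(-1)$ just produced, and we land on
$$
\rl_{Y\times_S Z}\bigt{\Sing(Y\times_S Z)} \simeq (Y_s\times_sZ_s\to Y\times_SZ)_* \bigt{\scrV_{Y/S}\boxtimes_s \scrV_{Z/S}}^\IK,
$$
which is the claim. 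Since every functor involved — taking $\IK$-fixed points, the external product $\boxtimes_s$, the pushforward $(Y_s\times_sZ_s\to Y\times_SZ)_*$, and the Tate twists — is $\Qell(\beta)$-linear and compatible with the ambient $\IK$-equivariant structure on the target, the identification takes place in $\Mod_{\Qell(\beta)}\bigt{\Shv^{\IK}(Y\times_S Z)}$, as required.

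I do not expect any genuine obstacle here: granted Theorem \ref{thm: I-invariant Thom-Sebastiani}, the statement is formal. The only points needing a modicum of care are the bookkeeping of shifts and Tate twists when passing from the convolution to the plain external product, and the verification that $\scrV_{Y/S}^\IK \simeq 0$ really forces $\scrV_{Y/S}^\IK\boxtimes_s\scrV_{Z/S}^\IK \simeq 0$ (immediate from the definition of $\boxtimes_s$). If there is a hard part at all, it is hidden in the precursor Theorem \ref{thm: I-invariant Thom-Sebastiani}, which here we take as given.
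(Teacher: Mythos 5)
Your argument is correct and is precisely the immediate deduction the paper intends (the corollary is stated without proof as a direct consequence of Theorem \ref{thm: I-invariant Thom-Sebastiani}): when one of $\scrV_{Y/S}^{\IK}$, $\scrV_{Z/S}^{\IK}$ vanishes, the target of the K\"unneth morphism vanishes, so the convolution is the shift of its source and the Tate twists cancel exactly as you compute. The only cosmetic remark is that you do not even need the symmetry of the convolution to treat the two cases: the vanishing of either factor kills the external product $\scrV_{Y/S}^{\IK}\boxtimes_s\scrV_{Z/S}^{\IK}$ directly.
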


\begin{rem}
The assumption of the above corollary holds if $(Y_s)_{\up{red}}$ or $(Z_s)_{\up{red}}$ is regular.
\end{rem}

\ssec{The \texorpdfstring{$\rl_S(\sB)$}{r(B)}-module \texorpdfstring{$\rl_S(\sT)$}{r(T)}}\label{ssec: the r(B)-module r(T)}

In this section we observe that the $\rl_S(\sB)$-module $\rl_S(\sT)$ is dualizable.
Next, we study the canonical morphism
$$
  \rl_S(\sT)\otimes_{\rl_S(\sB)}\rl_S(\sT)
  \to
  \rl_S(\sT \otimes_{\sB}\sT).
$$

\begin{prop}\label{prop: rl(T)/Ql dualizable}
The  $\rl_S(\sB)$-module $\rl_S(\sT)$ is dualizable, and in fact self-dual.
\end{prop}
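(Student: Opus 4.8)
The plan is to exploit the explicit identification of $\rl_S(\sT)$ supplied by Theorem \ref{thm: mainthm brtv18} (and the subsequent remark), namely $\rl_S(\sT) \simeq (i_S)_* \nu^{\IK}[-1]$ as a module over the commutative algebra $\rl_S(\sB) \simeq (i_S)_* \Ql{}^{\IK}(\beta)$, and to reduce dualizability to a statement about the $\ell$-adic sheaf $\nu^{\IK}$ over the ground ring $\Ql{}^{\IK}(\beta)$. Since $(i_S)_*$ is symmetric monoidal (or at least monoidal and conservative onto its essential image) for the closed embedding $i_S : s \hookrightarrow S$, it suffices to work on $s$ and show that $\nu^{\IK}[-1]$ is a dualizable — in fact self-dual — module over $\Ql{}^{\IK}(\beta)$ inside $\Mod_{\Ql{}^{\IK}(\beta)}(\Shv(s))$.

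First I would observe that $\nu = \uH^*_{\et}(X_s, \Phi_p(\Ql{,X}(\beta)))$ is a \emph{perfect} complex: it is the (finite-dimensional, $2$-periodized) cohomology of a constructible sheaf supported on the proper singular locus of $X/S$, hence a dualizable object of $\Mod_{\Ql{}(\beta)}$. The key point is then to upgrade ordinary Verdier/linear self-duality of $\nu$ to self-duality of the derived fixed points $\nu^{\IK}$ relative to $\Ql{}^{\IK}(\beta) = \Ql{}(\beta)^{h\IK}$. This is precisely the content of Lemma \ref{lem: (-)^IK vs bbD}: it gives a functorial equivalence $\bbD_s(\nu^{\IK})(-1)[-1] \simeq (\bbD^{\IK}_s \nu)^{\IK}$, which, after $2$-periodization (killing the Tate twist) and the shift, exhibits $\nu^{\IK}[-1]$ as its own dual \emph{as a $\Ql{}(\beta)$-module}. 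To promote this to $\Ql{}^{\IK}(\beta)$-linearity, I would package the duality datum as coming from the internal-Hom in $\Mod_{\Ql{}^{\IK}(\beta)}$: concretely, one checks that the evaluation and coevaluation morphisms produced by the lax-monoidal structure of $(-)^{\IK}$ in the proof of Lemma \ref{lem: (-)^IK vs bbD} are already $\IK$-equivariant, hence descend to $\Ql{}^{\IK}(\beta)$-linear maps providing a duality datum there. Alternatively, and perhaps more cleanly, one invokes the general fact that $\rl_S$ sends a saturated (dualizable) $\sB$-module to a dualizable $\rl_S(\sB)$-module because $\rl_S$ is lax-monoidal and $\sT$ is saturated over $\sB$ by Corollary \ref{cor: T dualizable B module}; the explicit duality datum $(\coev_{\sT/\sB}, \ev_{\sT/\sB})$ realizes to a duality datum $(\rl_S(\coev_{\sT/\sB}), \rl_S(\ev_{\sT/\sB}))$ for $\rl_S(\sT)$ over $\rl_S(\sB)$, and the self-duality $\sT^{\op} \simeq \sT$ at the categorical level (Grothendieck–Verdier duality exchanging left/right $\sB$-module structures) realizes to $\rl_S(\sT)^{\vee} \simeq \rl_S(\sT)$.

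The main obstacle is verifying that the categorical duality datum actually survives $\ell$-adic realization as a duality datum — that is, that $\rl_S$, being only \emph{lax} monoidal, does not destroy the triangle identities. For a general lax-monoidal functor the image of a duality datum need not be a duality datum; one genuinely needs either (a) that the relevant tensor products are preserved by $\rl_S$ on the nose for the objects at hand (which is what the Künneth-type computations of Theorem \ref{thm: I-invariant Thom-Sebastiani} and Theorem \ref{thm: mainthm brtv18} are for), or (b) the hands-on check via Lemma \ref{lem: (-)^IK vs bbD} that the evaluation and coevaluation at the level of $\ell$-adic sheaves compose to the identity, using Poincaré duality in Galois cohomology (the fundamental class in $\uH^1(\IK, \Ql{}) \simeq \Ql{}(1)$). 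I expect the paper to take route (b), matching $\ev_{\rl_S(\sT)/\rl_S(\sB)}$ with the map induced by the evaluation $\scrV_{X/S}^{\IK} \boxtimes_s \scrV_{X/S}^{\IK} \to \oml{,s}$ of Theorem \ref{thm: I-invariant Thom-Sebastiani} and $\coev$ with the realization of the diagonal class, and then checking the triangle identity is a finite unwinding of the definition of the Künneth morphism $\on{k}$. Once the duality datum is in place, self-duality is immediate from $\rl_S(\sT)^{\vee} \simeq \rl_S(\sT^{\op}) \simeq \rl_S(\sT)$, the last equivalence being the realization of Grothendieck–Verdier duality on $X_s$.
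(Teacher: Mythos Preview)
Your overall plan — reduce to $s$ via $(i_S)_*$ and argue directly with $\nu^{\IK}$ over $\QellI(\beta)$ — matches the paper, but the execution diverges in two places and one of them is a genuine gap.

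\textbf{Dualizability.} You only establish that $\nu$ is dualizable over $\Ql{}(\beta)$ and then say the ``key point is to upgrade'' to $\QellI(\beta)$, without saying how. Lemma~\ref{lem: (-)^IK vs bbD} alone does not do this: it is a statement about Verdier duals, not about perfectness over $\QellI(\beta)$. The paper's missing idea is short but essential: since $\IL$ acts unipotently on $\nu$, a lemma of To\"en--Vezzosi gives that $\nu^{\IL}$ is dualizable over $\QellI(\beta)$; then $\nu^{\IK}$ is a direct summand of $\nu^{\IL}$ (via the splitting $\nu^{\IL}\simeq\nu^{\IK}\oplus(\nuIquot)$), hence dualizable. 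Without this step your argument does not close.

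\textbf{Self-duality.} Your route via Lemma~\ref{lem: (-)^IK vs bbD} would require computing $(\bbD^{\IK}\nu)^{\IK}$, which in turn needs the Lu--Zheng identification $\bbD(\nu)\simeq\nu(1)^{\tau}$ and the Iwasawa-twist manipulation $(\nu(1)^{\tau})^{\IK}\simeq\nu^{\IK}(1)$; you do not mention either, and your remark that $2$-periodization ``kills the Tate twist'' is not quite right (it converts $(1)$ to $[-2]$, it does not trivialize it). The paper takes a cleaner path: it writes $\scrV_{X/S}^{\IK}[-1]$ as the cofiber of the $2$-periodic fundamental class $\Ql{,X_s}(\beta)\to\omega_{X_s}(\beta)$, observes that Verdier duality swaps the two ends of this arrow, and reads off $\bbD_{X_s}(\scrV_{X/S}^{\IK})\simeq\scrV_{X/S}^{\IK}(1)[-1]$ directly. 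The passage to the $\QellI(\beta)$-linear dual then uses the explicit splitting $\QellI\simeq\Qell\oplus\Qell(-1)[-1]$.

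\textbf{On your alternative route (b).} Transporting the categorical duality datum $(\coev_{\sT/\sB},\ev_{\sT/\sB})$ through $\rl_S$ is \emph{not} how the paper proves this proposition; that is precisely the content of the later Theorem~\ref{thm: duality r(T)}, and it requires the splitting of Proposition~\ref{prop: key splif fib-cofib sequence} and the projection $\pi_\sT$ to repair the failure of the top triangle in the obvious diagram. Here the paper deliberately proves abstract dualizability first, by hand at the $\ell$-adic level, and only afterwards identifies which duality datum is induced by the categorical one.
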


\begin{proof}

Recall that 
$$
\rl_S(\sB)
\simeq 
(i_S)_* \QellI(\beta)
$$
$$
\rl_S(\sT)
\simeq 
(i_S)_*\nu^\IK[-1]
$$
and that the action of $\rl_S(\sB)$ on $\rl_S(\sT)$ corresponds under these isomorphisms to the natural action of $\QellI(\beta)$ on $\nu^\IK[-1]$.

By \cite{deligne77}, we know that $\nu$ is a dualizable $\Qell(\beta)$-module. 
Since $\IL$ acts unipotently on $\nu$, we deduce from \cite[Lemma 5.2.5]{toenvezzosi22} that $\nu^\IL$ is a dualizable $\QellI(\beta)$-module.
Now, $\nu^\IK$ is a direct summand of $\nu^\IL$ as a $\QellI(\beta)$-module and the dualizability follows.

Let us now show that $\nu^\IK[-1]$ is self-dual. Recall that $\scrV_{X/S}^\IK[-1]$ can be written as the cofiber of the $2$-periodic $\ell$-adic fundamental class (\cite[Definition 3.3.4]{toenvezzosi22}):
$$
  \Ql{,X_s}(\beta)
  \xto{\eta_{X_s}^{2\on{-per}}}
  \omega_{X_s}(\beta)
  \to
  \scrV_{X/S}^\IK[-1].
$$
Applying $\bbD_{X_s}=\ul \Hom_{X_s}(-,\omega_{X_s})$, we obtain the fiber-cofiber sequence
$$
  \bbD_{X_s}\bigt{\scrV_{X/S}^\IK[-1]}
  \to 
  \bbD_{X_s}\bigt{\omega_{X_s}(\beta)}
  \xto{\bbD_{X_s}(\eta_{X_s}^{2\on{-per}})}
  \bbD_{X_s}\bigt{\Ql{,X_s}(\beta)}.
$$
Since $\bbD_{X_s}(\eta_{X_s}^{2\on{-per}})\sim \eta_{X_s}^{2\on{-per}}$, we deduce that 
$$
    \bbD_{X_s}\bigt{\scrV_{X/S}^\IK[-1]}
    \simeq 
    \scrV_{X/S}^\IK[-2],
$$
that is,
\begin{equation}\label{eqn: Grothendieck dual of nu^IK[-1]}
    \bbD_{X_s}\bigt{\scrV_{X/S}^\IK}
    \simeq
    \scrV_{X/S}^\IK[-3]
    \simeq
    \scrV_{X/S}^\IK(1)[-1].
\end{equation}

Since $\QellI=\Qell \oplus \Qell(-1)[-1]$, we have
 $$
   \ul \Hom_{\Qell(\beta)}\bigt{\nu^\IK[-1],\QellI(\beta)}
   \simeq
   \bbD(\nu^\IK)[1]\oplus \bbD(\nu^\IK)(-1)
 $$
 from which we deduce that
 $$
 \ul \Hom_{\QellI(\beta)}\bigt{\nu^\IK[-1],\QellI(\beta)}
 \simeq
 \bbD(\nu^\IK)(-1).
 $$
 By \eqref{eqn: Grothendieck dual of nu^IK[-1]} we get that
$$
  \ul \Hom_{\QellI(\beta)}\bigt{\nu^\IK[-1],\QellI(\beta)}
  \simeq 
  \nu^\IK[-1]
$$
and the claim of the proposition follows.
\end{proof}

\sssec{}\label{sssec: introduction pi_T}
Consider the canonical morphism
$$
  \mu_{\sT/\sB}:
  \rl_S(\sT)\otimes_{\rl_S(\sB)}\rl_S(\sT)
  \to
  \rl_S(\sT^\op\otimes_{\sB}\sT)
$$
induced by the lax-monoidal structure on $\rl_S$.

Since $\sT^{\op}$ is the $\sB$-linear dual of $\sT$ and $\rl_S(\sT)\simeq \rl_S(\sT^{\op})$ is a self-dual $\rl_S(\sB)$-module, we also have a canonical morphism
$$
  \pi_\sT:
  \rl_S(\sT^\op\otimes_{\sB}\sT)
  \simeq
  \rl_S \bigt{\End_{\sB}(\sT)}
  \to
  \End_{\rl_S(\sB)}\bigt{\rl_S(\sT)}
  \simeq
  \rl_S(\sT)\otimes_{\rl_S(\sB)}\rl_S(\sT)
$$

The next lemma shows that $\pi_\sT$ is a left inverse for $\mu_{\sT/\sB}$.

\begin{lem}\label{lem: r(T) otimes_(r(B)) r(T) direct factor of r(T otimes_B T)}
The composition
$$
  \pi_{\sT}\circ \mu_{\sT/\sB}:
  \rl_S(\sT)\otimes_{\rl_S(\sB)}\rl_S(\sT)
  \to
  \rl_S(\sT)\otimes_{\rl_S(\sB)}\rl_S(\sT)
$$
is homotopic to the identity map.
\end{lem}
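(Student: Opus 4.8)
The plan is to reduce the claim, via the explicit descriptions of $\rl_S(\sB)$ and $\rl_S(\sT)$, to a formal consequence of duality, isolating one genuine $\ell$-adic compatibility that has to be checked by hand. Write $R := \rl_S(\sB) \simeq (i_S)_*\Qell^{\IK}(\beta)$ and $N := \rl_S(\sT) \simeq (i_S)_*\nu^{\IK}[-1]$ as in Theorem \ref{thm: mainthm brtv18} and the Remark following it, so the $R$-module $N$ is the evident one and, by Proposition \ref{prop: rl(T)/Ql dualizable}, is self-dual over $R$. Under the identifications $\rl_S(\sT)\otimes_{\rl_S(\sB)}\rl_S(\sT) \simeq \ul{\End}_R(N)$ (self-duality of $N$) and $\rl_S(\sT^\op\otimes_\sB\sT) \simeq \rl_S\bigt{\ul{\End}_\sB(\sT)}$ (dualizability of $\sT$ over $\sB$, Corollary \ref{cor: T dualizable B module}), the morphism $\mu_{\sT/\sB}$ is exactly the lax-monoidal comparison map $\lambda$ of $\rl_S$ for the relative tensor product $\otimes_\sB$, whereas $\pi_\sT$ is the composite of the \emph{action map} $a\colon \rl_S\bigt{\ul{\End}_\sB(\sT)} \to \ul{\End}_R(N)$ — the map classifying the $R$-linear action of $\rl_S\bigt{\ul{\End}_\sB(\sT)}$ on $N$ obtained by applying the lax-monoidal $\rl_S$ to the tautological action $\ul{\End}_\sB(\sT)\otimes_A\sT \to \sT$ — with the self-duality identification. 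So the lemma is the statement that $a\circ\lambda$ coincides with the self-duality identification $\rl_S(\sT)\otimes_{\rl_S(\sB)}\rl_S(\sT)\xto{\;\sim\;}\ul{\End}_R(N)$.

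The formal core is the observation that a lax-monoidal functor, while it need not preserve coevaluations, always preserves evaluations: applying $\rl_S$ to the categorical evaluation $\ev_{\sT/\sB}\colon \sT\otimes_A\sT^\op \to \sB$ and precomposing with the lax comparison $\rl_S(\sT)\otimes_{\rl_S(A)}\rl_S(\sT^\op)\to\rl_S(\sT\otimes_A\sT^\op)$ produces an $R$-bilinear pairing $\widetilde{\ev}\colon N\otimes_{\rl_S(A)}N^\vee \to R$. Granting that $\widetilde{\ev}$ agrees with the evaluation $\ev'$ of the self-duality datum of Proposition \ref{prop: rl(T)/Ql dualizable}, the desired identity becomes purely formal: tracing through the definitions, the tautological action of $\ul{\End}_\sB(\sT) = \sT^\op\otimes_\sB\sT$ on $\sT$ is built out of $\ev_{\sT/\sB}$, so by compatibility of $\lambda$ and $a$ with the lax structure the composite $a\circ\lambda$ is the map that re-expands an $R$-linear endomorphism of $N$ through $\widetilde{\ev}$ and the coevaluation $\coev'$; when $\widetilde{\ev}=\ev'$ this collapses, by the snake identity for the duality datum $(\ev',\coev')$, precisely to the self-duality identification. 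Hence $\pi_\sT\circ\mu_{\sT/\sB}\simeq\id$.

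The one non-formal ingredient — and the step I expect to be the main obstacle — is therefore the compatibility $\widetilde{\ev}\simeq\ev'$. Both are $R$-bilinear pairings on $N = (i_S)_*\nu^{\IK}[-1]$: $\widetilde{\ev}$ is the $\rl_S$-realization of the categorical evaluation of $\sT$ over $\sB$, which at the sheaf level is assembled from Grothendieck--Verdier duality $\ul{\Hom}_{X_s}(-,\omega_{X_s})$ on the special fiber together with pushforward to $s\times_S s$ (compare the description of $\ev_{\sT/\sB}$ accompanying Corollary \ref{cor: T dualizable B module} and the K\"unneth mechanism of Theorem \ref{thm: I-invariant Thom-Sebastiani}), whereas $\ev'$ is extracted from the self-duality equivalence $\bbD_{X_s}\bigt{\scrV_{X/S}^{\IK}}\simeq\scrV_{X/S}^{\IK}(1)[-1]$ obtained in Proposition \ref{prop: rl(T)/Ql dualizable} by applying $\bbD_{X_s}$ to the $2$-periodic fundamental class. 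I would organize the bookkeeping not by matching the two pairings on the nose, but by first checking — using the explicit model and the dualizability of $\nu^{\IL}$ established in the proof of Proposition \ref{prop: rl(T)/Ql dualizable} — that $\widetilde{\ev}$ is non-degenerate, i.e. extends to a duality datum exhibiting $N^\vee$ as a dual of $N$ over $R$, and then appealing to the essential uniqueness of such data to identify it with $(\ev',\coev')$. The $\ell$-adic point entering the non-degeneracy check is that the pairing underlying $\ev_{\sT/\sB}$ is the one intertwined with Poincar\'e duality in Galois cohomology through the fundamental class in $\uH^1(\IK,\Qell)\simeq\Qell(1)$ — exactly the ingredient already used in the proof of Lemma \ref{lem: (-)^IK vs bbD}.
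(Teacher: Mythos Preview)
Your formal reduction is sound: unwinding the definitions does show that $\pi_\sT\circ\mu_{\sT/\sB}=\id$ is equivalent to the realized pairing $\widetilde{\ev}$ (obtained from $\ev_{\sT/\sB}$ via $\rl_S$ and the lax structure) coinciding with the self-duality pairing $\ev'$ of Proposition~\ref{prop: rl(T)/Ql dualizable}. The gap is in how you propose to verify this coincidence. ``Essential uniqueness'' of duality data says only that any two duals-with-data are related by a \emph{unique automorphism} $\alpha$ of $N$; it does not force $\alpha=\id$. Concretely, non-degeneracy of $\widetilde{\ev}$ yields $\widetilde{\ev}=\ev'\circ(\id\otimes\alpha)$ for some $\alpha\in\on{Aut}_R(N)$, and then $\pi_\sT\circ\mu_{\sT/\sB}$ becomes the automorphism induced by $\alpha$, not the identity. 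Since $\on{Aut}_R(\nu^{\IK}[-1])$ is large, you cannot pin $\alpha$ down without the very ``on-the-nose'' comparison you wished to avoid. Carrying that out would amount to showing that the $\rl_S$-realization of the \emph{coherent} duality underlying $\ev_{\sT/\sB}$ matches the \emph{Verdier} duality underlying $\ev'$ --- a genuine compatibility that is not supplied by Lemma~\ref{lem: (-)^IK vs bbD} alone.

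The paper sidesteps this entirely and takes a different route. It factors both $\mu_{\sT/\sB}$ and $\pi_\sT$ through an intermediate object $(\nu\otimes\nu)^{\IK}(1)$, which via the Iwasawa-twist identity $(\nu\otimes\nu(1)^\tau)^{\IK}\simeq(\nu\otimes\nu)^{\IK}(1)$ and $\bbD(\nu)\simeq\nu(1)^\tau$ is identified with $\bigl(\End_{\Qell(\beta)}(\nu)\bigr)^{\IK}$. Under these identifications the composite becomes the visibly canonical map
\[
\End_{\QellI(\beta)}(\nu^{\IK})\;\longrightarrow\;\bigl(\End_{\QellI(\beta)}(\nu^{\IL})\bigr)^{\GLK}\;\longrightarrow\;\End_{\QellI(\beta)}(\nu^{\IK}),
\]
and the $\GLK$-equivariant splitting $\nu^{\IL}\simeq\nu^{\IK}\oplus(\nuIquot)$ produces a direct-sum decomposition $\bigl(\End_{\QellI(\beta)}(\nu^{\IL})\bigr)^{\GLK}\simeq\End_{\QellI(\beta)}(\nu^{\IK})\oplus\End_{\QellI(\beta)(\GLK)}(\nuIquot)$ in which the two displayed maps are the canonical inclusion and projection of the first summand. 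This extra $\GLK$-structure is precisely what lets the paper avoid matching the two evaluations directly.
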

\begin{proof}
We proceed in steps.

\sssec*{Step 1}
First of all, we notice that there is a canonical morphism
\begin{equation}\label{eqn: can morphism (nu circledast nu)^IK to (nu otimes nu)^IK}
  \rl_S(\sT^{\op}\otimes_\sB \sT)
  \simeq
  (i_S)_*(\nu \circledast \nu)^\IK(1)
  \to
  (i_S)_*(\nu \otimes \nu)^\IK(1).
\end{equation}
In fact, since $\IL$-acts unipotently on $\nu$, there is fiber-cofiber sequence
$$
  (\nu \otimes \nu)^\IL(1)
  \to
  (\nu \otimes \nu)^\IL
  \to
  \nu^\IL \otimes \nu^\IL (1)[1]
$$
which, upon computing $\GLK$-fixed points, yields the fiber-cofiber sequence
$$
  (\nu \otimes \nu)^\IK(1)
  \to
  (\nu \otimes \nu)^\IK
  \to
  \bigt{\nu^\IL \otimes \nu^\IL}^{\GLK} (1)[1].
$$
Consider now the commutative square
\begin{equation*}
    \begin{tikzpicture}[scale=1.5]
    \node (Lu) at (-3,1.2) {$(\nu \otimes \nu)^\IK$};
    \node (Ld) at (-3,0) {$(\nu \otimes \nu)^\IK$};
    \node (Cu) at (0,1.2) {$\nu^\IK \otimes \nu^\IK(1)[1]$};
    \node (Cd) at (0,0) {$\bigt{\nu^\IL \otimes \nu^\IL}^{\GLK} (1)[1],$};
    \draw[->] (Lu) to node[right] { $\id$ } (Ld);
    \draw[->] (Cu) to node[right] { ${}$ } (Cd);
    \draw[->] (Lu) to node[above]{${}$} (Cu);
    \draw[->] (Ld) to node[above]{${}$} (Cd);
  \end{tikzpicture}
\end{equation*}
where the right vertical arrow is provided by the lax-monoidal structure on $(-)^{\GLK}$.

The fiber of the top row is $(\nu \circledast \nu)^\IK(1)$, so that we obtain the desired morphism
$$
  (i_S)_*(\nu \circledast \nu)^\IK(1)
  \to
  (i_S)_*(\nu \otimes \nu)^\IK(1).
$$

\sssec*{Step 2}

By \cite{luzheng19}, we know that
$$
  \bbD(\nu)
  \simeq
  \nu(1)^{\tau},
$$
where $(1)^{\tau}$ denotes the Iwasawa twist.
In particular,
$$
  \nu \otimes \nu(1)^{\tau}
  \simeq
  \End(\nu).
$$

  Notice that
\begin{equation}\label{eqn: (nu otimes nu(1)tau)^IK = (nu otimes nu)^IK(1)}
  \bigt{ \nu \otimes \nu(1)^{\tau}}^\IK
  \simeq
  \bigt{ \nu \otimes \nu}^\IK(1),
\end{equation}
  where on the right hand side appears the Tate twist.
  
  In fact for every $\IK$-representation $M$ over $\Qell(\beta)$ by \cite[Lemma 2.9]{luzheng19} we have a canonical fiber-cofiber sequence
  $$
    M^\IK 
    \to 
    M
    \to 
    M(-1)^{\tau}.
  $$
  Applying the exact $\oo$-functor $(-)^\IK$ we obtain the fiber-cofiber sequence
  $$
    (M^\IK)^\IK 
    \simeq 
    M^\IK \oplus M^\IK[1]
    \xto{\pr{1}}
    M^\IK
    \to 
    \bigt{M(-1)^{\tau}}^\IK,
  $$
  where the first map is the projection onto the first factor.
  In particular, we obtain that 
  $$
  \bigt{M(-1)^{\tau}}^\IK
  \simeq
  M[2]
  \simeq
  M(-1).
  $$
  Then from the chain of equivalences
  $$
  M^\IK
  \simeq
  \Bigt{\bigt{M(1)^{\tau}}(-1)^{\tau}}^\IK
  \simeq
  \bigt{M(1)^{\tau}}^{\IK}(-1)
  $$
  we deduce that 
  $$
  \bigt{M(1)^{\tau}}^{\IK}
  \simeq 
  M^\IK(1).
  $$

\sssec*{Step 3} 

The composition of $\mu_{\sT/\sB}$ with the canonical morphism \eqref{eqn: can morphism (nu circledast nu)^IK to (nu otimes nu)^IK} identifies, using the equivalence \eqref{eqn: (nu otimes nu(1)tau)^IK = (nu otimes nu)^IK(1)} and Theorem \ref{thm: mainthm brtv18}, with the canonical morphism
$$
  \nu^\IK[-1]\otimes_{\QellI(\beta)}\nu^\IK[-1]
  \simeq
  \nu^\IK\otimes_{\QellI(\beta)}\nu^\IK(1)
  \to
  \bigt{ \nu \otimes \nu}^\IK(1)
$$
given by the lax monoidal structure of $(-)^\IK$.

By Proposition \ref{prop: rl(T)/Ql dualizable} and by \eqref{eqn: (nu otimes nu(1)tau)^IK = (nu otimes nu)^IK(1)} this identifies with a morphism
$$
  \End_{\QellI(\beta)}(\nu^\IK[-1])
  \simeq
  \End_{\QellI(\beta)}(\nu^\IK)
  \to 
  \bigt{\End_{\Qell(\beta)}(\nu)}^\IK.
$$
Observe that $\pi_{\sT}: \rl_S(\sT^\op \otimes_{\sB}\sT)\to \rl_S(\sT)\otimes_{\rl_S(\sB)}\rl_S(\sT)$ factors through 
$$
  \rl_S(\sT^\op \otimes_{\sB}\sT)
  \to
  \bigt{ \nu \otimes \nu}^\IK(1).
$$

In other words, we have a commutative diagram
\begin{equation*}
    \begin{tikzpicture}
      \node (Cu) at (0,1) {$\rl_S(\sT)\otimes_{\rl_S(\sB)}\rl_S(\sT)$};
      \node (Cc) at (0,0) {$\rl_S(\sT^\op \otimes_{\sB}\sT)$};
      \node (Rc) at (3,0) {$\bigt{ \nu \otimes \nu}^\IK(1).$};
      \node (Cd) at (0,-1) {$\rl_S(\sT)\otimes_{\rl_S(\sB)}\rl_S(\sT)$};
      \draw[->] (Cu) to node[above] {${}$} (Cc);
      \draw[->] (Cu) to node[above] {${}$} (Rc);
      \draw[->] (Cc) to node[above] {${}$} (Rc);
      \draw[->] (Cc) to node[above] {${}$} (Cd);
      \draw[->] (Rc) to node[above] {${}$} (Cd);
    \end{tikzpicture}
\end{equation*}
Therefore, it remains to show that the composition of the two diagonal arrows is the identity.

\sssec*{Step 4} The composition mentioned above has a very explicit description. 
Indeed, it corresponds to the morphism
\begin{align*}
  \End_{\QellI(\beta)}(\nu^\IK) \simeq (\nu^\IL)^{\GLK}\otimes_{\QellI(\beta)}(\nu^\IL)^{\GLK}(1) & \to \bigt{\nu^\IL \otimes_{\QellI(\beta)}\nu^\IL }^{\GLK}(1) \simeq \bigt{\End_{\QellI(\beta)}(\nu^\IL)}^{\GLK}\\
  & \to \End_{\QellI(\beta)}(\nu^\IK)
\end{align*}
which is homotopic to the identity map.
This follows from the observation that under the canonical equivalence
$$
  \bigt{\End_{\QellI(\beta)}(\nu^\IL)}^{\GLK}
  \simeq
  \End_{\QellI(\beta)}(\nu^\IK)
  \oplus
  \End_{\QellI(\beta)(\GLK)}(\nuIquot)
$$
the two morphisms displayed above are the canonical inclusion/projection of $\End_{\QellI(\beta)}(\nu^\IK)$.

\end{proof}

\ssec{The \texorpdfstring{$\rl_S(\sC)$}{r(C)}-module \texorpdfstring{$\rl_S(\sU)$}{r(U)}}\label{ssec: the r(C)-module r(U)}
In this section, we focus our attention to the decategorification of the other side of the Morita equivalence, see Corollary \ref{cor:Morita}.
In particular, we show that the left $\rl_S(\sC)$-module $\rl_S(\sU)$ identifies with the left $(i_S)_*\QellI(\beta)(\GLK)$-module $(i_S)_*\bigt{\nuIquot}$.

\sssec{}
Let $\QellI(\beta)(\GLK)$ denote the \emph{reduced} group algebra of $\GLK$ over $\QellI(\beta)$.
Equivalently, for $\Qell(\GLK)$ the usual reduced group algebra generated by $\GLK$ over $\Qell$,
$$
  \QellI(\beta)(\GLK)
  \simeq
  \QellI(\beta)\otimes \Qell(\GLK).
$$

\begin{prop} \label{prop: rl(C)}
There is an equivalence of algebras
$$
  \rl_S(\sC)
  \simeq
 (i_S)_* \bigt{ \QellI(\beta)(\GLK) }.
$$
\end{prop}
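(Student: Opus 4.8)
The plan is to realize $\sC$ as the singularity category of the fiber product $H := S' \times_S S'$, to compute its $\ell$-adic realization via the $\IK$-invariant Thom--Sebastiani formula of \TV{}, and to feed in the explicit shape of the vanishing cohomology of $S'/S$. Since $S' \to S$ is finite flat with $S'$ regular, separated and generically étale over $S$, Theorem \ref{thm: I-invariant Thom-Sebastiani} applies to $Y = Z = S'$ and gives
$$
\rl_{H}(\sC)
\;\simeq\;
(s' \times_s s' \to H)_* \bigl( \scrV_{S'/S} \boxast{s} \scrV_{S'/S} \bigr)^{\IK}(1).
$$
I would check below that $\scrV_{S'/S}^{\IK} \simeq 0$; granting this, Corollary \ref{cor: I-invariant Thom-Sebastiani with nu^I=0} improves the formula to
$$
\rl_{H}(\sC)
\;\simeq\;
(s' \times_s s' \to H)_* \bigl( \scrV_{S'/S} \boxtimes_s \scrV_{S'/S} \bigr)^{\IK}.
$$
Pushing forward along $H \to S$ (via the compatibility $\rl_S(\Sing(Z)) \simeq (Z \to S)_* \rl_Z(\Sing(Z))$ already used in the remark after Theorem \ref{thm: mainthm brtv18}), using the projection formula for the external product over the point $s$ and the commutation of $(-)^{\IK}$ with proper pushforward, one obtains
$$
\rl_S(\sC)
\;\simeq\;
(i_S)_* \bigl( \nu_{S'/S} \otimes_{\Qell(\beta)} \nu_{S'/S} \bigr)^{\IK},
\qquad
\nu_{S'/S} := p_{s*} \scrV_{S'/S}.
$$

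The remaining ingredient is a local computation of $\nu_{S'/S}$. As $S' \to S$ is finite flat, its geometric generic fiber $\Spec{\uL \otimes_{\uK} \uKs}$ is a disjoint union of $|\GLK|$ copies of $\Spec{\uKs}$, permuted simply transitively through the quotient $\IK \to \GLK$, while the special fiber $s' = \Spec{k[x]/x^e}$ has the étale homotopy type of $s$. Applying $p_{s*}$ to the defining triangle $\Ql{,s'} \to \Psi_{S'/S}(\Ql{,S'_\eta}) \to \Phi_{S'/S}(\Ql{,S'})$ and $2$-periodizing therefore produces a cofiber sequence
$$
\Qell(\beta)
\longto
\Qell(\beta)[\GLK]
\longto
\nu_{S'/S}
$$
in which the first arrow is the inclusion of the trivial subrepresentation (spanned by the norm element); hence $\nu_{S'/S} \simeq \Qell(\beta)(\GLK)$ is the reduced regular representation of $\GLK$ over $\Qell(\beta)$, on which $\IL$ acts trivially and $\GLK$ by left translation. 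In particular $\nu_{S'/S}^{\IK} = (\nu_{S'/S}^{\IL})^{\GLK} = \bigl(\Qell(\beta)(\GLK) \otimes \QellI(\beta)\bigr)^{\GLK}$ vanishes, the reduced regular representation having no $\GLK$-invariants; this is the claim $\scrV_{S'/S}^{\IK} \simeq 0$ invoked above. Substituting $\nu_{S'/S} \simeq \Qell(\beta)(\GLK)$ into the formula for $\rl_S(\sC)$: since $\IL$ acts trivially on both tensor factors one gets $\bigl(\nu_{S'/S} \otimes_{\Qell(\beta)} \nu_{S'/S}\bigr)^{\IL} \simeq \Qell(\beta)(\GLK) \otimes_{\Qell(\beta)} \QellI(\beta)(\GLK)$ with residual diagonal $\GLK$-action (left translation on each group-algebra factor, trivial on $\QellI(\beta)$), so $\bigl(\nu_{S'/S} \otimes_{\Qell(\beta)} \nu_{S'/S}\bigr)^{\IK}$ is its $\GLK$-invariants. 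Finally, for any $\GLK$-representation $W$ the exact sequence $0 \to \Qell \to \Qell[\GLK] \to \Qell(\GLK) \to 0$ together with the canonical equivalence $\bigl(\Qell[\GLK] \otimes W\bigr)^{\GLK} \simeq W$ for the diagonal action (Shapiro) yields $\bigl(\Qell(\GLK) \otimes W\bigr)^{\GLK} \simeq \coFib\bigl(W^{\GLK} \to W\bigr)$; applying this with $W = \QellI(\beta)(\GLK)$, whose $\GLK$-invariants vanish for the same reason as before, one concludes $\bigl(\nu_{S'/S} \otimes_{\Qell(\beta)} \nu_{S'/S}\bigr)^{\IK} \simeq \QellI(\beta)(\GLK)$, hence $\rl_S(\sC) \simeq (i_S)_* \QellI(\beta)(\GLK)$ as objects.

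It then remains to promote this to an equivalence of algebras. The multiplication on $\rl_S(\sC)$ is induced, by lax-monoidality of $\rl_S$, from the convolution monoidal structure on $\sC = \Sing(H)$, i.e.\ from the composition law of the classical groupoid $H = S' \times_S S'$. Tracing it through the monoidal refinement of Theorem \ref{thm: I-invariant Thom-Sebastiani}, on geometric generic fibers this becomes the composition law of the pair groupoid of the $|\GLK|$-element set $S'_{\bar\eta}$, whose groupoid algebra is the matrix algebra $\End_{\Qell}(\Qell[\GLK])$; taking $\GLK$-invariants for the conjugation action identifies it, by Wedderburn's theorem, with $\End_{\GLK}(\Qell[\GLK]) \simeq \Qell[\GLK]$, and passing from nearby to vanishing cycles replaces the regular representation by its reduced quotient, hence $\Qell[\GLK]$ by the reduced group algebra $\Qell(\GLK)$; the surviving tensor factor $\QellI(\beta)$ is the central contribution of $\uH^*(\IL, \Qell) \simeq \QellI$. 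This matches the reduced group algebra structure on $(i_S)_* \QellI(\beta)(\GLK)$, completing the proof. The main obstacle is precisely this last step: while the object-level identification is a direct unwinding of the Thom--Sebastiani formula, checking that the convolution product survives the realization and yields the \emph{reduced} group algebra (rather than the full group algebra, or a twist of it) requires keeping careful track of the monoidal structure throughout Theorem \ref{thm: I-invariant Thom-Sebastiani}, as well as of the Tate twists suppressed in the displays above.
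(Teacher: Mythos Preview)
Your object-level computation is essentially the paper's: both apply Corollary~\ref{cor: I-invariant Thom-Sebastiani with nu^I=0} to $Y=Z=S'$, compute $\nu_{S'/S}\simeq\Qell(\beta)(\GLK)$ from the shape of the geometric generic fiber, observe $\nu_{S'/S}^{\IK}\simeq 0$, and then take $\IK$-invariants of $\nu_{S'/S}\otimes\nu_{S'/S}$. Your Shapiro-style reduction is a pleasant repackaging of the paper's chain $(\Qell[\GLK\times\GLK])^{\IK}\simeq(\Qell^{\IL}[\GLK\times\GLK])^{\GLK}\simeq\Qell^{\IK}[\GLK]$; both are correct and amount to the same computation.

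The gap is exactly where you flag it. There is no ``monoidal refinement'' of Theorem~\ref{thm: I-invariant Thom-Sebastiani} available in the paper, so tracing the convolution product through that equivalence is not a move you can make; your pair-groupoid / Wedderburn sketch is a plausible heuristic for what the answer should be, but it is not a proof. The paper avoids this entirely and instead pins down the algebra structure by two concrete algebra maps into $\rl_S(\sC)$. First, the pushforward along the quasi-smooth closed embedding $K(S',0)\hookrightarrow S'\times_S S'$ induces a \emph{monoidal} dg-functor $\Sing(K(S',0))\to\sC$, and applying $\rl_S$ yields an algebra map $\QellI(\beta)\to i_S^*\rl_S(\sC)$ which under the object-level identification becomes the canonical inclusion of the coefficient ring. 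Second, for the group generators, the paper uses that $\uG_0(\sC^+)\simeq\bbZ[\GLK]$ as rings, and that the non-commutative $\ell$-adic Chern character $\uG_0(\sC^+)\to\HK_0(\sC)\to\uH^0_\et(\sC;\Qell)$ is a ring homomorphism; this sends each $g\in\GLK$ to its image in $\QellI(\beta)(\GLK)$, and the algebra structure on the target is thereby forced. This two-input strategy (a geometric monoidal map for the unit/central part, plus a $K$-theoretic Chern character for the group elements) replaces the unavailable monoidal Thom--Sebastiani with tools already on the table.
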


\begin{proof}
Since $\sC = \Sing(S' \times_S S')$ and $\bigt{(S')_s}_{\on{red}} = s$ is regular, Corollary \ref{cor: I-invariant Thom-Sebastiani with nu^I=0} implies that
$$
\rl_S(\sC)
\simeq 
 (i_S)_* 
\bigt{\scrV_{S'/S} \otimes \scrV_{S'/S}}^{\IK}.
$$
To simplify the right-hand side, we first compute
$$
  \scrV_{S'/S}
  \simeq
  \coFib \bigt{\Qell \to \Qell[\GLK]}(\beta)
  \simeq
  \Qell(\beta)(\GLK).
$$
Then, applying \cite[Lemma 3.4.3]{toenvezzosi22}, we obtain a fiber-cofiber sequence
$$
  \scrV_{S'/S}^\IK \oplus \scrV_{S'/S}^\IK
  \to
  \coFib \bigt{\Qell \to \Qell[\GLK]\otimes \Qell[\GLK]}^\IK(\beta)
  \to
  \bigt{\scrV_{S'/S}\otimes \scrV_{S'/S}}^\IK.
$$
However, $\scrV_{S'/S}^\IK \simeq 0$ and thus
\begin{align*}
  \bigt{
  \scrV_{S'/S} \otimes \scrV_{S'/S}
  }^{\IK} 
& \simeq 
  \coFib \bigt{\Qell \to \Qell[\GLK]\otimes \Qell[\GLK]}^{\IK}(\beta) \\
& \simeq 
\coFib \bigt{\Qell^{\IK} \to (\Qell[\GLK \times \GLK])^{\IK}}(\beta) \\
& \simeq 
\coFib \bigt{\Qell^{\IK} \to (\Qell^{\IL}[\GLK \times \GLK])^{\GLK}}(\beta) \\
& \simeq 
\coFib \bigt{\Qell^{\IK} \to \Qell^{\IK}[\GLK]}(\beta) \\
& \simeq
\QellI(\beta)(\GLK).
\end{align*}
This proves the assertion of the lemma at the level of $\Mod_{\Qell(\beta)}\bigt{\Shv(s)}$. 

\medskip

We now show that the algebra structures are compatible.
By definition, the algebra structure on $\rl_S(\sC)$ is induced by that of $\sC$ and the lax-monoidality of $\rl_S$. The algebra structure of $\QellI(\beta)(\GLK)$ is explicit, hence we can verify the assertion on the generators.

The quasi-smooth closed embedding
$$
  K(S',0)\to S'\times_SS'
$$
of \cite{beraldopippi22} yields a morphism of algebras
$$
  i_S^*\rl_S\Bigt{\Sing \bigt{K(S',0)} \to \Sing(\sC)}
  \simeq 
  \Bigt{
  \QellI(\beta)
  \to
  i_S^*\rl_S(\sC)
  }
$$
which identifies with the canonical inclusion
$$
  \QellI(\beta)
  \subseteq
  \QellI(\beta)(\GLK)
$$
under the equivalence displayed above.
This is true because the morphism above is the canonical morphism
$$
  K(S',0)\simeq S'\times_{\pi_{\uL},\bbA^1_S,\pi_{\uL}}S' \to S'\times_SS'.
$$
The pushforward along this morphism induces a \emph{monoidal} functor
$$
  \Sing \bigt{K(S',0)}\to \Sing(S'\times_SS'), 
$$
whence the statement.

As for the generators $g\in \GLK$, notice that the non-commutative $\ell$-adic Chern character induces a morphism of algebras
$$
  \uG_0(\sC^+)
  =
  \bbZ[\GLK]
  \to
  \HK_0(\sC)
  \to
  \uH^0_\et \bigt{\sC;\Qell},
$$
and the algebra structure on $\uG_0(\sC^+)$ is obviously compatible with that of $\Qell^\IK(\beta)(\GLK)$.
\end{proof}

\begin{prop}\label{prop: rl(U)}
There is a canonical equivalence
$$
  \rl_{S}(\sU)
  \simeq
  (i_S)_*\bigt{ \nuIquot }.
$$

\end{prop}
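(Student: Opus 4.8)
The plan is to read off this equivalence from the K\"unneth formula for $\ell$-adic realizations of singularity categories, exactly in the spirit of the proof of Proposition~\ref{prop: rl(C)}. Since $X' = S'\times_S X$ and $\bigt{(S')_s}_{\up{red}} = s$ is regular, the remark following Corollary~\ref{cor: I-invariant Thom-Sebastiani with nu^I=0} gives $\scrV_{S'/S}^{\IK}\simeq 0$, so that corollary applies with $Y = S'$ and $Z = X$. Pushing its conclusion forward along $X'\to S$ (using the compatibility of $\ell$-adic realization with change of base, cf.\ Remark~\ref{rmk: rl over general bases}, together with the identification of the \'etale topos of $s'\times_s X_s$ with that of $X_s$ and the fact that external product over the point $s$ is the ordinary tensor product), I would arrive at
$$
  \rl_S(\sU)
  \simeq
  (i_S)_*\bigt{\scrV_{S'/S}\otimes \nu}^{\IK}.
$$
Here one invokes that $(X_s\to s)_*\scrV_{X/S}\simeq \nu$ — the singular locus of $X/S$ being proper over $S$, so that $*$- and $!$-pushforwards along $X_s\to s$ agree — and that the diagonal $\IK$-action on $\scrV_{S'/S}\otimes\nu$ commutes with all the pushforwards involved, since $\IK$ acts trivially on all these schemes.

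The remaining step is purely representation-theoretic: identify $\bigt{\scrV_{S'/S}\otimes\nu}^{\IK}$ with $\nuIquot$. Recall from the proof of Proposition~\ref{prop: rl(C)} that $\scrV_{S'/S}\simeq \coFib\bigt{\Qell\to\Qell[\GLK]}(\beta)$, where, under the isomorphism $\Qell[\GLK]\simeq \Ind_{\IL}^{\IK}\Qell$, the displayed arrow is the one adjoint to $\id_{\Qell}$ by Frobenius reciprocity. Since $-\otimes\nu$ and $(-)^{\IK}$ are exact, tensoring this cofiber sequence with $\nu$ and taking $\IK$-fixed points yields a fiber-cofiber sequence
$$
  \nu^{\IK}
  \to
  \bigt{\Qell[\GLK]\otimes\nu}^{\IK}
  \to
  \bigt{\scrV_{S'/S}\otimes\nu}^{\IK}.
$$
By the projection formula $\Qell[\GLK]\otimes\nu\simeq \Ind_{\IL}^{\IK}\bigt{\nu|_{\IL}}$, so Shapiro's lemma gives $\bigt{\Qell[\GLK]\otimes\nu}^{\IK}\simeq\nu^{\IL}$, and unwinding the adjunctions identifies the connecting map $\nu^{\IK}\to\nu^{\IL}$ with the canonical restriction. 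Hence
$$
  \bigt{\scrV_{S'/S}\otimes\nu}^{\IK}
  \simeq
  \coFib\bigt{\nu^{\IK}\to\nu^{\IL}}
  =
  \nuIquot,
$$
which proves the proposition. Finally, as in Proposition~\ref{prop: rl(C)}, one should check that this equivalence respects module structures, upgrading it to an equivalence of left $\rl_S(\sC)\simeq (i_S)_*\bigt{\QellI(\beta)(\GLK)}$-modules.

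I expect the main friction to be bookkeeping rather than conceptual: keeping precise track of the $\IK$-equivariance and of the Tate/$\beta$-twists as they pass through the two external-product K\"unneth morphisms and the various $*$-pushforwards, and in particular verifying that the connecting map in the last fiber-cofiber sequence is genuinely the tautological restriction $\nu^{\IK}\to\nu^{\IL}$ (rather than, say, a norm/averaging map). A secondary point to spell out is the passage from $\rl_{X'}$ (which is what Corollary~\ref{cor: I-invariant Thom-Sebastiani with nu^I=0} outputs) to $\rl_S$ via base-change compatibility of $\ell$-adic realization; this is used implicitly in Proposition~\ref{prop: rl(C)} and should be invoked here in the same manner.
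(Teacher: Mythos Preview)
Your argument is correct and follows essentially the same route as the paper: apply Corollary~\ref{cor: I-invariant Thom-Sebastiani with nu^I=0} with $Y=S'$, $Z=X$ to obtain $\rl_S(\sU)\simeq (i_S)_*(\scrV_{S'/S}\otimes\nu)^{\IK}$, then use $\scrV_{S'/S}\simeq\Qell(\beta)(\GLK)$ and identify $(\nu\otimes\Qell[\GLK])^{\IK}\simeq\nu^{\IL}$ to conclude. The paper writes this last identification as the explicit chain $(\nu[\GLK])^{\IK}\simeq(\nu^{\IL}[\GLK])^{\GLK}\simeq\nu^{\IL}$, which is exactly your Shapiro/projection-formula step unpacked; note also that the compatibility with the $\rl_S(\sC)$-module structure is not part of this proposition in the paper but is treated separately in Proposition~\ref{prop: rl(C) acts on rl(U)}.
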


\begin{proof}
Recall that 
$$
  \sU := \Sing(X\times_{S}S').
$$
Since $\bigt{(S')_s}_{\on{red}} \simeq s$ is regular, Corollary \ref{cor: I-invariant Thom-Sebastiani with nu^I=0} implies that
$$
  \rl_S(\sU)
  \simeq
  (i_S)_*(\nu \otimes \scrV_{S'/S})^\IK.
$$
In particular, notice that $\rl_S(\sU)\simeq (i_S)_*i_S^*\rl_S(\sU)$.
Using the isomorphism $\scrV_{S'/S}\simeq \Qell(\beta)(\GLK)$ established in the previous proposition, we obtain
\begin{align*}
    i_S^*\rl_S(\sU) & \simeq \coFib \bigt{\nu \to \nu \otimes \Qell[\GLK]}^\IK \\
             & \simeq \coFib \bigt{\nu^\IK \to (\nu[\GLK])^\IK}\\
             & \simeq \coFib \bigt{\nu^\IK \to (\nu^\IL[\GLK])^{\GLK}}\\
             & \simeq \coFib \bigt{\nu^\IK \to \nu^\IL},
\end{align*}
as desired.
\end{proof}

\begin{prop}\label{prop: rl(C) acts on rl(U)}
The canonical action of $\rl_S(\sC)$ on $\rl_S(\sU)$ is induced, under the equivalences on Proposition \ref{prop: rl(C)} and Proposition \ref{prop: rl(U)}, by the canonical action of $\QellI(\beta)(\GLK)$ on $\nuIquot$.
\end{prop}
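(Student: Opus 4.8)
The plan is to reduce this to the compatibility of the lax-monoidal structure on the fixed-points $\oo$-functor $(-)^{\IK}$ with the group-algebra multiplication of $\Qell[\GLK]$ and with the regular action of $\GLK$ on $\nu$, in the same spirit as the proofs of Propositions \ref{prop: rl(C)} and \ref{prop: rl(U)}. First I would observe that both $\rl_S(\sC)$ and $\rl_S(\sU)$ are of the form $(i_S)_*i_S^*(-)$, so that it suffices to identify the action after applying $i_S^*$. Then, invoking Corollary \ref{cor: I-invariant Thom-Sebastiani with nu^I=0} (legitimate because $\bigl((S')_s\bigr)_{\on{red}} = s$ is regular, whence $\scrV_{S'/S}^{\IK}\simeq 0$), I would use the identifications
$$
  i_S^*\rl_S(\sC)\simeq \bigl(\scrV_{S'/S}\otimes \scrV_{S'/S}\bigr)^{\IK},
  \qquad
  i_S^*\rl_S(\sU)\simeq \bigl(\nu\otimes \scrV_{S'/S}\bigr)^{\IK},
$$
established in those proofs, together with $\scrV_{S'/S}\simeq \Qell(\beta)(\GLK)$.

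Next I would trace the $\sC$-action on $\sU$ through $\ell$-adic realization. Geometrically this action is the one induced by the groupoid $H = S'\times_S S'$ acting on $X' = X\times_S S'$; under $\rl_S$ it is governed, exactly as in the proof of Theorem \ref{thm: I-invariant Thom-Sebastiani}, by the external-tensor / $\boxast{s}$ formalism on vanishing cycles. Since $\scrV_{S'/S}^{\IK}\simeq 0$, all the convolution terms that appear collapse to ordinary tensor products, so the action reduces to $(-)^{\IK}$ applied to the Hopf-algebroid multiplication $\scrV_{S'/S}\otimes \scrV_{S'/S}\to \scrV_{S'/S}$ and to the action map $\nu\otimes \scrV_{S'/S}\to \nu$. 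Under $\scrV_{S'/S}\simeq \Qell(\beta)(\GLK)$ these become the group-algebra multiplication and the regular right $\GLK$-action on $\nu$, and applying $(-)^{\IK}$ (using once more $\scrV_{S'/S}^{\IK}\simeq 0$ and the splitting of \eqref{eqn: nu^IK to nu^IL to nuIquot}) recovers precisely the canonical $\QellI(\beta)(\GLK)$-module structure on $\nuIquot\simeq \coFib(\nu^{\IK}\to \nu^{\IL})$. As a cross-check — or as an alternative route — since $\QellI(\beta)(\GLK)$ is generated as an algebra by $\QellI(\beta)$ and the group elements $g\in\GLK$, it is enough to verify two things, both variants of what was already established in Proposition \ref{prop: rl(C)}: (i) the subalgebra $\QellI(\beta)\subseteq \rl_S(\sC)$ coming from the quasi-smooth closed embedding $K(S',0)\to S'\times_S S'$ acts on $\rl_S(\sU)$ through the restriction, along $\rl_S(\sB)\simeq \QellI(\beta)\hookrightarrow \QellI(\beta)(\GLK)$, of the $\rl_S(\sB)$-module structure on $\rl_S(\sT)$ transported via the K\"unneth equivalence of Corollary \ref{cor: Kunneth for Sing (C side)}; and (ii) for $g\in\GLK$, the image of $g$ under $\uG_0(\sC^+) = \bbZ[\GLK]\to \uH^0_\et(\sC;\Qell)$ acts on $\rl_S(\sU)$ compatibly with the built-in $\GLK$-equivariance of $\nu^{\IL}$, hence of $\nuIquot$.

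The main obstacle is to upgrade the Thom--Sebastiani equivalence of Theorem \ref{thm: I-invariant Thom-Sebastiani} from an equivalence of objects to one respecting module structures — that is, to check that the $\sC$-action map on $\sU$ is genuinely carried to the $(-)^{\IK}$ of the external-product action on $\bigl(\nu\otimes\scrV_{S'/S}\bigr)$ — all while keeping the Tate twists straight. This is the module-level counterpart of the algebra-level compatibility already handled in Proposition \ref{prop: rl(C)}; with the machinery of Section \ref{sec: realizations} in place, the reduction to the generators (i)--(ii) should make it routine.
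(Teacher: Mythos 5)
Your reduction to generators --- the subalgebra $\QellI(\beta)$ coming from $K(S',0)\to S'\times_SS'$ on one hand, and the group elements $g\in\GLK$ on the other --- is exactly how the paper begins (its Steps 2--3), and your point (i) is handled there the same way you suggest. The gap is in point (ii) and in your first route through the convolution formalism: you assert that applying $(-)^{\IK}$ to the Hopf-algebroid multiplication and to the action map "recovers precisely the canonical $\QellI(\beta)(\GLK)$-module structure on $\nuIquot$", and that the generator reduction makes this "routine". But this identification is precisely the content of the proposition, and the Thom--Sebastiani theorem (Theorem \ref{thm: I-invariant Thom-Sebastiani}) only gives an equivalence of \emph{objects}; nothing in the machinery you cite tells you that the abstract $\GLK$-action produced by $\rl_S$ from the dg-categorical $\sC$-module structure coincides with the geometric Galois action on $\nu^{\IL}$. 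You need an independent anchor identifying the two actions, and your proposal does not supply one.

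The paper's anchor is the following chain, which your proposal is missing. One first replaces $\sU=\Sing(X')$ by the localization sequence $\Sing(X'_s\subseteq X')\to\Sing(X'_s)\to\Sing(X')$ of \cite{beraldopippi23}, whose realization is the sequence $(i_S)_*\nu^{\IL}[-1]\to(i_S)_*\nu^{\IK}[-1]\to(i_S)_*(\nuIquot)$; this reduces the question to identifying the $\GLK$-action on $\rl_S\bigt{\Sing(X'_s\subseteq X')}\simeq(i_S)_*\nu^{\IL}[-1]$. One then resolves $\Coh(X'_s\subseteq X')$ by $\Perf(X')$ and $\Perf(X'_\eta)$, and uses that $i_S^*\rl_S\bigt{\Perf(X'_\eta)}\simeq p_{s*}\Psi_p\bigt{\Ql{,X}(\beta)}^{\IL}$ carries the action of $i_S^*\rl_S\bigt{\Perf(S'\times\GLK)}\simeq\Qell(\beta)[\GLK]$ that is, \emph{by the very definition of nearby cycles}, the natural Galois action. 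That is the only place where the dg-categorical $\GLK$-action gets pinned to the sheaf-theoretic one, and it requires the input from the companion paper (in particular \cite[Theorem 4.2.4]{beraldopippi23}), not just the Künneth/convolution formalism of Section \ref{sec: realizations}. Without some such step your argument is circular at the decisive point.
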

\begin{proof}
We proceed in steps.
\sssec*{Step 1} 
In \cite{beraldopippi23}, we showed that there is a localization sequence of dg-categories
$$
  \Sing(X'_s\subseteq X')
  \to
  \Sing(X'_s)
  \to
  \Sing(X')
$$
whose $\ell$-adic realization 
\begin{equation}\label{eqn: fib-cofib r(Sing(X'_s -> X')) to r(Sing(X'_s)) to r(Sing(X'))}  
  \rl_S\bigt{\Sing(X'_s\subseteq X')}
  \to
  \rl_S\bigt{\Sing(X'_s)}
  \to
  \rl_S\bigt{\Sing(X')}
\end{equation}
corresponds to the fiber-cofiber sequence
\begin{equation}\label{eqn: fib-cofib nu^IL[-1] to nu^IK[-1] to nuIquot}
  (i_S)_*\nu^\IL[-1]
  \to
  (i_S)_*\nu^\IK[-1]
  \to
  (i_S)_*(\nuIquot).
\end{equation}
Furthermore, since $X'$ sits in the cartesian square
\begin{equation*}
  \begin{tikzpicture}[scale=1.5]
    \node (UL) at (0,1) {$X'$};
    \node (UR) at (2,1) {$X\times_S \bbA^1_S$};
    \node (LL) at (0,0) {$S$};
    \node (LR) at (2,0) {$\bbA^1_S,$};
    \draw[->] (UL) to node[above] {${}$} (UR);
    \draw[->] (UL) to node[left] {$p'$} (LL);
    \draw[->] (UR) to node[right] {$p\times_S E$} (LR);
    \draw[->] (LL) to node[above] {$0$} (LR);
  \end{tikzpicture}
\end{equation*}
the sequence \eqref{eqn: fib-cofib r(Sing(X'_s -> X')) to r(Sing(X'_s)) to r(Sing(X'))} is linear for the action of $\rl_S\bigt{\MF(S,0)}$.
By the main results of \cite{brtv18, beraldopippi23}, the sequences \eqref{eqn: fib-cofib r(Sing(X'_s -> X')) to r(Sing(X'_s)) to r(Sing(X'))} and \eqref{eqn: fib-cofib nu^IL[-1] to nu^IK[-1] to nuIquot}  correspond to each other as modules over
$$
  \rl_S\bigt{\MF(S,0)}
  \simeq
  \Ql{,S}^{\on{I}}(\beta).
$$
\sssec*{Step 2}
Recall the isomorphism
$$
  i_S^*\rl_S(\sC)
  \simeq
  \QellI(\beta) \otimes_{\Qell(\beta)}\Qell(\beta)(\GLK)
$$
and observe that the algebra map 
$$
  \QellI(\beta)
  \to
  \QellI(\beta) \otimes_{\Qell(\beta)}\Qell(\beta)(\GLK)
$$
is induced by the dg-functor
$$
  \Sing(S'\times_{\pi_{\uL},\bbA_S^1,\pi_{\uL}}S')
  \to
  \Sing(S'\times_SS')
$$
mentioned in the proof of Proposition \ref{prop: rl(C)}. 
It follows that the equivalence
$$
  i_S^*\rl_S(\sU)
  \simeq
  \nuIquot
$$
is compatible with the $\QellI(\beta)$-module structure under the equivalence $i_S^*\rl_S(\sC)\simeq \QellI(\beta)(\GLK)$.

Therefore, it suffices to show that the equivalence $i_S^*\rl_S(\sU)\simeq \nuIquot$ is compatible with the $\Qell(\beta)[\GLK]$-module structures.

\sssec*{Step 3}
Notice that the canonical morphism
$$
  \Qell(\beta)[\GLK]
  \to
  \QellI(\beta)(\GLK)
$$
corresponds to the natural map
$$
  \rl_S \bigt{\Perf(S'\times \GLK)}
  \to
    \rl_S \bigt{ \Sing(S'\times_S S')}
$$
induced by the pushforward along the canonical surjection
$$
  S'\times \GLK 
  \to
  S'\times_S S'.
$$
Therefore, it remains to show that the equivalence $i_S^*\rl_S(\sU)\simeq \nuIquot$ is compatible with the natural action of $\GLK$ on the two sides under the equivalence
$$
  i_S^*\rl_S\bigt{\Perf(S'\times \GLK)}
  \simeq
  \Qell(\beta)[\GLK].
$$

\sssec*{Step 4}

Using \eqref{eqn: fib-cofib r(Sing(X'_s -> X')) to r(Sing(X'_s)) to r(Sing(X'))} and the equivalences
$$
  \rl_S \bigt{\Sing(X'_s\subseteq X')}
  \simeq
  (i_S)_*\nu^\IL[-1],
$$
$$
  \rl_S\bigt{\Sing(X'_s)}
  \simeq
  \rl_S\bigt{\Sing(X_s)}
  \simeq
  (i_S)_*\nu^\IK[-1],
$$
$$
  \rl_S\bigt{\Sing(X')}
  \simeq
  (i_S)_*(\nuIquot),
$$
it suffices to show that the action of 
$$
\Qell(\beta)[\GLK]
\simeq
i_S^*\rl_S\bigt{\Perf(S'\times \GLK)}
$$ 
on $i_S^*\rl_S\bigt{\Sing(X'_s\subseteq X'}$ identifies with the natural action of $\Qell(\beta)[\GLK]$ on $\nu^\IL[-1]$.

\sssec*{Step 5}

By the properties of $\rl_S$, there is a fiber-cofiber sequence 
\begin{equation}\label{eqn: fib-cofib r(Perf(X'_s)) to r(Coh(X'_s -> X')) to r(Sing(X'_s -> X'))}
  i_S^*\rl_S\bigt{\Perf(X'_s)}
  \to
  i_S^*\rl_S\bigt{\Coh(X'_s\subseteq X')}
  \to
  i_S^*\rl_S\bigt{\Sing(X'_s\subseteq X')},
\end{equation}
which is linear for the action of 
$$
i_S^*\rl_S\bigt{\Perf(S'\times \GLK)}
\simeq \Qell[\GLK].
$$
Observe that the action of $\GLK$ on $\rl_S\bigt{\Perf(X'_s)} \simeq \rl_S\bigt{\Perf(X_s)}$ is trivial.
On the other hand, by \cite[Theorem 4.2.4]{beraldopippi23}, the second factor lives in the following fiber-cofiber sequence
$$
  i_S^*\rl_S\bigt{\Coh(X'_s\subseteq X')}
  \to
  i_S^*\rl_S\bigt{\Perf( X')}
  \to
  i_S^*\rl_S\bigt{\Perf(X'_\eta)}
$$
of $\Qell(\beta)[\GLK]\simeq i_S^*\rl_S\bigt{\Perf(S'\times \GLK)}$-modules.
Notice that the natural action of $i_S^*\rl_S\bigt{\Perf(S'\times \GLK)}\simeq \Qell(\beta)[\GLK]$ on
$$
  i_S^*\rl_S\bigt{\Perf(X'_\eta)}
  \simeq 
  p_{s*}\Psi_p\bigt{\Ql{,X}(\beta)}^\IL
$$
corresponds to that on the nearby cycles (by definition).
From this, it is easy to see that \eqref{eqn: fib-cofib r(Perf(X'_s)) to r(Coh(X'_s -> X')) to r(Sing(X'_s -> X'))} identifies with
$$
  p_{s*}\Ql{,X_s}
  \to
  p'_{s*}(X'_s\to X')^!\Ql{,X'}
  \to
  \nu^{\IL}[-1].
$$
Moreover, this identification is compatible with the natural module structures under the equivalence $\Qell(\beta)[\GLK]\simeq i_S^*\rl_S\bigt{\Perf(S'\times \GLK)}$.%
\end{proof}

\begin{prop}\label{prop: r(U) self dual over r(C)}
The left $\rl_S(\sC)$-module $\rl_S(\sU)$ is dualizable, with dual the right $\rl_S(\sC)$-module $\rl_S(\sU)(1)$.
\end{prop}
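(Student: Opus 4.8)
The plan is to transport the proof of Proposition~\ref{prop: rl(T)/Ql dualizable} to the other side of the Morita equivalence, using the explicit descriptions of $\rl_S(\sC)$ and $\rl_S(\sU)$ that are already available. By Propositions~\ref{prop: rl(C)}, \ref{prop: rl(U)} and~\ref{prop: rl(C) acts on rl(U)}, and since all the algebras and modules in sight are pushed forward from $s$, the assertion is equivalent to the following statement on $s$: the left $\QellI(\beta)(\GLK)$-module $\nuIquot$ is dualizable, and its dual is $\bigt{\nuIquot}(1)$ regarded as a right $\QellI(\beta)(\GLK)$-module (the right action being the contragredient $g\leftrightarrow g^{-1}$ of the left one). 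From now on we work on $s$.

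First, dualizability. By \cite{deligne77}, $\nu$ is dualizable over $\Qell(\beta)$, and since $\IL$ acts unipotently on $\nu$, \cite[Lemma 5.2.5]{toenvezzosi22} shows that $\nu^\IL$ is dualizable over $\QellI(\beta)$. The averaging element $e := \frac{1}{|\GLK|}\sum_{g\in\GLK} g$ is a central idempotent of $\Qell(\beta)[\GLK]$, and the split sequence~\eqref{eqn: nu^IK to nu^IL to nuIquot} identifies $\nuIquot$ with the direct summand $(1-e)\,\nu^\IL$ of $\nu^\IL$; hence $\nuIquot$ is also dualizable over $\QellI(\beta)$. Finally, $\QellI(\beta)(\GLK) = (1-e)\,\QellI(\beta)[\GLK]$ is a direct factor of the group algebra of a finite group; as $\Qell$ has characteristic zero, $\QellI(\beta)[\GLK]$ --- hence its direct factor $\QellI(\beta)(\GLK)$ --- is separable over $\QellI(\beta)$, so any $\QellI(\beta)(\GLK)$-module that is perfect over $\QellI(\beta)$ is perfect, hence dualizable, over $\QellI(\beta)(\GLK)$. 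This applies to $\nuIquot$.

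It remains to identify the dual, which is the crux. Being a direct factor of the group algebra of a finite group, $\QellI(\beta)(\GLK)$ is a \emph{symmetric Frobenius} algebra over $\QellI(\beta)$; the attendant bimodule equivalence $\QellI(\beta)(\GLK) \simeq \ul\Hom_{\QellI(\beta)}\bigt{\QellI(\beta)(\GLK),\QellI(\beta)}$ yields, for every perfect $\QellI(\beta)(\GLK)$-module $M$, a natural identification of $\ul\Hom_{\QellI(\beta)(\GLK)}(M,\QellI(\beta)(\GLK))$ with $\ul\Hom_{\QellI(\beta)}(M,\QellI(\beta))$ carrying the contragredient $\GLK$-action --- precisely the kind of right module appearing in the statement. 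So it is enough to compute the $\QellI(\beta)$-linear dual $\ul\Hom_{\QellI(\beta)}(\nuIquot,\QellI(\beta))$. This is the $2$-periodic Grothendieck--Verdier computation already performed in the proof of Proposition~\ref{prop: rl(T)/Ql dualizable} for $\nu^\IK$ (where it gives $\ul\Hom_{\QellI(\beta)}(\nu^\IK,\QellI(\beta)) \simeq \nu^\IK(1)$): running the same argument --- through the $2$-periodic fundamental class, or equivalently through Lu--Zheng's duality $\bbD(\nu)\simeq\nu(1)^\tau$ together with the compatibility of the Iwasawa twist with inertia-invariants established in Step~2 of the proof of Lemma~\ref{lem: r(T) otimes_(r(B)) r(T) direct factor of r(T otimes_B T)} --- gives $\ul\Hom_{\QellI(\beta)}(\nu^\IL,\QellI(\beta)) \simeq \nu^\IL(1)$ compatibly with the residual $\GLK$-action, the Iwasawa twist absorbing exactly the passage to the contragredient. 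Passing to the $(1-e)$-summand yields $\ul\Hom_{\QellI(\beta)}(\nuIquot,\QellI(\beta)) \simeq \bigt{\nuIquot}(1)$, hence $\ul\Hom_{\QellI(\beta)(\GLK)}(\nuIquot,\QellI(\beta)(\GLK)) \simeq \bigt{\nuIquot}(1) = \rl_S(\sU)(1)$ as a right $\QellI(\beta)(\GLK)$-module, which is the assertion.

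The hard part is this last step, and within it the equivariance bookkeeping. The Grothendieck--Verdier self-duality of inertia-invariant vanishing cohomology holds on the nose only after taking $\uI$-invariants --- before that it is off by an Iwasawa twist --- so one must check with care that these twists cancel against the contragredient introduced by the Frobenius structure of $\QellI(\beta)(\GLK)$, and that the idempotent splitting~\eqref{eqn: nu^IK to nu^IL to nuIquot} interacts with all of this as expected. A secondary, and essentially routine, point is to confirm that the algebra and module equivalences of Propositions~\ref{prop: rl(C)} and~\ref{prop: rl(C) acts on rl(U)} are compatible with the idempotent $e$ and with the symmetric Frobenius structure, so that the computation genuinely takes place over $\QellI(\beta)(\GLK)$.
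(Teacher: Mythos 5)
Your proposal is correct and follows essentially the same route as the paper: reduce via Propositions \ref{prop: rl(C)}, \ref{prop: rl(U)} and \ref{prop: rl(C) acts on rl(U)} to the module $\nuIquot$ over $\QellIGredbeta$, obtain dualizability from the finite-group-in-characteristic-zero situation (the paper packages your separability/Frobenius argument as the explicit averaging duality datum of Appendix \ref{appendix: traces over group algebras}), and identify the underlying $\QellI(\beta)$-linear dual by the $2$-periodic Grothendieck--Verdier computation, with the Tate twist $(1)=[-2]$ accounting for the shift. The only cosmetic difference is that the paper computes $\bbD_{X_s}(\nu^{\IL}[-1])\simeq\nu^{\IL}[-2]$ directly from the cartesian square relating $i_{X'}^!\Ql{,X'}(\beta)$ and $i_{X'}^*\omega_{X'}(\beta)$ rather than via Lu--Zheng and the Iwasawa twist, but your alternative is equivalent.
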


\begin{proof}
Recall the isomorphisms $\rl_S(\sU)\simeq \nuIquot$ and $\rl_S(\sC)\simeq \QellI(\beta)(\GLK)$, compatible with the natural actions on both sides (Proposition \ref{prop: rl(C) acts on rl(U)}).

Since $\nuIquot$ is a direct summand of $\nu^\IL$ as a $\QellI(\beta)$-module, the dualizability claim follows as in the previous proof.

The results of Appendix \ref{appendix: traces over group algebras} imply that $\nuIquot$ is a dualizable left $\QellI(\beta)(\GLK)$-module.

Moreover, we can explicitly compute its dual.

Consider the commutative square
\begin{equation*}
  \begin{tikzpicture}[scale=1.5]
    \node (UL) at (0,1) {$\Ql{,X_s}(\beta)$};
    \node (UR) at (2,1) {$i_{X'}^!\Ql{,X'}(\beta)$};
    \node (LL) at (0,0) {$i_{X'}^*\omega_{X'}(\beta)$};
    \node (LR) at (2,0) {$\omega_{X_s}(\beta).$};
    \draw[->] (UL) to node[above] {${}$} (UR);
    \draw[->] (UL) to node[left] {${}$} (LL);
    \draw[->] (UR) to node[right] {${}$} (LR);
    \draw[->] (LL) to node[above] {${}$} (LR);
  \end{tikzpicture}
\end{equation*}
This is cartesian, as
$$
  \coFib \bigt{\Ql{,X_s}(\beta)\to i_{X'}^*\omega_{X'}(\beta)}
  \simeq
  i_{X'}^*\omega^{\sg}_{X'}
  \simeq
  i_{X'}^!\omega^{\sg}_{X'}
  \simeq
  \coFib \bigt{i_{X'}^!\Ql{,X'}(\beta)\to \omega_{X_s}(\beta)}.
$$
We deduce that
$$
  \coFib \bigt{\Ql{,X_s}(\beta)\to i_{X'}^!\Ql{,X'}(\beta)}
  \simeq
  \coFib \bigt{i_{X'}^*\omega_{,X'}(\beta)\to \omega_{X_s}(\beta)}
  \simeq 
  \nu^\IL[-1].
$$
Furthermore, we have that 
$$
  \bbD_{X_s}\bigt{\Ql{,X_s}(\beta)\to i_{X'}^!\Ql{,X'}(\beta)}
  \simeq
  \bigt{i_{X'}^*\omega_{,X'}(\beta)\to \omega_{X_s}(\beta)},
$$
so that 
$$
  \bbD_{X_s}(\nu^\IL[-1])
  \simeq
  \nu^\IL[-2].
$$
Proceeding as in the proof of Proposition \ref{prop: rl(T)/Ql dualizable}, we obtain that
$$
  \Hom_{\QellI(\beta)}\bigt{\nu^\IL[-1],\QellI(\beta)}
  \simeq
  \nu^\IL[-1].
$$
Then it is easy to conclude that
$$
  \Hom_{\QellI(\beta)}\bigt{\nuIquot,\QellI(\beta)}
  \simeq
  \nuIquot[-2]
  \simeq
  \nuIquot(1).
$$
\end{proof}

\sssec{}
We conclude this section with an observation that will be useful later.

\begin{prop}\label{prop: r(U) otimes_(r(C)) r(U) = r(U) otimes_(r(C)) r(U)(1)}
There is a cartesian square
\begin{equation*}
  \begin{tikzpicture}[scale=1.5]
    \node (UL) at (0,1) {$\nu^\IK \otimes_{\QellI(\beta)}\nu^\IK(1)$};
    \node (UR) at (2,1) {$\nu^\IK \otimes_{\QellI(\beta)}\nu^\IK$};
    \node (LL) at (0,0) {$(\nu \otimes \nu)^\IK(1)$};
    \node (LR) at (2,0) {$(\nu \otimes \nu)^\IK.$};
    \draw[->] (UL) to node[above] {${}$} (UR);
    \draw[->] (UL) to node[left] {${}$} (LL);
    \draw[->] (UR) to node[right] {${}$} (LR);
    \draw[->] (LL) to node[above] {${}$} (LR);
  \end{tikzpicture}
\end{equation*}
Moreover, it induces an isomorphism
$$
  \upsilon:
  \bigt{\nuIquot} \otimes_{\QellIGredbeta} \bigt{\nuIquot}(1)
  \xto{\simeq}
  \bigt{\nuIquot} \otimes_{\QellIGredbeta} \bigt{\nuIquot}.
$$
\end{prop}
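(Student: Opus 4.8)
The plan is to realise this square as a direct summand of (a Tate twist of) the commutative square that governs the lax--monoidality gap of $\rl_S$, namely the one attached to the split decomposition $\nu^\IL \simeq \nu^\IK \oplus \bigt{\nuIquot}$ of \eqref{eqn: nu^IK to nu^IL to nuIquot}, and then to read off $\upsilon$ on the complementary summand.

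First, I would pin down the four arrows. The two vertical arrows are the canonical comparison morphism of \eqref{eqn: can morphism (nu circledast nu)^IK to (nu otimes nu)^IK} precomposed with $\mu_{\sT/\sB}$, i.e.\ the map $\nu^\IK\otimes_{\QellI(\beta)}\nu^\IK\to(\nu\otimes\nu)^\IK$, together with its Tate twist; by Step~4 of the proof of Lemma~\ref{lem: r(T) otimes_(r(B)) r(T) direct factor of r(T otimes_B T)} these are split monomorphisms of $\QellI(\beta)$--modules, compatibly with Tate twists, with complementary summand an object $\ccC$ which in the notation of \emph{loc.\ cit.}\ is (a Tate twist of) $\End_{\QellIGbeta}(\nuIquot)$. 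The bottom horizontal arrow is the morphism $(\nu\otimes\nu)^\IK(1)\to(\nu\otimes\nu)^\IK$ exhibited in Step~1 of that same proof, whose cofibre is $(\nu^\IL\otimes\nu^\IL)^{\GLK}(1)[1]$, and the top horizontal arrow is the morphism it induces on the distinguished summand; naturality gives commutativity of the square.

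Next I would observe that the square is cartesian if and only if the induced map on horizontal cofibres is an equivalence, and that by the direct--sum decomposition above this map splits as an evident equivalence on the $\nu^\IK\otimes_{\QellI(\beta)}\nu^\IK$--factor together with the map $\ccC(1)\to\ccC$ cut out by the bottom arrow. Thus \emph{both} assertions of the proposition reduce to the single claim that $\ccC(1)\to\ccC$ is an equivalence. Moreover, by \eqref{eqn:auxiliar-for-nu} together with Proposition~\ref{prop: r(U) self dual over r(C)} --- which gives the equivalence $\nuIquot^\vee\simeq\nuIquot(1)$ of $\QellIGredbeta$--modules --- one rewrites $\ccC\simeq\End_{\QellIGbeta}(\nuIquot)\simeq\End_{\QellIGredbeta}(\nuIquot)$, up to a Tate twist, as $\nuIquot\otimes_{\QellIGredbeta}\nuIquot$, and under this identification the map $\ccC(1)\to\ccC$ is precisely the arrow $\upsilon$. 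Hence proving it an equivalence simultaneously yields cartesianness and produces $\upsilon$.

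The last claim is the genuine content, and I expect it to be the main obstacle. Although the morphism $(\nu\otimes\nu)^\IK(1)\to(\nu\otimes\nu)^\IK$ is far from an equivalence --- its cofibre being $(\nu^\IL\otimes\nu^\IL)^{\GLK}(1)[1]\neq 0$ --- the point is that all of this cofibre is absorbed by the $\nu^\IK\otimes_{\QellI(\beta)}\nu^\IK$--summand, so nothing survives on $\ccC$. To prove this I would use that $\nuIquot$ is a direct summand of $\nu^\IL$ as a $\QellI(\beta)$--module (split sequence \eqref{eqn: nu^IK to nu^IL to nuIquot}), that $\nu^\IL$ is dualizable over $\QellI(\beta)$ because \emph{the whole} inertia $\IL$ --- not merely $\IK$ --- acts unipotently (Proposition~\ref{prop: rl(T)/Ql dualizable} and \cite[Lemma~5.2.5]{toenvezzosi22}), and the self--duality up to Tate twist of $\rl_S(\sU)$ over $\rl_S(\sC)$ (Proposition~\ref{prop: r(U) self dual over r(C)}); the upshot should be that, on the $\nuIquot$--summand, the fibre--cofibre sequence of Step~1 of the proof of Lemma~\ref{lem: r(T) otimes_(r(B)) r(T) direct factor of r(T otimes_B T)} degenerates, its third term contributing only through the trivial--$\GLK$--isotypic part. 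Everything else is bookkeeping with the summand decompositions already in place. (Note that although the Tate twist is canonically trivial on $\Shv(S)$, the residue field being algebraically closed, the morphisms at hand are not the Bott isomorphism, so this observation does not by itself shorten the argument.)
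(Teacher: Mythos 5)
Your reduction of the two assertions to a single claim is correct: in a stable $\infty$-category the square is cartesian if and only if the induced map on the cofibers of the \emph{vertical} arrows is an equivalence, and by \eqref{eqn:decomp I-invariant nu-otimes-nu} those cofibers are $\bigt{\nuIquot}\otimes_{\QellIGredbeta}\bigt{\nuIquot}(1)$ and $\bigt{\nuIquot}\otimes_{\QellIGredbeta}\bigt{\nuIquot}$, so the map in question is exactly $\upsilon$. (Small slip: it is the map on vertical cofibers, not on horizontal cofibers, that splits along the summand decomposition; the conclusion of the reduction is unaffected.) The problem is that you stop precisely where the proof must begin. You flag the equivalence $\ccC(1)\to\ccC$ as ``the main obstacle'' and offer only the expectation that the fibre--cofibre sequence of Step~1 of Lemma \ref{lem: r(T) otimes_(r(B)) r(T) direct factor of r(T otimes_B T)} ``degenerates'' on the $\nuIquot$-summand; nothing you cite establishes this. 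In particular, Proposition \ref{prop: r(U) self dual over r(C)} only provides an \emph{abstract} identification $\ul\Hom_{\QellI(\beta)}\bigt{\nuIquot,\QellI(\beta)}\simeq\bigt{\nuIquot}(1)$; it does not show that the \emph{specific} map $\upsilon$ induced by the square is invertible (an object can be abstractly isomorphic to its Tate twist while a given canonical map between them fails to be an equivalence). So the proposal is a correct reformulation of the statement rather than a proof of it.

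The missing step is the content of the paper's argument, which computes the fibres of the two \emph{horizontal} arrows and matches them. The fibre of the top arrow is $\nu^\IK\otimes_{\Qell(\beta)}\nu^\IK(1)$, read off from the fibre--cofibre sequence induced by the multiplication $\QellI\otimes\QellI\to\QellI$. For the bottom arrow one invokes the Iwasawa-twist sequence $\nu^\IK[-2]\to\nu(1)^\tau\to\nu$ of \cite[Lemma 2.9]{luzheng19}: tensoring with $\nu$ and applying the exact functor $(-)^\IK$ identifies $(\nu\otimes\nu)^\IK(1)\to(\nu\otimes\nu)^\IK$ with $\bigt{\nu(1)^\tau\otimes\nu}^\IK\to\bigt{\nu\otimes\nu}^\IK$, whose fibre is $\bigt{\nu^\IK[-2]\otimes\nu}^\IK\simeq\nu^\IK\otimes\nu^\IK(1)$. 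The two fibres agree and the vertical arrows identify them, which gives cartesianness and hence $\upsilon$ in one stroke. Some version of this computation --- or an actual proof that the cofibre of the bottom arrow is entirely carried by the $\nu^\IK\otimes_{\QellI(\beta)}\nu^\IK$-summand --- is indispensable and is absent from your write-up.
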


\begin{proof}
The morphism $\nu^\IK \otimes_{\QellI(\beta)}\nu^\IK(1)\to \nu^\IK \otimes_{\QellI(\beta)}\nu^\IK$ is the one appearing in the fiber-cofiber sequence
$$
  \nu^\IK \otimes \nu^\IK(1)
  \to
  \nu^\IK \otimes_{\QellI(\beta)}\nu^\IK(1)
  \to
  \nu^\IK \otimes_{\QellI(\beta)}\nu^\IK
$$
induced by the multiplication $\QellI \otimes \QellI \to \QellI$.
In the first term, the tensor product is taken over $\Qell(\beta)$.

Next, we write the lower horizontal morphism in a fiber cofiber-sequence as well.

By \cite[Lemma 2.9]{luzheng19}, there is a fiber-cofiber sequence
$$
  \nu^\IK[-2]
  \to
  \nu(1)^\tau
  \to
  \nu,
$$
which we tensor with $\nu$ (over $\Qell(\beta)$):
$$
  \nu^\IK[-2]\otimes \nu
  \to
  \nu(1)^\tau \otimes \nu
  \to
  \nu \otimes \nu.
$$
Next we apply $(-)^\IK$ and obtain the fiber-cofiber sequence
$$
  \bigt{\nu^\IK[-2]\otimes \nu}^\IK
  \to
  \bigt{\nu(1)^\tau \otimes \nu}^\IK
  \to
  \bigt{\nu \otimes \nu}^\IK.
$$
The morphism $(\nu \otimes \nu)^\IK(1)\to (\nu \otimes \nu)^\IK$ identifies with the morphism $\bigt{\nu(1)^\tau \otimes \nu}^\IK \to \bigt{\nu \otimes \nu}^\IK$ above under the equivalence
$$
  \bigt{\nu(1)^\tau \otimes \nu}^\IK
  \simeq 
  (\nu \otimes \nu)^\IK(1).
$$
Moreover, notice that
$$
  \bigt{\nu^\IK[-2]\otimes \nu}^\IK
  \simeq
  \nu^\IK[-2] \otimes \nu^\IK
  \simeq
  \nu^\IK \otimes \nu^\IK(1)
$$
and we obtain that the map
$$
  \coFib \bigt{\nu^\IK \otimes_{\QellI(\beta)}\nu^\IK(1)\to \nu^\IK \otimes_{\QellI(\beta)}\nu^\IK}
  \to
  \coFib \Bigt{\bigt{\nu(1)^\tau \otimes_{\Qell(\beta)} \nu}^\IK \to \bigt{\nu \otimes_{\Qell(\beta)} \nu}^\IK}
$$
is an equivalence. 
In particular, the square in the statement is cartesian.

The second statement is obtained by looking at the induced equivalence between the cofibers of the vertical maps.
\end{proof}

\ssec{A key split fiber-cofiber sequence}

In this section, we prove that the $\ell$-adic realization of
$$
\sT^{\op}\otimes_{\sB} \sT
\simeq 
\Sing(X \times_S X)
\simeq 
\sU^{\op}\otimes_{\sC} \sU
$$
canonically splits as a direct sum in the following way:
\begin{equation} \label{eqn:magic direct-sum}
\rl_S\bigt{\Sing(X \times_S X)}
\simeq 
\bigt{ 
\rl_S(\sT)\otimes_{\rl_S(\sB)}\rl_S(\sT)
}
\oplus
\bigt{
\rl_S(\sU)\otimes_{\rl_S(\sC)}\rl_S(\sU)}
.    
\end{equation}
This is an essential step of our strategy: in \cite{beraldopippi22}, we explained how to recover $\Bl(X/S)$ from the left-hand side (in the situations of Theorem \ref{mainthm: pure char} and Theorem \ref{mainthm: hypersurf}). We will show later that the two terms of the right-hand side give the two pieces of the total dimension.

\sssec{}

To begin, we compute explicitly the term $\bigt{\rl_S(\sU)\otimes_{\rl_S(\sC)}\rl_S(\sU)}$.
\begin{eqnarray}
\nonumber
\bigt{\nuIquot} \otimes_{\QellIGredbeta}\bigt{\nuIquot}
& \simeq &
\bigt{\nuIquot} \otimes_{\QellIGbeta} \nu^{\IL} \\
\nonumber
& \simeq &
\Bigt{\coFib\bigt{\nu^{\IK} \to \nu^{\IL}} } \otimes_{\QellIGbeta} \nu^{\IL} \\
\nonumber
& \simeq &
\coFib
\Bigt{
\bigt{\nu^{\IK} \otimes_{\QellIGbeta} \nu^{\IL} 
}
\to
\bigt{
\nu^{\IL} \otimes_{\QellIGbeta} \nu^{\IL}
 } } \\
 \nonumber
 & \simeq &
\coFib
\Bigt{
\bigt{
\nu^{\IK} \otimes_{\QellI(\beta)} \nu^{\IK}
} 
\to
\bigt{
\nu^{\IL} \otimes_{\QellIGbeta} \nu^{\IL}
}  
}
 \\
  & \simeq &
  \coFib
  \Bigt{
\bigt{ \nu^{\IK} \otimes_{\QellI(\beta)} \nu^{\IK}} 
\to
(\nu \otimes_{\Qell(\beta)} \nu)^{\IK}
}.
\end{eqnarray}
Notice that in the last equivalence we have used that $\IL$ acts unipotently on $\nu_{X/S}$.

The morphism $\nu^\IK \to \nu^\IL$ identifies with the canonical inclusion under the equivalence $\nu^\IL \simeq \nu^\IK \oplus \nuIquot$. Hence, we have constructed an isomorphism
\begin{equation} \label{eqn:decomp I-invariant nu-otimes-nu}
\bigt{\nu \otimes_{\Qell(\beta)}\nu}^{\IK}
\simeq
 \Bigt{ 
 \nu^{\IK} \otimes_{\QellI(\beta)} \nu^{\IK}
 }
\oplus
\Bigt{
\bigt{\nuIquot} \otimes_{\QellIGredbeta} \bigt{\nuIquot}}.
\end{equation}

\sssec{}

We now use the computations of Propositions \ref{prop: rl(C)} and \ref{prop: rl(U)} to rewrite the right-hand side of \eqref{eqn:decomp I-invariant nu-otimes-nu}. We obtain
\begin{equation}\label{eqn: splitting I_K-invariant external tensor product}
(i_S)_*\bigt{\nu \otimes_{\Qell(\beta)}\nu}^{\IK}
\simeq
\bigt{
\rl_S(\sT)
\otimes_{\rl_s(\sB)}
\rl_S(\sT)
}(-1)
\oplus
\bigt{
\rl_S(\sU)
 \otimes_{\rl_s(\sC)}
 \rl_S(\sU)
 }.
\end{equation}
In particular, we have an arrow
$$
\pr{\sU}:
(i_S)_*\bigt{\nu \otimes_{\Qell(\beta)} \nu}^{\IK}
\longto
\rl_S(\sU)
 \usotimes_{\rl_S(\sC)}
 \rl_S(\sU).
$$

The following observation is crucial.

\begin{prop}\label{prop: key splif fib-cofib sequence}
There is a split fiber-cofiber sequence
$$
  \rl_S(\sT)\otimes_{\rl_S(\sB)}\rl_S(\sT)
  \xto{\mu_{\sT/\sB}}
  \rl_S(\sT^{\op}\otimes_{\sB}\sT)
  \xto{\pi_\sU} 
  \rl_S(\sU)\otimes_{\rl_S(\sC)}\rl_S(\sU).
$$
\end{prop}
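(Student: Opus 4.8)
\emph{Plan.} The plan is to reduce the statement to Lemma \ref{lem: r(T) otimes_(r(B)) r(T) direct factor of r(T otimes_B T)}, which already exhibits $\mu_{\sT/\sB}$ as a split monomorphism with an explicit retraction $\pi_{\sT}$. In a stable $\infty$-category, a split monomorphism $f: A \to B$ automatically yields a split fiber-cofiber sequence $A \xto{f} B \to \coFib(f)$, together with a splitting $B \simeq A \oplus \coFib(f)$ whose second projection is the map $B \to \coFib(f)$. Hence the whole content of the proposition is the identification
$$
\coFib(\mu_{\sT/\sB}) \simeq \rl_S(\sU)\otimes_{\rl_S(\sC)}\rl_S(\sU),
$$
after which $\pi_{\sU}$ is simply \emph{defined} to be the canonical projection onto the cofiber composed with this identification, and the "split" part of the conclusion comes for free.

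To compute $\coFib(\mu_{\sT/\sB})$ I would pass to the $\ell$-adic descriptions already established: $\rl_S(\sT)\otimes_{\rl_S(\sB)}\rl_S(\sT) \simeq (i_S)_*\bigt{\nu^{\IK}\otimes_{\QellI(\beta)}\nu^{\IK}(1)}$ by Theorem \ref{thm: mainthm brtv18}, and $\rl_S(\sT^{\op}\otimes_{\sB}\sT) \simeq (i_S)_*(\nu\circledast\nu)^{\IK}(1)$ by Theorem \ref{thm: I-invariant Thom-Sebastiani}. By Steps 3 and 4 of the proof of Lemma \ref{lem: r(T) otimes_(r(B)) r(T) direct factor of r(T otimes_B T)}, under these identifications and after post-composing with the canonical comparison morphism \eqref{eqn: can morphism (nu circledast nu)^IK to (nu otimes nu)^IK}, the map $\mu_{\sT/\sB}$ becomes the lax-monoidal inclusion $\nu^{\IK}\otimes_{\QellI(\beta)}\nu^{\IK}(1) \to (\nu\otimes\nu)^{\IK}(1)$, while $\pi_{\sT}$ is realized by composing \eqref{eqn: can morphism (nu circledast nu)^IK to (nu otimes nu)^IK} with the projection onto the first summand of the splitting \eqref{eqn: splitting I_K-invariant external tensor product}. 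Feeding this into the octahedron attached to the composite of $\mu_{\sT/\sB}$ with \eqref{eqn: can morphism (nu circledast nu)^IK to (nu otimes nu)^IK}, one sees that $\coFib(\mu_{\sT/\sB})$ agrees with the cofiber of that lax-monoidal inclusion. By the decomposition \eqref{eqn:decomp I-invariant nu-otimes-nu}, this cofiber is $\bigt{\nuIquot}\otimes_{\QellIGredbeta}\bigt{\nuIquot}(1)$; by Proposition \ref{prop: r(U) otimes_(r(C)) r(U) = r(U) otimes_(r(C)) r(U)(1)} the Tate twist is irrelevant, so it equals $\bigt{\nuIquot}\otimes_{\QellIGredbeta}\bigt{\nuIquot}$, which is identified with $\rl_S(\sU)\otimes_{\rl_S(\sC)}\rl_S(\sU)$ by Propositions \ref{prop: rl(C)}, \ref{prop: rl(U)} and \ref{prop: rl(C) acts on rl(U)} (the last one supplying the compatibility with the module structures). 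Applying $(i_S)_*$ concludes.

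The main obstacle is bookkeeping: keeping track of the Tate twists, and — crucially — verifying that passing through the canonical comparison \eqref{eqn: can morphism (nu circledast nu)^IK to (nu otimes nu)^IK} does not change the relevant cofiber, i.e.\ that the octahedron built from $\mu_{\sT/\sB}$ and \eqref{eqn: can morphism (nu circledast nu)^IK to (nu otimes nu)^IK} has vanishing third term on the summand under consideration. This amounts to matching the defining triangle of the $\IK$-invariant convolution product $\nu\circledast\nu$ against the octahedron used in the proof of Theorem \ref{thm: I-invariant Thom-Sebastiani}, and to checking the compatibility of the two direct-sum decompositions involved (the one coming from the split monomorphism $\mu_{\sT/\sB}$ and the one coming from \eqref{eqn:decomp I-invariant nu-otimes-nu}). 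Since most of these compatibilities were already verified while proving Lemma \ref{lem: r(T) otimes_(r(B)) r(T) direct factor of r(T otimes_B T)}, I expect the proof to consist mainly in assembling those computations rather than in performing genuinely new ones.
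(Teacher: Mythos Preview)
Your proposal is correct and follows essentially the same approach as the paper. The paper organizes the argument as a $3\times 3$ diagram of fiber-cofiber sequences (with a cartesian top-right square) rather than phrasing it as an octahedron, and it defines $\pi_\sU$ explicitly as the composite of the canonical map $\rl_S(\sT^{\op}\otimes_\sB\sT)\to (i_S)_*(\nu\otimes\nu)^{\IK}$ with the projection $\pr{\sU}$ from \eqref{eqn: splitting I_K-invariant external tensor product} rather than abstractly as ``projection onto the cofiber''; but the underlying computation --- identifying $\coFib(\mu_{\sT/\sB})$ via \eqref{eqn:decomp I-invariant nu-otimes-nu} and Proposition \ref{prop: r(U) otimes_(r(C)) r(U) = r(U) otimes_(r(C)) r(U)(1)}, then invoking Lemma \ref{lem: r(T) otimes_(r(B)) r(T) direct factor of r(T otimes_B T)} for the splitting --- is the same.
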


The projection induced by the above splitting
$$
  \rl_S(\sT^\op \otimes_\sB \sT) 
  \to
  \rl_S(\sT)\otimes_{\rl_S(\sB)}\rl_S(\sT)
$$
is the morphism $\pi_{\sT}$ considered in Section \ref{sssec: introduction pi_T}.

\begin{proof}
By Theorem \ref{thm: I-invariant Thom-Sebastiani}, we have a canonical fiber-cofiber sequence
$$
  (i_S)_*\bigt{\nu^\IK \otimes_{\Qell(\beta)}\nu}^\IK(1)
  \to
  (i_S)_*\bigt{\nu \circledast \nu}^\IK(1)
  \simeq
  \rl_S(\sT^{\op}\otimes_{\sB}\sT)
  \xto{\on{can}}
  \bigt{\nu \otimes \nu}^\IK.
$$
Also, by (\ref{eqn: splitting I_K-invariant external tensor product}), we have a split fiber-cofiber sequence
$$
  (i_S)_*\bigt{\nu^\IK \otimes_{\QellI(\beta)}\nu^\IK}
  \to
  (i_S)_*\bigt{\nu \otimes \nu}^\IK
  \xto{\pr{\sU}} 
  \rl_S(\sU)\otimes_{\rl_S(\sC)}\rl_S(\sU).
$$

Next, observe that we have a commutative triangle  
\begin{equation*}
    \begin{tikzpicture}[scale=1.5]
      \node (Lu) at (0,1) {$\rl_S(\sT)\otimes_{\rl_S(A)}\rl_S(\sT)$};
      \node (Ru) at (3,1) {$\rl_S(\sT)\otimes_{\rl_S(\sB)}\rl_S(\sT)$};
      \node (Rd) at (3,0) {$\rl_S\bigt{\sT^\op \otimes_\sB \sT}$};
      \draw[->] (Lu) to node[above] {$ $} (Ru);
      \draw[->] (Lu) to node[left] {${}$} (Rd);
      \draw[->] (Ru) to node[right] {$\mu_{\sT/\sB}$} (Rd);
    \end{tikzpicture}
\end{equation*}
where the diagonal arrow corresponds to the canonical map 
$$
  \nu^\IK \otimes_{\Qell(\beta)}\nu^\IK(1)
  \to
  \bigt{\nu \circledast \nu}^\IK(1).
$$
Notice that the map
$$
\rl_S(\sT)\otimes_{\rl_S(A)}\rl_S(\sT)
\to
\rl_S(\sT)\otimes_{\rl_S(\sB)}\rl_S(\sT)
$$
is induced by the multiplication map, which sits in a split fiber-cofiber sequence
$$
  \rl_S(\sB)\otimes_{\rl_S(A)}\rl_S(\sB)
  \xto{\on{mult.}}
  \rl_S(\sB)
  \to
  \rl_S(\sB)(-1).
$$
Putting all pieces together, we get the following commutative diagram
\begin{equation*}
    \begin{tikzpicture}[scale=1.5]
      \node (Lu) at (0,1) {$\rl_S(\sT)\otimes_{\rl_S(A)}\rl_S(\sT)$};
      \node (Cu) at (3,1) {$\rl_S(\sT)\otimes_{\rl_S(\sB)}\rl_S(\sT)$};
      \node (Ru) at (6,1) {$\rl_S(\sT)\otimes_{\rl_S(\sB)}\rl_S(\sT)(-1)$};
      \node (Lc) at (0,0) {$\rl_S(\sT)\otimes_{\rl_S(A)}\rl_S(\sT)$};
      \node (Cc) at (3,0) {$\rl_S\bigt{\sT^\op \otimes_\sB \sT}$};
      \node (Rc) at (6,0) {$(i_S)_*\bigt{\nu \otimes \nu}^\IK$};
      \node (Cd) at (3,-1) {$\rl_S(\sU)\otimes_{\rl_S(\sC)}\rl_S(\sU)$};
      \node (Rd) at (6,-1) {$\rl_S(\sU)\otimes_{\rl_S(\sC)}\rl_S(\sU),$};
      \draw[->] (Lu) to node[above] {$ $} (Cu);
      \draw[->] (Cu) to node[above] {$ $} (Ru);
      \draw[->] (Lc) to node[above] {$ $} (Cc);
      \draw[->] (Cc) to node[above] {$\on{can}$} (Rc);
      \draw[->] (Cd) to node[above] {$\id$} (Rd);
      \draw[->] (Lu) to node[left] {${}$} (Lc);
      \draw[->] (Cu) to node[right] {$\mu_{\sT/\sB}$} (Cc);
      \draw[->] (Ru) to node[right] {$ $} (Rc);
      \draw[->] (Cc) to node[right] {$\pi_\sU$} (Cd);
      \draw[->] (Rc) to node[right] {$\pr{\sU}$} (Rd);
    \end{tikzpicture}
\end{equation*}
where the top right square is cartesian and all rows  and columns are fiber-cofiber sequences. 
The vertical fiber-cofiber sequence in the middle is the one referred to in the statement of the proposition.
It splits in view of Lemma \ref{lem: r(T) otimes_(r(B)) r(T) direct factor of r(T otimes_B T)}.
\end{proof}

\begin{rmk}\label{rmk: factorization pi_U}
Notice that the morphism
$$
  \rl_S(\sT^\op \otimes_{\sB}\sT)
  \xto{\pi_{\sU}}
  \rl_S(\sU)\otimes_{\rl_S(\sC)}\rl_S(\sU)
$$
factors as
$$
  \rl_S(\sT^\op \otimes_{\sB}\sT)
  \to
  \rl_S(\sU)\otimes_{\rl_S(\sC)}\rl_S(\sU)(1)
  \xto{\upsilon}
  \rl_S(\sU)\otimes_{\rl_S(\sC)}\rl_S(\sU),
$$
where $\upsilon$ is the equivalence of Proposition \ref{prop: r(U) otimes_(r(C)) r(U) = r(U) otimes_(r(C)) r(U)(1)}.
The first map identifies with the canonical morphism
$$
  \rl_S\bigt{\End_{\sC}(\sU)}
  \to
  \End_{\rl_S(\sC)}\bigt{\rl_S(\sU)}
$$
under the equivalences
$$
  \rl_S(\sT^\op \otimes_{\sB}\sT)
  \simeq
  \rl_S\bigt{\End_{\sC}(\sU)},
$$
$$
  \rl_S(\sU)\otimes_{\rl_S(\sC)}\rl_S(\sU)(1)
  \simeq
  \End_{\rl_S(\sC)}\bigt{\rl_S(\sU)},
$$
where the second equivalence follows from Proposition \ref{prop: r(U) self dual over r(C)}.
\end{rmk}

\ssec{Duality data for the \texorpdfstring{$\ell$}{l}-adic realizations of \texorpdfstring{$\sT$}{T} and \texorpdfstring{$\sU$}{U}} \label{ssec:duality-data for rl}

We now combine the material of the previous sections to show that the duality datum $(\coev,\ev)$ of $\sT$ over $\sB$ induces duality data for $\rl_S(\sT)$ over $\rl_S(\sB)$ and for $\rl_S(\sU)$ over $\rl_S(\sC)$.

\sssec{}

Let us write down the candidate evaluation for $\rl_S(\sT)$ as a $\bigt{\rl_S(\sB),\rl_S(A)}$-bimodule. Start with the evaluation at the categorical level, which is a $(\sB,\sB)$-linear dg-functor
$$
\ev: 
\sT \otimes_A \sT^\op
\to \sB.
$$
Applying $\rl_S$ and using its lax-monoidal structure, we get a $\bigt{\rl_S(\sB),\rl_S(\sB)}$-linear map
$$
\rl_S(\sT) \otimes_{\rl_S(A)} \rl_S(\sT^\op)
\xto{\mu_{\sT/A}}
\rl_S(\sT \otimes_A \sT^\op)
\xto{\rl_S(\ev_{\sT/\sB})}
\rl_S(\sB).
$$
Now observe that $\rl_S(\sT^\op) \simeq \rl_S(\sT)$. 
The resulting map
$$
\eps_\sT:
\rl_S(\sT) \otimes_{\rl_S(A)} \rl_S(\sT)
\to
\rl_S(\sB)
$$
is our candidate for the evaluation of the $\bigt{\rl_S(\sB),\rl_S(A)}$-bimodule $\rl_S(\sT)$.

\sssec{}
By the commutativity of $\rl_S(\sB)$, $\rl_S(\sT)$ is automatically a $\bigt{\rl_S(\sB),\rl_S(\sB)}$-bimodule. Moreover, $\eps_{\sT}$ factors as
$$
\rl_S(\sT) \otimes_{\rl_S(A)} \rl_S(\sT)
\to
\rl_S(\sT) \otimes_{\rl_S(\sB)} \rl_S(\sT)
\xto{\wt{\eps}_{\sT}}
\rl_S(\sB).
$$

We take
$$
\wt{\eps}_\sT: \rl_S(\sT) \otimes_{\rl_S(\sB)} \rl_S(\sT)
\to
\rl_S(\sB)
$$
as our candidate evaluation for the $\rl_S(\sB)$-module $\rl_S(\sT)$.

\sssec{} 

A parallel construction yields the map
$$
\eps_\sU: \rl_S(\sU) \otimes_{\rl_S(A)} \rl_S(\sU)
\to
\rl_S(\sC).
$$
Notice however that the algebra $\rl_S(\sC)$ is not commutative in general, so that this map does not factor through $\rl_S(\sU) \otimes_{\rl_S(\sC)} \rl_S(\sU)$.
Rather, we have an induced morphism
$$
  \eps_{\sU}^{\HH}:
  \rl_S(\sU) \otimes_{\rl_S(\sC)} \rl_S(\sU)
  \to
  \HH_*\bigt{\rl_S(\sC)/\rl_S(A)}.
$$

\sssec{}
Since $\rl_S(\sC)\simeq (i_S)_*\bigt{\QellI\otimes \Qell(\GLK)}(\beta)$ and $(i_S)_*\QellI(\beta)$ is a commutative algebra, $\rl_S(\sU)$ is automatically a $\bigt{\rl_S(\sC),(i_S)_*\QellI(\beta)}$-bimodule.
Then, $\eps_{\sU}$ factors as
$$
  \rl_S(\sU) \otimes_{\rl_S(A)} \rl_S(\sU)
  \to
  \rl_S(\sU) \otimes_{(i_S)_*\QellI(\beta)} \rl_S(\sU)
  \xto{\wt{\eps}_{\sU}}
  \rl_S(\sC).
$$
We take
$$
\wt{\eps}_\sU: \rl_S(\sU) \otimes_{(i_S)_*\QellI(\beta)} \rl_S(\sU)
\to
\rl_S(\sC)
$$
as our candidate evaluation for the $\bigt{\rl_S(\sC),(i_S)_*\QellI(\beta)}$-bimodule $\rl_S(\sU)$.
This map does not factor through $\rl_S(\sU) \otimes_{\rl_S(\sC)} \rl_S(\sU)$, but it induces a map
$$
  \wt{\eps}_{\sU}^{\HH}:
  \rl_S(\sU) \otimes_{\rl_S(\sC)} \rl_S(\sU)
  \to
  \HH_*\bigt{\rl_S(\sC)/(i_S)_*\QellI(\beta)}.
$$

\sssec{}\label{sssec: definition gamma_T gamma_U}

Now recall the isomorphism provided by Theorem \ref{thm: I-invariant Thom-Sebastiani}
$$
\rl_S (\sT^\op \otimes_\sB \sT) 
\simeq 
\rl_S \bigt{\Sing(X\x{S} X)}
\simeq
(i_S)_*\bigt{\nu \circledast \nu }^{\IK}(1).
$$

Using the coevaluation $\Perf(S) \to \sT^\op \otimes_\sB \sT$ (which equals the coevaluation $\Perf(S) \to \sU^\op \otimes_\sC \sU$) and its $\ell$-adic realization
$$
\rl_S(\Delta_X):
\rl_S \bigt{\Perf(S)} = \bbQ_{\ell,S}(\beta)
\to
\rl_S(\sT^\op \otimes_\sB \sT)
\simeq
(i_S)_*\bigt{\nu \circledast \nu }^{\IK}(1),
$$
we obtain the following maps:
$$
\gamma_\sT: 
\Ql{,S}(\beta)
\xto{\;\rl (\Delta_X)\;}
\bigt{\nu \circledast \nu }^{\IK}(1)
\xto{\;\;\pi_\sT\;\;}
\rl_S(\sT)
\usotimes_{\rl_S(\sB)}
\rl_S(\sT),
$$
$$
\gamma_\sU: 
\Ql{,S}(\beta)
\xto{\;\rl (\Delta_X)\;}
\bigt{\nu \circledast \nu }^{\IK}(1)
\xto{\;\;\pi_\sU\;\;}
\rl_S(\sU)
\usotimes_{\rl_S(\sC)}
\rl_S(\sU).
$$
These are the candidate coevaluations for the $\bigt{\rl_S(\sB),\rl_S(A)}$-bimodule $\rl_S(\sT)$ and the $\bigt{\rl_S(\sC),\rl_S(A)}$-bimodule $\rl_S(\sU)$ respectively.

\begin{thm}\label{thm: duality r(T)}

The pair $(\gamma_\sT, \epsilon_\sT)$ is a duality datum for the $\bigt{\rl_S(\sB),\rl_S(A)}$-bimodule $\rl_S(\sT)$.
\end{thm}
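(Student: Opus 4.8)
The plan is to verify the two triangle (zigzag) identities for the pair $(\gamma_\sT, \epsilon_\sT)$, exhibiting $\rl_S(\sT)$ as self-dual over $\rl_S(\sB)$. Since $\rl_S(\sT)$ is already known to be dualizable and self-dual over $\rl_S(\sB)$ by Proposition~\ref{prop: rl(T)/Ql dualizable}, the task really reduces to a \emph{compatibility} check: the abstractly available duality datum must be taken to be $(\gamma_\sT, \epsilon_\sT)$, equivalently the factorization $\wt{\epsilon}_\sT$ of $\epsilon_\sT$ through $\rl_S(\sT)\otimes_{\rl_S(\sB)}\rl_S(\sT)$ must be a perfect pairing with $\gamma_\sT$ the matching coevaluation. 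Concretely, I would verify that the composite
\[
\rl_S(\sT) \xto{\gamma_\sT \otimes \id} \rl_S(\sT)\otimes_{\rl_S(\sB)}\rl_S(\sT)\otimes_{\rl_S(A)}\rl_S(\sT) \xto{\id \otimes \epsilon_\sT} \rl_S(\sT)
\]
is homotopic to the identity (and symmetrically for the other identity).

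First I would unwind the definitions so the zigzag for $(\gamma_\sT,\epsilon_\sT)$ is expressed via the \emph{categorical} duality datum $(\coev_{\sT/\sB},\ev_{\sT/\sB})$ of Corollary~\ref{cor: T dualizable B module}. By construction $\epsilon_\sT = \rl_S(\ev_{\sT/\sB})\circ\mu_{\sT/A}$ and $\gamma_\sT = \pi_\sT\circ\rl_S(\coev_{\sT/\sB})$, where $\mu_{\sT/A}$ is a lax-monoidal structure map of $\rl_S$ and $\pi_\sT$ is the canonical retraction of Lemma~\ref{lem: r(T) otimes_(r(B)) r(T) direct factor of r(T otimes_B T)}. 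Applying the lax-monoidal (and unital, so that $\rl_S(\Perf(S))\simeq \Ql{,S}(\beta)$) functor $\rl_S$ to the categorical zigzag identity produces a commutative diagram identical to the desired one, except that $\rl_S(\sT^{\op}\otimes_{\sB}\sT)$ appears in place of its direct summand $\rl_S(\sT)\otimes_{\rl_S(\sB)}\rl_S(\sT)$, the passage between the two being mediated by $\mu_{\sT/\sB}$ and $\pi_\sT$. The retraction identity $\pi_\sT\circ\mu_{\sT/\sB}\simeq\id$ then lets one collapse the diagram onto that summand, \emph{provided} the realized evaluation tail (built from $\rl_S(\ev_{\sT/\sB})$) factors through $\mu_{\sT/\sB}$ followed by $\wt{\epsilon}_\sT$, i.e. is insensitive to the complementary $\rl_S(\sU)\otimes_{\rl_S(\sC)}\rl_S(\sU)$ summand of $\rl_S(\sT^{\op}\otimes_{\sB}\sT)$ from Proposition~\ref{prop: key splif fib-cofib sequence}.

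The heart of the matter is therefore this last compatibility, which I would establish by passing to explicit $\ell$-adic descriptions. Under the identifications $\rl_S(\sT)\simeq (i_S)_*\nu^{\IK}[-1]$, $\rl_S(\sB)\simeq (i_S)_*\QellI(\beta)$ of the remark following Theorem~\ref{thm: mainthm brtv18}, and $\rl_S(\sT^{\op}\otimes_{\sB}\sT)\simeq (i_S)_*(\nu\circledast\nu)^{\IK}(1)$ of Theorem~\ref{thm: I-invariant Thom-Sebastiani}, the map $\wt{\epsilon}_\sT$ becomes the pairing $\nu^{\IK}\otimes_{\QellI(\beta)}\nu^{\IK}\to\QellI(\beta)$ induced by the Grothendieck--Verdier self-duality $\bbD_{X_s}(\nu^{\IK})\simeq\nu^{\IK}(1)[-1]$ of~\eqref{eqn: Grothendieck dual of nu^IK[-1]} (equivalently, by the $2$-periodic fundamental class), and $\gamma_\sT$ becomes the component of $\rl_S(\Delta_X)$ in the first summand of the splitting~\eqref{eqn: splitting I_K-invariant external tensor product}. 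Granting these descriptions, the zigzag identity turns into a statement about the self-duality of $\nu^{\IK}$ being compatible with the Künneth morphism $\on{k}$ and with $\alpha_{X\times_S X}$, which one checks by tracing through the constructions in the proofs of Theorems~\ref{thm: mainthm brtv18} and~\ref{thm: I-invariant Thom-Sebastiani}; it ultimately rests on Poincaré duality in $\ell$-adic cohomology, exactly as did the categorical statement of Corollary~\ref{cor: T dualizable B module} in \cite{beraldopippi22}.

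The main obstacle is precisely this bookkeeping: one must verify that $\rl_S(\ev_{\sT/\sB})$ — a priori only a map out of $\rl_S(\sT\otimes_A\sT^{\op})$ — is compatible with the direct-sum decomposition of $\rl_S(\sT^{\op}\otimes_{\sB}\sT)$, so that the ``$\sU$-part'' of $\rl_S(\Delta_X)$ is annihilated by the evaluation tail, and that all Tate twists are correctly accounted for. Once this compatibility is in place, the triangle identities follow formally, and with them the theorem.
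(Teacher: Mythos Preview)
Your overall architecture is correct and matches the paper: set up the big diagram coming from the categorical zigzag identity for $(\coev_{\sT/\sB},\ev_{\sT/\sB})$ under the lax-monoidal functor $\rl_S$, observe that all pieces commute except the triangle involving $\gamma_\sT$, and repair this using the section/retraction pair $(\mu_{\sT/\sB},\pi_\sT)$. You have also correctly isolated the crux: one must show that the ``evaluation tail'' of the realized zigzag is compatible with $\pi_\sT$, i.e.\ that the square
\[
\begin{array}{ccc}
\rl_S(\sT)\otimes_{\rl_S(A)}\rl_S(\sT^\op\otimes_\sB\sT) & \xrightarrow{\ \id\otimes\pi_\sT\ } & \rl_S(\sT)\otimes_{\rl_S(A)}\rl_S(\sT)\otimes_{\rl_S(\sB)}\rl_S(\sT)\\[1ex]
\big\downarrow\mu & & \big\downarrow\epsilon_\sT\otimes\id\\[1ex]
\rl_S(\sT\otimes_A\sT^\op\otimes_\sB\sT) & \xrightarrow{\ \rl_S(\ev\otimes\id)\ } & \rl_S(\sT)
\end{array}
\]
commutes.

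Where you diverge from the paper is in \emph{how} you propose to check this square. You suggest unwinding everything into explicit $\ell$-adic sheaves (fundamental classes, K\"unneth morphism, Poincar\'e duality), which would require the Tate-twist bookkeeping you flag as an obstacle. The paper avoids all of that with a one-line formal argument: recall from \S\ref{sssec: introduction pi_T} that $\pi_\sT$ is \emph{defined} as the canonical map $\rl_S\bigl(\End_\sB(\sT)\bigr)\to\End_{\rl_S(\sB)}\bigl(\rl_S(\sT)\bigr)$ arising from the lax-monoidal structure of $\rl_S$. Under the identification $\sT^\op\otimes_\sB\sT\simeq\End_\sB(\sT)$, the functor $\ev_{\sT/\sB}\otimes\id$ becomes the \emph{action} map $\sT\otimes_A\End_\sB(\sT)\to\sT$, and likewise $\epsilon_\sT\otimes\id$ becomes the action of $\End_{\rl_S(\sB)}\bigl(\rl_S(\sT)\bigr)$ on $\rl_S(\sT)$. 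The square above is then nothing but the tautological compatibility of action maps under a lax-monoidal functor --- it commutes on the nose, with no sheaf-theoretic input required. So your plan is not wrong, but the explicit verification you anticipate is unnecessary once one remembers what $\pi_\sT$ actually is.
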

\begin{proof}
To reduce clutter, in the course of this proof, we use the notation $r := \rl_S$.
Recall that $\sT$ is dualizable as a left $\sB$-module, with dual the right $\sB$-module $\sT^{\op}$. In particular, the composition
$$
\sT \xto{\simeq}
\sT \otimes_A A
\xto{\id \otimes \coev}
\sT \otimes_A \sT^{\op}\otimes_{\sB}\sT
\xto{\ev \otimes \id} 
\sB \otimes_{\sB}\sT
\xto{\simeq}
\sT
$$
is the identity.
Since $r$ is lax-monoidal, we obtain the following diagram of $\ell$-adic sheaves:
\begin{small}
\begin{equation*}
  \begin{tikzpicture}[scale=1.5]
  
    \node (LLu) at (-4.8,1) {$r(\sT)$};
    \node (Lu) at (-3.2,1) {$r(\sT) \otimes_{r(A)} r(A)$};
    \node (Cu) at (0,1) {$r(\sT)\otimes_{r(A)} r(\sT^\op) \otimes_{r(\sB)} r(\sT)$};
    \node (Ru) at (3.2,1) {$r(\sB) \otimes_{r(\sB)} r(\sT)$};
    \node (RRu) at (4.8,1) {$r(\sT)$};
    
    \node (Lc) at (-1.6,-0.2) {$r(\sT)\otimes_{r(A)} r(\sT^\op \otimes_\sB \sT)$};
    \node (Rc) at (1.6,-0.2) {$r(\sT \otimes_A \sT^\op) \otimes_{r(\sB)} r(\sT)$};
    
    \node (LLd) at (-4.8,-1.2) {$r(\sT \otimes_A A)$};
    \node (Cd) at (0,-1.2) {$r(\sT\otimes_A \sT^\op \otimes_\sB \sT)$};
    \node (RRd) at (4.8,-1.2) {$r(\sB\otimes_\sB \sT)$.};

    \draw[->] (LLu) to node[above] {$\simeq$} (Lu) ;
    \draw[->] (Lu) to node[above] {$\id \otimes \gamma_\sT$} (Cu) ;
    \draw[->] (Cu) to node[above] {$\eps_\sT \otimes \id$} (Ru) ;
    \draw[->] (Ru) to node[above] {$\simeq$} (RRu) ;
    
    \draw[->] (Lu) to node[left] {$\id \otimes r(\coev)\;\;$} (Lc) ;
    \draw[->] (Cu) to node[left] {$\id \otimes \mu_{\sT/\sB}\;\;$} (Lc) ;
    \draw[->] (Cu) to node[right] {$\;\; \mu_{\sT/A}\otimes \id$} (Rc) ;
    \draw[->] (Rc) to node[right] {$\;\;r(\ev_{\sT})\otimes \id$} (Ru) ;
      
    \draw[->] (LLu) to node[right] {$\simeq$} (LLd) ;
    \draw[->] (RRd) to node[left] {$\simeq$} (RRu) ;
    
    \draw[->] (Lc) to node[left] {$\mu_{\sT,\sT^{\op}\otimes_{\sB}\sT} \;\;\;$} (Cd) ;
    \draw[->] (Rc) to node[right] {$\;\;\; \mu_{\sT \otimes_A \sT^{\op},\sT}$} (Cd) ;
    
    \draw[->] (LLd) to node[below] {$r(\id \otimes \coev)$} (Cd) ;
    \draw[->] (Cd) to node[below] {$r(\ev \otimes \id)$} (RRd) ;
  \end{tikzpicture}
\end{equation*}
\end{small}
Notice that the three squares and the right triangle in this diagram are commutative. However, the triangle
\begin{equation*}
  \begin{tikzpicture}[scale=1.5]
    \node (Lu) at (-2,1) {$r(\sT) \otimes_{r(A)} r(A)$};
    \node (Ru) at (2,1) {$r(\sT)\otimes_{r(A)} r(\sT^\op) \otimes_{r(\sB)} r(\sT)$};
    \node (Cd) at (0,0) {$r(\sT)\otimes_{r(A)} r(\sT^\op \otimes_\sB \sT).$};
    
    \draw[->] (Lu) to node[above] {$\id \otimes \gamma_{\sT}$} (Ru) ;
    \draw[->] (Lu) to node[left] {$\id \otimes r(\coev)\;\;$} (Cd) ;
    \draw[->] (Ru) to node[right] {$\;\;\; \id \otimes \mu_{\sT/\sB}$} (Cd) ;
    
  \end{tikzpicture}
\end{equation*}
is not commutative.
Consider the morphism
$$
  \id \otimes \pi_\sT:
  r(\sT)\otimes_{r(A)}r(\sT^\op \otimes_{\sB}\sT)
  \to
  r(\sT)\otimes_{r(A)} r(\sT^\op) \otimes_{r(\sB)} r(\sT).
$$
In order to conclude that 
$$
  (\epsilon_\sT\otimes \id)\circ (\id \circ \gamma_\sT)
  \sim
  \id,
$$
it suffices to prove that the diagram
\begin{equation}\label{eqn: fundamental comm square for (gamma_T,epsilon_T)}
  \begin{tikzpicture}[scale=1.5]
    \node (Lu) at (-2,1) {$r(\sT)\otimes_{r(A)}r(\sT^\op \otimes_{\sB}\sT)$};
    \node (Ru) at (2,1) {$r(\sT)\otimes_{r(A)} r(\sT^\op) \otimes_{r(\sB)} r(\sT)$};
    \node (Ld) at (-2,0) {$r(\sT\otimes_A \sT^\op \otimes_\sB \sT)$};
    \node (Rd) at (2,0) {$r(\sT)$};
    
    \draw[->] (Lu) to node[above] {$\id \otimes \pi_\sT$} (Ru) ;
    \draw[->] (Ld) to node[below] {$r(\ev \otimes \id)$} (Rd) ;
    
    \draw[->] (Lu) to node[left] {$\mu_{\sT, \sT^{\op}\otimes_{\sB}\sT}$} (Ld) ;
    \draw[->] (Ru) to node[right] {$\epsilon_\sT \otimes \id$} (Rd) ;
    
  \end{tikzpicture}
\end{equation}
is commutative.

Notice that since $\sT^\op$ is the $\sB$-dual of $\sT$,
$$
  \sT^\op \otimes_\sB \sT
  \simeq
  \End_\sB(\sT).
$$

Thus, in view of Proposition \ref{prop: rl(T)/Ql dualizable}, \eqref{eqn: fundamental comm square for (gamma_T,epsilon_T)} can be identified with
\begin{equation*}
  \begin{tikzpicture}[scale=1.5]
    \node (Lu) at (-2,1) {$r(\sT)\otimes_{r(A)}r(\End_\sB(\sT))$};
    \node (Ru) at (2,1) {$r(\sT)\otimes_{r(A)} \End_{r(\sB)}\bigt{r(\sT)}$};
    \node (Ld) at (-2,0) {$r(\sT\otimes_A \End_\sB(\sT))$};
    \node (Rd) at (2,0) {$r(\sT),$};
    
    \draw[->] (Lu) to node[above] {${}$} (Ru) ;
    \draw[->] (Ld) to node[below] {$r(\on{act})$} (Rd) ;
    
    \draw[->] (Lu) to node[left] {$\mu_{\sT, \End_{\sB}(\sT)}$} (Ld) ;
    \draw[->] (Ru) to node[right] {$\on{act}$} (Rd) ;
    
  \end{tikzpicture}
\end{equation*}
which is commutative.

The verification that
$$
  (\id \otimes \epsilon_\sT) \circ (\gamma_\sT \otimes \id)
  \sim
  \id
$$
is similar and left to the reader.

This proves that $r(\sT)$ is a self-dual $\bigt{r(\sB), r(A)}$-bimodule with duality datum provided by $(\gamma_\sT,\eps_\sT)$.
\end{proof}

\sssec{}
Notice that $\gamma_{\sT}:\Ql{,S}(\beta)\to \rl_S(\sT)\otimes_{\rl_S(\sB)}\rl_S(\sT)$ factors through $\Ql{,S}(\beta)\to \rl_S(\sB)$, so that we have a map
$$
  \wt{\gamma}_{\sT}:
  \rl_S(\sB)
  \to
  \rl_S(\sT)\otimes_{\rl_S(\sB)}\rl_S(\sT).
$$

\sssec{}

We next observe the following general fact.

Let $R\to R'$ be a morphism of commutative algebra objects in a symmetric monoidal $\oo$-category $\ccC^{\otimes}$. 
Assume that $C$ is an associative algebra in $\ccC^{\otimes}$ and let $R''=R'\otimes_RC$.
Let $M$ be a $(R'',R)$-bimodule.
Notice that $M$ is automatically a $(R'',R')$-bimodule.
Assume that $M$ is a dualizable $(R'',R)$-bimodule, with a specified duality datum
$$
  \ev: 
  M\otimes_R M^{\vee}
  \to
  S,
$$
$$
  \coev:
  R
  \to
  M^{\vee}\otimes_{R''}M.
$$
Notice that $M^{\vee}$ is automatically a $(R',R'')$-bimodule.

Since $\ev$ is $S\otimes_R S$-linear, it is in particular $R'\otimes_RR'$-linear. Then it factors through the $R'$-linear morphism.
$$
  \widetilde{\ev}: 
  M\otimes_{R'} M^{\vee}
  \to
  R''.
$$
Similarly, since $R'$ is commutative $M^{\vee}\otimes_{S}M$ is a $R'$-module, so that $\coev$ determines a morphism
$$
  \widetilde{\coev}:
  R'
  \to
  M^{\vee}\otimes_{R''}M.
$$

\begin{lem}\label{lem: (S,R) dualizable => (S,R') dualizable}
The pair $(\wt{\coev},\wt{\ev})$ provides a duality datum for the $(R'',R')$-bimodule $M$.
\end{lem}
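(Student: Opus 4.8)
The claim is essentially that ``dualizability as an $(R'',R)$-bimodule'' upgrades, for free, to ``dualizability as an $(R'',R')$-bimodule,'' once we remember that $R'$ is commutative so that the tensor $M \otimes_{R'} M^\vee$ (and $M^\vee \otimes_{R''} M$) make sense. The strategy is to verify the two triangular identities for $(\wt{\coev},\wt{\ev})$ directly, by tracing them back to the triangular identities satisfied by $(\coev,\ev)$, which hold by hypothesis.

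First I would record the factorizations explicitly. The evaluation $\ev\colon M \otimes_R M^\vee \to S$ is $S\otimes_R S$-linear; since $R' \to S$ is... wait, actually the target is $S$ here but the relevant base change lives over $R'$ via $R' \to R'' = R' \otimes_R C$ and via $R' \to R' \to S$ — the point is that $\ev$ is $R'\otimes_R R'$-linear (restricting along $R'\otimes_R R' \to S \otimes_R S$), hence descends to the $R'$-bilinear map $\wt\ev\colon M\otimes_{R'} M^\vee \to R''$ as stated (one checks the target is $R''$ rather than $S$ by inspecting where the $R'$-linearity forces the landing). Dually, because $R'$ is commutative, $M^\vee \otimes_S M$ acquires an $R'$-module structure and $\coev\colon R \to M^\vee \otimes_{R''} M$ promotes to $\wt{\coev}\colon R' \to M^\vee \otimes_{R''} M$ — here one uses that $M^\vee \otimes_{R''} M$ is already an $R'$-module (via the $(R',R'')$-bimodule structure on $M^\vee$) so that specifying $\wt{\coev}$ amounts to base-changing $\coev$ along $R \to R'$.

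Then I would check the first triangular identity: the composite
$$
M \xto{\simeq} M \otimes_{R'} R' \xto{\id \otimes \wt{\coev}} M \otimes_{R'} M^\vee \otimes_{R''} M \xto{\wt{\ev} \otimes \id} R'' \otimes_{R''} M \xto{\simeq} M
$$
is homotopic to $\id_M$. The key observation is that this composite fits into a commutative diagram whose other route is the original triangular identity composite
$$
M \xto{\simeq} M \otimes_R R \xto{\id \otimes \coev} M \otimes_R M^\vee \otimes_{R''} M \xto{\ev \otimes \id} S \otimes_R M \xto{\simeq} M,
$$
the vertical comparison maps being the canonical ``collapse'' morphisms $M \otimes_R (-) \to M \otimes_{R'} (-)$ induced by $R \to R'$ (which are compatible with $\coev \rightsquigarrow \wt{\coev}$ and $\ev \rightsquigarrow \wt{\ev}$ by the very construction of the factorizations). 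Since the original composite is the identity and the comparison maps on the outer columns $M \to M$ are the identity, the new composite is the identity too. The second triangular identity, involving $\wt{\coev} \otimes \id$ followed by $\id \otimes \wt{\ev}$, is verified symmetrically; it is routine enough to leave to the reader, as the authors will presumably do.

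\textbf{Main obstacle.} The only genuinely delicate point is keeping straight over which ring each tensor product is taken and checking that the factorizations through $R'$-balanced tensors are compatible with one another — i.e., that the square relating $M \otimes_R M^\vee \to S$ and $M \otimes_{R'} M^\vee \to R''$ (and its coevaluation counterpart) commutes on the nose in the relevant $\infty$-categorical sense. Once that bookkeeping is set up cleanly, the triangular identities transfer formally. I do not expect any substantive homotopical input beyond the functoriality of relative tensor products and the given duality datum; the lemma is a soft, structural statement, and the proof is a diagram chase organized around the base-change $R \to R'$.
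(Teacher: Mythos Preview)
Your proposal is correct and matches the paper's proof essentially line for line: the paper draws exactly the commutative diagram you describe, with the top row being the original $(R'',R)$-triangular identity, the bottom row being the $(R'',R')$-version, and the vertical maps being the canonical collapse $M\otimes_R(-)\to M\otimes_{R'}(-)$; the second triangular identity is then declared similar and omitted. Your hesitation about the target of $\ev$ being $S$ versus $R''$ simply reflects a notational slip in the paper's setup (where $S$ stands for $R''$); the argument is unaffected.
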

\begin{proof}
That $(\wt{\ev}\otimes_{R''} \id)\circ (\id \otimes_{R'} \wt{\coev})\simeq \id$ can be seen from the following commutative diagram:
\begin{equation*}
    \begin{tikzpicture}[scale=1.5]
      \node (LLu) at (-5,1) {$M$};
      \node (Lu) at (-3,1) {$M\otimes_RR$};
      \node (Cu) at (0,1) {$M\otimes_RM^{\vee}\otimes_{R''}M$};
      \node (Ru) at (3,1) {$R''\otimes_{R''}M$};
      \node (RRu) at (5,1) {$M$};
      
      \node (Ld) at (-3,0) {$M\otimes_{R'}R'$};
      \node (Cd) at (0,0) {$M\otimes_{R'}M^{\vee}\otimes_{R''}M.$};
      
      \draw[->] (LLu) to node[above] {$\simeq$} (Lu);
      \draw[->] (Lu) to node[above] {$\id \otimes_R \coev$} (Cu);
      \draw[->] (Cu) to node[above] {$\ev \otimes_{R''} \id$} (Ru);
      \draw[->] (Ru) to node[above] {$\simeq$} (RRu);
      
      \draw[->] (LLu) to node[left] {$\simeq \;\;\;$} (Ld);
      \draw[->] (Lu) to node[left] {$\simeq$} (Ld);
      \draw[->] (Ld) to node[above] {$\id \otimes_{R'} \widetilde{\coev}$} (Cd);
      \draw[->] (Cu) to node[left] {${}$} (Cd);
      \draw[->] (Cd) to node[right] {$\;\;\;\;\; \widetilde{\ev}\otimes_{R''}\id$} (Ru);
      
      \draw[ ->] (LLu.north) .. controls +(up:8mm) and +(up:8mm) .. (RRu.north);
      \node at (0,1.9) {$\id$};
    \end{tikzpicture}
\end{equation*}
The verification that $(\id \otimes_{R''} \widetilde{\ev})\circ (\widetilde{\coev} \otimes_{R'} \id)\simeq \id$ is similar.
\end{proof}

\begin{cor}\label{cor: duality r(T) over r(B)}
  The pair $(\wt{\gamma}_{\sT}, \wt{\eps}_{\sT})$ provides a duality datum for the $\rl_S(\sB)$-module $\rl_S(\sT)$.
\end{cor}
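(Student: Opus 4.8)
The plan is to deduce the corollary formally from Theorem \ref{thm: duality r(T)} by invoking the abstract upgrade result Lemma \ref{lem: (S,R) dualizable => (S,R') dualizable}. Theorem \ref{thm: duality r(T)} exhibits $\rl_S(\sT)$ as a self-dual $\bigt{\rl_S(\sB),\rl_S(A)}$-bimodule with duality datum $(\gamma_\sT,\eps_\sT)$, whose evaluation $\eps_\sT:\rl_S(\sT)\otimes_{\rl_S(A)}\rl_S(\sT)\to\rl_S(\sB)$ is taken relative to the base $\rl_S(A)$. What the corollary asks for is the strengthening in which the evaluation is taken relative to $\rl_S(\sB)$ itself, that is, the honest duality datum of $\rl_S(\sT)$ as a dualizable object of $\Mod_{\rl_S(\sB)}$. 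This is precisely the passage from a $(R'',R)$-bimodule duality datum to a $(R'',R')$-bimodule duality datum carried out by Lemma \ref{lem: (S,R) dualizable => (S,R') dualizable}.

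Concretely, I would apply that lemma inside $\Mod_{\Ql{,S}(\beta)}\bigt{\Shv(S)}$ with $R:=\rl_S(A)=\Ql{,S}(\beta)$ and $R':=\rl_S(\sB)$. By the remark following Theorem \ref{thm: mainthm brtv18}, $\rl_S(\sB)\simeq(i_S)_*\QellI(\beta)$ is a \emph{commutative} algebra; hence one may take $C:=\rl_S(A)$, so that $R'':=R'\otimes_R C\simeq\rl_S(\sB)$, and the module $M:=\rl_S(\sT)$ is a $\bigt{\rl_S(\sB),\rl_S(A)}$-bimodule in the required sense (its natural left $\rl_S(\sB)$-action together with the base $\rl_S(A)$-structure). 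Feeding the input duality datum $(\gamma_\sT,\eps_\sT)$ of Theorem \ref{thm: duality r(T)} into the lemma then produces a duality datum for $\rl_S(\sT)$ as a $\bigt{\rl_S(\sB),\rl_S(\sB)}$-bimodule, i.e.\ as a dualizable $\rl_S(\sB)$-module.

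It remains only to identify the maps produced by the lemma with the ones named in the statement. By construction in Lemma \ref{lem: (S,R) dualizable => (S,R') dualizable}, $\wt{\ev}$ is the factorization of the $\rl_S(\sB)\otimes_{\rl_S(A)}\rl_S(\sB)$-linear map $\eps_\sT$ through $\rl_S(\sT)\otimes_{\rl_S(\sB)}\rl_S(\sT)$, which is exactly the map $\wt{\eps}_\sT$ defined just before the corollary; and $\wt{\coev}$ is the refinement of $\gamma_\sT$ through $\rl_S(\sB)\to\rl_S(\sT)\otimes_{\rl_S(\sB)}\rl_S(\sT)$, which, since $\gamma_\sT$ was observed to factor through the unit map $\Ql{,S}(\beta)\to\rl_S(\sB)$, is precisely $\wt{\gamma}_\sT$. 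I do not expect a genuine obstacle here: the whole content is the reduction to the already-established Theorem \ref{thm: duality r(T)} and to the bookkeeping Lemma \ref{lem: (S,R) dualizable => (S,R') dualizable}, the only point requiring a moment of care being that the left $\rl_S(\sB)$-action is central in $R''$, which holds because $R'$ sits inside $R''=R'\otimes_R C$ as $R'\otimes_R 1$ and $R'$ is commutative.
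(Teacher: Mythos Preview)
Your proposal is correct and follows exactly the same approach as the paper: the corollary is deduced from Theorem \ref{thm: duality r(T)} by an application of Lemma \ref{lem: (S,R) dualizable => (S,R') dualizable} with $R=\rl_S(A)$, $R'=\rl_S(\sB)$, and $C$ trivial so that $R''=\rl_S(\sB)$. The paper's proof is the single sentence ``This is a consequence of Lemma \ref{lem: (S,R) dualizable => (S,R') dualizable}'', and you have simply unpacked that sentence in full.
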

\begin{proof}
This is a consequence of Lemma \ref{lem: (S,R) dualizable => (S,R') dualizable}.
\end{proof}

\begin{thm}\label{thm:duality for r(U)}
The pair $(\gamma_\sU, \epsilon_\sU)$ is a duality datum for the $\bigt{\rl_S(\sC),\rl_S(A)}$-bimodule $\rl_S(\sU)$.
\end{thm}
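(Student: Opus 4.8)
The plan is to run the proof of Theorem \ref{thm: duality r(T)} essentially verbatim, with the pair $(\sB,\sT)$ replaced by $(\sC,\sU)$, and to pay attention only to the places where the two sides genuinely differ. Write $r := \rl_S$. The starting input is Proposition \ref{prop: U dualizable C module}: $\sU$ is dualizable as a left $\sC$-module with dual the right $\sC$-module $\sU^\op$, so the two categorical triangular identities hold for the pair $(\coev_{\sU/\sC},\ev_{\sU/\sC})$. Applying the lax-monoidal functor $r$ to the first of these and inserting the constraint maps $\mu_{\sU/A}$, $\mu_{\sU/\sC}$ together with the lax-monoidality cells exactly as in \emph{loc.\ cit.} (and using $r(\sU^\op)\simeq r(\sU)$), I would obtain a large diagram in which every square and one triangle commute, the only obstruction to $(\epsilon_\sU\otimes\id)\circ(\id\otimes\gamma_\sU)\simeq\id$ being a single non-commuting triangle built from $\gamma_\sU$ and $r(\coev_{\sU/\sC})$.

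First I would reduce, as on the $\sT$-side, this identity to the commutativity of the square
\begin{equation*}
  \begin{tikzpicture}[scale=1.5]
    \node (Lu) at (-2.6,1) {$r(\sU)\otimes_{r(A)}r(\sU^\op \otimes_{\sC}\sU)$};
    \node (Ru) at (2.6,1) {$r(\sU)\otimes_{r(A)} r(\sU^\op) \otimes_{r(\sC)} r(\sU)$};
    \node (Ld) at (-2.6,0) {$r(\sU\otimes_A \sU^\op \otimes_\sC \sU)$};
    \node (Rd) at (2.6,0) {$r(\sU)$};
    \draw[->] (Lu) to node[above] {$\id \otimes \pi_\sU$} (Ru);
    \draw[->] (Ld) to node[below] {$r(\ev_{\sU/\sC}\otimes \id)$} (Rd);
    \draw[->] (Lu) to node[left] {$\mu$} (Ld);
    \draw[->] (Ru) to node[right] {$\epsilon_\sU \otimes \id$} (Rd);
  \end{tikzpicture}
\end{equation*}
where $\pi_\sU$ is the splitting morphism of Proposition \ref{prop: key splif fib-cofib sequence}. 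Then, using the equivalence $\sU^\op\otimes_\sC\sU \simeq \End_\sC(\sU)$ (valid again by dualizability of $\sU$ over $\sC$) together with the explicit description of $\pi_\sU$ recorded in Remark \ref{rmk: factorization pi_U} --- namely that $\pi_\sU$ coincides, after the equivalence $\upsilon$, with the canonical action map $r\bigt{\End_\sC(\sU)}\to\End_{r(\sC)}\bigt{r(\sU)}$ --- the square above collapses to the tautological square expressing compatibility of $r(\on{act})$ with $\on{act}$, which commutes. The reverse triangular identity $(\id\otimes\epsilon_\sU)\circ(\gamma_\sU\otimes\id)\simeq\id$ I would check symmetrically. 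Note that the non-commutativity of $r(\sC)$ causes no trouble here, precisely because the assertion concerns the $\bigt{r(\sC),r(A)}$-bimodule $r(\sU)$: the map $\epsilon_\sU$ targets $r(\sC)$ directly and one needs only $r(A)$-bilinearity, exactly as on the $\sT$-side.

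The one substantive departure from the proof of Theorem \ref{thm: duality r(T)}, and the step I expect to be the main obstacle, is bookkeeping the Tate twist. Whereas $r(\sT)$ is \emph{literally} self-dual over $r(\sB)$ by Proposition \ref{prop: rl(T)/Ql dualizable}, the $r(\sC)$-module $r(\sU)$ is self-dual only up to a twist, its $r(\sC)$-linear dual being $r(\sU)(1)$ by Proposition \ref{prop: r(U) self dual over r(C)}. Consequently the identification $r(\sU^\op\otimes_\sC\sU)\simeq\End_{r(\sC)}\bigt{r(\sU)}$ invoked above must be threaded through the chain
$$
r(\sU^\op\otimes_\sC\sU)\longto r(\sU)\otimes_{r(\sC)}r(\sU)(1)\xto{\;\upsilon\;}r(\sU)\otimes_{r(\sC)}r(\sU)\simeq\End_{r(\sC)}\bigt{r(\sU)},
$$
and I would have to verify that each arrow is $r(\sC)$-linear and intertwines the relevant action maps and coevaluations. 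This is exactly what Proposition \ref{prop: rl(C) acts on rl(U)} (compatibility of the $r(\sC)$-action with the explicit models $\nuIquot$ and $\QellIGredbeta$) and the cartesian square of Proposition \ref{prop: r(U) otimes_(r(C)) r(U) = r(U) otimes_(r(C)) r(U)(1)} are tailored to supply. Modulo this twist-tracking, the proof is formally identical to that of Theorem \ref{thm: duality r(T)}.
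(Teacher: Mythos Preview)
Your proposal is correct and follows essentially the same route as the paper's own proof: reduce via the lax-monoidal $r$ to the single square involving $\pi_\sU$, then use $\sU^\op\otimes_\sC\sU\simeq\End_\sC(\sU)$ together with Remark~\ref{rmk: factorization pi_U} and Proposition~\ref{prop: r(U) self dual over r(C)} to collapse it to the tautological compatibility of $r(\on{act})$ with $\on{act}$. One small slip: in your displayed chain the identification $\End_{r(\sC)}\bigl(r(\sU)\bigr)$ belongs at the \emph{middle} node $r(\sU)\otimes_{r(\sC)}r(\sU)(1)$ (per Proposition~\ref{prop: r(U) self dual over r(C)}), not at the target of $\upsilon$; this is precisely why $\upsilon$ is needed, but it does not affect your argument.
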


\begin{proof}
We use again the notation $r := \rl_S$.
To prove that $(\epsilon_\sU \otimes \id  )\circ (\id \otimes \gamma_{\sU}) \simeq \id$, consider the following diagram, obtained using the dualizability of $\sU$ as a left $\sC$-module and the lax-monoidal structure on $\rl_S$.
\begin{small}
\begin{equation*}
  \begin{tikzpicture}[scale=1.5]
  
    \node (LLu) at (-4.8,1) {$r(\sU)$};
    \node (Lu) at (-3.2,1) {$r(\sU) \otimes_{r(A)} r(A)$};
    \node (Cu) at (0,1) {$r(\sU)\otimes_{r(A)} r(\sU^\op) \otimes_{r(\sC)} r(\sU)$};
    \node (Ru) at (3.2,1) {$r(\sC) \otimes_{r(\sC)} r(\sU)$};
    \node (RRu) at (4.8,1) {$r(\sU)$};
    
    \node (Lc) at (-1.7,0) {$r(\sU)\otimes_{r(A)} r(\sU^\op \otimes_\sC \sU)$};
    \node (Rc) at (1.7,0) {$r(\sU \otimes_A \sU^\op) \otimes_{r(\sC)} r(\sU)$};
    
    \node (LLd) at (-4.8,-1) {$r(\sU \otimes_A A)$};
    \node (Cd) at (0,-1) {$r(\sU\otimes_A \sU^\op \otimes_\sC \sU)$};
    \node (RRd) at (4.8,-1) {$r(\sC\otimes_\sC \sU)$};

    \draw[->] (LLu) to node[above] {$\simeq$} (Lu) ;
    \draw[->] (Lu) to node[above] {$\id \otimes \gamma_\sU$} (Cu) ;
    \draw[->] (Cu) to node[above] {$\epsilon_\sU \otimes \id$} (Ru) ;
    \draw[->] (Ru) to node[above] {$\simeq$} (RRu) ;
      
    \draw[->] (LLd) to node[below] {$r(\id \otimes \coev)$} (Cd) ;
    \draw[->] (Cd) to node[below] {$r(\ev \otimes \id)$} (RRd) ;
      
    \draw[->] (LLu) to node[left] {$\simeq$} (LLd) ;
    \draw[->] (RRd) to node[left] {$\simeq$} (RRu) ;
      
    \draw[->] (Lu) to node[left] {$\id \otimes r(\coev)\;\;\;$} (Lc) ;
    \draw[->] (Cu) to node[left] {$\id \otimes \mu_{\sU/\sC}\;\;\;$} (Lc) ;
    \draw[->] (Cu) to node[right] {$\;\;\; \mu_{\sU/A}\otimes \id$} (Rc) ;
    \draw[->] (Rc) to node[right] {$\;\;\; r(\ev_{\sU})\otimes \id$} (Ru) ;
      
    \draw[->] (Lc) to node[left] {$\mu_{\sU,\sU^{\op}\otimes_{\sC}\sU}\;\;\;$} (Cd) ;
    \draw[->] (Rc) to node[right] {$\;\;\; \mu_{\sU \otimes_A \sU^{\op},\sU}$} (Cd) ;
      
  \end{tikzpicture}
\end{equation*}
\end{small}
As in the previous case, every piece of this diagram is commutative except for the triangle
\begin{equation*}
  \begin{tikzpicture}[scale=1.5]
    \node (Lu) at (-2,1) {$r(\sU) \otimes_{r(A)} r(A)$};
    \node (Ru) at (2,1) {$r(\sU)\otimes_{r(A)} r(\sU^\op) \otimes_{r(\sC)} r(\sU)$};
    \node (Cd) at (0,0) {$r(\sU)\otimes_{r(A)} r(\sU^\op \otimes_\sC \sU).$};
    
    \draw[->] (Lu) to node[above] {$\id \otimes \gamma_{\sU}$} (Ru) ;
    \draw[->] (Lu) to node[left] {$\id \otimes r(\coev) \;\;\;\;\;$} (Cd) ;
    \draw[->] (Ru) to node[right] {$\;\;\;\;\; \id \otimes \mu_{\sU/\sC}$} (Cd) ;
    
  \end{tikzpicture}
\end{equation*}
Consider the morphism
$$
  \id \otimes \pi_\sU:
  r(\sU)\otimes_{r(A)}r(\sU^\op \otimes_{\sC}\sU)
  \to
  r(\sU)\otimes_{r(A)} r(\sU^\op) \otimes_{r(\sC)} r(\sU).
$$
We claim that the square
\begin{equation}\label{eqn: fundamental comm square for (gamma_U,epsilon_U)}
  \begin{tikzpicture}[scale=1.5]
    \node (Lu) at (-2,1) {$r(\sU)\otimes_{r(A)}r(\sU^\op \otimes_{\sC}\sU)$};
    \node (Ru) at (2,1) {$r(\sU)\otimes_{r(A)} r(\sU^\op) \otimes_{r(\sC)} r(\sU)$};
    \node (Ld) at (-2,0) {$r(\sU\otimes_A \sU^\op \otimes_\sC \sU)$};
    \node (Rd) at (2,0) {$r(\sU)$};
    
    \draw[->] (Lu) to node[above] {$\id \otimes \pi_\sU$} (Ru) ;
    \draw[->] (Ld) to node[below] {$r(\ev \otimes \id)$} (Rd) ;
    
    \draw[->] (Lu) to node[above] {${}$} (Ld) ;
    \draw[->] (Ru) to node[right] {$\epsilon_\sU \otimes \id$} (Rd) ;
    
  \end{tikzpicture}
\end{equation}
is commutative.
As $\sU^\op$ is the $\sC$-dual of $\sU$, we have
$$
  \sU^\op \otimes_\sC \sU
  \simeq
  \End_\sC(\sU).
$$
Hence \eqref{eqn: fundamental comm square for (gamma_U,epsilon_U)} - thanks to Proposition \ref{prop: r(U) self dual over r(C)} and Remark \ref{rmk: factorization pi_U} - identifies with the commutative diagram
\begin{equation*}
  \begin{tikzpicture}[scale=1.5]
    \node (Lu) at (-2,1) {$r(\sU)\otimes_{r(A)}r(\End_\sC(\sU))$};
    \node (Ru) at (2,1) {$r(\sU)\otimes_{r(A)} \End_{r(\sC)}\bigt{r(\sU)}$};
    \node (RRu) at (6,1) {$r(\sU)\otimes_{r(A)} r(\sU)\otimes_{r(A)}r(\sU)$};
    \node (Ld) at (-2,0) {$r(\sU\otimes_A \End_\sC(\sU))$};
    \node (Rd) at (2,0) {$r(\sU).$};
    
    \draw[->] (Lu) to node[above] {${}$} (Ru) ;
    \draw[->] (Ru) to node[above] {$\id \otimes \upsilon$} (RRu);
    \draw[->] (Lu.north) .. controls +(up:8mm) and +(up:8mm) .. (RRu.north);
    \node at (2,2) {$\id \otimes \pi_{\sU}$};
    \draw[->] (Ld) to node[below] {$r(\on{act})$} (Rd) ;
    
    \draw[->] (Lu) to node[left] {$\mu_{\sU, \End_{\sC}(\sU)}$} (Ld) ;
    \draw[->] (Ru) to node[right] {$\on{act}$} (Rd) ;
    
    \draw[->] (RRu) to node[right] {$\;\;\; \eps_{\sU}\otimes \id$} (Rd);
  \end{tikzpicture}
\end{equation*}
The proof that
$$
  (\id \otimes \epsilon_\sU) \circ (\gamma_\sU \otimes \id)
  \sim
  \id
$$
is similar.
Thus, $(\gamma_\sU,\epsilon_\sU)$ provides a duality datum for the $\bigt{r(\sC), r(A)}$-bimodule $r(\sU)$.
\end{proof}

\sssec{}
Notice that $\gamma_{\sU}:\Ql{,S}(\beta)\to \rl_S(\sU)\otimes_{\rl_S(\sC)}\rl_S(\sU)$ factors through $(i_S)_*\QellI(\beta)$, thus inducing a morphism
$$
  \wt{\gamma}_{\sU}:(i_S)_*\QellI(\beta)
  \to 
  \rl_S(\sU)\otimes_{\rl_S(\sC)}\rl_S(\sU).
$$
\begin{cor}\label{cor: r(U) dualizable over (r(C),QellI)}
The pair $(\wt{\gamma}_{\sU},\wt{\eps}_{\sU})$ provides a duality datum for the $\bigt{\rl_S(\sC),(i_S)_*\QellI(\beta)}$-bimodule $\rl_S(\sU)$.
\end{cor}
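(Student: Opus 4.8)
The plan is to obtain this as a purely formal consequence of Theorem~\ref{thm:duality for r(U)} and Lemma~\ref{lem: (S,R) dualizable => (S,R') dualizable}, exactly as Corollary~\ref{cor: duality r(T) over r(B)} was deduced from Theorem~\ref{thm: duality r(T)}. The only new feature, compared with the $\sT$-case, is that the enlarged base ring $(i_S)_*\QellI(\beta)$ is a \emph{commutative subalgebra} of the (generally noncommutative) algebra $\rl_S(\sC)$, rather than all of it --- but this is precisely the generality in which Lemma~\ref{lem: (S,R) dualizable => (S,R') dualizable} is stated.

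Concretely, I would instantiate Lemma~\ref{lem: (S,R) dualizable => (S,R') dualizable} as follows. Work in the symmetric monoidal $\oo$-category of $\rl_S(A)$-modules in $\ell$-adic sheaves on $S$, whose unit is $R := \rl_S(A) = \Ql{,S}(\beta)$. Put $R' := (i_S)_*\QellI(\beta)$, a commutative algebra receiving a canonical algebra map from $R$ (induced by $\Ql{} \to \QellI$), and let $C := (i_S)_*\Qell(\beta)(\GLK)$ be the reduced group algebra of $\GLK$. By Proposition~\ref{prop: rl(C)} there is an equivalence of algebras
$$
R' \otimes_R C
\simeq
(i_S)_*\bigt{\QellI(\beta)(\GLK)}
\simeq
\rl_S(\sC)
=:
R''.
$$
Set $M := \rl_S(\sU)$. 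By Theorem~\ref{thm:duality for r(U)}, $M$ is a dualizable $(R'',R) = \bigt{\rl_S(\sC),\rl_S(A)}$-bimodule with specified duality datum $(\gamma_\sU,\epsilon_\sU)$, and by Proposition~\ref{prop: r(U) self dual over r(C)} its dual is $M^\vee \simeq \rl_S(\sU)(1)$; these are exactly the data required as input to Lemma~\ref{lem: (S,R) dualizable => (S,R') dualizable}.

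It then remains to check that the factored evaluation and coevaluation produced by the lemma agree with the maps $\wt{\eps}_\sU$ and $\wt{\gamma}_\sU$ constructed above. Since $R' = (i_S)_*\QellI(\beta)$ is a commutative subalgebra of $R''$, the module $M$ is automatically an $(R'',R')$-bimodule, the $R$-linear map $\epsilon_\sU$ factors through an $R'$-linear map, and $\gamma_\sU$ factors through an $R'$-linear map; by the universal property of the relevant relative tensor products these coincide with the maps $\wt{\eps}_\sU \colon \rl_S(\sU)\otimes_{(i_S)_*\QellI(\beta)}\rl_S(\sU) \to \rl_S(\sC)$ and $\wt{\gamma}_\sU \colon (i_S)_*\QellI(\beta) \to \rl_S(\sU)\otimes_{\rl_S(\sC)}\rl_S(\sU)$ defined in the body of the text (the existence of these factorizations having already been recorded there). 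Lemma~\ref{lem: (S,R) dualizable => (S,R') dualizable} then yields that $(\wt{\gamma}_\sU,\wt{\eps}_\sU)$ is a duality datum for the $(R'',R') = \bigt{\rl_S(\sC),(i_S)_*\QellI(\beta)}$-bimodule $\rl_S(\sU)$, which is the assertion.

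I do not expect a genuine obstacle here: the argument is a transcription of the $\sT$-case. The only place deserving attention is the bookkeeping just described --- identifying the abstractly-factored maps of Lemma~\ref{lem: (S,R) dualizable => (S,R') dualizable} with the concretely-defined $\wt{\eps}_\sU$, $\wt{\gamma}_\sU$ --- together with the appeal to Proposition~\ref{prop: rl(C)} for the algebra-level identification $R' \otimes_R C \simeq \rl_S(\sC)$, which is where the explicit tensor-product description of the algebra structure of $\rl_S(\sC)$ enters.
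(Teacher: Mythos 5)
Your proposal is correct and is exactly the paper's argument: the proof given in the text is simply an appeal to Lemma \ref{lem: (S,R) dualizable => (S,R') dualizable}, applied with $R=\rl_S(A)$, $R'=(i_S)_*\QellI(\beta)$, $R''\simeq\rl_S(\sC)$ (via Proposition \ref{prop: rl(C)}) and the duality datum $(\gamma_\sU,\eps_\sU)$ of Theorem \ref{thm:duality for r(U)}. Your instantiation and the bookkeeping identifying the factored maps with $\wt{\gamma}_\sU$, $\wt{\eps}_\sU$ are precisely the details the paper leaves implicit.
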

\begin{proof}
This follows from Lemma \ref{lem: (S,R) dualizable => (S,R') dualizable}.
\end{proof}

\sec{Categorical generalized Bloch conductor formula}\label{sec: categorical BCC}

In this section, we prove Theorem \ref{thm: categorical Bloch}.
As a corollary, we prove Theorem \ref{mainthm: pure char}.

\ssec{The categorical Bloch class}

After reviewing the definition of the categorical Bloch class $\Bl^{\cat}(X/S)$, we use the results of the previous section to express $\Bl^{\cat}(X/S)$ as a sum of two canonically defined classes. This is a step in the right direction, as the categorical total dimension also arises as a sum of two classes.

\sssec{} \label{sssec:cat Bloch class} 

Recall that $\sT$ is dualizable as a left $\sB$-module, with dual $\sT^\op$. Hence, we have a dg-functor
\begin{equation}\label{eqn:Trace id of T/B}
\Perf(S)
\xto{\coev_{\sT/\sB}}
\sT^{\op} 
\otimes_{\sB} 
\sT
\xto{\ev^{\HH}_{\sT/\sB }}
\HH_*(\sB/A).
\end{equation}
Applying $\rl_S$, we obtain a morphism
$$
\Ql{,S}(\beta)
\xto{
\rl_S \bigt{\ev^{\HH}_{\sT/\sB}\circ \coev_{\sT/\sB}}
}
\rl_S(\HH_*(\sB/A))
$$
in 
$\Mod_{\Ql{,S}(\beta)}\bigt{\Shv(S)}$.
By adjunction, this morphism yields a class
$$
\Bl^{\cat}(X/S)
\in 
\uH^0_{\et}
\Bigt{ 
S, \rl_S \bigt{\HH_*(\sB/A)}
},
$$
which is called the \emph{categorical Bloch intersection class}. This definition is due to \TV{}, see \cite{toenvezzosi22}.

\begin{rmk}
More precisely, \TV{} define the categorical Bloch intersection class to the class
$$
  [\ev^{\HH}_{\sT/\sB}\circ \coev_{\sT/\sB}]
  \in
  \HK_0\bigt{\HH_*(\sB/A)}.
$$
With our notation we have that
$$
  \Bl(X/S)^{\cat}
  =
  \chern \bigt{[\ev^{\HH}_{\sT/\sB}\circ \coev_{\sT/\sB}]}.
$$
\end{rmk}

\sssec{}

Thanks to \eqref{diag:HH-evaluations}, we can identify $\sT^\op \otimes_\sB \sT$ with $\sU^\op \otimes_\sC \sU$ and $\HH_*(\sB/A)$ with $\HH_*(\sC/A)$.
With this agreement in place, we write $\ev^{\HH}$ for either of the dg-functors 
$$
\ev^{\HH}_{\sT/\sB}:
  \sT^\op \otimes_\sB \sT 
  \to
  \HH_*(\sB/A)
$$
$$
\ev^{\HH}_{\sU/\sC}:
  \sU^\op \otimes_{\sC}\sU 
\to 
  \HH_*(\sC/A).
$$
Similarly, we write $\coev$ for both dg-functors
$$
\coev_{\sT/\sB} :
\Perf(S)
\to 
\sT^\op \otimes_\sB \sT 
$$
$$
\coev_{\sU/\sC} :
\Perf(S)
\to 
\sU^\op \otimes_\sC \sU.
$$

\sssec{}

In particular, the dg-functor \eqref{eqn:Trace id of T/B}
agrees with
\begin{equation} \label{eqn:Trace id of U/C}
\Tr_{\sC}(\id:\sU)
:
\Perf(S) \xto{\coev}
\sU^\op \otimes_\sC \sU
\xto{\ev^{\HH}}
\HH_*(\sC/A).
\end{equation}
Hence, in Section \ref{sssec:cat Bloch class}, we could have alternatively obtained $\Bl^{\cat}(X/S)$ using $\sU$ and $\sC$.
Our present goal is to compare the $\ell$-adic realization of $\ev^{\HH}\circ \coev$ with the the traces $\Tr_{\rl_S(\sB)}\bigt{\id:\rl_S(\sT)}$ and $\Tr_{\rl_S(\sC)}\bigt{\id:\rl_S(\sU)}$.

\sssec{}

From \eqref{diag:big-comm-diagram}, we obtain the commutative diagram
\begin{equation}  \label{diag:big-comm-diagram-rl}
  \begin{tikzpicture}[scale=1.5]
    \node (Lu) at (-4,1.2) {$\rl_S(\sT) \otimes_{\rl_S(\sB)} \rl_S(\sT)$};
    \node (Ld) at (-4,0) {$\rl_S(\sB)$};
    \node (Cu) at (0,1.2) {$\rl_S(\sT^\op \otimes_\sB \sT )\simeq \rl_S(\sU^\op \otimes_\sC \sU)$};
    \node (Cd) at (0,0) {$\rl_S \bigt{\HH_*(\sB/A)} \simeq \rl_S \bigt{\HH_*(\sC/A)}$};
    \node (Ru) at (4,1.2) {$\rl_S(\sU) \otimes_{\rl_S(\sC)} \rl_S(\sU)$};
    \node (Rd) at (4,0) {$ \HH_*\bigt{\rl_S(\sC)/(i_S)_*\QellI(\beta)}$,};
    \draw[->] (Lu) to node[right] { $\wt{\eps}_{\sT}$ } (Ld);
    \draw[->] (Cu) to node[right] { $\rl_S(\ev^{\HH})$ } (Cd);
    \draw[->] (Ru) to node[right] { $\wt{\eps}_{\sU}^{\HH}$ } (Rd);
    \draw[->] (Ru) to node[above]{$ \mu_{\sU/\sC}$} (Cu);
    \draw[->] (Rd) to node[above]{$-\arcat$} (Cd);
    \draw[->] (Lu) to node[above]{$ \mu_{\sT/\sB}$} (Cu);
    \draw[->] (Ld) to node[above]{$\idcat$} (Cd);
  \end{tikzpicture}
\end{equation}
where $\mu_{\sT/\sB}$ and $\mu_{\sU/\sC}$ are the morphisms induced by the lax-monoidal structure on $\rl_S$. 
Moreover, 
$$
  \idcat:= \rl_S \bigt{\sB \to \HH_*(\sB/A)}
$$
and
$$
  -\arcat:
  \HH_*\bigt{\rl_S(\sC)/(i_S)_*\QellI(\beta)}
  \to
  \rl_S \bigt{\HH_*(\sC/A)}
$$
denotes the composition of the morphism 
$$
  \HH_*\bigt{\rl_S(\sC)/\rl_S(A)}
  \to
  \rl_S \bigt{\HH_*(\sC/A)}
$$
induced by the lax monoidal structure of $\rl_S$ with the canonical morphism
$$
  \HH_*\bigt{\rl_S(\sC)/(i_S)_*\QellI(\beta)}
  \to
  \HH_*\bigt{\rl_S(\sC)/\rl_S(A)}.
$$
Notice that the following diagram commutes:
\begin{equation}\label{eqn: epsHH factors through wt(eps)HH}
    \begin{tikzpicture}[scale=1.5]
      \node (Lu) at (0,1) {$\rl_S(\sU)\otimes_{\rl_S(\sC)}\rl_S(\sU)$};
      \node (Ru) at (3,1) {$\HH_* \bigt{\rl_S(\sC)/(i_S)_*\QellI(\beta)}$};
      \node (Ld) at (0,0) {$\HH_* \bigt{\rl_S(\sC)/\rl_S(A)}\simeq \HH_* \bigt{\QellI(\beta)(\GLK)/\Qell(\beta)}.$};
    
      \draw[->] (Lu) to node[above] {$\wt{\eps}_{\sU}^{\HH}$} (Ru);
      \draw[->] (Lu) to node[left] {$\eps_{\sU}^{\HH}$} (Ld);
      \draw[->] (Ru) to node[above] {${}$} (Ld);
    \end{tikzpicture}
\end{equation}

\sssec{}

Next, we add the coevaluation arrows on the top of the above diagram:

\begin{equation}  \label{diag:master-diagram}
  \begin{tikzpicture}[scale=1.5]

    \node (Luu) at (-4,2.4) {$\rl_S(\sB)$ };
    \node (Cuu) at (0,2.4) {$\rl_S(A)$ };
    \node (Ruu) at (4,2.4) {$(i_S)_*\QellI(\beta)$ };

    \node (Lu) at (-4,1.2) {$\rl_S(\sT) \otimes_{\rl_S(\sB)} \rl_S(\sT)$};
    \node (Ld) at (-4,0) {$\rl_S(\sB)$};
    \node (Cu) at (0,1.2) {$\rl_S(\sT^\op \otimes_\sB \sT )\simeq \rl_S(\sU^\op \otimes_\sC \sU)$};
    \node (Cd) at (0,0) {$\rl_S \bigt{\HH_*(\sB/A)} \simeq \rl_S \bigt{\HH_*(\sC/A)}$};
    \node (Ru) at (4,1.2) {$\rl_S(\sU) \otimes_{\rl_S(\sC)} \rl_S(\sU)$};
    \node (Rd) at (4,0) {$ \HH_*\bigt{\rl_S(\sC)/(i_S)_*\QellI(\beta)}.$};
    
    \draw[->] (Cuu) to node[right] { $\rl_S(\coev)$ } (Cu);
    \draw[->] (Ruu) to node[right] { $\wt{\gamma}_\sU$ } (Ru);
    \draw[->] (Luu) to node[left] { $\wt{\gamma}_{\sT}$ } (Lu);
    
    \draw[->] (Lu) to node[right] { $\wt{\eps}_{\sT}$ } (Ld);
    \draw[->] (Cu) to node[right] { $\rl_S(\ev^{\HH})$ } (Cd);
    \draw[->] (Ru) to node[right] { $\wt{\eps}_{\sU}^{\HH}$ } (Rd);
    \draw[->] (Cuu) to node[right] { ${}$ } (Luu);
    \draw[->] (Cuu) to node[right] { ${}$ } (Ruu);
    
    \draw[->] (Ru) to node[above]{$ \mu_{\sU/\sC}$} (Cu);
    \draw[->] (Rd) to node[above]{$-\arcat$} (Cd);
    \draw[->] (Lu) to node[above]{$ \mu_{\sT/\sB}$} (Cu);
    \draw[->] (Ld) to node[above]{$\idcat$} (Cd);
\end{tikzpicture}
\end{equation}

\sssec{}
We warn the reader that the two squares on the top are not commutative. 
Rather, by the splitting \eqref{eqn:magic direct-sum} (see Proposition \ref{prop: key splif fib-cofib sequence}) and the definitions of $\gamma_{\sT}$ and $\gamma_{\sU}$ (see Section \ref{sssec: definition gamma_T gamma_U}), we have
\begin{equation}\label{eqn: magic}
  [\rl_S(\coev)]
  =
  [\mu_{\sT/\sB}\circ \wt{\gamma}_\sT]
  +
  [\mu_{\sU/\sC}\circ \wt{\gamma}_\sU]
  \in
  \uH^0_{\et}\Bigt{S,\rl_S \bigt{\Sing(X\times_SX)}}.
\end{equation}
Now apply the bottom vertical map: the commutativity of the two squares below in \eqref{diag:master-diagram} yields the equality
\begin{equation} \label{eqn:trace of T/B as a sum}
\Bl^{\cat}(X/S)
:=
[\rl_S( \ev^{\HH} \circ \coev)]
=
\idcat
([\wt{\eps}_\sT \circ \wt{\gamma}_\sT])
-\arcat
([\wt{\eps}^{\HH}_\sU \circ \wt{\gamma}_\sU])
\in \uH^0_{\et}\Bigt{S,\rl_S\bigt{\HH(\sB/A)}}.
\end{equation}

\ssec{Proof of Theorem \ref{thm: categorical Bloch}}

To conclude the proof of Theorem \ref{thm: categorical Bloch}, it remains to simplify the two terms on the right-hand side of \eqref{eqn:trace of T/B as a sum}.

We first need the following elementary observation.

\begin{lem}\label{lem: i_* Ql-mod --> i_*Ql-mod sym mon}
Let $i_S:s\to S$ denote the canonical closed embedding. Then
$$
  (i_S)_*: \Shv(s)
  \to
  \Mod_{(i_S)_*\Ql{,s}}(\Shv(S))
$$
is a symmetric monoidal functor.
\end{lem}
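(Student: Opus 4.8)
The plan is to show that $(i_S)_*\colon \Shv(s) \to \Mod_{(i_S)_*\Ql{,s}}(\Shv(S))$ is symmetric monoidal by producing the structure maps and checking they are equivalences. First I would recall that $i_S$ is a closed immersion, so that $(i_S)_*$ is fully faithful, has both adjoints, and satisfies the projection formula: for $\ccF \in \Shv(s)$ and $\ccG \in \Shv(S)$ one has $(i_S)_*\ccF \otimes_{\Ql{,S}} \ccG \simeq (i_S)_*\bigt{\ccF \otimes_{\Ql{,s}} i_S^*\ccG}$. Applying this with $\ccG = (i_S)_*\ccG'$ and using $i_S^*(i_S)_* \simeq \id$ gives the lax-monoidal structure constraint
$$
(i_S)_*\ccF \otimes_{(i_S)_*\Ql{,s}} (i_S)_*\ccG'
\xto{\;\simeq\;}
(i_S)_*\bigt{\ccF \otimes_{\Ql{,s}} \ccG'},
$$
where the left-hand tensor product is computed in $\Mod_{(i_S)_*\Ql{,s}}(\Shv(S))$ — more precisely, one first observes that $(i_S)_*$ lands in modules over the algebra $(i_S)_*\Ql{,s}$ because $(i_S)_*$ is lax monoidal and $\Ql{,s}$ is the unit, hence $(i_S)_*\ccF$ acquires a module structure functorially; then the relative tensor product over $(i_S)_*\Ql{,s}$ is the bar-construction colimit, which $(i_S)_*$ preserves since it is a left adjoint (it has the further left adjoint $i_S^!$, or more simply it preserves all colimits as $\Shv(-)$ is compactly generated and $(i_S)_*$ commutes with filtered colimits and finite colimits).

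The second step is to identify the unit: the unit of $\Mod_{(i_S)_*\Ql{,s}}(\Shv(S))$ is $(i_S)_*\Ql{,s}$ itself, which is precisely $(i_S)_*$ applied to the unit $\Ql{,s}$ of $\Shv(s)$, so the unit constraint is an equivalence essentially by definition. The third step is to verify that the constraint maps above are coherent, i.e. compatible with associativity and symmetry; this is formal once one knows the projection formula isomorphism is the canonical one coming from the lax-monoidal structure of $(i_S)_*$ — and indeed the projection-formula equivalence for a closed immersion is constructed exactly as the canonical lax-structure map, which one checks is invertible by computing stalks (it is obviously an equivalence after $i_S^*$, and $(i_S)_*$ is conservative on its essential image, or one simply notes both sides are supported on $s$ and agree there). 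Concretely, one reduces the whole statement to: the lax symmetric monoidal functor $(i_S)_*\colon \Shv(s)\to \Shv(S)$ has the property that the induced lax symmetric monoidal functor to modules over its image of the unit is strong monoidal, which is a general fact for any lax symmetric monoidal functor between presentable symmetric monoidal $\infty$-categories that preserves colimits and whose lax structure maps become equivalences after base change along the unit — and for $(i_S)_*$ the latter is exactly the projection formula.

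The main obstacle — really the only point requiring care — is checking that the relative tensor product $-\otimes_{(i_S)_*\Ql{,s}}-$, formed inside $\Shv(S)$, is computed by $(i_S)_*$ of the absolute tensor product on $\Shv(s)$; this is where one invokes that $(i_S)_*$ preserves geometric realizations (hence the bar construction computing the relative tensor product) together with the projection formula at each simplicial level, and then passes to the colimit. Everything else is bookkeeping with the symmetric monoidal structure on $\Mod$ of an algebra object, for which I would cite \cite[Section 4.5]{lurieha}. I do not expect to need any hypothesis beyond $i_S$ being a closed immersion of (the relevant) schemes, so the same argument applies verbatim to $t\colon s \to S'$ or to any of the other closed points used in the paper.
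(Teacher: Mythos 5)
Your proposal is correct and follows essentially the same route as the paper: both arguments reduce everything to the projection formula together with $i_S^*(i_S)_*\simeq \id$, the only cosmetic difference being that you identify the relative tensor product by commuting $(i_S)_*$ past the bar construction levelwise, whereas the paper shows directly that $(i_S)_*M\otimes_{\Ql{,S}}(i_S)_*N$ already computes $(i_S)_*M\otimes_{(i_S)_*\Ql{,s}}(i_S)_*N$ by inserting $(i_S)_*\Ql{,s}\otimes_{\Ql{,S}}-$. One small slip: $i_S^!$ is the \emph{right} adjoint of $(i_S)_*$ (which is why $(i_S)_*$ preserves colimits), not a "further left adjoint" as you wrote.
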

\begin{proof}
It is clear that it is a lax monoidal functor.

We are left to verify that, for every $M,N \in \Shv(s)$, the canonical morphism
$$
  (i_S)_*M\otimes_{(i_S)_*\Ql{,s}}(i_S)_*N
  \to
  (i_S)_*(M\otimes_{\Ql{,s}} N)
$$
is an equivalence.

Recall that, for every $P \in \Shv(S)$, the projection formula yields
$$
  (i_S)_*(M\otimes_{\Ql{,s}}i_S^*P)
  \simeq
  (i_S)_*M\otimes_{\Ql{,S}}P.
$$
Then
\begin{align*}
    (i_S)_*M\otimes_{\Ql{,S}}(i_S)_*N & \simeq (i_S)_*\bigt{M\otimes_{\Ql{,s}} i_S^*(i_S)_*N}\\
                              & \simeq (i_S)_*\bigt{M\otimes_{\Ql{,s}} N}.
\end{align*}
Moreover,
\begin{align*}
    (i_S)_*M\otimes_{\Ql{,S}}(i_S)_*N & \simeq (i_S)_*M\otimes_{(i_S)_*\Ql{,s}}\bigt{(i_S)_*\Ql{,s}\otimes_{\Ql{,S}}(i_S)_*N}\\
                              & \simeq (i_S)_*M\otimes_{(i_S)_*\Ql{,s}}(i_S)_*\bigt{\Ql{,s}\otimes_{\Ql{,s}}i_S^*(i_S)_*N}\\
                              & \simeq (i_S)_*M\otimes_{(i_S)_*\Ql{,s}}(i_S)_*N.
\end{align*}
\end{proof}
\begin{cor}\label{cor: tr over Ql^I= tr over i_*Ql^I}
Let $M \in \Shvcon(s)$. Then $(i_S)_*M$ is dualizable in $\Mod_{(i_S)_*\Ql{,s}}(\Shv(S))$ and
$$
  \Tr_{\Ql{,s}}(f:M)
  =
  \Tr_{(i_S)_*\Ql{,s}}\bigt{(i_S)_*f:(i_S)_*M}
$$
for every endomorphism $f$.
\end{cor}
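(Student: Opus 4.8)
The plan is to deduce the corollary from Lemma~\ref{lem: i_* Ql-mod --> i_*Ql-mod sym mon} together with the standard principle that a symmetric monoidal $\infty$-functor carries a duality datum to a duality datum, and hence preserves both dualizability and traces of endomorphisms.

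First I would observe that $M$ is already dualizable in $\Shv(s)$: since $s$ is the spectrum of an algebraically closed field, every object of $\Shvcon(s)$ --- in particular $M$ --- is dualizable in $\Shv(s)$, with dual the Verdier dual $\bbD_s(M)$. Applying the symmetric monoidal functor $(i_S)_*:\Shv(s)\to \Mod_{(i_S)_*\Ql{,s}}(\Shv(S))$ of Lemma~\ref{lem: i_* Ql-mod --> i_*Ql-mod sym mon} then shows that $(i_S)_*M$ is dualizable in $\Mod_{(i_S)_*\Ql{,s}}(\Shv(S))$, with dual $(i_S)_*\bbD_s(M)$; this settles the first assertion.

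For the equality of traces, I would recall that $\Tr_{\Ql{,s}}(f:M)$ is by construction the class of the composite
$$
\Ql{,s}\xto{\coev}\bbD_s(M)\otimes M\xto{\id\otimes f}\bbD_s(M)\otimes M\xto{\ev}\Ql{,s}
$$
(up to the symmetry of $\otimes$, which plays no role here). Since $(i_S)_*$ is symmetric monoidal and sends the chosen duality datum of $M$ to a duality datum of $(i_S)_*M$, applying it to this composite yields precisely the composite computing $\Tr_{(i_S)_*\Ql{,s}}\bigt{(i_S)_*f:(i_S)_*M}$, now read as a self-map of the monoidal unit $(i_S)_*\Ql{,s}$ of $\Mod_{(i_S)_*\Ql{,s}}(\Shv(S))$. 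It remains to identify the two classes: the adjunction $i_S^*\dashv (i_S)_*$ and the equivalence $i_S^*\Ql{,S}\simeq \Ql{,s}$ give a canonical isomorphism $\uH^0\bigt{(i_S)_*\Ql{,s}}\simeq \uH^0(\Ql{,s})=\Qell$, which is exactly the one induced by $(i_S)_*$ on endomorphisms of the units; under it, the two traces coincide.

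The only point requiring care --- and the closest thing here to a genuine obstacle --- is that the duality datum of $(i_S)_*M$ obtained by pushing forward that of $M$ must agree with the one implicitly used in the definition of $\Tr_{(i_S)_*\Ql{,s}}$. Since duality data of a dualizable object are unique up to a contractible space of choices and the trace does not depend on this choice, there is no issue.
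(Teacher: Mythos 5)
Your proposal is correct and is exactly the argument the paper intends: the corollary is stated as an immediate consequence of Lemma \ref{lem: i_* Ql-mod --> i_*Ql-mod sym mon}, via the standard fact that a symmetric monoidal functor preserves duality data and hence dualizability and traces, with the two traces identified in $\Qell$ through the adjunction $\bigt{i_S^*,(i_S)_*}$. Your preliminary observation that constructible sheaves on $s$ are dualizable (being perfect complexes of $\Qell$-vector spaces over an algebraically closed point) and your remark on the independence of the trace from the choice of duality datum are both correct and fill in the details the paper leaves implicit.
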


\sssec{}

By Theorem \ref{thm: duality r(T)} we have:

\begin{align}\label{eqn:integration of B-side}
    [\wt{\eps}_{\sT} \circ \gamma_{\sT}] & = [\wt{\eps}_{\sT} \circ \wt{\gamma}_{\sT}] \\
    \nonumber
                                       & = \Tr_{\rl_S(\sB)}\bigt{\id:\rl_S(\sT)} && \text{Cor. \ref{cor: duality r(T) over r(B)}}\\
    \nonumber
                                       & = \Tr_{\QellI(\beta)}\bigt{\id:\nu^{\IK}[-1]} && \text{\cite[Thm. 4.39]{brtv18}, Cor. \ref{cor: tr over Ql^I= tr over i_*Ql^I}}.
\end{align}

\sssec{}
To understand the term $[\wt{\eps}^{\HH}_\sU \circ \wt{\gamma}_\sU]$, first recall that in Proposition \ref{prop: rl(C)} we proved that there is an equivalence $\rl_S(\sC) \simeq \QellI(\beta)(\GLK)$.
We obtain that
$$
\HH_0\bigt{\rl_S(\sC)/(i_S)_*\QellI(\beta)}
\simeq
\HH_0\bigt{\QellI(\beta)(\GLK)/\QellI(\beta)}
\simeq
\Qell(\GLK)/ [\Qell(\GLK),\Qell(\GLK)].
$$

It follows that any ring homomorphism from $\QellIGred$ to a commutative ring $R$ descends uniquely to a map $\HH_0\bigt{\rl_S(\sC)/(i_S)_*\QellI(\beta)} \to R$. This is the case for any ring homomorphism induced by a class function, such as the Artin character. 

\sssec{} 

By Corollary \ref{cor: r(U) dualizable over (r(C),QellI)}, Corollary \ref{cor: tr over Ql^I= tr over i_*Ql^I} and Corollary \ref{cor: trace reduced group alg}, we have
\begin{align*}
[\wt{\eps}^{\HH}_\sU \circ \wt{\gamma}_\sU] 
& = 
\Tr_{\rl_S(\sC)}\bigt{\id:\rl_S(\sU)}
\\
& = \frac 1 {|\GLK|}\sum_{g \in \GLK} \Tr_{\QellI(\beta)} (g^{-1}: \nuIquot) \cdot \langle g \rangle 
\in 
\HH_0
\bigt{
\QellI(\beta)(\GLK)/\QellI(\beta)
}.
\end{align*}
Since $\GLK$ acts trivially on $\nu^{\IK}$, we can simplify the factors in the sum as follows:
\begin{align*}
\Tr_{\QellI(\beta)} (g: \nuIquot) & = \Tr_{\QellI(\beta)}(g: \nu^{\IL}) - \Tr_{\QellI(\beta)} (g: \nu^{\IK}) \\
                                              & = \Tr_{\QellI(\beta)}(g: \nu^{\IL}) - \Tr_{\QellI(\beta)}(\id: \nu^{\IK}).
\end{align*}
We obtain that
\begin{align*}
[\wt{\eps}^{\HH}_\sU \circ \wt{\gamma}_\sU] & = \frac 1 {|\GLK|}\sum_{h \in \GLK} \Tr_{\QellI(\beta)}(g: \nu^{\IL}) \cdot \langle g \rangle - \frac {\Tr_{\QellI(\beta)}(\id: \nu^{\IK})} {|\GLK|}\sum_{g \in \GLK} \langle g \rangle\\
                                            & = \frac 1 {|\GLK|}\sum_{h \in \GLK} \Tr_{\QellI(\beta)}(g: \nu^{\IL}) \cdot \langle g \rangle.
\end{align*}
The second equality follows from the identity 
$$
\sum_{g\in \GLK} \langle g \rangle
=
0
\in 
\HH_0 \bigt{\rl_S(\sC)/(i_S)_*\QellI(\beta)}.
$$

\sssec{}

By applying the map $\arcat$ we obtain the categorical Artin conductor:
\begin{align}\label{eqn:integration of C-side}
\Ar^{\cat}(\Phi) & := \arcat ([\wt{\eps}^{\HH}_\sU \circ \wt{\gamma}_\sU]) \\
\nonumber
                  & = \frac 1 {|\GLK|}\sum_{g \in \GLK} \arcat \bigt{\Tr_{\QellI(\beta)}(g^{-1}: \nu^{\IL}) \cdot \langle g \rangle}.
\end{align}

\sssec{} 

The combination of \eqref{eqn:trace of T/B as a sum} with \eqref{eqn:integration of B-side} and \eqref{eqn:integration of C-side} yields
\begin{align*}
  \Bl^{\cat}(X/S) & = \idcat \bigt{\Tr_{\QellI(\beta)}(\id:\nu^{\IK}[-1])} - \Ar^{\cat}(\Phi)\\  
                  & =: -\dimtot^{\cat} (\Phi),
\end{align*}
as desired.

\ssec{Proof of Theorem \ref{mainthm: pure char}}
We now show how the categorical generalized Bloch conductor formula implies the generalized Bloch conductor formula in pure characteristic.
\sssec{}

Suppose that we are in the pure characteristic situation, in which case $s\to S$ admits a retraction $S\to s$.
We then have equivalences 
$$
  \sB 
  \simeq 
  \MF(s,0)
  \simeq 
  \Perf\bigt{k[u,u^{-1}]}
$$
of $\bbE_{\oo}$-algebras in $\dgCat_k$, where $u$ is a free variable of cohomological degree $2$.
In other words, the monoidal structure on $\sB$ is \emph{symmetric}. This is a major simplification, which occurs in the pure characteristic situation only. 
Accordingly, the case of pure characteristic is simpler than the one of mixed characteristic, when tackled with the methods developed in this paper.

\sssec{}\label{sssec: retraction ev^(HH)_(B/B)}

Since $\sB$ is symmetric monoidal, the dg-functor
$$ 
\ev^{\HH}_{\sB/\sB}:
\sB
\to 
\HH_*(\sB/A)
$$
admits a retraction
$$
  \on{m}:
  \HH_*(\sB/A)
  \to
  \sB.
$$

\sssec{}
Since $S'$ is the zero locus of an Eisenstein polynomial, by \cite{beraldopippi22} we have a dg-functor (see also the beginning of Section \ref{ssec: proof of thm B})
$$
  \int_{S'/S}^{\on{dg}}:
  \Sing(S'\times_S S')
  \to
  \MF(S,0)_s.
$$
The following lemma allows us to relate the $\HH$-evaluation of $\sE$ over $\sB$ with this dg-functor.

\begin{lem}

The diagram
\begin{equation}\label{diag: m circ evHH vs int}
  \begin{tikzpicture}[scale=1.5]
    \node (UM) at (0,1) {$\sC$  };
    \node (M) at (0,0) {$\HH_*(\sB/A)\simeq \HH_*(\sC/A)$};
    \node (UN) at (3,1) {$\MF(S,0)_s$};
    \node (N) at (3,0) {$\sB$}; 
    \draw[->] (UM) to node[left] { $\ev^{\HH}_{\sE/\sB}$ } (M);
    \draw[->] (UN) to node[right] { $\bigt{K(S,0)\to K(s,0)}_*$ } (N);
    \draw[->] (UM) to node[above]{$\int_{S'/S}^{\on{dg}}$} (UN);
    \draw[->] (M) to node[below]{$\on{m}$} (N);
    \end{tikzpicture}
    \end{equation}
is commutative.
\end{lem}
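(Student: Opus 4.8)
The plan is to prove the commutativity of the square \eqref{diag: m circ evHH vs int} by reducing everything to the defining properties of the dg-functor $\int_{S'/S}^{\on{dg}}$ from \cite{beraldopippi22} and to the Morita-theoretic description of the $\HH$-evaluation $\ev^\HH_{\sE/\sB}$. First I would recall that, under the symmetric monoidal identification $\sB \simeq \Perf(k[u,u^{-1}])$, the retraction $\on{m}\colon \HH_*(\sB/A)\to \sB$ is induced by the multiplication map $\sB\otimes_A\sB\to\sB$ (equivalently, by the counit of the self-duality of $\sB$), so that $\on{m}\circ\ev^\HH_{\sB/\sB}\simeq\id_\sB$. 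Next, using the equivalence $\sE^\op\otimes_\sB\sE\simeq\sC$ of Corollary \ref{cor: End_B(E)=C} together with the double-Morita identifications \eqref{eqn:double-morita-for-BL} and \eqref{eqn:second-double-morita-for-BL}, I would unwind $\ev^\HH_{\sE/\sB}\colon\sC\simeq\sE^\op\otimes_\sB\sE\to\HH_*(\sB/A)$ as the composite of the swap $\sE^\op\otimes_\sB\sE\xto{\simeq}\sE\otimes_{\sB}\sE^\op$ with $\ev_{\sE/\sB}\otimes_{\sBenv}\sB$, i.e.\ it is the ``trace'' map attached to the dualizable $\sB$-module $\sE$. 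Composing with $\on{m}$ then collapses $\HH_*(\sB/A)$ back to $\sB$, and the resulting composite $\sC\to\sB$ is precisely $\ev_{\sE/\sB}$ postcomposed with the splitting — in geometric terms, a pushforward from $S'\times_S S'$ to the derived self-intersection $s\times_S s = G$.

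The heart of the argument is then the identification of this composite $\on{m}\circ\ev^\HH_{\sE/\sB}\colon\sC\to\sB$ with the lower-right path $\bigl(K(S,0)\to K(s,0)\bigr)_*\circ\int_{S'/S}^{\on{dg}}$. Here I would invoke the explicit construction of $\int_{S'/S}^{\on{dg}}$ from \cite{beraldopippi22}: it is built from the fact that $S'$ is the vanishing locus of the Eisenstein polynomial $E$, so that $\Sing(S'\times_S S')\simeq\MF(S'\times S', \ldots)$ maps to $\MF(S,0)$ via an integration-along-the-fibers procedure, which at the level of Koszul schemes is the pushforward along the map $K(S,0)\to K(s,0)$ precomposed with the correspondence relating $S'\times_S S'$ to $S\times_{\bbA^1_S}S$. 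Concretely, both $\on{m}\circ\ev^\HH_{\sE/\sB}$ and $\bigl(K(S,0)\to K(s,0)\bigr)_*\circ\int_{S'/S}^{\on{dg}}$ arise from the same geometric correspondence $s\times_S S'\times_S s \rightsquigarrow s\times_S s$, one packaged through the bimodule $\sE$ and the other through the matrix-factorization integration; I would check they agree by comparing their effects on the compact generators — $t_*k$ on the $\sE$-side and the corresponding generator of $\MF(S,0)_s$ — reusing the computations already performed in the proof of Theorem \ref{thm: End_C(E)=B} (where $\ffG(t_*k,t_*k)\simeq 1_\sB\oplus 1_\sB[1]$ and the hom-complexes were pinned down explicitly).

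The main obstacle I anticipate is the bookkeeping needed to match the \emph{splitting} of $\HH_*(\sB/A)$ induced by the symmetric monoidal structure with the \emph{geometric} splitting underlying $\int_{S'/S}^{\on{dg}}$: the retraction $\on{m}$ is characterized abstractly (as the map $\HH_*(\sB/A)\to\sB$ dual to $\ev^\HH_{\sB/\sB}$), whereas the construction in \cite{beraldopippi22} picks out a concrete section using the retraction $S\to s$, and one must verify these coincide. I expect this to follow from the uniqueness (up to contractible choice) of a retraction to $\ev^\HH_{\sB/\sB}$ once one knows $\sB$ is symmetric monoidal and Karoubi-generated by its unit, but making this precise — and tracking the Tate twists and shifts through the chain of equivalences $\sE^\op\otimes_\sB\sE\simeq\sC$, $\sE\otimes_\sC\sE^\op\simeq\sB$, and $K(S',0)\hookrightarrow S'\times_S S'$ — is where the real care is required. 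A secondary subtlety is ensuring the diagram commutes as a diagram of \emph{dg-functors} (i.e.\ with all higher coherences), rather than merely on objects or on $\pi_0$; for this I would organize the proof so that every arrow in \eqref{diag: m circ evHH vs int} is manifestly obtained by applying $\Sing(-)$ (or $\rl_S$) to a composite of pushforwards and pullbacks along an explicit diagram of derived schemes, after which commutativity is inherited from the functoriality of these operations together with base-change.
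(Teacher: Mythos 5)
Your overall route coincides with the paper's: reduce via the Morita bimodule $\sE$, observe that the bottom circuit precomposed with $\ffF$ collapses to $\ev_{\sE/\sB}$ (since $\on{m}$ retracts $\ev^{\HH}_{\sB/\sB}$), and then identify the top circuit geometrically through the Koszul schemes $K(S,0)$, $K(S',0)$ and the quasi-smooth embedding $d\delta_{S'/S}$. The one step that would not survive as written is your plan to ``check they agree by comparing their effects on the compact generators'': agreement of two dg-functors on a Karoubi generator such as $t_*k$ does not produce an equivalence of functors, and the computations from the proof of Theorem \ref{thm: End_C(E)=B} only pin down objects and hom-complexes, not a coherent identification of the two composites. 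The paper's replacement for this is precisely the mechanism you gesture at in your last paragraph but do not make the backbone of the argument: since $\sC \simeq (\sE^{\op}\otimes_A \sE)\otimes_{\sB^{\env}}\sB$, precomposition with $\ffF$ induces an equivalence $\Hom_{\sB}(\sC,\sB)\simeq \Hom_{\sB^{\env}}(\sE^{\op}\otimes_A\sE,\sB)$, and both circuits are $\sB$-linear, so it suffices to check commutativity after precomposing with $\ffF$; at that point every arrow is an explicit pushforward or pullback, $\int^{\on{dg}}_{S'/S}=v_{K(S,0)*}\circ d\delta_{S'/S}^*$ and $\ev_{\sE/\sB}=v_{K(s,0)*}\circ(\ul\Hom_{s'}(-,k')\boxtimes_{S'}-)$, and commutativity follows from base change in the two fiber squares relating $K(s,0)$, $K(S,0)$, $K(s',0)$, $K(S',0)$, $s'\times_s s'$ and $S'\times_S S'$. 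Your worry about whether the abstract retraction $\on{m}$ matches a ``geometric'' splitting is a non-issue in the paper's argument: $\on{m}$ is only ever used through the identity $\on{m}\circ\ev^{\HH}_{\sE/\sB}\circ\ffF\simeq\ev_{\sE/\sB}$, which holds for the canonical retraction supplied by the symmetric monoidality of $\sB$, with no further compatibility to verify.
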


\begin{proof}
In view of $\sC \simeq (\sE^{\op}\otimes_A \sE) \otimes_{\sB^{\env}} \sB$,
the functor obtained by pre-composing with the arrow 
$$
  \ffF:=i_{S''*}\bigt{\ul \Hom_{s'}(-,k')\boxtimes_s -}:
  \sE^{\op}\otimes_A \sE
  \to
  \sC
$$
induces an equivalence
$$
\Hom_{\sB} (\sC, \sB)
\simeq
\Hom_{\sB^{\env}} (\sE^{\op} \otimes_A \sE, \sB).
$$
Hence, since the two circuits of the above diagram are both $\sB$-linear, it suffices to prove the commutativity of
\begin{equation}\label{diag: m circ evHH vs int simplified}
\nonumber
  \begin{tikzpicture}[scale=1.5]
    \node (UM) at (0,1) {$ \sE^{\op}  \otimes_A \sE$  };
    \node (M) at (0,0) {$\HH_*(\sB/A)\simeq \HH_*(\sC/A)$};
    \node (UN) at (3,1) {$\MF(S,0)_s$};
    \node (N) at (3,0) {$\sB$. }; 
    \draw[->] (UM) to node[left] { $\ev^{\HH}_{\sE/\sB} \circ \, \ffF$ } (M);
    \draw[->] (UN) to node[right] { $\bigt{K(S,0)\to K(s,0)}_*$ } (N);
    \draw[->] (UM) to node[above]{$\int_{S'/S}^{\on{dg}} \circ \, \ffF$} (UN);
    \draw[->] (M) to node[below]{$\on{m}$} (N);
    \end{tikzpicture}
    \end{equation}
The bottom circuit can be simplified as
$$
\on{m}\circ \ev^{\HH}_{\sE/\sB}\circ \, \ffF
\simeq \ev_{\sE/\sB}.
$$
To manipulate the top circuit, we will use the two fiber squares
\begin{equation*}
  \begin{tikzpicture}[scale=1.5]
    \node (Ld) at (0,0) {$K(S,0)$};
    \node (Lu) at (0,1) {$K(s,0)\simeq s\times_Ss$};
    \node (Cd) at (3,0) {$K(S',0)$};
    \node (Cu) at (3,1) {$K(s',0)\simeq
s'\times_{S'}s'$};
    \node (Rd) at (6,0) {$S' \times_S S'.$};
    \node (Ru) at (6,1) {$s'\times_s s'$};
    \draw[->] (Cd) to node[above]{$v_{K(S,0)}$} (Ld);
    \draw[->] (Cd) to node[above]{$d\delta_{S'/S}$} (Rd);
    \draw[->] (Cu) to node[above]{$v_{K(s,0)}$} (Lu);
    \draw[->] (Cu) to node[above]{$d\delta_{s'/s}$} (Ru);
    \draw[->] (Lu) to node[right]{$i_{K(S,0)}$} (Ld);
    \draw[->] (Cu) to node[right]{$i_{K(S',0)}$} (Cd);
    \draw[->] (Ru) to node[right]{$i_{S''}$} (Rd);
  \end{tikzpicture}
\end{equation*}
By construction (see \cite{beraldopippi22}) we have
$$
  \int_{S'/S}^{\on{dg}}
 =
  v_{K(S,0)*}\circ d\delta_{S'/S}^*.
$$
On the other hand, $\ev_{\sE/\sB}$ can be written geometrically as
$$
\ev_{\sE/\sB}
\simeq
v_{K(s,0)*}\circ \bigt{ \ul \Hom_{s'}(-,k')\boxtimes_{S'} -}.
$$
It follows that 
$$ 
\int_{S'/S}^{\on{dg}} \circ \, \ffF
\simeq i_{K(S,0)*}\circ \ev_{\sE/\sB}.
$$
We conclude that
$$
\bigt{K(S,0)\to K(s,0)}_* 
\circ \int_{S'/S}^{\on{dg}}
\circ \,
\ffF 
\simeq 
\bigt{K(S,0)\to K(s,0)}_* \circ i_{K(S,0)*}\circ \ev_{\sE/\sB}
 \simeq 
 \ev_{\sE/\sB}
$$
as desired.
\end{proof}

\begin{prop}\label{prop: nc nature ar}
The map
$$ 
\int_{S'/S}^{\on{dg}}:
\sC
\to
\MF(S,0)_s
$$
induces the opposite of the Artin character upon taking $\HK_0$.
\end{prop}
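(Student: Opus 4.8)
The plan is to compute the map induced by $\int_{S'/S}^{\on{dg}}$ on $\HK_0$ by evaluating it on the classes of the graphs of the Galois automorphisms. Recall from the proof of Proposition \ref{prop: rl(C)} the identification $\HK_0(\sC^+)=\uG_0(H)\simeq\bbZ[\GLK]$, where $H=S'\times_SS'$ and, for $g\in\GLK$, the basis vector $\langle g\rangle$ is the class of the structure sheaf of the graph $\Gamma_g\subseteq H$, so that $\langle\id\rangle=[\mathcal{O}_{\Delta_{S'}}]$ is the unit of the convolution. Applying $\HK$ to the localization sequence $\Perf(H)\to\Coh(H)\to\Sing(H)=\sC$, the class of the perfect complex $\mathcal{O}_H$ — which in $\uG_0(H)$ equals $\sum_{g\in\GLK}\langle g\rangle$ — lies in the image of $\HK_0(\Perf H)$, hence
$$
\sum_{g\in\GLK}\langle g\rangle=0\quad\text{in }\HK_0(\sC).
$$
On the target side, the analogous localization sequence for $K(S,0)$ together with dévissage to the closed fibre identifies $\HK_0\bigt{\MF(S,0)_s}\simeq\bbZ$, the class of an object being recorded as an $A$-length (equivalently, via $\rl_S\bigt{\MF(S,0)_s}\simeq(i_S)_*\QellI(\beta)$ and the non-commutative Chern character, as a class in $\uH^0_\et(S;\Qell)\simeq\Qell$). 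It then suffices to compute the image of each $\langle g\rangle$ under $\HK_0(\sC^+)\to\HK_0(\sC)\to\HK_0\bigt{\MF(S,0)_s}\simeq\bbZ$ and to match it with $-\ar(g)$.

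For $g\neq\id$ this is a direct computation using the formula $\int_{S'/S}^{\on{dg}}=v_{K(S,0)*}\circ d\delta_{S'/S}^{*}$ recalled in the previous lemma (and \cite{beraldopippi22}). Pulling back $\mathcal{O}_{\Gamma_g}$ along $d\delta_{S'/S}\colon K(S',0)\to H$ and unwinding the strict models, one obtains a complex quasi-isomorphic to the Koszul complex on the element $\pi_{\uL}-g(\pi_{\uL})\in A'=\scrO_{\uL}$; since $g\neq\id$ this element is a nonzero-divisor in the discrete valuation ring $A'$, so the pullback is quasi-isomorphic to the cyclic torsion module $A'/\langle g(\pi_{\uL})-\pi_{\uL}\rangle$, concentrated at the closed point. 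Pushing forward along the finite morphism $v_{K(S,0)}$ and reading off the resulting class in $\HK_0\bigt{\MF(S,0)_s}\simeq\bbZ$ produces $\lg_A\bigt{A'/\langle g(\pi_{\uL})-\pi_{\uL}\rangle}$, which by the definition of the Artin character is exactly $-\ar(g)$. The only genuine point of vigilance here is an overall shift coming from the odd cohomological degree of the Koszul factor; the sign is fixed by the normalisation of the duality datum for $\sE$ over $\sB$.

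For $g=\id$ the graph is the diagonal $\Delta_{S'}$, which is not regularly embedded in $H$ (it lies in the common specialisation locus of every $\Gamma_g$), so the naive Koszul pullback degenerates and does not directly exhibit the expected torsion. Instead one combines the relation $\langle\id\rangle=-\sum_{g\neq\id}\langle g\rangle$ in $\HK_0(\sC)$ with Serre's different–discriminant formula (\cite[Ch.~III--IV]{serre79})
$$
\lg_A\bigt{\Omega^1_{A'/A}}=v_{A'}\bigt{E'(\pi_{\uL})}=\sum_{g\neq\id}v_{A'}\bigt{g(\pi_{\uL})-\pi_{\uL}}=\sum_{g\neq\id}\lg_A\bigt{A'/\langle g(\pi_{\uL})-\pi_{\uL}\rangle}
$$
to deduce $\HK_0\bigt{\textstyle\int_{S'/S}^{\on{dg}}}(\langle\id\rangle)=-\sum_{g\neq\id}\bigt{-\ar(g)}=\sum_{g\neq\id}\ar(g)=-\ar(\id)$, the last step being again the same formula together with $\ar(\id)=\lg_A\Omega^1_{A'/A}$. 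Hence $\HK_0\bigt{\textstyle\int_{S'/S}^{\on{dg}}}$ agrees with $g\mapsto-\ar(g)$ on every $\langle g\rangle$, which is the assertion; this is consistent since $\sum_{g}\ar(g)=0$, so $-\ar$ does descend from $\bbZ[\GLK]$ to the relevant quotient.

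The principal obstacle is precisely the $g=\id$ term, i.e.\ the self-intersection of the diagonal inside $H$: since $\Delta_{S'}$ fails to be a local complete intersection of the expected virtual dimension in $H$, one cannot read off $-\ar(\id)$ from a bare pullback and must route it through the relation $\sum_g\langle g\rangle=0$ and the different formula. This makes it essential that the two identifications $\HK_0(\sC^+)\simeq\bbZ[\GLK]$ and $\HK_0\bigt{\MF(S,0)_s}\simeq\bbZ$ — together with their sign conventions and the precise descriptions of $d\delta_{S'/S}$ and $v_{K(S,0)}$ from \cite{beraldopippi22} — be in place. The remaining cases $g\neq\id$, by contrast, are elementary length computations in the discrete valuation ring $\scrO_{\uL}$.
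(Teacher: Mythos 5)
Your proposal is correct and its overall skeleton matches the paper's: identify $\HK_0(\sC)$ with $\bbZ[\GLK]$ modulo the relation $\sum_{g}\langle g\rangle=0$, evaluate the induced map on each graph class, and match with $-\ar(g)$; the computations for $g\neq\id$ agree with the paper's (both reduce to $\lg_A\bigt{A'/\langle g(\pi_{\uL})-\pi_{\uL}\rangle}=-\ar(g)$ via the derived fiber square for $\gamma_g$). The two genuine divergences are these. First, you obtain the relation $\sum_g\langle g\rangle=0$ from the localization sequence $\Perf(H)\to\Coh(H)\to\sC$ together with the filtration of $\ccO_H$ by the graphs, whereas the paper invokes the fact that $\HK$ sends abstract blow-up squares to cartesian squares, applied to $s\to S''\leftarrow S'\times\GLK$; your route is more elementary and equally valid (note $A''$ is local, so $K_0(\Perf(H))$ is indeed generated by $[\ccO_H]$). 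Second, and more substantively, you sidestep the $g=\id$ term by combining $\langle\id\rangle=-\sum_{g\neq\id}\langle g\rangle$ with Serre's different--discriminant formula, on the grounds that the diagonal is not regularly embedded in $H$. That concern somewhat misses the point of the construction: $d\delta_{S'/S}\colon K(S',0)\to S'\times_SS'$ is a \emph{quasi-smooth derived} enhancement of the diagonal precisely so that this pullback is unproblematic, and the paper computes it directly --- the derived fiber square exhibits $d\delta_{S'/S}^*\ccO_{\Delta_{S'}}$ as $K(S',0)$ mapped in via $\epsilon\mapsto E'(\pi_{\uL})\cdot\epsilon$, yielding the matrix factorization with components $E'(\pi_{\uL})$ and $0$ and hence the class $-\lg_A(\Omega^1_{A'/A})=-\ar(\id)$ with no appeal to the different formula. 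Your workaround is nonetheless rigorous and self-consistent (it uses exactly the identity $\sum_g\ar(g)=0$ that is forced by descent to the reduced group algebra), at the cost of importing the classical identity $v_{A'}(E'(\pi_{\uL}))=\sum_{g\neq\id}v_{A'}(g(\pi_{\uL})-\pi_{\uL})$ that the paper's derived computation reproves in passing.
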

\begin{proof}
We have that
$$
  \HK_0(\sC)
  \simeq 
  \bbZ[\GLK]/ \langle \sum_{g\in \GLK} [\Gamma_g] \rangle.
$$
This follows from the fact that $\HK$ sends (abstract) blow-up squares to cartesian squares (\cite{cisinski13}): we apply this property to the cartesian square obtained from
$$
  s\to S'' \leftarrow S'\times \GLK.
$$
Using the definition of $d\delta_{S'/S}$, see \cite{beraldopippi22}, one computes that there are derived fiber squares
\begin{equation*}
    \begin{tikzpicture}[scale=1.5]
    \node (Lu) at (0,1) {$K(S',0)$};
    \node (Ru) at (3,1) {$K(S',0)$};
    \node (Ld) at (0,0) {$S'$};
    \node (Rd) at (3,0) {$S'\times_SS'$};
    \draw[->] (Lu) to node[above] {$$} (Ru);
    \draw[->] (Ld) to node[above] {$\delta_{S'/S}$} (Rd);
    \draw[->] (Lu) to node[above] {$$} (Ld);
    \draw[->] (Ru) to node[right] {$d\delta_{S'/S}$} (Rd);
      
    \end{tikzpicture}
    \hspace{0.5cm}
    \begin{tikzpicture}[scale=1.5]
    \node (Lu) at (0,1) {$\up{V}\bigt{g(\pi_{\uL})-\pi_{\uL}}$};
    \node (Ru) at (3,1) {$K(S',0)$};
    \node (Ld) at (0,0) {$S'$};
    \node (Rd) at (3,0) {$S'\times_SS'$};
    \draw[->] (Lu) to node[above] {$$} (Ru);
    \draw[->] (Ld) to node[above] {$\gamma_g$} (Rd);
    \draw[->] (Lu) to node[above] {$$} (Ld);
    \draw[->] (Ru) to node[right] {$d\delta_{S'/S}$} (Rd);
      
    \end{tikzpicture}
\end{equation*}
where in the square on the right $g\neq \id$.
Recall that $K(S',0)$ is the derived scheme associated to the simplicial Koszul algebra generated by one element in degree $1$.
In the derived fiber square on the left, the top arrow displayed is the one induced by sending the generator $\epsilon$ in degree $1$ to $E'(\pi_{\uL})\cdot \epsilon$, where
$$
E'(\pi_{\uL})=\prod_{g\in \GLK \setminus \{\id\}}\bigt{g(\pi_{\uL})-\pi_{\uL}}.
$$
Therefore, by applying derived base change one sees that
$$ 
\int_{S'/S}^{\on{dg}}:
\sC
\to
\MF(S,0)_s
$$
maps $\Delta_{S'/S}$ to the matrix factorization
\begin{equation*}
    \begin{tikzpicture}[scale=1.5]
      \node (a) at (0,0) {$\ccO_{S'}$};
      \node (b) at (1.5,0) {$\ccO_{S'}$};
      \path[->,font=\scriptsize,>=angle 90]
           ([yshift= 1.5pt]b.west) edge node[above] {$E'(\pi_{\uL})$} ([yshift= 1.5pt]a.east);
      \path[->,font=\scriptsize,>=angle 90]
           ([yshift= -1.5pt]a.east) edge node[below] {$0$} ([yshift= -1.5pt]b.west);
\end{tikzpicture}
\end{equation*}
and $\gamma_{g*}\ccO_{S'}$ to the matrix factorization
\begin{equation*}
    \begin{tikzpicture}[scale=1.5]
      \node (a) at (0,0) {$\ccO_{S'}$};
      \node (b) at (1.5,0) {$\ccO_{S'}.$};
      \path[->,font=\scriptsize,>=angle 90]
           ([yshift= 1.5pt]b.west) edge node[above] {$0$} ([yshift= 1.5pt]a.east);
      \path[->,font=\scriptsize,>=angle 90]
           ([yshift= -1.5pt]a.east) edge node[below] {$g(\pi_{\uL})-\pi_{\uL}$} ([yshift= -1.5pt]b.west);
\end{tikzpicture}
\end{equation*}
The claim follows easily from the well-known facts that
$$
  \deg:
  \uG_0(S)_s
  \xto{\simeq}
  \bbZ
$$
and that
$$
  \HK_0(\MF(S,0)_s)
  \xto{\simeq}
  \uG_0(S)_s
$$
$$
  [M\leftrightarrows N]
  \mapsto
  [\coker(M\to N)]-[\coker(N\to M)].
$$
\end{proof}
\begin{rmk}
Notice that in the proposition above we didn't use that $S$ is of pure characteristic. 
In fact, the proof works is mixed characteristic as well.
\end{rmk}

\sssec{}

Applying $\rl_S$ to \eqref{diag: m circ evHH vs int} and invoking Proposition \ref{prop: nc nature ar}, we obtain the commutative diagram
\begin{equation} \label{diag:rl(m circ ev^HH)=-ar}
  \begin{tikzpicture}[scale=1.5]
    \node (Lu) at (0,1.5) {$(i_S)_*\QellI(\beta)(\GLK)$  };
    \node (Ld) at (0,0) {$\rl_S\bigt{\HH_*(\sB/A)}$};
    \node (Ru) at (5,1.5) {$\rl_S\bigt{\MF(S,0)_s}$};
    \node (Rd) at (5,0) {$\rl_S(\sB)\simeq (i_S)_*\QellI(\beta)$.};
    \draw[->] (Lu) to node[right] { $\wideprime{\rl_S(\ev^{\HH}_{\sC/\sC})}$ } (Ld);
    \draw[->] (Ru) to node[right] { $\simeq$ } (Rd);
    \draw[->] (Lu) to node[above]{$\int_{S'/S}=\rl_S(\int_{S'/S}^{\on{dg}}) \sim -\ar$} (Ru);
    \draw[->] (Ld) to node[above]{$\rl_S(\on{m})$} (Rd);
  \end{tikzpicture}
\end{equation}

\sssec{}

We can now conclude the proof of Theorem \ref{mainthm: pure char}.
Thanks to Theorem \ref{thm: categorical Bloch}, we have an equality
\begin{align*}
  \Bl^{\cat}(X/S) & = - \dimtot^{\cat}(X/S;\Qell) \\
                  & := \idcat([\varepsilon_{\sT}\circ \gamma_{\sT}])-\arcat([\varepsilon_{\sU}\circ \gamma_{\sU}]) 
\end{align*}
of classes in $\uH^0_\et\Bigt{S,\rl_S \bigt{\HH_*(\sB/A)}}$.
Let us now apply  
$$
\rl_S(\on{m}):\uH^0_\et\Bigt{S,\rl_S \bigt{\HH_*(\sB/A)}} \to \uH^0_\et\bigt{S,\rl_S(\sB)}\simeq \Qell
$$
to the above equality. It follows from \cite{beraldopippi22} that 
$$
\rl_S(\on{m}) \bigt{\Bl^{\cat}(X/S)}
=
\Bl(X/S).
$$
We compute:
\begin{align*}
    \Bl(X/S) & = \rl_S(\on{m}) \bigt{\Bl(X/S)^{\cat}} \\
             & = \rl_S(\on{m}) \Bigt{\idcat([\wt{\eps}_{\sT}\circ \wt{\gamma}_{\sT}])-\arcat([\wt{\varepsilon}^{\HH}_{\sU}\circ \wt{\gamma}_{\sU}]) } && \text{Thm. \ref{thm: categorical Bloch}} \\
             & = \Tr_{\rl_S(\sB)}\bigt{\id: \rl_S(\sT)} - \ar \Bigt{\Tr_{\rl_S(\sC)}\bigt{\id:\rl_S(\sU)}} && \text{\ref{sssec: retraction ev^(HH)_(B/B)}, \eqref{diag:rl(m circ ev^HH)=-ar} } \\
             & && \text{Thm. \ref{cor: duality r(T) over r(B)} }\\
             & && \text{Thm. \ref{thm:duality for r(U)}}\\
             & = -\Bigt{\Tr_{\QellI(\beta)}(\id : \nu^{\IK}) + \frac{1}{|\GLK|}\sum_{g\in \GLK} \Tr_{\QellI(\beta)}(g:\nu^{\IL}) \cdot \ar(g)} && \text{\cite[Thm. 4.39]{brtv18}  }\\
             & && \text{Prop. \ref{prop: rl(C) acts on rl(U)} }\\
             & && \text{Cor. \ref{cor: tr over Ql^I= tr over i_*Ql^I} }\\
             & && \text{Cor. \ref{cor: trace reduced group alg}  }\\
             & && \ar(g)\in \bbZ\\
             & = - \Tr_{\QellI}(\id : \Phi^{\IK})  - \Ar(\Phi) && \text{\eqref{eqn: def Ar}}\\
             & = -\dimtot (\Phi). && \text{\eqref{eqn: dimtot = Ar + Tr_(Qell^I)(id:V^I)}}
\end{align*}

\sec{The hypersurface case}\label{sec: connection to the classical conjectures}

In this section, we adapt the proof of Theorem \ref{thm: categorical Bloch} to prove Theorem \ref{mainthm: hypersurf}.

\ssec{Proof of Theorem \ref{mainthm: hypersurf}}\label{ssec: proof of thm B}

\sssec{} 

In this section, we do not impose any restriction on the characteristic of $S$. Instead, we assume that $X$ is the zero locus of a section $\sigma \in \uH^0(P,L)$ of a line bundle $L$ on a smooth $S$-scheme $P$.
Let $\iota:Z\subseteq X$ denote the singular locus of $X \to S$.
We require that the composition $Z \hto X \to S$ be proper, while $X \to S$ does not need to be.

\sssec{} 

Under these circumstances, the main construction of \cite{beraldopippi22} yields a dg-functor
\begin{equation} \label{eqn:pullback fat diag}
   \Sing(X\times_S X)
  \to
  \MF(X, L,0)_Z
\end{equation}
obtained by pulling back along a quasi-smooth closed embedding
$$
  d\delta_{X/S}:
  K(X, \restr L X,0)
  \to 
  X\times_S X,
$$
which is a derived enhanced of the diagonal $\delta_{X/S} : X \to X \times_S X$.

\sssec{}

As explained in \cite{beraldopippi22}, upon taking realizations, \eqref{eqn:pullback fat diag} induces a morphism
$$
  \int_{X/S}:
  \rl_S\bigt{\Sing(X\times_SX)}
  \to
  \rl_S\bigt{\MF(X,\restr L X,0)_Z}
  \xto{\deg}
  \rl_S\bigt{\MF(S,0)_s}
$$
in $\Mod_{\Ql{,S}(\beta)}\bigt{\Shv(S)}$. 
The morphism $\deg$ identifies with the degree map in homology under the identifications
$$
  \rl_S\bigt{\MF(X,\restr L X,0)_Z}
  \simeq
  (i_S)_*(p_Z)_*\bigt{\oml{,Z}(\beta)\oplus \oml{,Z}(\beta)[1]},
$$
$$
  \rl_S\bigt{\MF(S,0)_s}
  \simeq
  (i_S)_*\bigt{\Qell(\beta)\oplus \Qell(\beta)[1]}.
$$

Using the $\ell$-adic Chern character, we further obtain a homomorphism of abelian groups
$$
  \uG_0\bigt{X\times_S X}
  \to
  \HK_0\bigt{\Sing(X\times_S X)}
  \xto{\chern}
  \uH^0_\et \Bigt{S, \rl_{S} \bigt{\Sing(X\times_S X)}}
  \xto{\int_{X/S}}
  \uH^0_\et \Bigt{S, \rl_{S} \bigt{\MF(S,0)_s}}.
$$

\sssec{} 

It follows immediately from \cite[Proposition 3.30]{brtv18} that $\uH^0_\et \Bigt{S, \rl_S \bigt{\MF(S,0)_s}} \simeq \Qell$.
Under this identification, we showed in \cite{beraldopippi22} that
$$
  \int_{X/S}[\Delta_X]
  =
  \Bl(X/S).
$$

\sssec{} 

We can apply the above construction with $s \to S$ replacing $X\to S$. In this particular case, \eqref{eqn:pullback fat diag} becomes the dg-functor 
$$
  \sB
  \to
  \MF(S,0)_s
$$
that corresponds, under the $A$-linear equivalence $\Sing(G)\simeq \MF(s,0)$, to the push-forward
$$
  \bigt{K(s,0)\to K(S,0)}_*:
  \MF(s,0)
  \to
  \MF(S,0)_s.
$$
We denote by $\int_{s/S}: \rl_S(\sB) \to \rl(\MF(S,0)_s)$ the image of $\sB \to \MF(S,0)_s$ under $\rl_S$.

\sssec{}
Let
$$
  \int'_{S'/S}:
  \HH_*\bigt{\rl_S(\sC)/\rl_S(A)}
  \to
  \rl_S\bigt{\MF(S,0)_s}
$$
denote the morphism induced by $\int_{S'/S}$.
The following result is crucial: it allows - in the hypersurface case - to avoid reference to the cohomology group $\uH^0_{\et}\Bigt{S, \rl_S \bigt{\HH_*(\sB/A)}}$.
\begin{prop} \label{prop:two commutative squares}
{The two squares below are commutative:}
\begin{equation}  \label{diag:big-comm-diagram-rl (GR)}
  \begin{tikzpicture}[scale=1.5]
    \node (Lu) at (-4,1.2) {$\rl_S(\sT) \otimes_{\rl_S(\sB)} \rl_S(\sT)$};
    \node (Ld) at (-4,0) {$\rl_S(\sB)$};
    \node (Cu) at (0,1.2) {$\rl_S\bigt{\Sing(X\times_S X)}$};
    \node (Cd) at (0,0) {$ \rl_S\bigt{\MF(S,0)_s}$};
    \node (Ru) at (4,1.2) {$\rl_S(\sU) \otimes_{\rl_S(\sC)} \rl_S(\sU)$};
    \node (Rd) at (4,0) {$ \HH_*\bigt{\rl_S(\sC)/(i_S)_*\QellI(\beta)}$.};
    \draw[->] (Lu) to node[right] { $\wt{\eps}_{\sT}$ } (Ld);
    \draw[->] (Cu) to node[right] { $\int_{X/S}$ } (Cd);
    \draw[->] (Ru) to node[right] { $\wt{\eps}_{\sU}^{\HH}$ } (Rd);
    \draw[->] (Ru) to node[above]{$ \mu_{\sU/\sC}$} (Cu);
    \draw[->] (Rd) to node[above]{$\int'_{S'/S}$} (Cd);
    \draw[->] (Lu) to node[above]{$ \mu_{\sT/\sB}$} (Cu);
    \draw[->] (Ld) to node[above]{$\int_{s/S}$} (Cd);
  \end{tikzpicture}
\end{equation}
\end{prop}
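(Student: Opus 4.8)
The plan is to deduce the two squares from diagram \eqref{diag:big-comm-diagram-rl} of the previous section by postcomposing it with a single \emph{integration morphism}. Concretely, I would construct a map
$$
\lambda:\;
\rl_S\bigl(\HH_*(\sB/A)\bigr)
\simeq
\rl_S\bigl(\HH_*(\sC/A)\bigr)
\longrightarrow
\rl_S\bigl(\MF(S,0)_s\bigr)
$$
(together with its evident variant on $\HH_*\bigl(\rl_S(\sC)/(i_S)_*\QellI(\beta)\bigr)$), and then verify the three identities
$$
\lambda\circ\idcat=\int_{s/S},
\qquad
\lambda\circ\rl_S(\ev^{\HH})=\int_{X/S},
\qquad
\lambda\circ(-\arcat)=\int'_{S'/S},
$$
the middle one being an equality of maps out of $\rl_S(\sT^\op\otimes_\sB\sT)\simeq\rl_S\bigl(\Sing(X\times_S X)\bigr)$. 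Granting these, the commutativity of the two squares of \eqref{diag:big-comm-diagram-rl (GR)} is immediate: one simply postcomposes the two (already commutative) squares of \eqref{diag:big-comm-diagram-rl} with $\lambda$.

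To construct $\lambda$ and obtain the first and third identities, I would exploit the derived diagonal $d\delta_{X/S}:K(X,\restr L X,0)\to X\times_S X$ of \cite{beraldopippi22} and its base changes. This morphism is stable under base change along $S$-schemes: restricting along $s\hto S$ recovers the derived closed embedding $K(s,0)\simeq s\times_S s$, which under $\sB\simeq\MF(s,0)$ induces $\int_{s/S}$; restricting along $S'\to S$ recovers $d\delta_{S'/S}:K(S',0)\to S'\times_S S'$, which induces $\int_{S'/S}$, hence $\int'_{S'/S}$ after passing to Hochschild homology. These fit into a commutative diagram of Koszul schemes over $K(S,0)$ — an enlargement of the one appearing in the pure-characteristic case — all of whose faces are cartesian. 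Since integration in each case is pullback along the relevant derived diagonal followed by the proper pushforward to $K(S,0)$ and the degree map, base change and the projection formula simultaneously produce $\lambda$ and the identities $\lambda\circ\idcat=\int_{s/S}$, $\lambda\circ(-\arcat)=\int'_{S'/S}$ — by the very arguments of the lemma around \eqref{diag: m circ evHH vs int} and of Proposition \ref{prop: nc nature ar}, which, as noted there, do not use the pure-characteristic hypothesis.

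The remaining and decisive point is the middle identity $\lambda\circ\rl_S(\ev^{\HH})=\int_{X/S}$. Here one must reconcile the abstract Hochschild evaluation $\ev^{\HH}_{\sT/\sB}:\sT^\op\otimes_\sB\sT\to\HH_*(\sB/A)$ (and the equivalent $\ev^{\HH}_{\sU/\sC}$), which is defined purely in terms of the $\sB$- and $\sC$-module structures, with the concrete graph-construction functor $\Sing(X\times_S X)\to\MF(X,\restr L X,0)_Z\xto{\deg}\MF(S,0)_s$ defining $\int_{X/S}$. The comparison is carried out after applying $\rl_S$, using the K\"unneth identification $\sT^\op\otimes_\sB\sT\simeq\Sing(X\times_S X)$ together with its $\ell$-adic realization $\rl_S(\sT^\op\otimes_\sB\sT)\simeq(i_S)_*(\nu\circledast\nu)^{\IK}(1)$ from Theorem \ref{thm: I-invariant Thom-Sebastiani}; concretely, it amounts to a base-change computation relative to $d\delta_{X/S}$, keeping track of the singular-locus support throughout, and is essentially the main comparison of \cite{beraldopippi22, beraldopippi23} re-run with $d\delta_{X/S}$ as a formal input rather than with a fixed hypersurface equation. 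This is where the only real work lies; once it is done — a long but mechanical diagram chase using only base change, the projection formula, and the functoriality and lax-monoidality of $\rl_S$ — no further geometric input is required beyond Sections \ref{sec:morita}--\ref{sec: realizations}.
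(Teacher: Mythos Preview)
Your strategy has a genuine gap at its starting point: the construction of the map $\lambda:\rl_S\bigl(\HH_*(\sB/A)\bigr)\to\rl_S\bigl(\MF(S,0)_s\bigr)$. In pure characteristic this map exists --- it is $\rl_S(\on{m})$, coming from the retraction $\on{m}:\HH_*(\sB/A)\to\sB$ available because $\sB$ is symmetric monoidal (see Section~\ref{sssec: retraction ev^(HH)_(B/B)}). In mixed characteristic $\sB$ is \emph{not} symmetric monoidal, and no such retraction is known. Your sketch says that ``base change and the projection formula simultaneously produce $\lambda$'' via a diagram of Koszul schemes, but the derived diagonals $d\delta_{-/S}$ give dg-functors between singularity categories, not maps out of $\HH_*(\sB/A)=\sB\otimes_{\sBenv}\sB$; to get a map out of this colimit you would need a $\sBenv$-bilinear structure that you never specify. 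Worse, the middle identity $\lambda\circ\rl_S(\ev^{\HH})=\int_{X/S}$ asks that the hypersurface-dependent functor $\int_{X/S}^{\on{dg}}$ factor (after $\rl_S$) through the purely module-theoretic $\ev^{\HH}$, which is precisely the kind of control over $\rl_S\bigl(\HH_*(\sB/A)\bigr)$ that the paper flags as an open problem (Section~\ref{ssec:intro-classical conjectures}) and that the hypersurface argument is designed to \emph{bypass}.

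The paper's proof avoids $\HH_*(\sB/A)$ entirely. It first proves an unrefined version (Proposition~\ref{prop:pre-big-comm-diagram-rl (GR)}) with $\otimes_{\rl_S(A)}$ in place of $\otimes_{\rl_S(\sB)}$ and $\otimes_{\rl_S(\sC)}$, and with $\eps_\sT$, $\eps_\sU$ and $i_\sT$, $i_\sU$ in place of their tilded/Hochschild variants. Each square is then handled separately (Lemmas~\ref{lem: left square tech-prop} and~\ref{lem: right square tech-prop}): by $\rl_S(A)$-duality of $\rl_S(\sT)$ and $\rl_S(\sU)$, the two circuits of each square correspond to two actions of $\Ql{,Z}^{\uI}(\beta)$ on $\scrV_{X/S}^{\IK}[-1]$ (resp.\ on $\scrV_{X/S}^{\IL}/\scrV_{X/S}^{\IK}$), and one shows these actions coincide by localizing on $Z$, trivializing $L$, and exhibiting both as induced from a common action of $\Sing(S\times_{\bbA^1_S}S)$ (resp.\ $\Sing(S\times_{\bbA^2_S}S)$) via cartesian cubes of Koszul schemes. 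The refined statement then follows formally in~\ref{sssec: proof of prop on two commutative squares} because all the relevant maps factor through the finer tensor products. None of this requires a map out of $\rl_S\bigl(\HH_*(\sB/A)\bigr)$.
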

\begin{proof}
The proof is postponed to Section \ref{ssec:tech-prop}.

\end{proof}
\sssec{}

Let us assume this proposition for the moment and conclude the proof of Theorem \ref{mainthm: hypersurf}. The argument is similar to the proof of the categorical Bloch conductor formula.
Using the commutative squares above, we obtain (similarly to \eqref{eqn: magic}) that 
\begin{equation}\label{eqn: Bl=int_(s/S)Tr(id:r(T))+int_(S'/S) Tr(id:r(U))}
    \Bl(X/S) = \int_{X/S}[\Delta_X] = \int_{s/S}[\wt{\eps}_\sT \circ \wt{\gamma}_\sT] + \wideprime{\int_{S'/S}}[\wt{\eps}_\sU^{\HH} \circ \wt{\gamma}_\sU].
\end{equation}
Observe that
\begin{equation}\label{eqn: int_(s/S)=id}
  \id \simeq \int_{s/S}:
  \: \rl_S(\sB)
  \simeq
  (i_S)_*\QellI(\beta)
  \to
  \rl_S\bigt{\MF(S,0)_s}
  \simeq
  (i_S)_*\QellI(\beta)
\end{equation}
and that (see Proposition \ref{prop: nc nature ar})
\begin{equation}\label{eqn: int_(S'/S)=-ar}
  -\ar \simeq \int_{S'/S}:
  \: \rl_S(\sC)
  \simeq 
  (i_S)_*\QellI(\beta)(\GLK)
  \to
  \rl_S\bigt{\MF(S,0)_s}
  \simeq
  (i_S)_*\QellI(\beta).
\end{equation}
We conclude with the same computation as in the pure characteristic case:
\begin{align*}
    \Bl(X/S) & = \int_{s/S}[\wt{\eps}_\sT \circ \wt{\gamma}_\sT] + \wideprime{\int_{S'/S}}[\wt{\eps}_\sU^{\HH} \circ \wt{\gamma}_\sU] && \eqref{eqn: Bl=int_(s/S)Tr(id:r(T))+int_(S'/S) Tr(id:r(U))}\\
             & = [\wt{\eps}_\sT \circ \wt{\gamma}_\sT]-\ar \bigt{ [\wt{\eps}_\sU^{\HH} \circ \wt{\gamma}_\sU]} && \text{\eqref{eqn: int_(s/S)=id}, \eqref{eqn: int_(S'/S)=-ar} }\\
             & = \Tr_{\rl_S(\sB)}\bigt{\id:\rl_S(\sT)}-\ar \Bigt{\Tr_{\rl_S(\sC)}\bigt{\id:\rl_S(\sU)}} && \text{Thm. \ref{thm: duality r(T)} }\\
             & && \text{Thm. \ref{thm:duality for r(U)}}\\ 
             & = \Tr_{i_{S*}\QellI(\beta)}\bigt{\id:i_{S*}\nu^{\IK}[-1]}-\ar \Bigt{\Tr_{i_{S*}\QellI(\beta)(\GLK)}\bigt{\id:i_{S*}(\nuIquot)}} && \text{\cite[Thm. 4.39]{brtv18} }\\
             & && \text{Thm. \ref{prop: rl(C) acts on rl(U)} }\\
             & = \Tr_{\QellI(\beta)}\bigt{\id:\nu^{\IK}[-1]} - \frac{1}{|\GLK|}\sum_{g\in \GLK}\ar(g)\cdot \Tr_{\QellI(\beta)}\bigt{g:\nuIquot} && \text{Cor. \ref{cor: tr over Ql^I= tr over i_*Ql^I} }\\
             & && \text{Cor. \ref{cor: trace reduced group alg}}\\
             & && \ar(g)\in \bbZ\\
             & = - \Tr_{\QellI}(\id : \Phi^{\IK})  - \Ar(\Phi) && \text{\eqref{eqn: def Ar}}\\
             & = -\dimtot (\Phi). && \text{\eqref{eqn: dimtot = Ar + Tr_(Qell^I)(id:V^I)}}
\end{align*}

\ssec{A technical proposition} \label{ssec:tech-prop}

It remains to prove Proposition \ref{prop:two commutative squares}.

\sssec{}
Let $i_{\sT}: \rl_S(\sT)\otimes_{\rl_S(A)}\rl_S(\sT)\to \rl_S\bigt{\Sing(X\times_SX)}$ denote the morphism induced by the lax-monoidal structure on $\rl_S$ composed with the morphism
$$
  \rl_S\Bigt{(X_s\times_sX_s \to X\times_SX)_*\bigt{-\boxtimes_s \bbD_{X_s}(-)}}:
  \rl_S(\sT \otimes_A \sT^{\op})
  \to
  \rl_S\bigt{\Sing(X\times_SX)}.
$$
We denote $i_{\sU}: \rl_S(\sU)\otimes_{\rl_S(A)}\rl_S(\sU)\to \rl_S\bigt{\Sing(X\times_SX)}$ the morpshims obtained in a similar way on the other side of the Morita equivalence.
We will now prove the following.
\begin{prop}\label{prop:pre-big-comm-diagram-rl (GR)}
The two squares below are commutative:
\begin{equation}  \label{diag:pre-big-comm-diagram-rl (GR)}
  \begin{tikzpicture}[scale=1.5]
    \node (Lu) at (-4,1.2) {$\rl_S(\sT) \otimes_{\rl_S(A)} \rl_S(\sT)$};
    \node (Ld) at (-4,0) {$\rl_S(\sB)$};
    \node (Cu) at (0,1.2) {$\rl_S\bigt{\Sing(X\times_S X)}$};
    \node (Cd) at (0,0) {$ \rl_S\bigt{\MF(S,0)_s}$};
    \node (Ru) at (4,1.2) {$\rl_S(\sU) \otimes_{\rl_S(A)} \rl_s(\sU)$};
    \node (Rd) at (4,0) {$\rl_S(\sC)$.};
    \draw[->] (Lu) to node[right] { $\eps_{\sT}$ } (Ld);
    \draw[->] (Cu) to node[right] { $\int_{X/S}$ } (Cd);
    \draw[->] (Ru) to node[right] { $\eps_{\sU}$ } (Rd);
    \draw[->] (Ru) to node[above]{$ i_\sU$} (Cu);
    \draw[->] (Rd) to node[above]{$\int_{S'/S}$} (Cd);
    \draw[->] (Lu) to node[above]{$ i_\sT$} (Cu);
    \draw[->] (Ld) to node[above]{$\int_{s/S}$} (Cd);
  \end{tikzpicture}
\end{equation}
\end{prop}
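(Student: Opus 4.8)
The plan is to lift both squares to the level of dg-categories, where they become base-change identities among derived schemes, and then invoke the functoriality of the $\ell$-adic realization.

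\emph{Step 1 (reduction to a categorical statement).} By construction, $\eps_\sT$ and $i_\sT$ both factor through the canonical arrow $\mu_{\sT/A}\colon \rl_S(\sT)\otimes_{\rl_S(A)}\rl_S(\sT)\to \rl_S(\sT\otimes_A\sT^\op)$ coming from the lax-monoidal structure of $\rl_S$ (and the identification $\rl_S(\sT^\op)\simeq\rl_S(\sT)$): explicitly $\eps_\sT=\rl_S(\ev_{\sT/\sB})\circ\mu_{\sT/A}$ and $i_\sT=\rl_S(\ffF_X)\circ\mu_{\sT/A}$, where $\ffF_X\colon\sT\otimes_A\sT^\op\to\Sing(X\times_SX)$ is the K\"unneth-type dg-functor of Theorem \ref{thm:Kunneth} (external tensor product over $s$, followed by push-forward along $X_s\times_sX_s\to X\times_SX$). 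Likewise $i_\sU=\rl_S(\ffF'_X)\circ\mu_{\sU/A}$ and $\eps_\sU=\rl_S(\ev_{\sU/\sC})\circ\mu_{\sU/A}$ on the $\sC$-side, with $\ffF'_X$ the analogous functor supplied by Corollary \ref{cor: Kunneth for Sing (C side)}. Moreover, $\int_{s/S}$, $\int_{S'/S}$ and $\int_{X/S}$ are each the $\ell$-adic realization of an honest dg-functor: $\int_{s/S}$ that of the dg-functor $\sB\to\MF(S,0)_s$ described in Section \ref{ssec: proof of thm B}; $\int_{S'/S}$ that of $\int^{\on{dg}}_{S'/S}\colon\sC\to\MF(S,0)_s$; and $\int_{X/S}$ that of the composite $\int^{\on{dg}}_{X/S}\colon\Sing(X\times_SX)\to\MF(S,0)_s$, which — following \cite{beraldopippi22} — is obtained by pulling back along the quasi-smooth closed embedding $d\delta_{X/S}\colon K(X,\restr L X,0)\to X\times_SX$ and then applying the proper push-forward $\MF(X,\restr L X,0)_Z\to\MF(S,0)_s$ (available because $Z\hto S$ is proper). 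Since $\rl_S$ is an $\infty$-functor, it therefore suffices to produce, at the categorical level, the two homotopies of dg-functors
\[
  \int^{\on{dg}}_{X/S}\circ\,\ffF_X
  \;\simeq\;
  \bigt{K(s,0)\to K(S,0)}_*\circ\ev_{\sT/\sB},
  \qquad
  \int^{\on{dg}}_{X/S}\circ\,\ffF'_X
  \;\simeq\;
  \int^{\on{dg}}_{S'/S}\circ\ev_{\sU/\sC}.
\]

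\emph{Step 2 (the left-hand identity).} From the geometric descriptions recalled above, both sides of the first homotopy are composites of external tensor products, proper push-forwards, and pullbacks along (derived) closed immersions. The plan is to assemble a single commutative diagram of derived schemes — with vertices among $X_s\times_sX_s$, $X\times_SX$, $K(X,\restr L X,0)$, the derived diagonal of $X/S$, $s\times_Ss\simeq K(s,0)$, $Z$, and $K(S,0)$ — arranged so that every square to be crossed by base-change is cartesian in the \emph{derived} sense, and then to read off the homotopy from iterated (derived) base change together with the projection formula. This is the same mechanism by which $\int_{X/S}$ is defined and by which $\int_{X/S}[\Delta_X]=\Bl(X/S)$ is established in \cite{beraldopippi22}, and it is the $X$-relative counterpart of the lemma proven around diagram \eqref{diag: m circ evHH vs int}. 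I expect the identification of this diagram, and the verification that the relevant squares are derived-cartesian, to be the only genuinely technical point; the rest is formal.

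\emph{Step 3 (the right-hand identity).} I would run the parallel computation on the $\sC$-side of the Morita equivalence. Here $\sU=\Sing(X')$ with $X'=X\times_SS'$, and, through Corollary \ref{cor: Kunneth for Sing (C side)}, the dg-functors $\ffF'_X$ and $\ev_{\sU/\sC}$ admit geometric descriptions in terms of $X'\times_{S'}X'$, $X\times_SX$ and $S'\times_SS'$, while $\int^{\on{dg}}_{S'/S}$ is pullback along the derived diagonal enhancement $d\delta_{S'/S}\colon K(S',0)\to S'\times_SS'$ followed by push-forward, as in \cite{beraldopippi22} and in the pure-characteristic section. One then argues as in Step 2 with the corresponding diagram of derived schemes, the extra bookkeeping being base change along the thickening $s'=s\times_SS'$ that links the two sides — the same role $s'$ and the bimodule $\sE$ play throughout Sections \ref{sec: nctraces} and \ref{sec:morita}. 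Alternatively, once the left-hand identity is available, the right-hand one should follow from it formally, by transporting along $\sE$ and using the compatibility of $\int^{\on{dg}}_{S'/S}$ with the evaluation $\ev_{\sE/\sB}$ — namely the identity established in the lemma around diagram \eqref{diag: m circ evHH vs int}, whose proof does not use the pure-characteristic hypothesis.
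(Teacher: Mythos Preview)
Your reduction in Step~1 is correct: both circuits of each square factor as $\rl_S(\text{dg-functor})\circ\mu_{?/A}$, so it would suffice to exhibit the two homotopies of dg-functors you write down. The gap is that Steps~2 and~3 do not actually produce them; they only record the hope that a suitable commutative diagram of derived schemes exists. The paper's own argument indicates this hope is misplaced, or at least that a quite different mechanism is needed.

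The paper (Lemmas~\ref{lem: left square tech-prop} and~\ref{lem: right square tech-prop}) does not attempt a categorical lift. It works entirely on the $\ell$-adic side: using that $\rl_S(\sT)$ and $\rl_S(\sB)$ (resp.\ $\rl_S(\sU)$ and $\rl_S(\sC)$) are dualizable over $\rl_S(A)$, one converts the two circuits of each square into a pair of \emph{action} morphisms $\Ql{,Z}^{\uI}(\beta)\otimes\scrV_{X/S}^{\IK}[-1]\to\scrV_{X/S}^{\IK}[-1]$ (resp.\ with $\scrV_{X/S}^{\IL}/\scrV_{X/S}^{\IK}$). One action is the realization of the $\sB$-module structure on $\sT$; the other comes from the $\MF(P,L,0)$-module structure on $\Sing(X_s)$ supplied by the hypersurface presentation. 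At the dg-level these are actions of \emph{different} monoidal categories --- locally, $\Sing(s\times_Ss)$ versus $\Sing(s\times_{\bbA^1_s}s)$ --- and no base-change square identifies them. The crux is that, after $\rl_S$, both factor through a common action of $\Sing(S\times_{\bbA^1_S}S)$, and the two realized algebra maps into $(i_S)_*\QellI(\beta)$ coincide (both are the unit of the adjunction $(i_S^*,(i_S)_*)$). This coincidence is an $\ell$-adic phenomenon; it is the substance of the proof, and it is exactly what your derived-scheme diagram-chase would have to replace. The right square is handled by the same mechanism, using $X'$, $P'$ and the groupoid $S\times_{\bbA^2_S}S$.

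A separate issue with your alternative route in Step~3: the lemma around diagram~\eqref{diag: m circ evHH vs int} is not characteristic-free. Its statement involves the retraction $\on{m}:\HH_*(\sB/A)\to\sB$, and its proof finishes by pushing forward along $K(S,0)\to K(s,0)$; both require the section $S\to s$, which is available only in pure characteristic.
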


\begin{proof}
The proof is the combination of Lemma \ref{lem: left square tech-prop} and Lemma \ref{lem: right square tech-prop} below, which deal with each square separately.
\end{proof}

\begin{lem}\label{lem: left square tech-prop}
The left square of diagram (\ref{diag:pre-big-comm-diagram-rl (GR)}) commutes.
\end{lem}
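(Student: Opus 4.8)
The plan is to unwind all four edges of the square into $\rl_S$ of explicit dg-functors between singularity categories, cancel the common lax-structure factor, and then prove the resulting square of dg-functors commutes by a derived diagram chase (derived base change, projection formula, and the compatibility of $\rl_S$ with $(-)_*$, $(-)^*$, external products and proper traces established in \cite{brtv18}). Write $r:=\rl_S$.

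\textbf{Reduction to the categorical level.} By their very definitions in Section~\ref{ssec:duality-data for rl}, both edges emanating from $r(\sT)\otimes_{r(A)}r(\sT)$ factor through the lax-monoidal structure map $\mu_{\sT/A}\colon r(\sT)\otimes_{r(A)}r(\sT)\to r(\sT\otimes_A\sT^{\op})$: one has $\eps_\sT=r(\ev_{\sT/\sB})\circ\mu_{\sT/A}$, where $\ev_{\sT/\sB}\colon\sT\otimes_A\sT^{\op}\to\sB$ is the evaluation dg-functor (induced by $(x,y)\mapsto(X_s\x S X_s\to s\x S s)_*(y\boxtimes_S\ul\Hom(x,\ccO))$), and $i_\sT=r(\kappa)\circ\mu_{\sT/A}$, where $\kappa\colon\sT\otimes_A\sT^{\op}\to\Sing(X\x S X)$ is the Künneth dg-functor of Theorem~\ref{Thm: Kunneth for Sing}. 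Likewise $\int_{s/S}=r(\gamma_s)$ and $\int_{X/S}=r(\gamma_X)$, where $\gamma_s\colon\sB\simeq\MF(s,0)\xto{(K(s,0)\to K(S,0))_*}\MF(S,0)_s$ and $\gamma_X\colon\Sing(X\x S X)\xto{d\delta_{X/S}^*}\MF(X,\restr L X,0)_Z\xto{\deg}\MF(S,0)_s$, the map $\deg$ being (the realization of) the proper pushforward of the fat diagonal along $Z\to s$ composed with $\oml{,s}\simeq\Qell$, as recalled in Section~\ref{ssec: proof of thm B}. Since $\mu_{\sT/A}$ is common to both composites, it suffices to prove that the square of dg-functors with horizontal arrows $\kappa$ (top) and $\gamma_s$ (bottom) and vertical arrows $\ev_{\sT/\sB}$ (left) and $\gamma_X$ (right) commutes, and then apply $r$.

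\textbf{Contracting the two composites.} Restricting the quasi-smooth closed embedding $d\delta_{X/S}\colon K(X,\restr L X,0)\to X\x S X$ along $X_s\x s X_s=(X\x S X)\x S s\hookrightarrow X\x S X$ yields a derived cartesian square whose upper-left corner is $W:=K(X,\restr L X,0)\x S s$; this $W$ is canonically the Koszul scheme $K(X_s,\restr L{X_s},0)$ of the special fiber, a quasi-smooth thickening of the diagonal of $X_s$ over $s$. By derived base change and the projection formula, $d\delta_{X/S}^*\circ\kappa$ sends $(x,y)$ to the pushforward along $W\to K(X,\restr L X,0)$ of the restriction of $x\boxtimes_s\ul\Hom(y,\ccO)$ to $W$; applying $\deg$, i.e.\ pushing further along $W\to Z\to s\hookrightarrow K(S,0)$ (with the harmless trivialisation of $\restr L{X_s}$ on realisations), identifies $\gamma_X\circ\kappa$ with the dg-functor $(x,y)\mapsto(W\to K(S,0))_*\bigl(\restr{(x\boxtimes_s\ul\Hom(y,\ccO))}{W}\bigr)$. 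On the other side, using $s\x S s\simeq K(s,0)$ and the identifications $X_s\x S X_s=(X\x S X)\x S(s\x S s)$, $X_s\x s X_s=(X\x S X)\x S s$, the composite $\gamma_s\circ\ev_{\sT/\sB}$ sends $(x,y)$ to $\bigl(X_s\x S X_s\to K(S,0)\bigr)_*(y\boxtimes_S\ul\Hom(x,\ccO))$; since the derived diagonal of $X_s$ over $s$ is the base change of that over $S$, the map $X_s\x S X_s\to s\x S s\hookrightarrow K(S,0)$ factors through $W\to K(S,0)$, and under this factorisation the external product over $S$ restricted to the diagonal becomes the external product over $s$ restricted to $W$ (the order swap between the two formulas being absorbed by the symmetry of the construction). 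Hence the two dg-functors agree; applying $r$, which carries each base-change and projection-formula equivalence to the corresponding $\ell$-adic isomorphism, gives $\int_{X/S}\circ i_\sT=\int_{s/S}\circ\eps_\sT$.

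\textbf{Main obstacle.} The delicate point is the middle step: one must check that $W=K(X,\restr L X,0)\x S s$ really carries the quasi-smooth structure and line-bundle twist of the Koszul scheme $K(X_s,\restr L{X_s},0)$, that the relevant cube of derived fibre products has all faces cartesian, and above all that the base-change/projection-formula equivalences invoked are precisely those that $r$ sends to the base-change/projection-formula isomorphisms of $\ell$-adic sheaves — so that no spurious Tate twist, shift or sign slips in when passing through $\deg$ and the identifications $r(\MF(-))\simeq(i_S)_*(\cdots)$ of \cite{brtv18} and Section~\ref{ssec: proof of thm B}.
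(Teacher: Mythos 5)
Your reduction of both composites through the lax\-/monoidal map $\mu_{\sT/A}\colon \rl_S(\sT)\otimes_{\rl_S(A)}\rl_S(\sT)\to\rl_S(\sT\otimes_A\sT^{\op})$ is fine, but the step that carries all the content --- ``Contracting the two composites'' --- is asserted rather than proved, and the justification offered does not work. A concrete error first: there is no map $X_s\times_S X_s\to W$, so the claim that ``the map $X_s\times_S X_s\to s\times_S s\hto K(S,0)$ factors through $W\to K(S,0)$'' does not typecheck; the geometry goes the other way ($W\hto X_s\times_s X_s\to X_s\times_S X_s$). More importantly, the two composites you are trying to match are built from genuinely different data: one from the convolution $\sB$-module structure on $\sT$ (the external product $\boxtimes_S$ on $X_s\times_S X_s$ pushed forward to $s\times_S s$), the other from the Koszul presentation $X\simeq P\times_{\sigma,L,0}P$ (the product $\boxtimes_s$ restricted to the derived thickening $W=K(X_s,\restr{L}{X_s},0)$ and then ``integrated''). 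Matching them is not a formal base-change/projection-formula manipulation; it is a comparison of two a priori different module structures, and moreover $\deg$ is only defined on $\ell$-adic realizations, so your square cannot be made purely dg-categorical in the first place.

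What is actually needed --- and what the paper does --- is: (i) use the dualizability of $\rl_S(\sT)$ and $\rl_S(\sB)$ over $\rl_S(A)$ to transpose both composites into morphisms $a,a'\colon \Ql{,Z}^{\on{I}}(\beta)\otimes_{\Ql{,Z}(\beta)}\scrV_{X/S}^{\IK}[-1]\to\scrV_{X/S}^{\IK}[-1]$; (ii) identify $a$ with the action coming from the groupoid $s\times_S s$ and $a'$ with the action coming from $s\times_{\bbA^1_s}s$, after localizing on $Z$ to trivialize $L$; (iii) show these two actions coincide after realization because both are induced from the action of $S\times_{\bbA^1_S}S$ via the unit map $\Ql{,S}^{\on{I}}(\beta)\to(i_S)_*i_S^*\QellI(\beta)$. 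Steps (ii)--(iii), the comparison of the two groupoid actions, are the mathematical heart of the lemma and hold only at the level of $\ell$-adic sheaves. Your ``Main obstacle'' paragraph names roughly the right worry (twists, shifts, identifications of $\rl_S(\MF(-))$), but no counterpart of this comparison appears in your argument, so the proof is incomplete.
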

\begin{proof}
The proof is the same as in \cite[Lemma 4.3.3]{beraldopippi22}; we reproduce it here for the reader's convenience. We proceed in steps.
\sssec*{Step 1}
Both $\rl_S(\sT)$ and $\rl_S(\sB)$ are dualizable over $\rl_S(A)$.
It is easy to compute that
$$
  \ul \Hom_{\Ql{,S}(\beta)}\bigt{\rl_S(\sB),\Ql{,S}(\beta)}
  \simeq
  \rl_S(\sB)(1)[1],
$$
$$
  \ul \Hom_{\Ql{,S}(\beta)}\bigt{\rl_S(\sT),\Ql{,S}(\beta)}
  \simeq
  \rl_S(\sT)(1)[1].
$$
By duality, and also invoking the equivalence $\int_{s/S}:\rl_S(\sB)\to \rl_S\bigt{\MF(S,0)_s}$, one can see that $\eps_{\sT}:\rl_S(\sT)\otimes_{\rl_S(A)}\rl_S(\sT)\to \rl_S(\sB)$ and $\int_{X/S}\circ i_{\sT}:\rl_S(\sT)\otimes_{\rl_S(A)}\rl_S(\sT)\to \rl_S\bigt{\MF(S,0)_s}$ correspond to two morphisms of the form
\begin{equation*}
    \rl_S(\sB)\otimes_{\rl_S(\sA)}\rl_S(\sT)
    \to
    \rl_S(\sT).
\end{equation*}
Invoking \cite[Proposition 4.27, Theorem 4.39]{brtv18}, these identify with two morphisms of the form
\begin{equation*}
    (i_S)_*\bigt{\QellI(\beta)\otimes_{\Qell(\beta)}(p_Z)_*\scrV_{X/S}^\IK[-1]}
    \simeq
    (i_S)_*(p_Z)_*\bigt{\Ql{,Z}^{\on{I}}(\beta)\otimes_{\Ql{,Z}(\beta)}\scrV_{X/S}^{\IK}[-1]}
    \to
    (i_S)_*(p_Z)_*\scrV_{X/S}^{\IK}[-1].
\end{equation*}
The one corresponding to $\eps_{\sT}$ is the image along $(i_S)_*(p_Z)_*$ of the morphism 
\begin{equation}\label{eqn: morphism a}
    a:
    \Ql{,Z}^{\on{I}}(\beta)\otimes_{\Ql{,Z}(\beta)}\scrV_{X/S}^{\IK}[-1]
    \to
    \scrV_{X/S}^{\IK}[-1]
\end{equation}
in $\Shv(Z)$ induced by the action of $\Ql{,Z}^{\on{I}}(\beta)$ on $\scrV_{X/S}^{\IK}[-1]$.

\sssec*{Step 2}
We will now describe the origin of the morphism
$$
 (i_S)_*(p_Z)_*\bigt{\Ql{,Z}^{\on{I}}(\beta)\otimes_{\Ql{,Z}(\beta)}\scrV_{X/S}^{\IK}[-1]}
    \to
    (i_S)_*(p_Z)_*\scrV_{X/S}^{\IK}[-1]
$$
which corresponds to $\int_{X/S}\circ i_{\sT}$.

Since $X\simeq P\times_{\sigma,L,0}P$, the ($\Perf(P)$-linear) dg-category $\Sing(X_s)$ carries a canonical action of the monoidal dg-category $\MF(P,L,0)$ (see \cite{pippi22a}).
Via the lax-monoidal structure on the functor $\rl_P$ (see Remark \ref{rmk: rl over general bases}), we obtain an action of $\rl_P\bigt{\MF(P,L,0)}$ on $\rl_P\bigt{\Sing(X_s)}$.

Since 
$$
  \rl_P\bigt{\Sing(X_s)}
  \simeq
  k_* \scrV_{X/S}^{\IK}[-1],
$$
where $k:Z\to P$, this action factors through
$$
  \rl_P\bigt{\MF(P,L,0)}
  \to
  k_*k^* \rl_P\bigt{\MF(P,L,0)}.
$$
Notice that (see \cite[Proposition 3.3.1]{beraldopippi22}) 
$$
  k_*k^* \rl_P\bigt{\MF(P,L,0)}
  \simeq
  k_*\Ql{,Z}^{\on{I}}(\beta).
$$
Then the morphism above is the image along $(i_S)_*(p_Z)_*$ of this action of $\Ql{,Z}^{\on{I}}(\beta)$ on $\scrV_{X/S}^\IK[-1]$, which we denote
\begin{equation}\label{eqn: morphism a'}
    a':
    \Ql{,Z}^{\on{I}}(\beta)\otimes_{\Ql{,Z}(\beta)}\scrV_{X/S}^{\IK}[-1]
    \to
    \scrV_{X/S}^{\IK}[-1].
\end{equation}

\sssec*{Step 3} 

Thanks to the previous two steps, to conclude the proof of the lemma it suffices to show that $a$ and $a'$ are homotopic.
This is a local statement on $Z$.
In particular, we may assume that $L$ is the trivial line bundle\footnote{Notice that by working locally on $Z$ we have to drop the properness assumption. However, the morphism \eqref{eqn: morphism a} and \eqref{eqn: morphism a'} are always defined, regardless of the properness assumption on $Z$: this hypothesis is only needed to obtain, by duality, the morphisms $\eps_{\sT}$ and $\int_{X/S}\circ i_{\sT}$.}.
Hence we can assume that $X$ is the zero locus of a function $f: P\to \bbA^1_S$ on a smooth $S$-scheme $P$: 
$$
  X
  \simeq
  S\times_{0,\bbA^1_S,f}P.
$$

\sssec*{Step 4}

Consider now the following cube (whose faces are all cartesian):
\begin{equation*}
    \begin{tikzpicture}[scale=1]
      \node(LLuu) at (-2,2) {$X_s$};
      \node(RRuu) at (2,2) {$P_s$};
      \node(LLdd) at (-2,-2) {$X$};
      \node(RRdd) at (2,-2) {$P$.};
      \node(Lu) at (-1,1) {$s$};
      \node(Ru) at (1,1) {$\bbA_s^1$};
      \node(Ld) at (-1,-1) {$S$};
      \node(Rd) at (1,-1) {$\bbA_S^1$};
    
      \draw[->] (LLuu) to node[above] {${}$} (RRuu);
      \draw[->] (Lu) to node[above] {${}$} (Ru);
      
      \draw[->] (Ld) to node[above] {${}$} (Rd);
      \draw[->] (LLdd) to node[above] {${}$} (RRdd);
      
      \draw[->] (LLuu) to node[above] {${}$} (LLdd);
      \draw[->] (Lu) to node[above] {${}$} (Ld);
      
      \draw[->] (Ru) to node[above] {${}$} (Rd);
      \draw[->] (RRuu) to node[above] {${}$} (RRdd);
      
      \draw[->] (LLuu) to node[above] {${}$} (Lu);
      \draw[->] (LLdd) to node[above] {${}$} (Ld);
      
      \draw[->] (RRuu) to node[above] {${}$} (Ru);
      \draw[->] (RRdd) to node[above] {${}$}(Rd);
    \end{tikzpicture}
\end{equation*}
%
%
%
In particular, $X_s$ is canonically acted upon by the following three derived groupoids:
$$
G=s\times_Ss,
\hspace{.4cm}
G^{\on{comm}}=s\times_{\bbA^1_s}s,
\hspace{.4cm}
G_S^{\on{comm}}=S\times_{\bbA^1_S}S.
$$
Since $X$, $P_s$ and $P$ are regular, we obtain actions of  $\Sing(G)$, $\Sing(G^{\on{comm}})$ and $\Sing(G_S^{\on{comm}})$ on $\Sing(X_s)$.
By construction, the morphism $a$ is induced by the action of $\Sing(G)$ on $\Sing(X_s)$ and the morphism $a'$ by the action of $\Sing(G^{\on{comm}})$ on $\Sing(X_s)$.
We conclude by observing that, at the level of $\ell$-adic sheaves, these actions coincide: they are both induced by the action of $\Sing(G_S^{\on{comm}})$ on $\Sing(X_s)$.
Indeed, under the equivalences
$$
  \rl_S\bigt{\Sing(G_S^{\on{comm}})}
  \simeq 
    \Ql{,S}^{\on{I}}(\beta),
$$
$$
  \rl_S\bigt{\Sing(G)}
  \simeq
  \rl_S\bigt{\Sing(G^{\on{comm}})}
  \simeq
  (i_S)_*i_S^*\QellI(\beta),
$$
the morphisms $\rl_S\bigt{\Sing(G_S^{\on{comm}})}\to  \rl_S\bigt{\Sing(G})$ and $\rl_S\bigt{\Sing(G_S^{\on{comm}})}\to  \rl_S\bigt{\Sing(G^{\on{comm}})}$ are both given by the canonical map
$$
  \Ql{,S}^{\on{I}}(\beta)
  \to
  (i_S)_*i_S^*\QellI(\beta)
$$
coming from the unit of the adjunction $\bigt{i_S^*,(i_S)_*}$.
\end{proof}

\begin{lem}\label{lem: right square tech-prop}
The right square of diagram (\ref{diag:pre-big-comm-diagram-rl (GR)}) commutes.
\end{lem}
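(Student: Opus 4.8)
The plan is to mirror the strategy of the proof of Lemma \ref{lem: left square tech-prop}, now transported to the other side of the Morita equivalence. Recall that $\sU = \Sing(X')$ with $X' = X\times_S S'$ and that, exactly as for $X$, the derived enhanced diagonal $d\delta_{X'/S'}: K(X',\restr{L}{X'},0)\to X'\times_{S'} X'$ produces a dg-functor $\Sing(X'\times_{S'}X')\to \MF(X',\restr{L}{X'},0)_{Z'}$ and, after realization, a morphism $\int_{X'/S'}$. The first step is to identify $\rl_S(\sU)\otimes_{\rl_S(A)}\rl_S(\sU)$ together with the map $i_\sU$ to $\rl_S\bigt{\Sing(X\times_SX)}$ in geometric terms: by Corollary \ref{cor: Kunneth for Sing (C side)} we have $\sU^\op\otimes_\sC \sU\simeq \Sing(X\times_S X)$, so the situation is strictly parallel to the $\sB$-side, with $\sC$, $\sU$, $S'$, $Z'$ replacing $\sB$, $\sT$, $S$, $Z$.

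Next, I would dualize as in Step 1 of Lemma \ref{lem: left square tech-prop}: both $\rl_S(\sU)$ and $\rl_S(\sC)$ are dualizable over $\rl_S(A)$ (Proposition \ref{prop: r(U) self dual over r(C)} and Proposition \ref{prop: rl(C)}), so $\eps_\sU$ and $\int_{X/S}\circ i_\sU$ correspond, by passing to adjoints, to two morphisms of the form $\rl_S(\sC)\otimes_{\rl_S(A)}\rl_S(\sU)\to \rl_S(\sU)$. Using Proposition \ref{prop: rl(C)}, Proposition \ref{prop: rl(U)} and Proposition \ref{prop: rl(C) acts on rl(U)}, together with the main results of \cite{brtv18, beraldopippi23}, these identify with two morphisms
$$
  (i_S)_*(p_{Z'})_*\Bigt{\QellI(\beta)(\GLK)\otimes_{\Qell(\beta)}\bigt{\nuIquot}}
  \to
  (i_S)_*(p_{Z'})_*\bigt{\nuIquot}.
$$
The one coming from $\eps_\sU$ is induced by the tautological module action of $\rl_S(\sC)$ on $\rl_S(\sU)$. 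The one coming from $\int_{X/S}\circ i_\sU$ is, by the same argument as in Step 2 of the previous lemma, induced by the action of $\rl_{P'}\bigt{\MF(P',\restr{L}{P'},0)}$ on $\rl_{P'}\bigt{\Sing(X'_{s'})}$, which factors through $k'_*k'^*\rl_{P'}\bigt{\MF(P',\restr{L}{P'},0)}\simeq k'_*\bigt{\QellI(\beta)(\GLK)}$ by \cite[Proposition 3.3.1]{beraldopippi22} (applied over $S'$ instead of $S$, cf. Remark \ref{rmk: rl over general bases}). So the claim again reduces to showing that these two actions of $\QellI(\beta)(\GLK)$ on $\nuIquot$ agree.

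As in Step 3--4 of Lemma \ref{lem: left square tech-prop}, this is local on $Z'$, so we may assume $L$ trivial and $X\simeq S\times_{0,\bbA^1_S,f}P$, whence $X'\simeq S'\times_{0,\bbA^1_{S'},f}P'$. Both actions then factor through the action of $\Sing(G_{S'}^{\on{comm}})=\Sing(S'\times_{\bbA^1_{S'}}S')$ on $\Sing(X'_{s'})$: the action defining $\eps_\sU$ comes from $G'=s'\times_{S'}s'$ via the (classical) inclusion $G_{S'}^{\on{comm}}\hookrightarrow\dots$, while the one defining $\int_{X/S}\circ i_\sU$ comes from $G'^{\on{comm}}=s'\times_{\bbA^1_{s'}}s'$; at the level of $\ell$-adic sheaves both of these maps are the canonical unit map $\QellI(\beta)(\GLK)\to (i_S)_*i_S^*\QellI(\beta)(\GLK)$, and hence the two actions coincide. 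I expect the main obstacle to be bookkeeping: one must carefully check that the construction of \cite{beraldopippi22} producing $d\delta_{X/S}$ and $\int_{X/S}$ is compatible with the flat base change $S'\to S$ — i.e. that the enhanced diagonal for $X'/S'$ is the base change of that for $X/S$ along $X'\times_{S'}X'\to X\times_S X$ — and that this compatibility is precisely what transports the module-action identification of Lemma \ref{lem: left square tech-prop} to the $\sU$-side. Once that is granted, the four steps above run exactly in parallel to the $\sB$-case, and the lemma follows.
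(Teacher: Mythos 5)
There is a genuine gap: your reduction compares the wrong pair of maps. The two circuits of the right square are $\int_{S'/S}\circ\, \eps_{\sU}$ and $\int_{X/S}\circ\, i_{\sU}$, and both land in $\rl_S\bigt{\MF(S,0)_s}\simeq (i_S)_*\QellI(\beta)$. You instead dualize $\eps_{\sU}$ \emph{on its own} (which lands in $\rl_S(\sC)\simeq (i_S)_*\QellIGredbeta$) and claim that both circuits become module actions of $\QellIGredbeta$ on $\nuIquot$. This cannot work for two reasons. First, the targets differ, so the two maps cannot both be rewritten as morphisms $\rl_S(\sC)\otimes_{\rl_S(A)}\rl_S(\sU)\to\rl_S(\sU)$; the comparison has to take place after composing with $\int_{S'/S}$, i.e.\ at the level of $\QellI(\beta)$-module structures, not $\QellIGredbeta$-module structures. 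Second, your identification $k'_*k'^*\rl_{P'}\bigt{\MF(P',\restr{L}{P'},0)}\simeq k'_*\QellIGredbeta$ is false: by \cite[Proposition 3.3.1]{beraldopippi22} this restriction is $\Qell^{\uI}_{,Z}(\beta)$, with no $\GLK$-factor, since $K(P',L',0)=P'\times_{0,L',0}P'$ is a Koszul groupoid and not the Galois-type groupoid $S'\times_SS'$. So the $\int_{X/S}\circ i_{\sU}$ circuit never produces an action of the reduced group algebra, and "the two $\QellIGredbeta$-actions agree" is not the statement you need; proving it would establish commutativity of a square with $\rl_S(\sC)$ in the lower-right corner, which is a different (and in fact non-existent) diagram.

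The missing idea is the geometric source of the map $\int_{S'/S}\circ\,\eps_{\sU}$ after dualization: one must use that $X'$ fits into the fiber square over $\bbA^1_S$ built from the Eisenstein polynomial $E$ and the uniformizer $\pi_{\uL}$ (so that the derived groupoid $S\times_{\bbA^1_S}S$ acts on $X'$ and $\MF(S,0)$ acts on $\Sing(X')$). This is what lets you express that circuit as an action of $\Qell^{\uI}_{,Z}(\beta)$ on $\nuIquot$, to be matched against the $\MF(P',L',0)$-action governing $\int_{X/S}\circ i_{\sU}$; the final comparison then goes through the common refinement by $S\times_{\bbA^2_S}S$, not through $s'\times_{S'}s'$ versus $s'\times_{\bbA^1_{s'}}s'$ as you propose (neither of these is $\sC$, whose underlying groupoid is $S'\times_SS'$). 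Your base-change remark about $d\delta_{X'/S'}$ is not the relevant compatibility either, since the map $\int_{X/S}$ in the square is the one attached to $X/S$, not to $X'/S'$.
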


\begin{proof}
We proceed in steps.

\sssec*{Step 1}
Notice that $\rl_S(\sU)$ is a dualizable $\rl_S(A)$-module, with dual
$$
  \ul \Hom_{\Ql{,S}(\beta)}\bigt{\rl_S(\sU),\Ql{,S}(\beta)}
  \simeq
  \rl_S(\sU)(1)[1].
$$
Since $\rl_S\bigt{\MF(S,0)_s}\simeq (i_S)_*\QellI(\beta)$ is also dualizable over $\rl_S(A)$, with dual $\rl_S\bigt{\MF(S,0)_s}(1)[1]$, we see that $\int_{S'/S}\circ \eps_{\sU}$ and $\int_{X/S}\circ i_{\sU}$ correspond to two morphisms
$$
  a,a':
  \rl_S\bigt{\MF(S,0)_s}\otimes_{\rl_S(A)}\rl_S(\sU)
  \to
  \rl_S(\sU).
$$
Using the equivalences $$\rl_S\bigt{\MF(S,0)_s}\simeq (i_S)_*\QellI(\beta)
$$
$$\rl_S(\sU)\simeq (i_S)_*\bigt{\nuIquot}
$$
and the projection formula, $a$ and $a'$ can be identified with morphisms
$$
  b,b':
  (i_S)_*(p_Z)_*\bigt{\Ql{,Z}^{\uI}(\beta)\otimes_{\Ql{,Z}(\beta)}\scrV_{X/S}^{\IL}/\scrV_{X/S}^{\IK}}
  \to
  (i_S)_*(p_Z)_*\bigt{\scrV_{X/S}^{\IL}/\scrV_{X/S}^{\IK}}.
$$

\sssec*{Step 2}
We now describe the geometric origin of the morphism $a: \rl_S\bigt{\MF(S,0)_s}\otimes_{\rl_S(A)}\rl_S(\sU) \to \rl_S(\sU)$, corresponding to $\int_{S'/S}\circ \eps_{\sU}$.
Notice that $X'$ sits in the following fiber product of schemes:
\begin{equation*}
    \begin{tikzpicture}[scale=1.5]
      \node (Lu) at (0,2) {$X'$};
      \node (Ru) at (1,2) {$\bbA_X^1$};
      \node (Lc) at (0,1) {$S'$};
      \node (Rc) at (1,1) {$\bbA_S^1$};
      \node (Ld) at (0,0) {$S$};
      \node (Rd) at (1,0) {$\bbA_S^1.$};
      
      \draw[->] (Lu) to node[above] {${}$} (Ru);
      \draw[->] (Lc) to node[above] {$\pi_{\uL}$} (Rc);
      \draw[->] (Ld) to node[above] {${}$} (Rd);
      
      \draw[->] (Lu) to node[above] {${}$} (Lc);
      \draw[->] (Lc) to node[above] {${}$} (Ld);
      
      \draw[->] (Ru) to node[above] {${}$} (Rc);
      \draw[->] (Rc) to node[right] {$E$} (Rd);
    \end{tikzpicture}
\end{equation*}
This formally implies that the derived groupoid $G_S^{\on{comm}}=S\times_{\bbA_S^1}S$ acts on $X'$.
Since $\bbA_X^1$ is regular, this induces an action of $\Sing(S\times_{\bbA_S^1}S)\simeq \MF(S,0)$ on $\Sing(X')$.
Taking $\rl_S$, this yields an action of $\rl_S\bigt{\MF(S,0)}$ on $\rl_S\bigt{\Sing(X')}$, which gives the morphism $a$.

It follows that $b$ is the image along $(i_S)_*(p_Z)_*$ of the morphism
$$
  \Ql{,Z}^{\uI}(\beta)\otimes_{\Ql{,Z}(\beta)}\scrV_{X/S}^{\IL}/\scrV_{X/S}^{\IK}
  \to
  \scrV_{X/S}^{\IL}/\scrV_{X/S}^{\IK}
$$
which corresponds to the canonical action of $\Ql{,Z}^{\uI}(\beta)$ on $\scrV_{X/S}^{\IL}/\scrV_{X/S}^{\IK}$.
\sssec*{Step 3} 
We now describe the geometric origin of the morphism $a': \rl_S\bigt{\MF(S,0)_s}\otimes_{\rl_S(A)}\rl_S(\sU) \to \rl_S(\sU)$ corresponding to $\int_{X/S}\circ i_{\sU}$.

By our standing assumption on $X$, we immediately obtain that $X'\simeq P'\times_{\sigma',L',0}P'$.
This formally implies that the derived groupoid $P'\times_{0,L',0}P'$ acts on $X'$.
Since $P'$ is regular, this induces an action of the (symmetric) monoidal dg-category $\MF(P',L',0)=\Sing(P'\times_{0,L',0}P')$ on the ($\Perf(P')$-linear) dg-category $\Sing(X')$, see \cite{pippi22a}.
This endows $\rl_{P'}\bigt{\Sing(X')}$ with an action of $\rl_{P'}\bigt{\MF(P',L',0)}$.
We now observe that 
$$
  \rl_{P'}\bigt{\Sing(X')}
  \simeq
  l_*\bigt{\scrV_{X/S}^{\IL}/\scrV_{X/S}^{\IK}},
$$
where $l:Z\to P'$ denotes the closed embedding of $Z$ into $P'$.
This formally implies that the action of $\rl_{P'}\bigt{\MF(P',L',0)}$ on $\rl_{P'}\bigt{\Sing(X')}$ factors through 
\begin{align*}
    \rl_{P'}\bigt{\MF(P',L',0)} & \to l_*l^*\rl_{P'}\bigt{\MF(P',L',0)} \\
                                & \simeq l_*\Ql{,Z}^{\uI}(\beta). && \text{\cite[Proposition 3.3.1]{beraldopippi22}} 
\end{align*}
Then $b'$ corresponds to the image along $(P'\to S)_*$ of the morphism
$$
  l_*\Ql{,Z}^{\uI}(\beta)\otimes_{\Ql{,P'}(\beta)}l_*\bigt{\scrV_{X/S}^{\IL}/\scrV_{X/S}^{\IK}}
  \to
  l_*\bigt{\scrV_{X/S}^{\IL}/\scrV_{X/S}^{\IK}}
$$
induced by this action.

\sssec*{Step 4}
Since both $b$ and $b'$ are images along $(i_S)_*(p_Z)_*$ of morphisms
$$
  c,c':
  \Ql{,Z}^{\uI}(\beta)\otimes_{\Ql{,Z}(\beta)}\bigt{\scrV_{X/S}^{\IL}/\scrV_{X/S}^{\IK}}
  \to
  \scrV_{X/S}^{\IL}/\scrV_{X/S}^{\IK},
$$
in order to show that $b\simeq b'$ it suffices to show that $c$ and $c'$ are homotopic.
This is a local statement on $Z$ and therefore we may assume that $L\simeq \ccO_P$ and $\sigma=f:P\to \bbA_S^1$ is a function on $P$.

\sssec*{Step 5}
We then see that $X'$ sits in the following fiber square:
\begin{equation*}
    \begin{tikzpicture}[scale=1.5]
      \node (Lu) at (0,2) {$X'$};
      \node (Ru) at (1,2) {$P'$};
      \node (Lc) at (0,1) {$S'$};
      \node (Rc) at (1,1) {$\bbA_{S'}^1$};
      \node (Ld) at (0,0) {$S$};
      \node (Rd) at (1,0) {$\bbA_S^1.$};
      
      \draw[->] (Lu) to node[above] {${}$} (Ru);
      \draw[->] (Lc) to node[above] {${}$} (Rc);
      \draw[->] (Ld) to node[above] {${}$} (Rd);
      
      \draw[->] (Lu) to node[above] {${}$} (Lc);
      \draw[->] (Lc) to node[above] {${}$} (Ld);
      
      \draw[->] (Ru) to node[right] {$f\times_SS'$} (Rc);
      \draw[->] (Rc) to node[right] {${}$} (Rd);
    \end{tikzpicture}
\end{equation*}
In particular, the derived groupoid $S\times_{\bbA_S^1}S$ acts on $X'$.
Since $P'$ is regular, this induces an action of $\MF(S,0)$ on $\Sing(X')$.
The morphism $c'$ is induced by this action.

\sssec*{Step 6}
We finish by noticing that both $c$ and $c'$ identify with the canonical action of $\Ql{,Z}^{\uI}(\beta)$ on $\scrV_{X/S}^{\IL}/\scrV_{X/S}^{\IK}$.
To see this, consider the diagram
\begin{equation*}
    \begin{tikzpicture}[scale=1]
      \node(LLuu) at (-2,2) {$X'$};
      \node(RRuu) at (2,2) {$P'$};
      \node(LLdd) at (-2,-2) {$\bbA_X^1$};
      \node(RRdd) at (2,-2) {$P\times_S\bbA_S^1$};
      
      \node(Lu) at (-1,1) {$S$};
      \node(Ru) at (1,1) {$\bbA_S^1$};
      \node(Ld) at (-1,-1) {$\bbA_S^1$};
      \node(Rd) at (1,-1) {$\bbA_S^2$};
    
      \draw[->] (LLuu) to node[above] {${}$} (RRuu);
      \draw[->] (Lu) to node[above] {${}$} (Ru);
      
      \draw[->] (Ld) to node[above] {${}$} (Rd);
      \draw[->] (LLdd) to node[above] {${}$} (RRdd);
      
      \draw[->] (LLuu) to node[above] {${}$} (LLdd);
      \draw[->] (Lu) to node[above] {${}$} (Ld);
      
      \draw[->] (Ru) to node[above] {${}$} (Rd);
      \draw[->] (RRuu) to node[above] {${}$} (RRdd);
      
      \draw[->] (LLuu) to node[above] {${}$} (Lu);
      \draw[->] (LLdd) to node[above] {${}$} (Ld);
      
      \draw[->] (RRuu) to node[above] {${}$} (Ru);
      \draw[->] (RRdd) to node[above] {${}$}(Rd);
    \end{tikzpicture}
\end{equation*}
where all squares are cartesian. It follows that the derived groupoid $S\times_{\bbA^2_S}S$ acts on $X'$, which in turn induces an action of $\rl_S\bigt{\Sing(S\times_{\bbA^2_S}S)}\boxtimes_S(Z\to X')_*\Ql{,Z}$ on $\rl_{X'}\bigt{\Sing(X')}$.
The two actions of $\rl_S\bigt{\Sing(S\times_{\bbA^1_S}S)}\boxtimes_S(Z\to X')_*\Ql{,Z}$ mentioned above are induced by this action.

We finish by observing\footnote{This can be proved as in \cite[Proposition 4.27]{brtv18} or, alternatively, using the equivalence $\Sing(S\times_{\bbA_S^2}S)\simeq \MF\bigt{\bbP_S^1,\ccO(1),0}$ and the results of \cite{pippi22a}.} that 
$$
  \rl_S\bigt{\Sing(S\times_{\bbA^2_S}S)}
  \simeq 
  \Ql{,S}^{\uI}(\beta)
$$
so that the morphisms $c$ and $c'$ are actually homotopic. 
\end{proof}

\sssec{} \label{sssec: proof of prop on two commutative squares}
We are now ready to show that the two squares in (\ref{diag:big-comm-diagram-rl (GR)}) commute.  This was the statement of Proposition \ref{prop:two commutative squares}.

It clear that this follows from Proposition \ref{prop:pre-big-comm-diagram-rl (GR)}: both $\rl_S(\sT)\otimes_{\rl_S(A)}\rl_S(\sT)\to \rl_S\bigt{\Sing(X\times_SX)}$ and $\rl_S(\sT)\otimes_{\rl_S(A)}\rl_S(\sT)\to \rl_S(\sB)$ factor through the canonical morphism
$$
  \rl_S(\sT)\otimes_{\rl_S(A)}\rl_S(\sT)
  \to
  \rl_S(\sT)\otimes_{\rl_S(\sB)}\rl_S(\sT).
$$
Similarly, $\rl_S(\sU)\otimes_{\rl_S(A)}\rl_S(\sU)\to \rl_S\bigt{\Sing(X\times_SX)}$ factors through 
$$
  \rl_S(\sU)\otimes_{\rl_S(A)}\rl_S(\sU)
  \to
  \rl_S(\sU)\otimes_{\rl_S(\sC)}\rl_S(\sU)
$$
and $\rl_S(\sU)\otimes_{\rl_S(A)}\rl_S(\sU)\to \rl_S(\sC)\to \rl_S\bigt{\MF(S,0)_s}$ factors through 
$$
  \rl_S(\sU)\otimes_{\rl_S(\sC)}\rl_S(\sU)
  \to
  \HH_*\bigt{\rl_S(\sC)/\rl_S(A)}.
$$
This shows that the diagram
\begin{equation*}
  \begin{tikzpicture}[scale=1.5]
    \node (Lu) at (-4,1.2) {$\rl_S(\sT) \otimes_{\rl_S(\sB)} \rl_S(\sT)$};
    \node (Ld) at (-4,0) {$\rl_S(\sB)$};
    \node (Cu) at (0,1.2) {$\rl_S\bigt{\Sing(X\times_S X)}$};
    \node (Cd) at (0,0) {$ \rl_S\bigt{\MF(S,0)_s}$};
    \node (Ru) at (4,1.2) {$\rl_S(\sU) \otimes_{\rl_S(\sC)} \rl_S(\sU)$};
    \node (Rd) at (4,0) {$ \HH_*\bigt{\rl_S(\sC)/\rl_S(A)}$.};
    \draw[->] (Lu) to node[right] {$\wt{\eps}_{\sT}$} (Ld);
    \draw[->] (Cu) to node[right] {$\int_{X/S}$} (Cd);
    \draw[->] (Ru) to node[right] {$\eps_{\sU}^{\HH}$} (Rd);
    \draw[->] (Ru) to node[above]{$\mu_{\sU/\sC}$} (Cu);
    \draw[->] (Rd) to node[above]{$\int'_{S'/S}$} (Cd);
    \draw[->] (Lu) to node[above]{$\mu_{\sT/\sB}$} (Cu);
    \draw[->] (Ld) to node[above]{$\int_{s/S}$} (Cd);
  \end{tikzpicture}
\end{equation*}
commutes. 
To conclude that the squares in \eqref{diag:big-comm-diagram-rl (GR)} commute as well, it suffices to observe that $\eps_{\sU}^{\HH}$ factors through $\wt{\eps}_{\sU}^{\HH}$, see \eqref{eqn: epsHH factors through wt(eps)HH}.

\appendix 
\sec{}\label{appendix: traces over group algebras}

In this appendix, we review the well-known formula for the trace of a dualizable module over a (reduced) group algebra.

\ssec{Traces over group algebras}

\sssec{}
Let $\ccC$ be a symmetric monoidal stable $\oo$-category, which we assume to be $\bbQ$-linear.
Let $\uF$ be a commutative algebra object in $\ccC$ and $H$ a finite group. Denote by
$$
  \uF[H]
  :=
  \bigoplus_{h\in H} \uF \cdot e_{h}
$$
the group algebra of $H$ with coefficients in $\uF$.
Given a left $\uF[H]$-module $M$ and a right $\uF[H]$-module $N$, we have 
$$
  (N\otimes_{\uF}M)\otimes_{\uF[H]^{\env}}\uF[H]
  \simeq 
  N\otimes_{\uF[H]}M
  \simeq
  (N\otimes_{\uF}M)^{H},
$$
where $H$ acts diagonally on $N\otimes_{\uF}M$.
Here $\uF[H]^{\env}=\uF[H]^{\rev}\otimes_{\uF}\uF[H]$ denotes the \virg{enveloping} algebra.

\sssec{} 

Recall that a left $\uF[H]$-module $M$ is dualizable if there are a right $\uF[H]$-module $N$, an $\uF[H]$-bilinear map
$$
  \ev:
  M\otimes_{\uF}N
  \to
  \uF[H]
$$
and an $\uF$-linear map
$$
  \coev:
  \uF
  \to
  N\otimes_{\uF[H]}M
$$
such that the compositions
$$
  M
  \simeq
  M\otimes_{\uF}\uF
  \xto{\id \otimes \coev}
  M\otimes_{\uF}N\otimes_{\uF[H]}M
  \xto{\ev\otimes \id}
  \uF[H]\otimes_{\uF[H]}M
  \simeq 
  M
$$
$$
  N
  \simeq
  \uF \otimes_{\uF}N
  \xto{\coev \otimes \id}
  N\otimes_{\uF[H]}M\otimes_{\uF}N
  \xto{\id \otimes \ev}
  N\otimes_{\uF[H]}\uF[H]
  \simeq 
  N
$$
are the identity maps.

\sssec{}

Given a dualizable $\uF[H]$-module $M$ and an $H$-linear map $T: M \to M$, we define the trace 
$$
\Tr_{\uF[H]}(T: M) \in \HH_0(\uF[H]/F)
$$
in the standard way, that is, as the $\uF$-linear map
$$
\uF 
\xto{\coev} 
N \otimes_{\uF[H]} M
\xto{\id \otimes T}
N \otimes_{\uF[H]} M
\xto{\ev^{\HH}}
\uF[H]
\usotimes_{\uF[H]^{\env}}
\uF[H]
=
\HH_*\bigt{\uF[H]/\uF},
$$
where $\ev^{\HH}$ is the arrow induced naturally by $\ev$.

\begin{lem}
Let $M$ be a left $\uF[H]$-module whose underlying $\uF$-module is dualizable. Then $M$ is dualizable as a left $\uF[H]$-module and
\begin{equation} \label{eqn:appendix-trace-formula}
  \Tr_{\uF[H]}(\id:M)
  =
  \frac{1}{|H|} \sum_{h\in H}\Tr_{\uF}(h^{-1}:M) \langle e_h \rangle
\end{equation}
in $\HH_0(\uF[H]/\uF)$.
\end{lem}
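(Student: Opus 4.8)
The plan is to produce an explicit duality datum for $M$ as a left $\uF[H]$-module and then to obtain \eqref{eqn:appendix-trace-formula} by unwinding it. The crucial input is that $\ccC$ is $\bbQ$-linear and $H$ is finite, so $|H|$ is invertible; concretely this is the identity $N\otimes_{\uF[H]}M\simeq(N\otimes_\uF M)^H\simeq(N\otimes_\uF M)_H$ recorded above (Maschke), which lets me trade relative tensor products over $\uF[H]$ for $H$-(co)invariants of tensor products over $\uF$. In particular, since $M$ is dualizable over $\uF$, the $\uF$-linear dual $M^\vee:=\ul\Hom_\uF(M,\uF)$ is again $\uF$-dualizable.

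First I would fix the duality datum. Endow $M^\vee$ with the right $\uF[H]$-module structure determined by $(\phi\cdot e_h)(m):=\phi(h\cdot m)$, and take as evaluation
$$
\ev\colon M\otimes_\uF M^\vee\longto\uF[H],\qquad m\otimes\phi\longmapsto\sum_{h\in H}\phi(h^{-1}\cdot m)\,e_h;
$$
the substitutions $h\mapsto gh$ and $h\mapsto hg^{-1}$ show at once that $\ev$ is $(\uF[H],\uF[H])$-bilinear. As coevaluation I would take the composite
$$
\coev\colon\uF\xto{\ \frac1{|H|}\coev_\uF\ }M^\vee\otimes_\uF M\longto M^\vee\otimes_{\uF[H]}M,
$$
where $\coev_\uF$ exhibits the $\uF$-dualizability of $M$ and the second arrow is the canonical projection. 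Feeding these into the first triangular identity produces the endomorphism $\sum_{h\in H}(h\cdot)\circ(h^{-1}\cdot)=|H|\cdot\id_M$ of $M$ before the normalization, so the factor $\tfrac1{|H|}$ is exactly what makes the identity hold; the second triangular identity is checked the same way. This proves that $M$ is dualizable over $\uF[H]$, with dual $M^\vee$.

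Then I would compute the trace. By definition $\Tr_{\uF[H]}(\id:M)$ is the class of $\ev^{\HH}\circ\coev\colon\uF\to\HH_*(\uF[H]/\uF)$, with $\ev^{\HH}$ induced from $\ev$ upon tensoring with $\uF[H]$ over $\uF[H]^{\rev}$. Since $\coev$ carries $1$ to $\tfrac1{|H|}$ times the class of $\id_M$ under $\End_\uF(M)\simeq M^\vee\otimes_\uF M$, and $\ev^{\HH}$ sends the class of $\phi\otimes m$ to the class of $\sum_{h\in H}\phi(h^{-1}m)\,e_h$ in $\HH_0(\uF[H]/\uF)=\uF[H]\big/[\uF[H],\uF[H]]$, splitting the sum over $h\in H$ gives
$$
\Tr_{\uF[H]}(\id:M)=\frac1{|H|}\sum_{h\in H}\Bigt{\,\uF\xto{\coev_\uF}M^\vee\otimes_\uF M\xto{\ \id\otimes(h^{-1}\cdot)\ }M^\vee\otimes_\uF M\xto{\ev_\uF}\uF\,}\langle e_h\rangle.
$$
The $h$-th bracket is, by the very definition of the trace of a $\uF$-dualizable object (with $\uF$ commutative, so $\HH_*(\uF)=\uF$), equal to $\Tr_\uF(h^{-1}:M)$, which is precisely \eqref{eqn:appendix-trace-formula}. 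As a consistency check, $\Tr_\uF(h^{-1}:M)$ is a class function of $h$ by cyclicity of the trace, matching the relation $\langle e_h\rangle=\langle e_{ghg^{-1}}\rangle$ in $\HH_0(\uF[H]/\uF)$.

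I do not expect a genuine obstacle here: the content is the classical computation of the trace of the identity of a module over a separable group algebra. The only care required is the bookkeeping of conventions — the swap in the definition of $\ev^{\HH}$, the left/right module structures on $M^\vee$, and the resulting appearance of $h$ versus $h^{-1}$ — so that the normalization comes out to exactly $\tfrac1{|H|}$, together with the routine replacement of the ``dual basis'' heuristic by the purely diagrammatic dualizability calculus valid in a general stable $\bbQ$-linear symmetric monoidal $\oo$-category.
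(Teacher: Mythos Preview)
Your proof is correct and follows essentially the same approach as the paper: both construct an explicit duality datum using the $\uF$-dual $M^\vee$ and then unwind it to obtain the trace formula. The only cosmetic difference is that you place the factor $\tfrac{1}{|H|}$ in the coevaluation while the paper places it in the evaluation; this is an immaterial gauge choice and the verification of the triangular identities and the trace computation proceed identically.
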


\begin{proof}
We first show that $M^\vee := \Hom_{\uF}(M,\uF)$, equipped with its natural right $\uF[H]$-module structure, is dual to the left $\uF[H]$-module $M$.
Choose a duality datum
$$
  \gamma:
  \uF
  \to M^{\vee}\otimes_{\uF} M
$$
$$
  \eps:
  M\otimes_{\uF} M^{\vee}
  \to
  \uF
$$
for $M$ as an $\uF$-module and denote by
$$ 
r= 
M^\vee \otimes_{\uF} M
 \to
M^\vee \otimes_{\uF[H]} M
$$
the canonical projection.
We claim that the morphisms
$$
  \coev
  :=
  r\circ \gamma
  :
  \uF 
  \to
   M^\vee \otimes_{\uF[H]} M
$$
and
$$
  \ev:=
  \frac{1}{|H|}\sum_{g\in H}\eps \circ (g^{-1}\otimes \id) \cdot e_g
  :
  M \otimes_{\uF} M^{\vee}
  \to
  \uF[H]
$$
form a duality datum. To prove this, we need to show that
$$
  M
  \simeq
  M \otimes_{\uF}\uF
  \xto{\id \otimes \coev}
  M \otimes_{\uF} M^{\vee}\otimes_{\uF[H]} M
  \xto{\ev\otimes \id}
  \uF[H]\otimes_{\uF[H]} M
  \simeq
  M
$$
is the identity (plus a parallel fact for the other triangular identity, which we leave to the reader). 
A straightforward diagram chase shows that this composition can be rewritten as
$$
  M
  \simeq
  M\otimes_{\uF}\uF
  \xto{\id \otimes \gamma}
  M\otimes_{\uF} M^{\vee}\otimes_{\uF} M
  \xto{\ev\otimes \id}
  \uF[H]\otimes_{\uF} M
  \xto{\act}
  M.
$$
Thanks to the obvious
$$
  (\eps \otimes \id)\circ (\act_h \otimes \id \otimes \id)\circ (\id \otimes \gamma)
 \simeq 
 (\eps \otimes \id)\circ (\id \otimes \gamma)
 \circ \act_h
 \simeq \act_h,
$$
we obtain
$$
  (\ev \otimes \id) \circ (\id \otimes \coev)
  = \frac{1}{|H|} \sum_{g\in H} g^{-1} \cdot e_g.
$$
and the duality claim follows.
We now prove \eqref{eqn:appendix-trace-formula}.
Observe that
$$
  \on{can} \circ \ev = \ev^{\HH}\circ r,
$$
where
$$
  \on{can}: 
  \uF[H] 
  \to
  \HH_0\bigt{\uF[H]/\uF}
$$
is the canonical map. It follows that
$$
  \Tr_{\uF[H]}(\id: M)
  =
  \on{can} \Bigt{\frac{1}{|H|} \sum_{h\in H} \eps \circ (h\otimes \id) \circ \gamma \cdot e_h}
  =
  \frac{1}{|H|}\sum_{h\in H} \Tr_{\uF}(h^{-1} : M) \langle e_h \rangle,
$$
as desired.
\end{proof}

\ssec{Traces over reduced group algebras}

\sssec{}

The reduced group algebra of $H$ with coefficients in $\uF$ is defined as
$$
  \uF(H)
  :=
  \uF[H]/\langle \sum_{h\in H}e_h \rangle.
$$
The $\infty$-category of left $\uF(H)$-modules is equivalent to the full subcategory of $\Mod_{\uF[H]}(\ccC)$ spanned by the objects that have zero $H$-invariants.

\begin{cor}\label{cor: trace reduced group alg}
Let $M$ be a left $\uF[H]$-module whose underlying $\uF$-module is dualizable. Then $M/M^H: = \coFib(M^H \to M)$ is a dualizable left $\uF(H)$-module and
    $$
    \Tr_{\uF(H)}(\id:M/M^H)
    =
    \frac{1}{|H|} \sum_{h\in H}\Tr_{\uF}(h^{-1}:M/M^H) \langle e_h \rangle
    $$
    in $\HH_0(\uF(H)/F)$.
\end{cor}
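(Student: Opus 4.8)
\emph{Proof proposal.}
The plan is to deduce the statement from the Lemma just proved, by base changing along the quotient map $\uF[H]\to \uF(H)$ and exploiting the splitting of the group algebra coming from the $\bbQ$-linearity of $\ccC$. First I would observe that, since $|H|$ is invertible, the averaging element $e:=\frac{1}{|H|}\sum_{h\in H}e_h$ is a central idempotent in $\uF[H]$; it yields a product decomposition of algebras $\uF[H]\simeq \uF\times \uF(H)$, where the factor $e\,\uF[H]\simeq \uF$ carries the trivial $H$-action and $(1-e)\,\uF[H]\simeq \uF(H)$. Correspondingly, every left $\uF[H]$-module $M$ splits as $M\simeq eM\oplus (1-e)M$; as $|H|$ is invertible, homotopy and strict $H$-fixed points agree, so $eM\simeq M^H$ and $(1-e)M\simeq M/M^H$. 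In particular $M/M^H$ is a direct summand of $M$, hence its underlying $\uF$-module is dualizable, and $(M/M^H)^H\simeq 0$, so $M/M^H$ is naturally a left $\uF(H)$-module via the equivalence recalled before the statement.

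Next I would apply the Lemma to the left $\uF[H]$-module $N:=M/M^H$: it is dualizable over $\uF[H]$, and
$$
  \Tr_{\uF[H]}(\id:N)
  =
  \frac{1}{|H|}\sum_{h\in H}\Tr_{\uF}(h^{-1}:N)\,\langle e_h\rangle
  \in
  \HH_0(\uF[H]/\uF).
$$
Under the decomposition above one has $\HH_*(\uF[H]/\uF)\simeq \HH_*(\uF/\uF)\times \HH_*(\uF(H)/\uF)$, and $N$ lies entirely in the $\uF(H)$-factor; hence $N$ is dualizable over $\uF(H)$ and $\Tr_{\uF(H)}(\id:N)$ is the image of $\Tr_{\uF[H]}(\id:N)$ under the projection $\HH_0(\uF[H]/\uF)\to \HH_0(\uF(H)/\uF)$. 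Since this projection is induced by the algebra map $\uF[H]\to \uF(H)$ and therefore sends $\langle e_h\rangle$ to $\langle e_h\rangle$, the displayed equality projects precisely to
$$
  \Tr_{\uF(H)}(\id:M/M^H)
  =
  \frac{1}{|H|}\sum_{h\in H}\Tr_{\uF}(h^{-1}:M/M^H)\,\langle e_h\rangle,
$$
as desired.

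The only point that requires a little care — and the main (mild) obstacle — is the compatibility of the trace of the identity with base change along $\uF[H]\to \uF(H)$. I would either invoke the general functoriality of traces (equivalently of Hochschild homology) under morphisms of algebras, applied to $N\simeq N\otimes_{\uF[H]}\uF(H)$, or, more concretely, note that the duality datum produced in the proof of the Lemma is built from a duality datum of the underlying $\uF$-module of $N$; since $N$ and that datum all live in the $\uF(H)$-factor of the product decomposition $\uF[H]\simeq \uF\times \uF(H)$, the trace computed over $\uF(H)$ is literally the $\uF(H)$-component of the trace computed over $\uF[H]$. (As an internal check, the $\uF$-component of the Lemma's formula recovers $\Tr_{\uF}(\id:M^H)=\frac{1}{|H|}\sum_h \Tr_{\uF}(h^{-1}:M^H)$, which is consistent because $H$ acts trivially on $M^H$ and $\sum_h e_h$ maps to $|H|\,e$ in the $\uF$-factor; dually, $\sum_h\langle e_h\rangle=0$ in $\HH_0(\uF(H)/\uF)$, which is what makes the $M^H$-contribution disappear from the $\uF(H)$-component.)
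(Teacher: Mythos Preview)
Your argument is correct and follows essentially the same route as the paper: both reduce the claim to the Lemma plus functoriality of traces along the algebra map $\uF[H]\to\uF(H)$, using the idempotent splitting $M\simeq M^H\oplus M/M^H$ and the vanishing of $\sum_h\langle e_h\rangle$ in $\HH_0(\uF(H)/\uF)$. The only cosmetic difference is that you apply the Lemma directly to $N=M/M^H$, whereas the paper applies it to $M$ and then peels off the $M^H$-contribution at the end; both choices amount to the same computation.
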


\begin{proof}
The dualizability of $M/M^H$ follows immediately from that of $M$, since the symmetric monoidal functor
$$
  \uF(H)\otimes_{\uF[H]}-:
  \Mod_{\uF[H]}(\ccC)
  \to
 \Mod_{\uF(H)}(\ccC)
$$
verifies
$$
  \uF(H)\otimes_{\uF[H]} M
  \simeq
  M/M^H.
$$
The trace formula follows from \eqref{eqn:appendix-trace-formula}. Indeed, we have the following chain of equalities in $\HH_0\bigt{\uF(H)/\uF}$:
\begin{align*}
  \Tr_{\uF(H)}(\id:M/M^H) 
 & = \Tr_{\uF[H]}(\id:M) \\
 & = \frac{1}{|H|} \sum_{h\in H}\Tr_{\uF}( h^{-1} :M) \langle e_h \rangle \\
 & = \frac{1}{|H|} \sum_{h \in H}\Tr_{\uF}(h^{-1} :M/M^H) \langle e_h \rangle  + \frac{1}{|H|} \sum_{h\in H}\Tr_{\uF}(h^{-1} :M^H) \langle e_h  \rangle \\
& = \frac{1}{|H|} \sum_{ h\in H}\Tr_{\uF}( h^{-1} :M/M^H) \langle e_h \rangle,
\end{align*}
where the last equality follows from the observation that 
$$
  \sum_{h\in H}\langle e_h \rangle 
  =
  0 
$$
in  $\HH_0\bigt{\uF(H)/\uF}$. 
\end{proof}
\printbibliography
\end{document}